\newcommand{\Ga}{\Gamma}
\newcommand{\Si}{ {\Sigma} }
\newcommand{\Up}{ {\Upsilon} }
\newcommand{\si}{ {\sigma} }
\newcommand{\ep}{ {\epsilon} }
\newcommand{\bA}{ \mathbb{A} }
\newcommand{\bC}{ {\mathbb{C}} }
\newcommand{\bD}{\mathbb{D}}
\newcommand{\bE}{\mathbb{E}}
\newcommand{\bF}{\mathbb{F}}
\newcommand{\bH}{\mathbb{H}}
\newcommand{\bK}{\mathbb{K}}
\newcommand{\bL}{\mathbb{L}}
\newcommand{\bP}{\mathbb{P}}
\newcommand{\bQ}{\mathbb{Q}}
\newcommand{\bR}{\mathbb{R}}
\newcommand{\bS}{\mathbb{S}}
\newcommand{\bT}{\mathbb{T}}
\newcommand{\bU}{\mathbb{U}}
\newcommand{\bZ}{\mathbb{Z}}
\newcommand{\cA}{\mathcal{A}}
\newcommand{\cB}{\mathcal{B}}
\newcommand{\cE}{\mathcal{E}}
\newcommand{\cI}{\mathcal{I}}
\newcommand{\cL}{\mathcal{L}}
\newcommand{\cM}{\mathcal{M}}
\newcommand{\cO}{\mathcal{O}}
\newcommand{\cP}{\mathcal{P}}
\newcommand{\cS}{\mathcal{S}}
\newcommand{\cT}{\mathcal{T}}
\newcommand{\cX}{\mathcal{X}}
\newcommand{\cV}{\mathcal{V}}
\newcommand{\age}{\mathrm{age}}
\newcommand{\Aut}{\mathrm{Aut}}
\newcommand{\CR}{ {\mathrm{CR}} }
\newcommand{\Hess}{\mathrm{Hess}}
\newcommand{\Hom}{\mathrm{Hom}}
\newcommand{\End}{\mathrm{End}}
\newcommand{\Jac}{ {\mathrm{Jac}} }
\newcommand{\Ker}{\mathrm{Ker}}
\newcommand{\Res}{\mathrm{Res}}
\newcommand{\Spec}{\mathrm{Spec}}
\newcommand{\Pic}{\mathrm{Pic}}
\newcommand{\can}{ {\mathrm{can}} }
\newcommand{\eff}{ {\mathrm{eff}} }
\newcommand{\ext}{ {\mathrm{ext}} }
\newcommand{\ev}{\mathrm{ev}}
\newcommand{\val}{ {\mathrm{val}} }
\newcommand{\vir}{ {\mathrm{vir}} }
\newcommand{\Boxs}{\mathrm{Box}(\si)}
\newcommand{\BoxS}{\mathrm{Box}(\bSi^{\can})}
\newcommand{\Int}{\mathrm{Int}}
\newcommand{\Eff}{{\mathrm{Eff}}}
\newcommand{\Nef}{{\mathrm{Nef}}}
\newcommand{\NE}{{\mathrm{NE}}}
\newcommand{\Area}{\mathrm{Area}}
\newcommand{\Picst}{\Pic^\mathrm{st}}
\newcommand{\be}{\mathbf{e}}
\newcommand{\bk}{\mathbf{k}}
\newcommand{\bp}{\mathbf{p}}
\newcommand{\bu}{\mathbf{u}}
\newcommand{\one}{\mathbf{1}}
\newcommand{\bmu}{\boldsymbol{\mu}}
\newcommand{\btau}{\boldsymbol{\tau}}
\newcommand{\bsi}{{\boldsymbol{\si}}}
\newcommand{\brho}{{\boldsymbol{\rho}}}
\newcommand{\bgamma}{{{\boldsymbol{\gamma}}}}
\newcommand{\bSi}{\mathbf{\Si}}
\newcommand{\fg}{\mathfrak{g}}
\newcommand{\fl}{\mathfrak{l}}
\newcommand{\fm}{\mathfrak{m}}
\newcommand{\fM}{\mathfrak{M}}
\newcommand{\fn}{\mathfrak{n}}
\newcommand{\fp}{\mathfrak{p}}
\newcommand{\fr}{\mathfrak{r}}
\newcommand{\fs}{\mathfrak{s}}
\newcommand{\fC}{\mathfrak{C}}
\newcommand{\rA}{{\mathscr{A}}}
\newcommand{\rB}{{\mathscr{B}}}
\newcommand{\rP}{{\mathscr{P}}}
\newcommand{\w}{\mathsf{w}}
\newcommand{\su}{\mathsf{u}}
\newcommand{\sv}{\mathsf{v}}
\newcommand{\sw}{\mathsf{w}}
\newcommand{\hX}{\hat{X}}
\newcommand{\hY}{\hat{Y}}
\newcommand{\hx}{\hat{x}}
\newcommand{\hy}{\hat{y}}
\newcommand{\hxi}{\hat{\xi}}
\newcommand{\htheta}{\hat{\theta}}
\newcommand{\fX}{{\mathfrak{X}}}
\newcommand{\tGa}{\widetilde{\Gamma}}
\newcommand{\txi}{\widetilde{\xi}}
\newcommand{\tPhi}{\widetilde{\Phi}}
\newcommand{\tSi}{\widetilde{\Sigma}}
\newcommand{\trho}{\widetilde{\rho}}
\newcommand{\tcL}{\widetilde{\cL}}
\newcommand{\tbK}{\widetilde{\bK}}
\newcommand{\tbH}{\widetilde{\bH}}
\newcommand{\tfr}{\widetilde{\fr}}
\newcommand{\tfs}{\widetilde{\fs}}
\newcommand{\tsu}{\widetilde{\su}}
\newcommand{\tsw}{\widetilde{\sw}}
\newcommand{\tA}{\widetilde{A}}
\newcommand{\tB}{\widetilde{B}}
\newcommand{\tC}{\widetilde{C}}
\newcommand{\tD}{\widetilde{D}}
\newcommand{\tF}{\widetilde{F}}
\newcommand{\tH}{\widetilde{H}}
\newcommand{\tL}{\widetilde{L}}
\newcommand{\tM}{\widetilde{M}}
\newcommand{\tN}{\widetilde{N}}
\newcommand{\tQ}{\widetilde{Q}}
\newcommand{\tS}{\widetilde{S}}
\newcommand{\tU}{\widetilde{U}}
\newcommand{\tV}{\widetilde{V}}
\newcommand{\tX}{\widetilde{X}}
\newcommand{\tY}{\widetilde{Y}}
\newcommand{\tZ}{\widetilde{Z}}
\newcommand{\tb}{\widetilde{b}}
\newcommand{\tc}{\widetilde{c}}
\newcommand{\tp}{\widetilde{p}}
\newcommand{\tu}{\widetilde{u}}
\newcommand{\tw}{\widetilde{w}}
\newcommand{\tx}{\widetilde{x}}
\newcommand{\ty}{\widetilde{y}}
\newcommand{\tGamma}{\widetilde{\Gamma}}
\newcommand{\tbsi}{{\widetilde{\bsi}}}
\newcommand{\tmu}{\widetilde{\mu}}
\newcommand{\tnu}{\widetilde{\nu}}
\newcommand{\tTheta}{\widetilde{\Theta}}
\newcommand{\tgamma}{\widetilde{\gamma}}
\newcommand{\tbT}{\widetilde{\bT}}
\newcommand{\tNef}{\widetilde{\Nef}}
\newcommand{\tNE}{\widetilde{\NE}}
\newcommand{\vGa}{\vec{\Gamma}}
\newcommand{\bGa}{\mathbf{\Gamma}}
\newcommand{\Mbar}{\overline{\cM}}
\newcommand{\spa}{ {\ \ \,} }
\newcommand{\Cbar}{\overline{C}}
\newcommand{\ST}{ {S_{\bT}} }
\newcommand{\RT}{ {R_{\bT}} }
\newcommand{\bST}{ {\bar{S}_{\bT}} }
\newcommand{\bSTQ}{ \bST[\![ \tQ,\tau'' ]\!] }
\newcommand{\Rt}{ {R_{\bT'}} }
\newcommand{\bSt}{ {\bar{S}_{\bT'}} }
\newcommand{\nov}{\Lambda_{\mathrm{nov}}}
\newcommand{\novT}{\bar{\Lambda}^{\bT}_{\mathrm{nov}} }
\newcommand{\XX}{X_{(\tau,\si)}}
\newcommand{\YY}{Y_{(\tau,\si)}}
\newcommand{\lra}{\longrightarrow}
\newcommand{\chX}{\check{X}}
\newcommand{\chY}{\check{Y}}
\newtheorem{dummy}{dummy}[section]
\newtheorem{lemma}[dummy]{Lemma}
\newtheorem{theorem}[dummy]{Theorem}
\newtheorem{corollary}[dummy]{Corollary}
\newtheorem{proposition}[dummy]{Proposition}
\theoremstyle{definition}
\newtheorem{definition}[dummy]{Definition}
\newtheorem{example}[dummy]{Example}
\newtheorem{notation}[dummy]{Notation}
\newtheorem{convention}[dummy]{Convention}
\theoremstyle{remark}
\newtheorem{remark}[dummy]{Remark}
\numberwithin{equation}{section}
\begin{document}

\title{On the remodeling conjecture for toric Calabi-Yau 3-orbifolds}

\author{Bohan Fang}
\address{Bohan Fang, Beijing International Center for Mathematical
  Research, Peking University, 5 Yiheyuan Road, Beijing 100871, China}
\email{bohanfang@gmail.com}

\author{Chiu-Chu Melissa Liu}
\address{Chiu-Chu Melissa Liu, Department of Mathematics, Columbia University, 2990 Broadway, New York, NY 10027}
\email{ccliu@math.columbia.edu}

\author{Zhengyu Zong}
\address{Zhengyu Zong, Yau Mathematical Sciences Center, Tsinghua University, Jin Chun Yuan West Building,
Tsinghua University, Haidian District, Beijing 100084, China}
\email{zyzong@mail.tsinghua.edu.cn}

\subjclass[2010]{Primary 14N35, Secondary 14J33}

\date{}

\begin{abstract}
The Remodeling Conjecture proposed by Bouchard-Klemm-Mari\~{n}o-Pasquetti (BKMP) relates the A-model open
and closed topological string amplitudes (the all genus open and closed Gromov-Witten invariants) of a semi-projective toric
Calabi-Yau 3-manifold/3-orbifold  to the Eynard-Orantin invariants of its mirror curve.
It is an all genus open-closed mirror symmetry for toric Calabi-Yau 3-manifolds/3-orbifolds.
In this paper, we present a proof of the BKMP Remodeling Conjecture for all genus open-closed orbifold Gromov-Witten invariants of an arbitrary semi-projective toric Calabi-Yau 3-orbifold relative to an outer framed Aganagic-Vafa Lagrangian brane. We also prove
the conjecture in the closed string sector at all genera.
\end{abstract}

\maketitle

{\small \tableofcontents}

\section{Introduction}

\subsection{Background and motivation}
Mirror symmetry is a duality from string theory originally discovered by physicists. It says two
dual string theories -- type IIA and type IIB -- on different Calabi-Yau 3-folds give rise to the same
physics. Mathematicians became interested in this relationship around
1990 when Candelas, de la Ossa, Green, and Parkes \cite{CdGP} obtained
a conjectural formula of the number of rational curves of arbitrary
degree in the quintic 3-fold by relating it to period integrals
of the quintic mirror. By late 1990s mathematicians had
established the foundation of Gromov-Witten (GW) theory as a mathematical theory of A-model topological closed strings. In this context, the genus $g$ free energy of the topological A-model on a Calabi-Yau 3-fold $\cX$ is
defined as a generating function $F_g^{\cX}$ of genus $g$
Gromov-Witten invariants of $\cX$, which is a function on a (formal)
neighborhood around the large radius limit in the complexified K\"{a}hler moduli of $\cX$. The genus $g$ free energy of the topological B-model on the mirror Calabi-Yau 3-fold $\check{\cX}$
is a section of $\mathcal{V}^{2g-2}$, where $\mathcal{V}$ is the
Hodge line bundle over the complex moduli $\check{\cM}$ of $\check{\cX}$, whose fiber over
$\check{\cX}$ is $H^0(\check{\cX},\Omega^3_{\check{\cX}})$. Locally it is a function
$\check{F}_g^{\check{\cX}}$ near the large radius limit on the complex moduli $\check{\cX}$.
Mirror symmetry predicts that $\check{F}_g^{\check{\cX}}= F_g^{\cX} + \delta_{g,0}a_0 + \delta_{g,1} a_1$ under the mirror map,
where $a_0$  (resp. $a_1$) is a cubic (resp. linear) function in K\"{a}hler parameters.
The mirror map and $\check{F}_0^{\check{\cX}}$ are determined by period integrals of a holomorphic 3-form on $\check{\cX}$.
Period integrals on toric manifolds and complete intersections in them
can be expressed in terms of  explicit hypergeometric functions. This
conjecture has been proved in many cases to various degrees. Roughly
speaking, our result is about (a much more generalized version of) this
conjecture when $\cX$ is a toric Calabi-Yau $3$-orbifold.

\subsubsection{Mirror symmetry for compact Calabi-Yau manifolds}
Givental \cite{G96} and Lian-Liu-Yau \cite{LLY97} independently proved
the genus zero mirror formula for the quintic Calabi-Yau 3-fold $Q$; later they
extended their results to Calabi-Yau complete intersections in
projective toric manifolds \cite{G98, LLY99, LLY3}.
Bershadsky-Cecotti-Ooguri-Vafa (BCOV) conjectured the genus-one and genus-two mirror formulae
for the quintic 3-fold \cite{BCOV}. The BCOV genus-one  mirror formula was first proved
by A. Zinger  in \cite{Zi09} using genus-one reduced Gromov-Witten theory, and later reproved in \cite{KimL, CFKim} via quasimap theory and
in \cite{CGLZ} via MSP theory. The BCOV genus-two mirror formula was recently proved by Guo-Janda-Ruan \cite{GJR} and Chang-Guo-Li \cite{CGL}.
Combining the techniques of BCOV, results of
Yamaguchi-Yau \cite{YY04}, and boundary conditions,
Huang-Klemm-Quackenbush \cite{HKQ} proposed a mirror
conjecture on $F_g^Q$ up to $g=51$. The mirror conjecture
on $F_g^Q$ is open for $g> 2$. One difficulty is that mathematical
theory of higher genus B-model on a general compact
Calabi-Yau manifold has not been developed until recently. In 2012, Costello and Li initiated
a mathematical analysis of the BCOV theory \cite{CoLi1} based on the effective
renormalization method developed by Costello \cite{Co}. One essential
idea in their construction  \cite{CoLi2} is to introduce open
topological strings on the  B-model. The higher genus B-model potentials are then uniquely determined by the genus-zero open B-model potentials. We will see later that this phenomenon also arises in the BKMP Remodeling Conjecture, where the higher genus B-model potentials are determined by the genus-zero open B-model potentials via the Eynard-Orantin recursion.

\subsubsection{Gromov-Witten invariants of  toric Calabi-Yau 3-manifolds/3-orbifolds}\label{sec:orbiGW}
The technique of virtual localization \cite{GP99} reduces all genus Gromov-Witten invariants of
toric orbifolds to Hodge integrals. When the toric orbifold $\cX$ is a
smooth Calabi-Yau 3-fold, the Topological
Vertex \cite{AKMV,LLLZ,MOOP} provides an efficient algorithm to
compute these integrals, and thus to compute Gromov-Witten invariants
of $\cX$ as well as open Gromov-Witten invariants of $\cX$
relative to an Aganagic-Vafa Lagrangian brane $\cL$ (defined in
\cite{KL, DF, Liu02, LLLZ} in several ways), in all genera and
degrees. The algorithm of the topological vertex is equivalent to the Gromov-Witten/Donaldson-Thomas
correspondence for smooth toric Calabi-Yau threefolds \cite{MNOP1}; it provides a combinatorial formula for
a generating function of all genus Gromov-Witten invariants of a fixed degree.
Recently, this effective algorithm has been generalized to toric Calabi-Yau 3-orbifolds
with transverse $A_n$ singularities \cite{Zo15, RZ13, RZ14, R14}, but not for more general toric Calabi-Yau 3-orbifolds.

\subsubsection{Mirror symmetry for toric Calabi-Yau 3-manifolds/orbifolds}
The topological B-model for the mirror
$\check{\cX}$ of a semi-projective toric Calabi-Yau 3-manifold/3-orbifold
$\cX$ can be reduced to a theory on the mirror curve of $\cX$ \cite{HoVa}.
Under mirror symmetry, $F_0^{\cX}$ corresponds to integrals of 1-forms on the mirror curve along loops,
whereas the generating function $F_{0,1}^{\cX,\cL}$ of genus-zero open Gromov-Witten invariants
(counting holomorphic disks in $\cX$ bounded by $\cL$) corresponds to
integrals of 1-forms on the mirror curve along paths \cite{AV,
  AKV}. Based on the work of Eynard-Orantin \cite{EO07} and Mari\~{n}o
\cite{Ma}, Bouchard-Klemm-Mari\~{n}o-Pasquetti  (BKMP) \cite{BKMP09, BKMP10} proposed a new formalism of the
topological B-model on the Hori-Vafa mirror $\check{\cX}$ of $\cX$ in terms of the Eynard-Orantin invariants $\omega_{g,n}$ of the mirror curve; $\omega_{g,n}$ are mathematically defined and can be effectively computable for all $g,n$. Eynard-Mari\~{n}o-Orantin \cite{EMO} showed that the non-holomorphic $\check{F}_g(\btau,\bar{\btau})$ defined by the Eynard-Orantin topological recursion satisfy the BCOV holomorphic anomaly equation, and also derived holomorphic anomaly equations in the open string sector. BKMP conjectured a precise correspondence, known as the BKMP Remodeling Conjecture, between $\omega_{g,n}$ (where $n>0$) and the generating function $F_{g,n}^{\cX,\cL}$ of
open Gromov-Witten invariants counting holomorphic maps from bordered Riemann surfaces with
$g$ handles and $n$ holes to $\cX$ with boundaries in $\cL$. In the closed string sector, BKMP conjectured that
the A-model genus $g$ Gromov-Witten potential
$F_g^\cX$ is equal to the B-model genus $g$ free energy 
$\check{F}_g := \displaystyle{ \lim_{\bar{\btau}\to \sqrt{-1}\infty} \check{F}_g(\btau,\bar{\btau}) }$ under the closed mirror map.
These conjectures, known as the BKMP Remodeling Conjecture in both open string and closed string sectors,
are all genus open-closed mirror symmetry, and provide an effective algorithm of
computing $F_{g,n}^{\cX,\cL}$ and $F_g^{\cX}$ recursively, for general semi-projective toric Calabi-Yau 3-orbifolds.

The open string sector of the Remodeling Conjecture for $\bC^3$ was proved independently
by  L. Chen \cite{Ch09} and J. Zhou \cite{Zh09}; the closed string sector of the Remodeling Conjecture for $\bC^3$ was proved
independently by Bouchard-Catuneanu-Marchal-Su{\l}kowski
\cite{BCMS} and S. Zhu \cite{Zhu}.  Eynard and Orantin provided a proof of the Remodeling Conjecture for general
smooth semi-projective toric Calabi-Yau 3-folds in \cite{EO15}. The authors proved the Remodeling Conjecture for
all semi-projective affine toric Calabi-Yau 3-orbifolds $[\bC^3/G]$ \cite{FLZ}.

\subsection{Statement of the main result and outline of the proof}
In this paper, we prove the BKMP Remodeling Conjecture for a general semi-projective toric Calabi-Yau 3-orbifold $\cX$,
in both the open string sector and the closed string sector. We consider a framed Aganagic-Vafa brane $(\cL,f)$ on an
outer leg of $\cX$, where $\cL \cong [(S^1\times \bC)/\bmu_\fm]$ for a finite abelian group
$\bmu_\fm\cong \bZ_\fm$ and $f\in \bZ$. This outer leg may be gerby with
a non-trivial isotropy group $\bmu_\fm$.  We define generating functions of open-closed Gromov-Witten invariants:
$$
F_{g,n}^{\cX,(\cL,f)}(\btau;\tilde{X}_1,\ldots, \tilde{X}_n)
$$
as $H^*_{\CR}(B\bmu_\fm;\bC)^{\otimes n}$-valued formal
power series in A-model closed string coordinates $\btau=(\tau_1,\ldots, \tau_{\fp})$
and A-model open string coordinates $\tilde{X}_1,\ldots, \tilde{X}_n$;
here $H^*_{\CR}(B\bmu_\fm;\bC)\cong \bC^\fm$ is the Chen-Ruan orbifold cohomology
of the classifying space $B\bmu_\fm$ of $\bmu_\fm$. (The precise definition of
$F_{g,n}^{\cX,(\cL,f)}$ is given in Section \ref{sec:open-closed-GW}.)

On the other hand, we use the Eynard-Orantin invariants $\omega_{g,n}$ of the framed mirror curve
to define B-model potentials
$$
\check{F}_{g,n}(q;\hat{X}_1,\ldots,\hat{X}_n)
$$
as $H^*_{\CR}(\cB\bmu_\fm;\bC)^{\otimes n}$-valued
functions in B-model closed string coordinates (complex parameters)
$q=(q_1,\ldots, q_{\fp})$ and B-model open string
coordinates $\hat{X}_1,\ldots,\hat{X}_n$. (The precise definitions of
$\omega_{g,n}$ and $\check{F}_{g,n}$ are given in Section \ref{sec:eynard-orantin} and Section \ref{sec:B-potential}, respectively.)
They are analytic in an open neighborhood of the origin in $\bC^{\fp}\times\bC^n$. The closed mirror map
relates the flat coordinates $(\tau_1,\ldots,\tau_{\fp})$
to the complex parameters $(q_1,\ldots, q_{\fp})$ of the mirror curve and the open mirror map relates
the A-model open string coordinates $\tilde{X}_1,\ldots, \tilde{X}_n$ to the B-model open string
coordinates $\hat{X}_1,\ldots,\hat{X}_n$.


Our first main result is the BKMP Remodeling Conjecture for the
open string sector:

\medskip
\paragraph{\bf Theorem \ref{main} (BKMP Remodeling Conjecture: open string sector)}
{\it For any $g\in \bZ_{\geq 0}$ and $n\in \bZ_{>0}$, under the open-closed mirror map $\btau=\btau(q)$ and $\widetilde X=\widetilde
X(q,\hat X)$,
\begin{equation}\label{eqn:main-eqn}
\check{F}_{g,n}(q;\hX_1,\ldots, \hX_n) = (-1)^{g-1+n}  F_{g,n}^{\cX, (\cL,f)}(\btau;\tX_1,\ldots, \tX_n).
\end{equation}
}

This is more general than the original conjecture in
\cite{BKMP10}, which covers the $m=1$ case, i.e. when $\cL$ is
on an effective leg.

In the closed string sector, we prove the BKMP Remodeling
Conjecture for free energies. We have the
following theorems under the closed mirror map $\btau=\btau(q)$.

\medskip
\paragraph{\bf Theorem \ref{thm:stable-Fg} (free energies at genus $g>1$)}
{\it When $g>1$, we have,}
\begin{equation}\label{eqn:stable-Fg}
F^{\cX}_g(\btau)= (-1)^{g-1} \check{F}_g(q).
\end{equation}
\medskip
\paragraph{\bf Theorem \ref{F1} (genus one free energy) }
{\it When $g=1$, we have,}
\begin{equation}\label{eqn:F1}
dF^\cX_1(\btau)=d\check{F}_1(q).
\end{equation}
\medskip
\paragraph{\bf Theorem \ref{F0} (genus zero free energy)}
{\it For any $i,j,k\in\{1,\cdots,\fp\}$, we have,}
\begin{equation}\label{eqn:F0}
\frac{\partial^3 F^\cX_0}{\partial\tau_i\partial\tau_j\partial\tau_k}(\btau)
=-\frac{\partial^3 \check{F}_0}{\partial\tau_i\partial\tau_j\partial\tau_k}(q).
\end{equation}

The key idea in the proof of the BKMP Remodeling Conjecture is that we
can realize the A-model and B-model higher genus potentials as
quantizations on two isomorphic semi-simple Frobenius structures. On
the A-model side, we use the Givental quantization formula to express
the higher genus GW potential of $\cX$ in terms of the Frobenius
structure of the quantum cohomology of $\cX$ (genus-zero data). On the
B-model side, the Eynard-Orantin recursion determines the higher genus
B-model potential by the genus-zero initial data. The bridge
connecting these two formalisms on A-model and B-model is the graph
sum formula. The quantization formula on the A-model is a formula involving the exponential of a quadratic differential operator. By the classical Wick formula, it can be rewritten as a graph sum formula:
$$
F_{g,n}^{\cX,(\cL,f)}  =\sum_{\vGa\in \bGa_{g,n}(\cX)}\frac{w^O_A(\vGa)}{|\Aut(\vGa)|}
$$
where $\bGa_{g,n}(\cX)$ is certain set of decorated graphs, $\Aut(\vGa)$ is the automorphism group of the decorated graph $\vGa$,
and $w^O_A(\vGa)$ is the A-model weight of the decorated graph $\vGa$.

On the B-model side, by the result in \cite{DOSS}, the Eynard-Orantin recursion is equivalent to a graph sum formula. So the B-model potential $\check{F}_{g,n}$ can also be expressed as a graph sum:
$$
\check{F}_{g,n} = \sum_{\vGa\in \bGa_{g,n}(\cX)}\frac{w^O_B(\vGa)}{|\Aut(\vGa)|}
$$
where $w^O_B(\vGa)$ is the B-model weight of the decorated graph
$\vGa$.  Then we reduce the BKMP Remodeling Conjecture to
$$
w^O_A(\vGa)=w^O_B(\vGa).
$$

The weights $w^O_A(\vGa)$ and $w^O_B(\vGa)$ are determined by the
A-model and B-model $R-$matrices (information extracted from the
Frobenius structures) together with the A-model and B-model disk
potentials. The disk mirror theorem in \cite{FLT} is precisely what we
need to match the disk potentials. The genus-zero mirror theorem \cite{CCIT, CCK} identifies the
equivariant quantum cohomology ring of $\cX$ with the
equivariant Jacobian ring of its Landau-Ginzburg B-model. In
particular, the quantum differential equations on the A-model and on
the Landau-Ginzburg B-model are identified. By the dimensional
reduction, we can show that the B-model $R-$matrix is indeed the
$R-$matrix in the fundamental solution of the B-model quantum
differential equation. The fundamental solution of the quantum
differential equation is unique up to a constant matrix. We
identify the A-model and B-model $R$-matrices by matching them in
degree zero. Putting these pieces together, we have
$$
w^O_A(\vGa)=w^O_B(\vGa).
$$

\subsection{Some remarks}
\label{sec:remarks}

We have the following remarks about our proof:

\begin{itemize}

\item Our proof does not rely on the equivalence of
the orbifold Gromov-Witten vertex (a generating function of Hurwitz Hodge
integrals \cite{Ro14})  and the orbifold Donaldson-Thomas vertex (a generating
function of colored 3d partitions). As mentioned in Section \ref{sec:orbiGW} above,
the equivalence is known for toric Calabi-Yau 3-orbifolds with transverse $A_n$-singularities
\cite{Zo15, RZ13, RZ14, R14}. It is not clear how to formulate the equivalence
for toric Calabi-Yau 3-orbifolds which do not satisfy the Hard Lefschetz condition.
Moreover, the structure of the algorithm for the Topological Vertex is very different
from the structure of the Eynard-Orantin topological recursion on the B-model. Roughly speaking,
the vertex algorithm comes from degeneration of the target, whereas the topological recursion
comes from degeneration of the domain. It seems very difficult, if not
impossible, to derive the Remodeling Conjecture from the Topological Vertex.

\item Instead, we study the GW theory of $\cX$ by Givental's
 quantization formula, which expresses the higher genus GW potential
 of $\cX$ in terms of the abstract Frobenius structure of the quantum
cohomology. In \cite{Zo}, the Givental quantization formula for
general GKM orbifolds is proved. So we can apply the result in
\cite{Zo} to the case of toric Calabi-Yau 3-orbifolds. It turns out
that the Givental quantization formula on the A-model matches the
Eynard-Orantin recursion on the B-model perfectly. In particular,
we provide new proofs of the BKMP Remodeling Conjecture in
the smooth case and the affine case.

\item The Remodeling Conjecture provides
a very effective recursive algorithm to compute closed and open-closed
Gromov-Witten invariants of all semi-projective toric Calabi-Yau
3-orbifolds at all genera (see \cite{BKMP09,BKMP10} for the numerical
computation). Before the introduction of this algorithm, these invariants were very difficult to
compute in the non-Hard-Lefschetz orbifold cases where the Topological Vertex was not applicable.

\item One key ingredient in Eynard-Orantin's recursive algorithm is the open
topological string. Only by including the open topological strings can
we determine the higher genus topological strings from the genus zero
data by the Eynard-Orantin recursion. This philosophy is in line with
the method of Costello-Li \cite{CoLi1,CoLi2}, and may be enlightening
for further study of mirror symmetry.

\item The disk mirror theorem proved in \cite{FLT} covers both outer and inner branes, so the statement and the proof of Theorem \ref{main} can be extended to the case where $\cL$ is an inner brane.  When $\cL$ is an outer brane, the left (resp. right) hand side of  Equation \eqref{eqn:main-eqn} involves only positive powers of $\hat{X}_i$ (resp. $\tX_i$); when $\cL$ is an inner brane, the left (resp. right) hand side of Equation \eqref{eqn:main-eqn} involves both positive and negative powers of
$\hat{X}_i$ (resp. $\tX_i$).

\end{itemize}

\subsection{Future work}
The BKMP Remodeling Conjecture has many interesting applications. We discuss two of them:
all genus open-closed Crepant Transformation Conjecture for toric Calabi-Yau 3-orbifolds and modularity for all genus open-closed GW potentials of toric Calabi-Yau 3-orbifolds.

\subsubsection{The all genus open-closed Crepant Transformation Conjecture for toric Calabi-Yau 3-orbifolds}
The Crepant Transformation Conjecture, proposed by Ruan \cite{Ruan1, Ruan2}
and later generalized by others in various situations, relates GW theories of
K-equivalent smooth varieties, orbifolds, or Deligne-Mumford stacks.
To establish this equivalence, one may need to do change of variables, analytic continuation, and symplectic
transformation for the GW potential. In
general, the higher genus Crepant Transformation Conjecture is difficult
to formulate and prove. Coates-Iritani introduced the Fock sheaf formalism and proved
all genus Crepant Transformation Conjecture for compact toric orbifolds \cite{CI}.
The Remodeling Conjecture leads to simple formulation and proof of  all genus Crepant Transformation Conjecture for
semi-projective toric CY 3-orbifolds (which are always non-compact). The key point here is that our higher genus
B-model, defined in terms of Eynard-Orantin invariants of the mirror curve, is \emph{global} and \emph{analytic}. One can use the secondary fan to construct a global
B-model closed string moduli space, over which we construct a {\em global} family of mirror curves.

\subsubsection{Modularity for all genus open-closed GW potentials of toric Calabi-Yau 3-orbifolds}
The modularity of the GW potentials of Calabi-Yau 3-folds has been studied in
\cite{ABK, JieZhou}. In these works, the modularity of the GW potentials
provides a powerful tool to construct higher genus B-models. It also
produces closed formulae for some GW potentials in terms of
quasi-modular forms \cite{JieZhou}. The mathematical
proof of the modularity for GW potentials remains a difficult problem
in general. For toric Calabi-Yau 3-orbifolds, the Remodeling
Conjecture relates the GW potential to the Eynard-Orantin invariants
of the mirror curve. Eynard and Orantin studied the modularity of the
Eynard-Orantin invariants of any spectral curves \cite{EO07}. This
modularity follows from the modularity of the fundamental differential
of the spectral curve. Therefore, the Remodeling Conjecture should imply the modularity for all genus open-closed GW potentials of toric Calabi-Yau 3-orbifolds.


\subsection{Overview of the paper}

In Section \ref{sec:A-geometry}, we fix the notation of toric
varieties and orbifolds. We also discuss the geometry of toric Calabi-Yau $3$-orbifolds
and Aganagic-Vafa branes in them.

In Section \ref{sec:A-string}, we introduce the equivariant GW invariants, as well as open-closed GW invariants relative to Aganagic-Vafa branes.
Section \ref{sec:bigQH} to \ref{sec:Agraph} are on the quantization of the Frobenius manifolds from
big equivariant quantum cohomology; the graph sum formula from
\cite{Zo} expressing all genus descendant potential for toric
orbifolds is stated in Section \ref{sec:Agraph}. In Section \ref{sec:I-J} to Section \ref{sec:open-closed-GW},
we consider restriction to the small phase space. We recall the genus zero mirror theorem from \cite{CCIT} in Section \ref{sec:I-J}
and  define A-model open potentials $F_{g,n}^{\cX,(\cL,f)}$ in Section \ref{sec:open-closed-GW}.

Section \ref{sec:HV-LG} defines three different
mirrors of a toric Calabi-Yau $3$-orbifold: the Hori-Vafa mirror, the equivariant Landau-Ginzburg mirror,
and the mirror curve. Section \ref{sec:HV}  describes dimensional reduction from  the genus-zero B-model on the 3-dimensonal  Hori-Vafa mirror  to a theory on the mirror curve in terms of period integrals. Section \ref{sec:LG} describes dimensional reduction from  the 3-dimensional equivariant Landau-Ginzburg model to a Landau-Ginzburg model on the mirror curve, in terms of Frobenius algebras and oscillatory integrals.

In Section \ref{sec:mirror-curve-geometry}, we study geometry and topology of mirror
curves. In particular, we construct a family of mirror curves near the
limit point in the B-model moduli space in Section \ref{sec:toric-degeneration}. In Section \ref{sec:B-flat}, we
introduce flat coordinates on the B-model moduli space,  and identify each B-model flat coordinate  with a specific solution to a Picard-Fuch equation which is a component of the mirror map (expressible in terms of explicit hypergeometric series).

In Section \ref{sec:b-string}, we recall the Eynard-Orantin's topological recursion \cite{EO07}, and the graph sum formula of Eynard-Orantin
invariants $\omega_{g,n}$ derived in \cite{DOSS}. Using the disk mirror theorem \cite{FLT}, we expand this graph sum formula  around suitable puncture(s) on the
mirror curve and obtain a graph sum formula of the B-model potential
$\check{F}_{g,n}$. In Section \ref{sec:mirror-symmetry}, we finish the proof of the
Remodeling Conjecture by comparing the A-model and B-model graph sums.

\subsection{List of notations}
\begin{center}
\begin{tabular}{| c | p{6.5cm} | p{5.5cm}|}
\hline
Notations    & Explanation & Remark\\
\hline
$\cX$ & a toric CY $3$-orbifold, defined by a fan $\Si$ &  fan $\Si$ is the cone over a triangulated polytope $P$\\
\hline
$\fp'$ & $h^2(\cX)$ & $\fp'+3$ = number of $1$-cones in $\Si$ \\
\hline
$\fp$ & $h^2_\CR(\cX)$, number of extended K\"ahler parameters & $\fp+3$ = number of lattice points in $P$\\
\hline
$\fg$ & $h^4_\CR(\cX)$, also the genus of the compactified mirror curve $\overline C_q$ & also number of lattice points in the interior of $P$
\\
\hline
$\fn$ & number of punctures in the mirror curve, $\#(\overline C_q \setminus C_q)$ & also number of lattice points on the boundary of $P$, $\fn =\fp +3 -\fg$ \\
\hline
$\bT$  & torus acting on the toric CY 3-fold $\cX$ & $\bT\cong (\bC^*)^3$ \\
\hline
$\bT'$ & Calabi-Yau torus $\subset \bT$ preserving the CY form & $\bT'\cong (\bC^*)^2$\\
\hline
$\cL$ & a fixed outer Aganagic-Vafa brane & location (phase) given by $\si_0\in \Si(3), \tau_0\in \Si(2)$\\
\hline
$H^*_{\CR,\bT}$, $H^*_{\CR,\bT'}$ & equivariant Chen-Ruan  cohomology & \\
\hline
$QH_{\bT}^*$, $QH_{\bT'}^*$ & equivariant quantum cohomology  & \\
\hline
$I_\Si$ & index set of canonical basis for $QH_{\bT}^*(\cX), H^*_{\CR,\bT}(\cX)$, etc. & $I_\Si=\{\bsi=(\si,\gamma): \si\in \Si(3),\gamma\in G^*_\si\}$\\
\hline
$\phi_\bsi(t)$, $\hat \phi_\bsi(t)$ & canonical and normalized canonical basis of equivariant quantum cohomlogy of $\cX$  &\\
\hline
$\phi_\bsi$, $\hat\phi_\bsi$ & canonical and normalized canonical basis of equivariant Chen-Ruan  cohomology  of $\cX$ & $\phi_\bsi=\phi_\bsi(0)$, $\hat\phi_\bsi=\hat\phi_\bsi(0)$\\
\hline
$\Delta^\bsi(\tau)$ & $\sqrt{\frac{1}{\Delta^\bsi(\tau)}}$ is the length of $\phi_\bsi(\tau)$ & $h^\bsi_1=\sqrt{\frac{-2}{\Delta^\bsi(\tau)}}$\\
\hline
$H(X,Y,q)=0$ & mirror curve equation & defines the mirror curve $C_q\subset (\bC^*)^2$\\
\hline
$W^{\bT'}$ & $\bT'$-equivariant LG superpotential  & $W^{\bT'}=H(X,Y,q)Z +\su_1 x+\su_2 y$\\
\hline
$\bold{p}_\bsi$ & critical points of $W^{\bT'}$, in $(\bC^*)^3$ & labeled by $\bsi \in I_\Si$ \\
\hline
$p_\bsi$ & critical points of $\hat x=\su_1 x+\su_2 y$ on the  mirror curve $C_q$ & labeled by $\bsi\in I_\Si$ \\

\hline
$\Jac(W^{\bT'})$ & Jacobian ring of $W^{\bT'}$ & $\cong QH_{\bT'}^*(\cX)$ under the mirror map, as a Frobenius algebra\\
\hline
$V_\bsi$ & canonical basis of $\Jac(W^{\bT'})$&  $V_\bsi(\bold{p}_{\bsi'})=\delta_{\bsi\bsi'}$ \\
\hline
$q_a,\ a=1\dots \fp$ & complex parameters mirror to extended K\"ahler parameters & depend on a choice of extended K\"ahler basis $H_1,\dots, H_\fp$\\
\hline
$\overline C_q$ & Compactified mirror curve at parameter $q$ & \\

\hline
$B(p_1,p_2)$ & the fundamental differential of the second kind a.k.a. the Bergman kernel & depends on the choice of $A$-cycles: $A_1,\dots,A_\fg\in H_1(\overline C_q;\bC)$\\
\hline
$F_{g,n}^{\cX,(\cL,f)}$ & A-model open GW potential & depends on $\cX$, $\cL$ and the framing $f$
\\
\hline
$\omega_{g,n}$ & B-model higher genus invariants from the Eynard-Orantin topological recursion& symmetric meromorphic form on $(\overline C_q)^n$.
\\
\hline
$\check F_{g,n}$ & B-model open potential & defined as the indefinite integral of $\omega_{g,n}$  \\
\hline
\end {tabular}
\end{center}

\subsection*{Acknowledgements}
We wish to thank Charles Doran, Bertrand Eynard, and Nicolas Orantin for helpful conversations. We wish to thank Motohico Mulase and Yongbin Ruan for encouragement.
We wish to thank Sheldon Katz, Jie Zhou, and Shengmao Zhu for their comments on earlier versions of this paper.
The first author is partially supported by a start-up grant at
Peking University. The second author is partially supported by NSF grants DMS-1206667 and DMS-1159416. The third author is partially supported by the start-up grant at Tsinghua University.

\section{A-model Geometry and Topology}
\label{sec:A-geometry}

We work over $\bC$. In this section, we give a brief review of semi-projective toric Calabi-Yau 3-orbifolds.
We refer to \cite{Fu, CLS} for the theory of general toric varieties. We refer to
\cite{BCS05,FMN} for the theory of general smooth toric Deligne-Mumford (DM) stacks.
In Section \ref{sec:lattices} and Section \ref{sec:kahler}, we specialize the definitions
in \cite[Section 3.1]{Iri} to toric Calabi-Yau 3-orbifolds.

\subsection{The simplicial toric variety and the fan}\label{sec:fan}
Let $N\cong \bZ^3$ be a lattice of rank $3$. Let $X_\Si$ be a 3-dimensional
simplicial toric variety defined by a (finite) simplicial fan
$\Si$ in $N_\bR:=N\otimes \bR$.  Then $X_\Si$ contains the algebraic torus $\bT= N\otimes \bC^* \cong (\bC^*)^3$
as a open dense subset, and the action of $\bT$ on itself extends to $X_\Si$. We further assume that:
\begin{enumerate}
\item[(i)] $X_\Si$ is {\em Calabi-Yau}: the canonical divisor of $X_\Si$ is trivial;
\item[(ii)] $X_\Si$ is {\em semi-projective}: the $\bT$-action on $X_\Si$ has at least one fixed
point, and the morphism from $X_\Si$ to its affinization $X_0=\Spec H^0(X_\Si,\cO_{X_\Si})$
is projective.
\end{enumerate}
We introduce some notation:
\begin{itemize}
\item Let $\Si(d)$ be the set of $d$-dimensional cones in $\Si$.
\item Let $\Si(1)=\{\rho_1,\ldots,\rho_{3+\fp'}\}$ be the set of 1-dimensional cones
in $\Si$, where $\fp'\in \bZ_{\geq 0}$, and let $b_i\in N$ be characterized by $\rho_i\cap N = \bZ_{\geq 0} b_i$.
\end{itemize}

The lattice $N$ can be canonically identified with $\Hom(\bC^*,\bT)$, the cocharacter lattice of $\bT$; the dual
lattice $M=\Hom(N,\bZ)$ can be canonically identified with $\Hom(\bT,\bC^*)$, the character lattice of $\bT$.
Given $m\in M$, let $\chi^m \in \Hom(\bT,\bC^*)$ denote the
corresponding character of $\bT$. Let $M_\bR := M\otimes \bR$ be the
dual real vector space of $N_\bR$. The Calabi-Yau condition (i) implies
that, there exists a vector $e_3^*\in M$ such that
$\langle e_3^*, b_i\rangle =1$ for $i=1,\ldots, 3+\fp'$. We may choose
$e_1^*, e_2^*$ such that $\{ e_1^*, e_2^*, e_3^*\}$ is a $\bZ$-basis of $M$. Let
$\{e_1, e_2,e_3\}$ be the dual $\bZ$-basis of $N$, which defines an isomorphism
$N\cong \bZ^3$ given by  $n_1 e_1 + n_2 e_2 + n_3 e_3 \longmapsto (n_1,n_2,n_3)$; under this
isomorphism, $b_i = (m_i,n_i,1)$ for some $(m_i,n_i)\in \bZ^2$. We define
the Calabi-Yau subtorus of $\bT$ to be $\bT':= \Ker(\chi^{e_3^*}:\bT\to \bC^*)\cong (\bC^*)^2$.
Then  $N':= \Ker(e_3^*:N\to \bZ)\cong \bZ^2$ can be canonically identified with $\Hom(\bC^*, \bT')$,
the cocharacter lattice of the Calabi-Yau torus $\bT'$.
Let $P\subset N'_\bR:= N'\otimes_\bZ \bR \cong \bR^2$ be the convex hull of $\{(m_i,n_i):i=1,\ldots,3+\fp'\}$, and
let $\underline{\si}\subset N_\bR$ be the cone over $P\times \{1\} \subset N_\bR'\times \bR=N_\bR$.
Then
$$
\underline{\si}=\bigcup_{\si\in \Si(3)} \si
$$
is a 3-dimensional strongly convex polyhedral cone. We have
$$
H^0(X_\Si,\cO_{X_\Si})=\bC[M\cap \underline{\si}^\vee]
$$
where $\underline{\si}^\vee \subset M_\bR$ is the dual cone of $\underline{\si}\subset N_\bR$. Therefore,
the affine toric variety defined by the cone $\underline{\si}$ is the affinization
$X_0$ of $X_\Si$.

There is a group homomorphism
$$
\phi':\tN':=\bigoplus_{i=1}^{3+\fp'} \bZ \tb_i \to N,\quad \tb_i\longmapsto b_i
$$
with finite cokernel. Applying $-\otimes_{\bZ}\bC^*$, we obtain an exact sequence of abelian groups
$$
1\to G_\Si \to \tbT'\to \bT \to 1,
$$
where $\tbT'=\tN'\otimes\bC^* \cong (\bC^*)^{3+\fp'}$, and $G_\Si$ can be disconnected.  The action of
$\tbT'$ on itself extends to $\bC^{3+\fp'}=\Spec\bC[Z_1,\ldots, Z_{3+\fp'}]$. Let
$Z_\si:=\prod_{\rho_i\not\subset \si }Z_i$, and let $Z(\Si)\subset \bC^{3+\fp'}$ be the closed
subvariety defined by the ideal generated by $\{Z_\si: \si\in \Si\}$.
Then $\tbT'$ acts on $U_\Si:=\bC^{3+\fp'}- Z(\Si)$, and
the simplicial toric variety $X_\Si$  is the geometric quotient
$$
X_\Si = U_\Si /G_\Si.
$$

\subsection{The toric orbifold and the stacky fan} \label{sec:stacky}
In general, a smooth toric DM stack is defined by a stacky fan
$\bSi=(N,\Si,\beta)$ \cite{BCS05}, and a toric orbifold is a smooth toric DM stack
with trivial generic stabilizer \cite[Section 5]{FMN}.
The canonical stacky fan associated to the simplicial fan $\Si$
in Section \ref{sec:fan} is
$$
\bSi^\can =(N,\Si,\beta^\can=(b_1,\ldots, b_{3+\fp'})).
$$
The toric orbifold $\cX$ defined by $\bSi^\can$ is the stacky quotient
$$
\cX =[U_\Si/G_\Si].
$$
In this paper, we consider semi-projective toric Calabi-Yau 3-orbifolds $\cX$
constructed as above. A more general toric orbifold $\cX_{\bSi}$ (with the same
coarse moduli $X_\Si$) is obtained by
a more general choice of $\beta=(a_1 b_1,\ldots, a_{3+\fp'}b_{3+\fp'})$, where
$a_1,\ldots, a_{3+\fp'}$ are positive integers.

We will also need an alternative description of $\cX$ in terms of an extended
stacky fan introduced by Y. Jiang \cite{J}. For any toric Calabi-Yau 3-orbifold, there
is a canonical extended stacky fan
$$
\bSi^\ext=(N,\Si, \beta^\ext=(b_1,\ldots,b_{3+\fp}))
$$
where $b_i=(m_i,n_i,1)$ and
$$
\{ (m_i,n_i): i=1\ldots, 3+\fp\} = P\cap \bZ^2.
$$
There is a surjective group homomorphism
$$
\phi:\tN:=\bigoplus_{i=1}^{3+\fp}\bZ \tb_i \to N,\quad \tb_i\longmapsto b_i
$$
Let $\bL = \Ker(\phi)\cong \bZ^\fp$. Then we have a short exact sequence of free $\bZ$-modules:
\begin{equation}\label{eqn:tNN}
0\to \bL \stackrel{\psi}{\to} \tN \stackrel{\phi}{\to} N\to 0.
\end{equation}
Applying $-\otimes_{\bZ}\bC^*$, we obtain an exact sequence of abelian Lie groups:
\begin{equation}\label{eqn:tTT}
1\to G \to \tbT\to \bT \to 1,
\end{equation}
where $\tbT=\tN\otimes\bC^* \cong \bC^{3+\fp}$, and $G=\bL\otimes \bC^* \cong (\bC^*)^{\fp}$ .  The action of
$\tbT$ on itself extends to $\bC^{3+\fp}=\Spec\bC[Z_1,\ldots, Z_{3+\fp}]$
and preserves the Zariski open dense subset  $U_{\bSi^\ext}=U_\Si\times (\bC^*)^{\fp-\fp'}\subset \bC^{3+\fp}$.
Then
$$
X_\Si = U_{\bSi^\ext}/G,\quad
\cX =[U_{\bSi^\ext}/G].
$$
The action of $\tbT$ on $\bC^{3+\fp}$ restricts to an action of the subgroup $G$ on $\bC^{3+\fp}$. Let $\chi_i\in \Hom(G,\bC^*)$ be the
character of the $G$-action on the $i$-th coordinate $Z_i$ of $\bC^{3+\fp}$.

\begin{example}
  \label{exp:polytope}
A polytope $P$ is illustrated in Figure \ref{fig:defining-polytope}. It is triangulated, and the vertices are $(0,0)$, $(0,2)$, $(1,0)$, $(2,-1)$. One regards $P\subset N'_\bR\times\{1\}\subset N_\bR\cong \bR^3$, and can define a fan whose $3$-cones (resp. $2$ and $1$-cones) of the fan $\Si$ are cones at $(0,0,0)\in N_\bR$ over the  triangles (resp. solid segments, vertices) in $P$. In this example, $\fp=2$, $\fp'=1$, and $b_1=(1,0,1),b_2=(0,2,1),b_3=(0,0,1),b_4=(2,-1,1)$. In the extended fan $\bSi^\ext$, we also have $b_5=(0,1,1)$.
\begin{figure}[h]
\begin{center}
\psfrag{si0}{\small $\si_0'$}
\psfrag{si1}{\small $\si_1'$}
\psfrag{si2}{\small $\si_2'$}
\psfrag{tau0}{\small $\tau_0'$}
\includegraphics[scale=0.6]{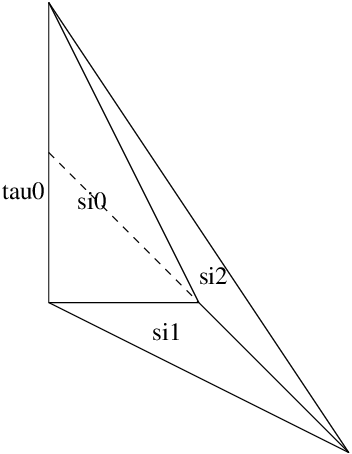}
\caption{The defining polytope of a toric Calabi-Yau $3$-orbifold $\cX$. It defines a fan $\Si$ where $3$-cones $\si_i$ are cones over $\si_i'$.}
\label{fig:defining-polytope}
\end{center}
\end{figure}
\end{example}

\subsection{Character lattices and integral second cohomology groups} \label{sec:lattices}
Let $\tM:=\Hom(\tN,\bZ)$ be the dual lattice of $\tN$, which can be canonically identified
with the character lattice $\Hom(\tbT,\bC^*)$ of $\tbT$. The $\tbT$-equivariant
inclusion $U_\Si\hookrightarrow \bC^{3+\fp}$ induces a surjective group homomorphism
$$
\kappa_{\bT}:\tM\cong H^2_{\tbT}(\bC^{3+\fp};\bZ)\cong H^2_{\bT}([\bC^{3+\fp}/G];\bZ)
\longrightarrow  H^2_{\tbT}(U_\Si;\bZ)\cong H^2_{\bT}(\cX;\bZ).
$$
Let $D_i^{\bT}\in H^2_{\tbT}(\bC^{3+\fp};\bZ)$ be the $\tbT$-equivariant Poincar\'{e} dual of the divisor $\{Z_i=0\}\subset\bC^{3+\fp}$.
Then $\{ D_i^{\bT}:i=1,\ldots,{3+\fp}\}$ is the $\bZ$-basis of $\tM \cong H_{\tbT}^2(\bC^{3+\fp};\bZ)$ which is
dual to the $\bZ$-basis $\{\tb_i:i=1,\ldots,{3+\fp}\}$ of $\tN$. We have
$$
\Ker(\kappa_{\bT})= \bigoplus_{i=4+\fp'}^{3+\fp}\bZ D_i^\bT.
$$

Let $\bL^\vee=\Hom(\bL,\bZ)$ be the dual lattice of $\bL$, which can be canonically
identified with the character lattice $\Hom(G,\bC^*)$ of $G$. The $G$-equivariant inclusion
$U_\Si\hookrightarrow \bC^{3+\fp}$ induces a surjective group homomorphism
$$
\kappa:\bL^\vee\cong H^2_G(\bC^{3+\fp};\bZ)\cong H^2([\bC^{3+\fp}/G];\bZ)  \longrightarrow  H^2_G(U_\Si;\bZ)\cong H^2(\cX;\bZ).
$$
We have
$$
\Ker(\kappa)=\bigoplus_{i=4+\fp'}^{3+\fp} \bZ D_i
$$
where $D_i\in H^2_G(\bC^{3+\fp};\bZ)$ is the $G$-equivariant Poincar\'{e} dual of the divisor
$\{Z_i=0\}\subset \bC^{3+\fp}$.

Applying $\Hom(-;\bZ)$ to \eqref{eqn:tNN}, we obtain the following
short exact sequence of free $\bZ$-modules:
\begin{equation}\label{eqn:MtM}
0 \to M  \stackrel{\phi^\vee}{\to} \tM  \stackrel{\psi^\vee}{\to} \bL^\vee \to 0.
\end{equation}

To summarize, we have the following commutative diagram:
\begin{equation}\label{eqn:diagram}
\begin{CD}
&& 0 && 0 && 0 \\
& & @VVV  @VVV  @VVV \\
0 @>>> 0 @>>> \bigoplus_{i=4+\fp'}^{3+\fp} \bZ D_i^{\bT} @>{\cong}>> \bigoplus_{i= 4+\fp'}^{3+\fp} \bZ D_i @>>> 0\\
& & @VVV  @VVV  @VVV \\
0 @>>> M  @>{\phi^\vee}>> \tM  @>{\psi^\vee}>> \bL^\vee @>>> 0\\
& & @VV{\cong}V  @VV{\kappa_\bT}V  @VV{\kappa}V & \\
0 @>>> M  @>{\bar{\phi}^\vee}>> H^2_{\bT}(\cX;\bZ)  @>{\bar{\psi}^\vee}>> H^2(\cX;\bZ) @>>> 0\\\
&&  @VVV @VVV @VVV & \\
&& 0 && 0 && 0 &  \\
\end{CD}
\end{equation}
In the above diagram, the rows and columns are short exact sequences of
abelian groups.  The map $\psi^\vee$ sends $D_i^\bT$ to $D_i$. For $i=1,\ldots,3+\fp$,
we define
$$
\bar{D}_i^{\bT}:= \kappa_{\bT}(D_i^{\bT})\in H^2_{\bT}(\cX;\bZ),\quad
\bar{D}_i :=\kappa(D_i)\in H^2(\cX;\bZ).
$$
Then $\bar{\psi}^\vee(\bar{D}_i^{\bT})= \bar{D}_i$, and
$\bar{D}_i^{\bT}=\bar{D}_i =0$ for $4+\fp'\leq i\leq 3+\fp$.

Finally, we have the following isomorphisms
$$
c_1:\Pic(\cX)\stackrel{\cong}{\to} H^2(\cX;\bZ), \quad
c_1^{\bT}:\Pic_{\bT}(\cX)\stackrel{\cong}{\to} H^2_{\bT}(\cX;\bZ),
$$
where $c_1$ and $c_1^\bT$ are the first Chern class and
the $\bT$-equivariant first Chern class, respectively.

\subsection{Torus invariant closed substacks} \label{sec:substack}
 Given $\si\in \Si$, define
$$
I_{\si}':=\{ i\in\{1,\ldots,3+\fp'\}:\rho_i\in \si\},\quad
\quad I_\si = \{1,\ldots, 3+\fp\}\setminus I_{\si}'.
$$
Then $I_{\si}'\subset \{1,\ldots,3+\fp'\}$ and
$\{4+\fp',\ldots, 3+\fp\} \subset I_\si\subset \{1,\ldots,3+\fp\}$.
If $\si\in \Si(d)$ then $|I_{\si}'|=d$ and $|I_\si|=3+\fp-d$.

Let $\tV(\si)\subset U_{\bSi^{\ext}}$ be the closed subvariety defined by the ideal of $\bC[Z_1,\ldots, Z_{3+\fp}]$ generated by
$\{ Z_i=0\mid i \in I'_\si\}$. Then $\cV(\si) := [\tV(\si)/G]$
is a codimension $d$ $\bT$-invariant closed substack of $\cX= [U_{\bSi^{\ext}}/G]$.

The generic stabilizer of $\cV(\si)$ is
$G_\si= \{ g\in G\mid g\cdot z = z \textup{ for all } z\in \tV(\si)\}$,
which is a finite subgroup of $G\cong (\bC^*)^\fp$. If $\tau\subset \si$ then $\cV(\tau)\supset \cV(\si)$ so $G_\tau$ is a subgroup  of $G_\si$. When $\si\in \Si(3)$, we denote $\fp_\si=\cV(\si)$, which is a $\bT$-fixed point.

\subsection{Extended nef, K\"{a}hler, and Mori cones} \label{sec:kahler}

We first introduce some notation. Given a lattice $\Lambda\cong \bZ^m $ and
$\bF=\bQ,\bR, \bC$, let $\Lambda_\bF$ denote the $\bF$ vector space
$\Lambda\otimes_{\bZ}\bF\cong \bF^m$.

Given a maximal cone $\si\in \Si(3)$, we define
$$
\bK_\si^\vee := \bigoplus_{i\in I_\si }\bZ D_i
$$
which is a sublattice of $\bL^\vee$ of finite index, and define
the {\em extended  $\si$-nef cone} to be
$$
\tNef_\si := \sum_{i\in I_\si}\bR_{\geq 0} D_i,
$$
which is a top dimensional cone in $\bL^\vee_\bR$.
The {\em extended nef cone} of
the extended stacky fan $\bSi^{\ext}$ is
$$
\Nef(\bSi^{\ext}):=\bigcap_{\si\in \Si(3)} \tNef_\si.
$$
The {\em extended $\si$-K\"{a}hler cone} $\tC_\si$ is defined to be the interior of
$\tNef_\si$; the {\em extended K\"{a}hler cone} $C(\bSi^{\ext})$ of $\bSi^{\ext}$
is defined to be the interior of the extended nef cone $\Nef(\bSi^{\ext})$.
We have an exact sequence of $\bR$-vector spaces:
$$
0\to \bigoplus_{i=\fp' +4}^{\fp+3} \bR D_i\longrightarrow \bL^\vee_\bR
\stackrel{\kappa}{\longrightarrow} H^2(\cX;\bR)
=H^2(X_\Si;\bR) \to 0
$$
The {\em K\"{a}hler cone} of $\cX$ is
$C(\Si)= \kappa(C(\bSi^{\ext}))\subset H^2(\cX;\bR)$.

For $i=\fp'+4,\dots,\fp+3$, let $\si$ be the smallest cone containing $b_i$. Then
$$
b_i=\sum_{j\in I_\si'}c_j(b_i)b_j,
$$
where $c_j(b_i)\in (0,1)$ and $\sum_{j\in I_\si'}c_j(b_i)=1$. There exists a unique $D_i^\vee \in \bL_\bQ$ such that
$$
\langle D_j, D_i^\vee\rangle =\begin{cases}
1, & j=i,\\
-c_j(b_i), &  j\in I'_\si,\\
0, & j\in I_\si\setminus \{i\}.
\end{cases}
$$

By \cite[Equation (39) and Lemma 3.2]{Iri}, the space $\bL^\vee_\bF$ decomposes as below
\begin{equation}
\label{eqn:split}
\bL^\vee_\bF=\Ker((D^\vee_{\fp'+4},\dots,D^\vee_{\fp+3}):\bL_\bF \to \bF^{\fp-\fp'})\oplus \bigoplus_{j=\fp'+4}^{\fp+3} \bF D_j;
\end{equation}
The first factor is identified with $H^2(\cX;\bF)$ under $\kappa$. The extended K\"ahler cone splits as
\[
C(\bSi^\ext)=C(\Si)\times  (\sum_{j=\fp'+4}^{\fp+3} \bR_{>0} D_j ),
\]
where $C(\Si)\subset H^2(\cX;\bR)\subset \bL^\vee_\bR$. For any $H\in \bL^\vee_\bF$ we denote $\bar H=\kappa(H)\in H^2(\cX;\bF)\subset \bL^\vee_\bF$, and we have
\[
\bar D_i=D_i+\sum_{j=\fp'+4}^{\fp+3} c_i(b_j) D_j.
\]

Let $\bK_\si $ be the dual lattice of $\bK_\si^\vee$; it can be viewed as an additive subgroup of $\bL_\bQ$:
$$
\bK_\si =\{ \beta\in \bL_\bQ \mid \langle D, \beta\rangle \in \bZ \  \forall D\in \bK_\si^\vee \},
$$
where $\langle-, -\rangle$ is the natural pairing between $\bL^\vee_\bQ$ and $\bL_\bQ$.
Define
$$
\bK:= \bigcup_{\si\in \Si(3)} \bK_\si.
$$
Then $\bK$ is a subset (which is not necessarily a subgroup) of $\bL_\bQ$, and $\bL\subset \bK$.

We define the {\em extended $\si$-Mori cone} $\tNE_\si\subset \bL_\bR$ to be the dual cone of
$\tNef_\si\subset \bL_\bR^\vee$:
$$
\tNE_\si=\{ \beta \in \bL_\bR\mid \langle D,\beta\rangle \geq 0 \ \forall D\in \tNef_\si\}.
$$
The {\em extended Mori cone} of the extended stacky fan $\bSi^{\ext}$ is
$$
\NE(\bSi^{\ext}):= \bigcup_{\si\in \Si(3)} \tNE_\si.
$$
We define
$$
\bK_{\eff,\si}:= \bK_\si\cap \tNE_\si,\quad \bK_{\eff}:=
\bK\cap \NE(\bSi^{\ext})= \bigcup_{\si\in \Si(3)} \bK_{\eff,\si}.
$$

\subsection{Anticones and stability}
\label{sec:anticones}

There is an alternative way to define the toric variety $\cX$ (see
\cite[Section 3.1]{Iri}). Given
$D_i\in \bL^\vee\cong \bZ^\fp,$ for $i=1,\dots, \fp+3$, one chooses a
stability vector $\eta\in \bL^\vee_\bR$. Define the anticone
\[
\cA_\eta=\{I\subset \{1,\dots, \fp+3\}: \eta\in \sum_{i\in I} \bR_{\geq
    0} D_i\}.
\]
Then the associated toric orbifold is $[(\bC^{\fp+3}\setminus
\bigcup_{I\notin \cA_\eta} \bC^I)/G]$, where $G=\bL\otimes_\bZ\bC^*$. The
definition is equivalent to the one in Section \ref{sec:stacky}. For
the stacky fan $\bSi^\ext$, one can always choose such $\eta$ -- for
any $\eta$ in $C(\bSi^\ext)$ this construction will produce $\cX$. Then $\mathcal A_\eta$ is the collection of $I_\si$ for all $\si\in \Si$.

\subsection{The inertia stack and the Chen-Ruan orbifold cohomology group} \label{sec:CR}
Given $\si\in \Si$, define $I_\si$ and $I'_\si$ as in Section \ref{sec:substack}, and define
$$
\Boxs:=\{ v\in N: v=\sum_{i\in I'_\si} c_i  b_i, \quad 0\leq c_i <1\}.
$$
If $\tau \subset \sigma$ then
$I'_\tau \subset I'_\si$, so $\mathrm{Box}(\tau)\subset \Boxs$.

Let $\si\in \Si(3)$ be a maximal cone in $\Si$. We
have a short exact sequence of abelian groups
$$
0\to \bK_\si/\bL\to \bL_\bR/\bL\to \bL_\bR/\bK_\si\to 0,
$$
which can be identified with the following short exact sequence of multiplicative abelian groups
$$
1\to G_\si\to G_\bR \to (G/G_\si)_\bR\to 1
$$
where $(G/G_\si)_\bR\cong U(1)^{\fp}$ is the maximal compact subgroup of $(G/G_\si)\cong(\bC^*)^{\fp}$.

Given a real number $x$, we recall some standard notation:
$\lfloor x \rfloor$ is the greatest integer less than or equal to  $x$,
$\lceil x \rceil$ is the least integer greater or equal to $x$,
and $\{ x\} = x-\lfloor x \rfloor$ is the fractional part of $x$.
Define $v: \bK_\si\to N$ by
$$
v(\beta)= \sum_{i=1}^{3+\fp} \lceil \langle D_i,\beta\rangle\rceil b_i.
$$
Then
$$
v(\beta) = \sum_{i\in I'_\si} \{ -\langle D_i,\beta\rangle \}b_i,
$$
so $v(\beta)\in \Boxs$. Indeed, $v$ induces a bijection $\bK_\si/\bL\cong \Boxs$.

For any $\tau \in \Si$ there exists $\si\in \Si(3)$ such that
$\tau \subset \si$. The bijection $G_\si \to \Boxs$ restricts
to a bijection $G_\tau\to \mathrm{Box}(\tau)$.

Define
$$
\BoxS:=\bigcup_{\si\in \Si}\Boxs =\bigcup_{\si\in\Si(3)}\Boxs.
$$
Then there is a bijection $\bK/\bL\to \BoxS$.

Given $v \in \BoxS$ and let $\si$ be the smallest cone containing $v$, define $c_i(v)\in [0,1)\cap \bQ$ by
$$
v= \sum_{i\in I'_\si} c_i(v) b_i.
$$
Suppose that  $k \in G_\si$  corresponds to $v\in \Boxs$ under the bijection $G_\si\cong\Boxs$, then
$k$ acts on $(Z_1,\ldots, Z_{3+\fp})\in \bC^{3+\fp}$ by
$$
k\cdot Z_i = \begin{cases}
Z_i, & i\in I_\si,\\
e^{2\pi\sqrt{-1} c_i(v)} Z_i,& i \in I'_\si.
\end{cases}
$$
Define
$$
\age(k)=\age(v)= \sum_{i\notin I_\si} c_i(v).
$$

Let $IU=\{(z,k)\in U_{\bSi^\ext}\times G\mid k\cdot z = z\}$,
and let $G$ act on $IU$ by $h\cdot(z,k)= (h\cdot z,k)$. The
inertia stack $\cI\cX$ of $\cX$ is defined to be the quotient stack
$$
\cI\cX:= [IU/G].
$$
Note that $(z=(Z_1,\ldots,Z_{3+\fp}), k)\in IU$ if and only if
$$
k\in \bigcup_{\si\in \Si}G_\si \textup{ and }  Z_i=0 \textup{ whenever } \chi_i(k) \neq 1.
$$
So
$$
IU=\bigcup_{v\in \BoxS} U_v,
$$
where
$$
U_v:= \{(Z_1,\ldots, Z_{3+\fp})\in U_{\bSi^{\ext}}: Z_i=0 \textup{ if } c_i(v) \neq 0\}.
$$
The connected components of $\cI\cX$ are
$$
\{ \cX_v:= [U_v/G] : v\in \BoxS\}.
$$

Let $\bF =\bQ, \bR$ or $\bC$.
The Chen-Ruan orbifold cohomology group \cite{CR04} with coefficient $\bF$ is defined to be
$$
H^*_\CR (\cX;\bF)=\bigoplus_{v\in \BoxS}  H^*(\cX_v;\bF)[2\age(v)],
$$
where $[2\age(v)]$ is the degree shift by $2\age(v)$. The Chen-Ruan orbifold cup product is denoted by $\star_\cX$. It is not the component-wise cup product.
Denote $\mathbf 1_v$ to be the unit in $H^*(\cX_v;\bF)$. Then $\mathbf 1_v\in
H^{2\age(v)}_\CR(\cX;\bF)$.

We have
$$
H^2_\CR(\cX;\bF) = H^2(\cX;\bF)\oplus \bigoplus_{i=4+\fp'}^{3+\fp} \bF\one_{b_i}
\cong H^2(\cX;\bF)\oplus \bigoplus_{i=4+\fp'}^{3+\fp} \bF D_i \cong \bL_{\bF}^{\vee},
$$
where we identify $\one_{b_i}=D_i$.

\begin{convention}
In the rest of this paper, when the coefficient is $\bC$, we omit the coefficient.
For example, $H^*_{\CR}(\cX)=H^*_{\CR}(\cX;\bC)$, $H^*(X_\Si)= H^*(X_\Si;\bC)$, etc.
\end{convention}

Let $\fg :=|\Int(P)\cap N'|$
be the number of lattice points in $\Int(P)$, the interior of the polytope $P$, and
let $\fn:=|\partial P\cap N'|$ be the number of lattice points on $\partial P$, the boundary of the polytope $P$. Then
\begin{eqnarray*}
\dim_\bC H^2(X_\Si)   &=& \fp'= |\Si(1)|-3\\
\dim_\bC H^2_\CR(\cX) &=& \fp= |P \cap N'|-3 = \fg+\fn-3,\\
\dim_\bC H^4_\CR(\cX) &=& |\Int(P)\cap N'| = \fg,\\
\dim_\bC H^*_\CR(\cX)  &=& 2\Area(P) = 1+ \fp +\fg = 2\fg-2+\fn.
\end{eqnarray*}

Let $H^2_{\CR,\mathrm{c}}(\cX)$ be the subspace of $H^2_{\CR}(\cX)$ generated by
$$
\{ \bar{D}_i: i\in \{1,\ldots, 3+\fp'\},\cV(\rho_i)\textup{ is proper}  \}\cup
\{ \one_{b_i}: i\in \{4+\fp',\ldots, 3+\fp\}, \cX_{b_i} \textup{ is proper} \}
$$
Then $\dim_\bC H^2_{\CR,\mathrm{c}}(\cX)=\fg$, and there is a perfect pairing
\begin{equation}\label{eqn:non-equivariant-pairing}
H^2_{\CR,\mathrm{c}}(\cX)\times H^4_{\CR}(\cX)\longrightarrow \bC.
\end{equation}

\begin{example}[Example \ref{exp:polytope}, continued]
  \label{exp:number-of-parameters}
  Let $\cX$ be the toric Calabi-Yau $3$-orbifold defined in Example \ref{exp:polytope}. We have
  \begin{align*}
    &\fp=2,\ \fp'=1,\ \fg=1,\ \fn=4,\ \dim_\bC H^*_\CR(\cX)=4,\\
    &D_1=-2D_4,\ D_4=2D_3+D_5=2D_2+D_5,\ \bar D_1=D_1,\ \bar D_2= D_2+\frac{1}{2}D_5,\ \bar D_3=D_3+\frac{1}{2}D_5,\ \bar D_4=D_4,\ \bar D_5=0,\\
    &\text{K\"ahler cone}\ C(\Sigma)=\bR_{>0} \bar D_4,\quad \text{extended K\"ahler cone}\ C(\bSi^\ext)=\bR_{>0} D_4\times \bR_{>0}D_5.
  \end{align*}
\end{example}

\subsection{Equivariant Chen-Ruan cohomology}\label{sec:equivariantCR}

Let $\RT:= H_{\bT}^*(\mathrm{pt})= H^*(B\bT)$, and let $\ST$ be the fractional field of $\RT$.
Then
$$
\RT =\bC[\su_1,\su_2,\su_3],\quad \ST=\bC(\su_1,\su_2,\su_3).
$$

As a graded $\bC$ vector space and an $R_\bT$-module, the $\bT$-equivariant Chen-Ruan orbifold cohomology
with coefficient $\bC$ is defined to be
$$
H^*_{\CR,\bT} (\cX)=\bigoplus_{v\in \BoxS}  H^*_{\bT}(\cX_v)[2\age(v)].
$$
By slight abuse of notation, the Chen-Ruan orbifold cup product in the equivariant setting is also denoted by $\star_\cX$.

Given $\si\in \Si(3)$, let $\cX_\si=[\bC^3/G_\si]$ be the affine toric Calabi-Yau 3-orbifold defined
by the cone $\si$; its coarse moduli is the affine simplicial toric variety $X_\si$
defined by $\si$: $X_\si=\mathrm{Spec}\bC[\si^\vee\cap M]\cong \bC^3/G_\si$.
The inertia stack of
$\cX_\si$ is
$$
\cI \cX_\si =\bigcup_{h\in G_\si} \cX_h,
\quad
\textup{where } \cX_h=[(\bC^3)^h/G].
$$
As a graded vector space over $\bC$,
$$
H^*_\CR(\cX_\si;\bC) = \bigoplus_{h\in G_\si} H^*(\cX_h;\bC)[2\age(h)] =\bigoplus_{h\in G_\si}\bC \one_h,
$$
where $\one_h$ is unit of the ring $H^*(\cX_h;\bC)\cong \bC$,  so $\deg(\one_h)=2\age(h)$.  Let $\sw_{1,\si},\sw_{2,\si},\sw_{3,\si}$ be the weights of the $\bT$ action along the 3 coordinate axes of $\cX_\si\cong [\bC^3/G_\si]$.
Then the $\bT$-equivariant Poincar\'{e} pairing is given by
$$
\langle \one_h,\one_{h'}\rangle_{\cX_\si} = \frac{\delta_{hh',1}}{\displaystyle{|G_\si| \prod_{i=1}^3 \sw_{i,\si}^{\delta_{c_i(h),0}} } },
$$
and the $\bT$-equivariant
orbifold cup product is given by
$$
\one_h \star_{\chi_\si} \one_{h'} =\Big(\prod_{i=1}^3 \sw_{i,\si}^{c_i(h)+c_i(h')-c_i(hh')}\Big) \one_{hh'}
$$
Define
$$
\bar{\one}_h:=\frac{\one_h}{\prod_{i=1}^3 \sw_{i,\si}^{c_i(h)}} \in H^*_{\bT,\CR}(\cX_\si;\bC)\otimes_{\RT}\bST,
$$
where $\bST$ is the minimal extension of $\ST$ which contains $\{\sw_{i,\si}^{c_i(h)}: i\in \{1,2,3\}, h\in G_\si, \si\in \Si(3) \}$.
Then
$$
\langle \bar{\one}_h,\bar{\one}_{h'}\rangle_{\cX_\si} = \frac{\delta_{hh',1}}{|G_\si|\prod_{i=1}^3 \sw_{i,\si} },\quad
\bar{\one}_h \star_{\cX_\si}\bar{\one}_{h'} =\bar{\one}_{hh'}.
$$
For any $\gamma\in G_\si^*$, define
$$
\bar{\phi}_\gamma:=\frac{1}{|G_\si|}\sum_{h\in G_\si}\chi_\gamma(h^{-1})\bar{\one}_h.
$$
Then
$$
\langle \bar{\phi}_\gamma,\bar{\phi}_{\gamma'}\rangle_{\cX_\si} = \frac{\delta_{\gamma \gamma'}}{|G_\si|^2 \prod_{i=1}^3 \sw_{i,\si} },\quad
\bar{\phi}_\gamma\star_{\cX_\si} \bar{\phi}_{\gamma'} = \delta_{\gamma\gamma'}\bar{\phi}_{\gamma}.
$$
So
$\{\bar{\phi}_\gamma:\gamma\in G_\si^*\}$
is a canonical basis of the semisimple Frobenius algebra
$$
\big(H^*_{\CR,\bT}(\cX_\si)\otimes_\RT \bST, \star_{\cX_\si}, \langle \  ,\  \rangle_{\cX_\si}\big)
$$
over the field $\bST$.

The Frobenius algebra  $H^*_{\CR,\bT}(\cX)\otimes_{\RT}\bST$,
equipped with the $\bT$-equivariant orbifold cup product and the $\bT$-equivariant Poincar\'{e} pairing,
is isomorphic to a direct sum of Frobenius algebras:
\begin{equation}\label{eqn:direct-sum}
\bigoplus_{\si\in \Si(3)} \iota_\si^*: H^*_{\CR,\bT}(\cX;\bC)\otimes_\RT\bST
\stackrel{\cong}{\longrightarrow}  \bigoplus_{\si\in \Si(3)} H^*_{\CR,\bT}(\cX_\si;\bC)\otimes_{\RT} \bST,
\end{equation}
where  $\iota_\si^*: H^*_{\CR,\bT}(\cX)\otimes_{\RT} \bST \to H^*_{\CR,\bT}(\cX_\si)\otimes_{\RT} \bST$ is induced
by the $\bT$-equivariant open embedding $\iota_\si: \cX_\si\hookrightarrow \cX$.
There exists a unique $\phi_{\si,\gamma}\in H^*_{\CR,\bT}(\cX)\otimes_{\RT} \bST$
such that $\phi_{\si,\gamma}|_{\cX_\sigma}= \bar{\phi}_\gamma$
and $\phi_{\si,\gamma}|_{\fp_{\si'}}=0$ for $\si'\in \Si(3)$, $\si'\neq \si$, where $\fp_{\si'}$ is the $\bT$-fixed point corresponding to $\si'$.
Define $I_\Si:=\{(\si,\gamma): \si\in \Si(3),\gamma\in G_\si^*\}$. Then
$$
\{\phi_{\si,\gamma}: (\si,\gamma)\in I_\Si\}
$$
is a canonical basis of the semisimple $\bST$-algebra  $H^*_{\CR,\bT}(\cX;\bC)\otimes_{\RT}\bST$:
$$
\phi_{\si,\gamma}\star_{\cX} \phi_{\si',\gamma'}
=\delta_{\si,\si'}\delta_{\gamma,\gamma'} \phi_{\si,\gamma}.
$$
We have
$$
\sum_{(\si,\gamma)\in I_\Si}\phi_{\si,\gamma} =1.
$$
The $\bT$-equivariant Poincar\'{e} pairing is given by
$$
( \phi_{\si,\gamma},\phi_{\si',\gamma'})_{\cX,\bT}
=\frac{\delta_{\si,\si'}\delta_{\gamma,\gamma'}}{\Delta^{\si,\gamma}},\quad
\Delta^{\si,\gamma}=|G_\si|^2 \be(\si),
$$
where $\be(\si)\in H^6_{\bT}(\fp_\si)=H^6(B\bT)$ is the $\bT$-equivariant
Euler class of the tangent space $T_{\fp_\si}\cX$ (viewed as a $\bT$-equivariant vector bundle over $\fp_\si\cong \cB G_\si$).

In the rest of this paper, we sometimes use the bold letter $\bsi$ for the pair $(\si,\gamma)$ for simplicity.
Define
$$
\hat{\phi}_{\bsi}=\sqrt{\Delta^{\bsi}}\phi_{\bsi},\quad \bsi\in I_\Si.
$$
(By a finite field extension, we may assume $\sqrt{\Delta^{\bsi}}\in \bST$ for all $\bsi\in I_\Si$.) Then
$$
\hat{\phi}_{\bsi}\star_{\cX}\hat{\phi}_{\bsi'} = \delta_{\bsi,\bsi'} \sqrt{\Delta^{\bsi}}\hat{\phi}_{\bsi},\quad
(\hat{\phi}_{\bsi}, \hat{\phi}_{\bsi'})_{\cX,\bT}= \delta_{\bsi,\bsi'}.
$$
We call $\{\hat{\phi}_\bsi:\bsi\in I_\Si\}$ the {\em classical normalized canonical basis}. It is
a normalized canonical basis of the $\bT$-equivariant Chen-Ruan orbifold cohomology
ring $H^*_{\CR,\bT}(\cX)\otimes_{\RT}{\bST}$.

\subsection{The symplectic quotient and the toric graph}\label{sec:toric-graph}
Let $\{\ep_a: a=1,\ldots, \fp\}$ be
a $\bZ$-basis of the lattice $\bL$ and let
$\{\ep_a^*: a=1,\ldots, \fp\}$ be the dual $\bZ$-basis
of the dual lattice $\bL^\vee$. Then $\psi:\bL\to \tN$ and
$\psi^\vee:\tM\to \bL^\vee$ are  given by
$$
\psi(\ep_a)= \sum_{i=1}^{3+\fp} l^{(a)}_i\tb_i,\quad
\psi^\vee(D_i^{\bT})=\sum_{a=1}^\fp l^{(a)}_i \ep_a^*.
$$
for some $l^{(a)}_i \in \bZ$, where $1\leq a\leq \fp$ and $1\leq i\leq 3+\fp$.
The $\fp$ vectors $l^{(a)}:= (l^{(a)}_1,\ldots, l^{(a)}_{3+\fp})\in \bZ^{3+\fp}$ are
known as the charge vectors in the physics literature.

Let $G_\bR \cong U(1)^\fp$ be the maximal compact torus of  $G\cong (\bC^*)^\fp$.
Then $\bL^\vee_\bR$ can be canonically identified with the dual of the Lie algebra of $G_\bR$.
The $G$-action on $\bC^{3+\fp}$ restricts to a Hamiltonian  $G_\bR$-action on
the symplectic manifold $(\bC^{3+\fp},\sqrt{-1}\sum_{i=1}^{3+\fp} dZ_i\wedge d\bar{Z}_i)$. A moment
map of this Hamiltonian $G_\bR$-action is given by
$$
\tmu:\bC^{3+\fp} \longrightarrow \bL_\bR^\vee,\quad
\tmu(Z_1,\ldots, Z_{3+\fp})= \sum_{a=1}^\fp \Big(\sum_{i=1}^{3+\fp} l_i^{(a)} |Z_i|^2\Big) \ep_a^*.
$$
Given a point $\eta$ in the extended K\"{a}hler cone $C(\Si^{\ext}) \subset \bL_\bR^\vee$,
the symplectic quotient $[\tmu^{-1}(\eta)/G_{\bR}]$ is a K\"{a}hler orbifold which is isomorphic
to $\cX$ as a complex orbifold (c.f. Section \ref{sec:anticones}). The symplectic structure $\omega(\eta)$ depends on $\eta$. The map
$$
\kappa|_{C(\bSi^\ext)}: C(\Si^\ext)\longrightarrow  C(\Si)\subset H^2(X_\Si;\bR)
$$
can be identified with $\bk\mapsto [\omega(\eta)]$, where $[\omega(\eta)]$ is the K\"{a}hler
class of the K\"{a}hler form $\omega(\eta)$.

Let $\bT_\bR\cong U(1)^3$ (resp. $\bT'_\bR\cong U(1)^2$) be the maximal compact torus
of $\bT\cong (\bC^*)^3$ (resp. $\bT'\cong (\bC^*)^2$).
Then $M_\bR$ (resp. $M_\bR'$) is canonically identified with the dual of the Lie algebra of $\bT_\bR$
(resp. $\bT'_\bR$). The $\bT$-action on $\cX$ restricts to a Hamiltonian $\bT_\bR$-action on the K\"{a}hler orbifold $(\cX,\omega(\eta))$. The map $\kappa(\eta)$ determines
a moment map $\mu_{\bT_\bR}: \cX \longrightarrow M_\bR$ up to translation by a vector in $M_\bR$.
The image $\mu_{\bT_\bR}(\cX)$ is a convex polyhedron.
The moment map $\mu_{\bT'_\bR}:\cX \longrightarrow M'_\bR$ is the composition $\pi\circ \mu_{T_\bR}$,
where $\pi:M_\bR\cong \bR^3 \to M'_{\bR}\cong \bR^2$ is the projection.
The map $\mu_{\bT'_\bR}$ is surjective. Let $\cX^1 \subset \cX$ be the union of
0-dimensional and 1-dimensional $\bT$-orbits in $\cX$. The {\em toric graph} of $(\cX,\omega(\eta))$ is defined by
$\Gamma:= \mu_{T_\bR'}(\cX^1)\subset M'_\bR \cong \bR^2$. It is determined by $\kappa(\eta)$ up to translation
by a vector in $M'_\bR$. The vertices (resp. edges) of $\Gamma$ are in one-to-one correspondence
to $3$-dimensional (resp. $2$-dimensional) cones in $\Si$.

\subsection{Aganagic-Vafa branes}
\label{sec:av-branes}

An Aganagic-Vafa brane in $\cX=[\tilde{\mu}^{-1}(\eta)/G_\bR]$ is a Lagrangian suborbifold
of the form $\cL=[\tL/G_\bR]$, where
$$
\tL=\{ (Z_1,\ldots, Z_{3+\fp})\in \tmu^{-1}(\eta): \sum_{i=1}^{3+\fp} \hat{l}^1_i |Z_i|^2 = c_1,
\quad \sum_{i=1}^{3+\fp}\hat{l}^2_i|Z_i|^2=c_2,\quad \arg(\prod_{i=1}^{3+\fp} Z_i)= c_3\}.
$$
for some $\hat{l}^1, \hat{l}^2\in \bZ^{3+\fp}$ satisfying
$\sum_{i=1}^{3+\fp} \hat{l}^1_i =\sum_{i=1}^{3+\fp} \hat{l}^2_i =0$ and $c_1,c_2,c_3\in \bR$.
The constants $c_1, c_2$ are chosen such that $\mu_{\bT_\bR}(\cL)$ is a ray ending
at a point in the interior of an edge of the moment polytope. Then $\mu_{\bT'_\bR}(\cL)$
is a point in $\Gamma$ which is not a vertex.

Given a 2-dimensional cone $\tau \in \Si(2)$ such that $I'_\tau=\{i,j\}$,
where $1\leq i<j \leq 3+\fp'$, let $\fl_\tau:=\mathcal V(\tau)$ be defined by
$Z_i=Z_j=0$, and let $\ell_\tau$ be the coarse moduli space of
$\fl_\tau$. There are two cases:
\begin{enumerate}
 \item $\tau$ is the intersection of two 3-dimensional cones
and $\ell_\tau \cong \bP^1$.
\item There is a unique 3-dimensional cone $\si$ containing $\tau$ and $\ell_\tau\cong \bC$.
\end{enumerate}

An Aganagic-Vafa brane $\cL$ intersects a unique 1-dimensional $\bT$-orbit closure
$\fl_\tau$, where $\tau\in \Si(2)$. We say $\cL$ is an inner (resp. outer) brane
if $\ell_\tau\cong \bP^1$ (resp. $\ell_\tau \cong \bC$). In this paper we will consider
a fixed $\tau_0\in \Si(2)$ such that $\ell_{\tau_0}\cong \bC$, and consider
an Aganagic-Vafa (outer) brane intersecting $\fl_{\tau_0}$. Let $\sigma_0$ be the unique
3-dimensional cone containing $\tau_0$. By permuting $b_1,\ldots, b_{3+\fp'}$ we may
assume $\si_0$ is spanned by $b_1,b_2,b_3$ and $\tau_0$ is spanned by
$b_2, b_3$. We have a short exact sequence of finite abelian groups
$$
1\to G_{\tau_0}\cong \bmu_{\fm} \longrightarrow G_{\si_0} \longrightarrow  \bmu_{\fr}\to 1,
$$
where $\fm$ and $\fr$ are positive integers and $\bmu_{\fm}$ and $\bmu_{\fr}$ are finite cyclic groups of orders $\fm$ and $\fr$ respectively.
We say $\fl_{\tau_0}$ is an effective leg (resp. a gerby leg)
if $\fm=1$ (resp. $\fm>1$).
By choosing suitable $\bZ$-basis $(e_1,e_2,e_3)$ of $N$ we may
assume
$$
b_1 = \fr e_1-\fs e_2+e_3,\quad b_2 = \fm e_2+e_3,\quad b_3 = e_3,
$$
where $\fs\in \{0,1\ldots,\fr-1\}$.

\begin{figure}[h]
\begin{center}
\psfrag{L}{\tiny $\mathcal L$}
\psfrag{lt0}{\tiny $\fl_{\tau_0}$}
\psfrag{ls0}{\tiny $\fl_{\si_0}$}
\includegraphics[scale=0.3]{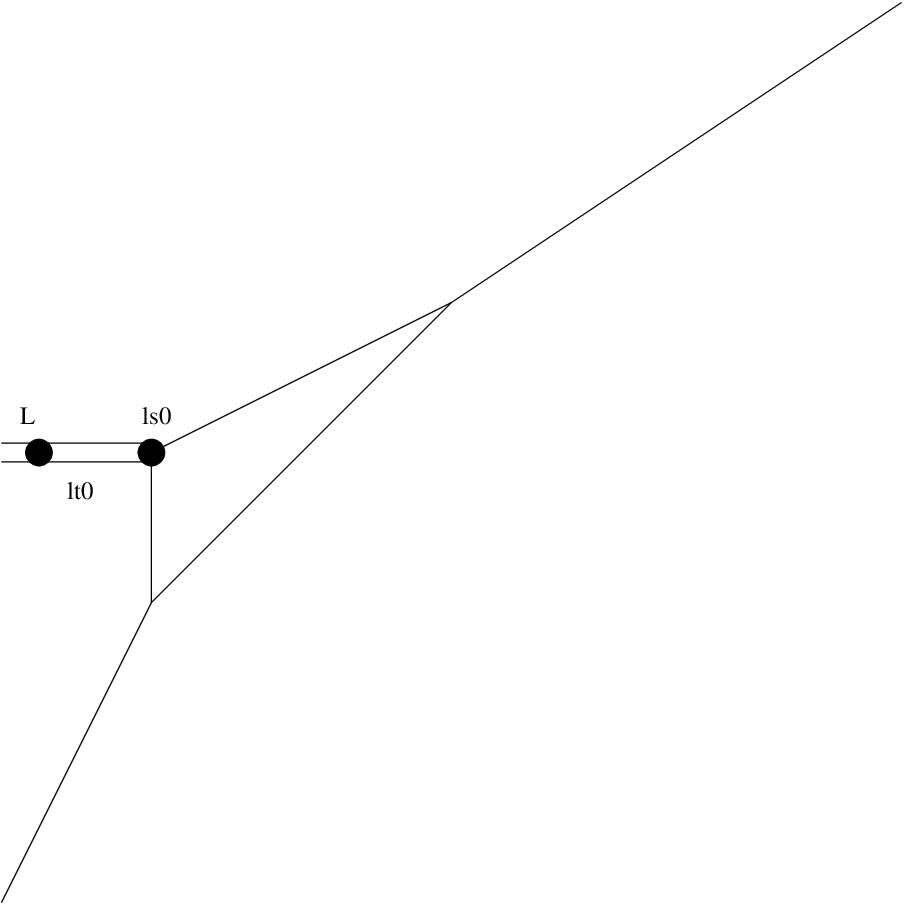}
\caption{The toric graph for $\cX$ in Example \ref{exp:polytope}. The gerby leg $\fl_{\tau_0}$'s image is a line, but we draw a double line to denote it is gerby.}
\label{fig:graph}
\end{center}
\end{figure}

\begin{example}[Example \ref{exp:number-of-parameters}, continued]
  \label{exp:graph}
  Given the choice of $\si_i$ and $\tau_0$ as in Figure \ref{fig:defining-polytope}, the toric graph for $\cX$ and the phase of the Aganagic-Vafa brane $\cL$ is in Figure \ref{fig:graph}. We have
  \begin{align*}
    &G_{\tau_0}\cong \bmu_2,\ \fm=2,\ G_{\si_0}\cong \bmu_2,\ \fr=1,\\
    & \fl_{\tau_0}\cong \bC\times \cB \bmu_2,\ \fl_{\si_0}=[\mathrm{pt}/\bmu_2],\\
    &G_{\si_1}=G_{\si_2}=\{1\}.
  \end{align*}
\end{example}

\section{A-model Topological Strings}
\label{sec:A-string}

In Section \ref{sec:TGW}-\ref{sec:Agraph}, we consider
the 3-dimensional torus $\bT$ and the big phase space.
In Section \ref{sec:I-J} we specialize to the small  phase space.
After that, we further specialize to the Calabi-Yau torus $\bT'$.

\subsection{Equivariant Gromov-Witten invariants}\label{sec:TGW}
Given nonnegative integers $g$, $n$ and an effective curve class
$d\in H_2(X_\Si;\bZ)$, let $\Mbar_{g,n}(\cX,d)$ be the
moduli stack of genus $g$, $n$-pointed, degree $d$ twisted
stable maps to $\cX$ (\cite{AGV02, AGV08}, \cite[Section 2.4]{Ts10}).
Let $\ev_i:\Mbar_{g,n}(\cX,d)\to \cI\cX$ be the evaluation
at the $i$-th marked point. The $\bT$-action on $\cX$ induces
$\bT$-actions on $\Mbar_{g,n}(\cX,d)$ and on the inertia stack $\cI\cX$, and
the evaluation map $\ev_i$ is $\bT$-equivariant.

Let $\Mbar_{g,n}(X_\Si,d)$ be the moduli stack of
genus $g$, $n$-pointed, degree $d$ stable maps to the coarse
moduli $X_\Si$ of $\cX$. Let
$\pi:\Mbar_{g,n+1}(X_\Si,d)\to \Mbar_{g,n}(X_\Si,d)$
be the universal curve, and let $\omega_\pi$
be the relative dualizing sheaf. Let $s_j:\Mbar_{g,n}(X_\Si,d)\to
\Mbar_{g,n+1}(X_\Si,d)$ be the section associated to
the $j$-th marked point. Then
$$
\bL_j :=s_j^*\omega_\pi
$$
is the line bundle over $\Mbar_{g,n}(X_\Si,d)$ formed
by the cotangent line at the $j$-th marked point.
The descendant classes on $\Mbar_{g,n}(X_\Si,d)$ are defined by
$$
\psi_j:= c_1(\bL_j)\in H^2(\Mbar_{g,n}(X_\Si,d);\bQ), \quad j=1,\ldots,n.
$$
The $\bT$-action on $X_\Si$ induces a $\bT$-action on
$\Mbar_{g,n}(X_\Si,d)$, and we choose a $\bT$-equivariant
lift $\psi_j^{\bT}\in H^2_{\bT}(\Mbar_{g.n}(X_\Si,d);\bQ)$
of $\psi_j\in H^2(\Mbar_{g,n}(X_\Si,d);\bQ)$ as in \cite[Section 5.1]{Liu}.

The map $p:\Mbar_{g,n}(\cX,d)\to \Mbar_{g,n}(X_\Si,d)$ is
$\bT$-equivariant. Following \cite[Section 2.5.1]{Ts10}, we define
$$
\hat{\psi}_j := p^*\psi_j  \in H^2(\Mbar_{g,n}(\cX,d);\bQ)
$$
to be the pullback of $\psi_j \in H^2(\Mbar_{g,n}(X_\Si,d);\bQ)$.
(Note that $\hat{\psi}_j$ is denoted  $\bar{\psi}_j$ in \cite{CCIT}
and $\psi_j$ in \cite{Zo}.) Then
$$
\hat{\psi}_j^{\bT}:=p^*\psi_j^{\bT} \in H^2_{\bT}(\Mbar_{g,n}(\cX,d);\bQ)
$$
is a $\bT$-equivariant lift of $\hat{\psi}_j$.

Since $X_\Si$ is not projective, the moduli stack $\Mbar_{g,n}(\cX,d)$ is
not proper in general, but the $\bT$-fixed substack
$\Mbar_{g,n}(\cX,d)^{\bT}$ is. Given $a_1,\ldots, a_n\in \bZ_{\geq 0}$ and
$\gamma_1,\ldots,\gamma_n\in H^*_{\CR,\bT}(\cX;\bQ)$, we define
$\bT$-equivariant genus $g$ degree $d$ Gromov-Witten invariants
of $\cX$ by
$$
\langle \gamma_1\hat{\psi}^{a_1}, \cdots,\gamma_n\hat{\psi}^{a_n}\rangle^{\cX,\bT}_{g,n,d}
= \int_{[\Mbar_{g,n}(\cX,d)^{\bT}]^{w,\bT}} \frac{\iota^*\big(\prod_{j=1}^n \ev_j^*(\gamma_j)(\hat{\psi}_j^{\bT})^{a_j}\big)}{e_{\bT}(N^\vir)}
\in \ST=\bQ(\su_1,\su_2,\su_3).
$$
where the weighted virtual fundamental class $[\Mbar_{g,n}(\cX,d)^{\bT}]^{w,\bT}$ \cite{AGV02, AGV08} (resp.
the virtual normal bundle $N^\vir$ of $\Mbar_{g,n}(\cX,d)^{\bT}$ in $\Mbar_{g,n}(\cX,d)$) is
defined by the fixed (resp. moving) part of the restriction to $\Mbar_{g,n}(\cX,d)^{\bT}$ of the $\bT$-equivariant
perfect obstruction theory on $\Mbar_{g,n}(\cX,d)$ \cite{GP99},
and $\iota^*:H^*_{\bT}(\Mbar_{g,n}(\cX,d);\bQ)\rightarrow H^*_{\bT}(\Mbar_{g,n}(\cX,d)^{\bT};\bQ)$ is induced
by the inclusion map $\iota: \Mbar_{g,n}(\cX,d)^{\bT}\hookrightarrow \Mbar_{g,n}(\cX,d)$.
More generally, if we insert $\gamma_1,\ldots,\gamma_n\in H^*_{\CR,\bT}(\cX)\otimes_{\RT}\bST$
then we obtain
$$
\langle \gamma_1 \hat{\psi}^{a_1}, \cdots , \gamma_n \hat{\psi}^{a_n} \rangle^{\cX,\bT}_{g,n,d} \in \bST.
$$

\subsection{Generating functions}\label{sec:generating-functions}
Let $\NE(\Si)\subset H_2(\cX;\bR)=H_2(X_\Si;\bR)$ be the Mori cone
generated by effective curve classes in $X_\Si$ (see Section \ref{sec:kahler}).
Let $E(\cX)$ denote the semigroup $\NE(\Si)\cap H_2(X_\Si;\bZ)$.
The Novikov ring $\nov$ of $\cX$ is defined to be the completion of the
semigroup ring $\bC[E(\cX)]$.
$$
\nov:=\widehat{ \bC[E(\cX)]}
= \{ \sum_{d\in E(\cX)} c_d Q^d:c_d\in \bC\}.
$$

Given $a_1,\ldots, a_n\in \bZ_{\geq 0}$,
$\gamma_1,\ldots,\gamma_n\in H^*_{\CR,\bT}(\cX)\otimes_\RT \bST$, we define
the following double correlator:
\begin{eqnarray*}
\llangle \gamma_1\hat{\psi}^{a_1},  \cdots, \gamma_n\hat{\psi}^{a_n} \rrangle^{\cX,\bT}_{g,n}
:=\sum_{m=0}^\infty \sum_{d\in E(\cX)}\frac{Q^d}{m!}\langle
\gamma_1\hat{\psi}^{a_1}, \cdots, \gamma_n\hat{\psi}^{a_n}, t^m \rangle^{\cX,\bT}_{g,n+m,d}
\end{eqnarray*}
where $Q^d\in \bC[E(\cX)]\subset \nov$ is the Novikov variable corresponding to
$d\in E(\cX)$, and $t\in H^*_{\CR,\bT}(\cX)\otimes_{R_\bT}\bST$.

We introduce two types of generating functions of genus $g$, $n$-point $\bT$-equivariant
Gromov-Witten invariants of $\cX$.

\begin{enumerate}
\item For $j=1,\ldots,n$, introduce formal variables
$$
\bu_j =\bu_j(z)= \sum_{a\geq 0}(u_j)_a z^a
$$
where $(u_j)_a \in H^*_{\CR,\bT}(\cX)\otimes_\RT\bST$.
Define
$$
\llangle \bu_1,\ldots, \bu_n \rrangle_{g,n}^{\cX,\bT} =
\llangle \bu_1(\hat{\psi}),\ldots, \bu_n(\hat{\psi}) \rrangle_{g,n}^{\cX,\bT}
=\sum_{a_1,\ldots,a_n\geq 0}
\llangle (u_1)_{a_1}\hat{\psi}^{a_1}, \cdots, (u_n)_{a_n}\hat{\psi}^{a_n}\rrangle_{g,n}^{\cX,\bT}.
$$
\item Let $z_1,\ldots,z_n$ be formal variables. Given $\gamma_1,\ldots,\gamma_n
\in H^*_{\CR,\bT}(\cX)\otimes_\RT \bST$, define
$$
\llangle \frac{\gamma_1}{z_1-\hat{\psi}_1},\ldots, \frac{\gamma_n}{z_n-\hat{\psi}_n}\rrangle^{\cX,\bT}_{g,n}
=\sum_{a_1,\ldots,a_n\in \bZ_{\geq 0}}
\llangle \gamma_1 \hat{\psi}^{a_1} ,\ldots, \gamma_n \hat{\psi}^{a_n} \rrangle^{\cX,\bT}_{g,n}\prod_{j=1}^n z_j^{-a_j-1}.
$$
\end{enumerate}
The above two generating functions are related by
\begin{equation}\label{eqn:gamma-u}
\llangle \frac{\gamma_1}{z_1-\hat{\psi}_1},\ldots, \frac{\gamma_n}{z_n-\hat{\psi}_n}\rrangle^{\cX,\bT}_{g,n}
=\llangle \bu_1,\ldots, \bu_n \rrangle_{g,n}^{\cX,\bT}\Big|_{\bu_j(z)=\frac{\gamma_j}{z_j-z}}.
\end{equation}

\subsection{The equivariant big quantum cohomology}\label{sec:bigQH}
Let
$$
\chi= \dim_{\bC}H^*_{\CR}(\cX) =\dim_{\bST} H^*_{\CR,\bT}(\cX;\bST).
$$
We choose an $\bST$-basis of $H^*_{\CR,\bT}(\cX;\bST)$ $\{ T_i: i=0,1,\ldots,\chi-1\}$
such that
$$
T_0=1,\quad T_a=\bar{D}_{3+a}^{\bT} \textup{ for } a=1,\ldots,\fp',\quad
T_a=\one_{b_{3+a}}\textup{  for }a=\fp'+1,\ldots, \fp,
$$
and for $i=\fp+1,\ldots,\chi-1$, $T_i$ is of the form $T_aT_b$ for some $a,b\in \{1,\ldots, \fp\}$.
Write $t=\sum_{a=0}^{\chi-1}\tau_a T_a$, and let $\tau'=(\tau_1,\ldots, \tau_{\fp'})$,
$\tau''=(\tau_0, \tau_{\fp'+1},\ldots, \tau_{\chi-1})$.
By the divisor equation,
$$
\llangle T_i,T_j,T_k\rrangle^{\cX,\bT}_{0,3} \in \ST[\![ \tQ, \tau'']\!],\quad
\llangle \hat{\phi}_{\bsi}, \hat{\phi}_{\bsi'}, \hat{\phi}_{\bsi''}\rrangle^{\cX,\bT}_{0,3}\in \bSTQ,
$$
where $\tQ^d = Q^d \exp(\sum_{a=1}^{\fp'}\tau_a \langle T_a, d\rangle)$. Let $S:= \bSTQ$. Given
$a,b\in H_{\CR,\bT}^*(\cX;\bST)$, define the \emph{quantum product}
$$
a\star_t b:= \sum_{\bsi\in I_\Si} \llangle a, b, \hat{\phi}_\bsi\rrangle \hat{\phi}_\bsi
\in H_{\CR,\bT}^*(\cX;\bST)\otimes_{\bST} S.
$$
Then $A:= H_{\CR,\bT}^*(\cX;\bST)\otimes_{\bST}S$ is a free $S$-module of rank $\chi$, and
$(A,*_t)$ is a commutative, associative algebra over $S$.  Let $I\subset S$
be the ideal generated by $\tQ,\tau''$, and define
$$
S_n:= S/I^n,\quad A_n:= A\otimes_{S}S_n
$$
for $n\in \bZ_{\geq 0}$. Then $A_n$ is a free $S_n$-module of rank $\chi$, and the ring structure
$*_t$ on $A$ induces a ring structure $*_{\underline{n}}$ on $A_n$. In particular,
$$
S_1=\bST,\quad A_1=H^*_{\CR,\bT}(\cX;\bST),
$$
and $*_{\underline{1}} = *_\cX$ is the orbifold cup product. So
$$
\{ \phi_{\bsi}^{(1)}:=\phi_{\bsi}: \bsi\in I_\Si\}
$$
is an idempotent basis of $(A_1,\star_{\underline{1}})$.
For $n\geq 1$, let  $\{ \phi_{\bsi}^{(n+1)}:\bsi\in I_\Si\}$
be the unique idempotent basis of $(A_{n+1},\star_{\underline{n+1}})$
which is the lift of the idempotent basis $\{ \phi_{\bsi}^{(n)}:\bsi\in I_\Si\}$
of $(A_n,\star_{\underline{n}})$ \cite[Lemma 16]{LP}. Then
$$
\{ \phi_{\bsi}(t):=\lim \phi_{\bsi}^{(n)}: \bsi\in I_\Si\}
$$
is an idempotent basis of $(A,\star_t)$. The ring $(A,\star_t)$ is called the \emph{equivariant big quantum cohomology ring}.

Set
$$
\novT:= \bST\otimes_{\bC}\nov =\bST[\![ E(\cX)]\!].
$$
Then $H:=H^*_{\CR,\bT}(\cX;\novT)$ is a free $\novT$-module of rank $\chi$.
Any point $t\in H$ can be written as  $t=\sum_{\bsi\in I_\Si}t^{\bsi} \hat{\phi}_{\bsi}$. We have
$$
H=\mathrm{Spec}(\novT [ t^{\bsi}:\bsi\in I_\Si]).
$$
Let $\hat{H}$ be the formal completion of $H$ along the origin:
$$
\hat{H} :=\mathrm{Spec}(\novT[\![ t^{\bsi}:\bsi\in I_\Si ]\!]).
$$
Let $\cO_{\hat{H}}$ be the structure sheaf on $\hat{H}$, and let $\cT_{\hat{H}}$ be the tangent sheaf on
$\hat{H}$.
Then $\cT_{\hat{H}}$ is a sheaf of free $\cO_{\hat{H}}$-modules of rank $\chi$.
Given an open set in $\hat{H}$,
$$
\cT_{\hat{H}}(U)  \cong \bigoplus_{\bsi\in I_\Si}\cO_{\hat{H}}(U) \frac{\partial}{\partial t^{\bsi}}.
$$
The big quantum product and the $\bT$-equivariant Poincar\'{e} pairing defines the structure of a formal
Frobenius manifold on $\hat{H}$:
$$
\frac{\partial}{\partial t^{\bsi}} \star_t \frac{\partial}{\partial t^{\bsi'}}
=\sum_{\brho\in I_\Si} \llangle \hat{\phi}_{\bsi},\hat{\phi}_{\bsi'},\hat{\phi}_{\brho}\rrangle_{0,3}^{\cX,\bT}
\frac{\partial}{\partial t^{\brho}}
\in \Gamma(\hat{H}, \cT_{\hat{H}}).
$$
$$
( \frac{\partial}{\partial t^{\bsi}},\frac{\partial}{\partial t^{\bsi'}})_{\cX,\bT} =\delta_{\bsi,\bsi'}.
$$

\subsection{The A-model canonical coordinates and the $\Psi$-matrix}\label{sec:A-canonical}
The canonical coordinates $\{ u^{\bsi}=u^{\bsi}(t):\bsi\in I_\Si\}$ on the formal Frobenius
manifold $\hat{H}$ are characterized by
\begin{equation}\label{eqn:partial-u}
\frac{\partial}{\partial u^{\bsi}} = \phi_{\bsi}(t).
\end{equation}
up to additive constants in $\novT$. We choose canonical coordinates
such that they lie in $\bST[\tau'][\![\tQ,\tau'']\!]$ and vanish when $Q=0$,
$\tau'=\tau''=0$. Then $u^{\bsi}-\sqrt{\Delta^{\bsi}}t^{\bsi} \in \bST[\tau'][\![\tQ,\tau'']\!]$
and vanish when $\tQ=0$, $\tau''=0$.

We define $\Delta^{\bsi}(t)\in \bSTQ$ by the following equation:
$$
(\phi_{\bsi}(t), \phi_{\bsi'}(t))_{\cX,\bT} =\frac{\delta_{\bsi,\bsi'}}{\Delta^{\bsi}(t)}.
$$
Then $\Delta^{\bsi}(t) \to \Delta^{\bsi}$  in the large radius limit
$\tQ,\tau''\to 0$. The normalized canonical basis of $(\hat{H},\star_t)$ is
$$
\{ \hat{\phi}_{\bsi}(t):= \sqrt{\Delta^{\bsi}(t)}\phi_{\bsi}(t): \bsi\in I_\Si\}.
$$
They satisfy
$$
\hat{\phi}_{\bsi}(t)\star_t \hat{\phi}_{\bsi'}(t) =\delta_{\bsi, \bsi'}\sqrt{\Delta^{\bsi}(t)}\hat{\phi}_{\bsi}(t),\quad
(\hat{\phi}_{\bsi}(t), \hat{\phi}_{\bsi'}(t))_{\cX,\bT}=\delta_{\bsi,\bsi'}.
$$
(Note that $\sqrt{\Delta^{\bsi}(t)}= \sqrt{\Delta^{\bsi}} \cdot
\sqrt{\frac{ \Delta^{\bsi}(t) }{\Delta^{\bsi}} }$,
where
$\sqrt{\Delta^{\bsi}}\in \bST$ and
$\sqrt{\frac{ \Delta^{\bsi}(t) }{\Delta^{\bsi}} }\in \bSTQ$,
so $\sqrt{\Delta^{\bsi}(t)}\in \bSTQ$.)
We call $\{\hat{\phi}_{\bsi}(t):t\in I_\Si\}$ the {\em quantum} normalized canonical basis
to distinguish it from the  {\em classical} normalized canonical basis $\{ \hat{\phi}_{\bsi}:\bsi \in I_\Si\}$.
The quantum canonical basis tends to the classical canonical
basis in the large radius limit: $\hat{\phi}_{\bsi}(t)\to \hat{\phi}_{\bsi}$ as $\tQ,\tau''\to 0$.

Let $\Psi=(\Psi_{\bsi'}^{\spa  \bsi})$ be the transition matrix between the classical and quantum
normalized canonical bases:
\begin{equation}\label{eqn:Psi-phi}
\hat{\phi}_{\bsi'}=\sum_{\bsi\in I_\Si} \Psi_{\bsi'}^{\spa \bsi} \hat{\phi}_\bsi(t).
\end{equation}
Then $\Psi$ is an $\chi\times \chi$ matrix with entries
in $\bSTQ$, and $\Psi\to \one$ (the identity matrix)
in the large radius limit $\tQ,\tau''\to 0$. Both
the classical and quantum normalized canonical bases are orthonormal with respect
to the $\bT$-equivariant Poincar\'{e} pairing $(\ ,\ )_{\cX,\bT}$, so $\Psi^T\Psi= \Psi \Psi^T= \one$,
where $\Psi^T$ is the transpose of $\Psi$, or equivalently
$$
\sum_{\brho\in I_\Si} \Psi_{\brho}^{\spa \bsi} \Psi_{\brho}^{\spa \bsi'} =\delta_{\bsi,\bsi'}
$$
Equation \eqref{eqn:Psi-phi} can be rewritten as
$$
\frac{\partial}{\partial t^{\bsi'}} =\sum_{\bsi\in I_\Si} \Psi_{\bsi'}^{\spa \bsi}
\sqrt{\Delta^{\bsi}(t)} \frac{\partial}{\partial u^{\bsi}}
$$
which is equivalent to
\begin{equation}
\label{eqn:Psi-matrix}
\frac{du^{\bsi}}{\sqrt{\Delta^{\bsi}(t)}} =
\sum_{\bsi'\in I_\Si}
dt^{\bsi'} \Psi_{\bsi'}^{\spa \bsi}.
\end{equation}

\subsection{The equivariant big quantum differential equation}
We consider the Dubrovin connection $\nabla^z$, which is a family
of connections parametrized by $z\in \bC\cup \{\infty\}$, on the tangent bundle
$T_{\hat{H}}$ of the formal Frobenius manifold $\hat{H}$:
$$
\nabla^z_{\bsi}=\frac{\partial}{\partial t^{\bsi}} -\frac{1}{z} \hat{\phi}_{\bsi}\star_t
$$
The commutativity (resp. associativity)
of $*_t$ implies that $\nabla^z$ is a torsion
free (resp. flat) connection on $T_{\hat{H}}$ for all $z$. The equation
\begin{equation}\label{eqn:qde}
\nabla^z \mu=0
\end{equation}
for a section $\mu\in \Gamma(\hat{H},\cT_{\hat{H}})$ is called the {\em $\bT$-equivariant
big quantum differential equation} ($\bT$-equivariant big QDE). Let
$$
\cT_{\hat{H}}^{f,z}\subset \cT_{\hat{H}}
$$
be the subsheaf of flat sections with respect to the connection $\nabla^z$.
For each $z$, $\cT_{\hat{H}}^{f,z}$ is a sheaf of
$\novT$-modules of rank $\chi$.

A section $L\in \End(T_{\hat{H}})=\Gamma(\hat{H},\cT_{\hat{H}}^*\otimes\cT_{\hat{H}})$
defines an $\cO_{\hat{H}}(\hat{H})$-linear map
$$
L: \Gamma(\hat{H},\cT_{\hat{H}})= \bigoplus_{\bsi\in I_{\Si}} \cO_{\hat{H}}(\hat{H})
\frac{\partial}{\partial t^{\bsi}}
\to \Gamma(\hat{H},\cT_{\hat{H}})
$$
from the free $\cO_{\hat{H}}(\hat{H})$-module $\Gamma(\hat{H},\cT_{\hat{H}})$ to itself.
Let $L(z)\in \End(T_{\hat H})$ be a family of endomorphisms of the tangent bundle $T_{\hat{H}}$
parametrized by $z$. $L(z)$ is called a {\em fundamental solution} to the $\bT$-equivariant QDE if
the $\cO_{\hat{H}}(\hat{H})$-linear map
$$
L(z): \Gamma(\hat{H},\cT_{\hat{H}}) \to \Gamma(\hat{H},\cT_{\hat{H}})
$$
restricts to a $\novT$-linear isomorphism
$$
L(z): \Gamma(\hat{H},\cT_H^{f,\infty})=\bigoplus_{\bsi\in I_{\Si}} \novT \frac{\partial}{\partial t^{\bsi}}
\to \Gamma(\hat{H},\cT_H^{f,z}).
$$
between rank $\chi$ free $\novT$-modules.


\subsection{The $\cS$-operator}\label{sec:A-S}
The $\cS$-operator is defined as follows.
For any cohomology classes $a,b\in H_{\CR,\bT}^*(\cX;\bST)$,
$$
(a,\cS(b))_{\cX,\bT}=(a,b)_{\cX,\bT}
+\llangle a,\frac{b}{z-\hat{\psi}}\rrangle^{\cX,\bT}_{0,2}
$$
where
$$
\frac{b}{z-\hat{\psi}}=\sum_{i=0}^\infty b\hat{\psi}^i z^{-i-1}.
$$
The $\cS$-operator can be viewed as an element in $\End(T_{\hat{H}})$ and is a fundamental solution to the $\bT$-equivariant
big QDE \eqref{eqn:qde}.  The proof for $\cS$ being a fundamental solution can be found in  \cite[Section 10.2]{CK}
for the smooth case and in \cite{Iri} for the orbifold case which is a direct generalization of the smooth case.

\begin{remark}
One may notice that since there is a formal variable $z$ in the definition of
the $\bT$-equivariant big QDE \eqref{eqn:qde}, one can consider its solution space over different rings. Here the operator
$\cS= \one+ \cS_1/z+ \cS_2/z^2+\cdots$ is viewed as a formal power series in $1/z$ with operator-valued coefficients.
\end{remark}

\begin{remark}\label{Novikov}
Given $t\in H^*_{\CR,\bT}(\cX)\otimes_\RT \bST$, let  $t=t'+t''=\sum_{a=0}^{\chi-1}\tau_aT_a$ where $t'=\sum_{a=1}^{\fp'}\tau_a  T_a\in H^2_{\bT}(\cX)\otimes_\RT \bST$ and $t''$ is a linear combination of elements in $H^{\neq 2}_{\CR,\bT}(\cX)\otimes_\RT \bST$ and elements in degree 2 twisted sectors. Define $\tau'=(\tau_1,\dots, \tau_{\fp'})$ and $\tau''=(\tau_0,\tau_{\fp'+1},\dots,\tau_{\chi-1})$ as in Section \ref{sec:bigQH}. Then by divisor equation, we have
$$
(a,b)_{\cX,\bT}+\llangle a,\frac{b}{z-\hat{\psi}}\rrangle^{\cX,\bT}_{0,2}=(a,be^{t'/z})_{\cX,\bT}+
\sum_{m=0}^\infty \sum_{\substack{d\in E(\cX)\\ (d,m) \neq (0,0) } }\frac{Q^de^{\int_dt'}}{m!}\langle a,\frac{be^{t'/z}}{z-\hat{\psi}},(t'')^m\rangle^{\cX,\bT}_{0,2+m,d}.
$$
In the above expression, if we fix the power of $z^{-1}$, then only finitely many terms in the expansion of $e^{t'/z}$ contribute. Therefore, the factor $e^{\int_dt'}$ can play the role of $Q^d$ and hence the restriction $\llangle a,\frac{b}{z-\hat{\psi}}\rrangle^{\cX,\bT}_{0,2}|_{Q=1}$ is well-defined. So $(a,\cS(b))_{\cX,\bT}\in \bST[\tau'][\![\tQ,\tau'',z^{-1}]\!]$ and  the operator $\cS|_{Q=1}$ is well-defined: $(a,\cS\vert_{Q=1}(b))_{\cX,\bT}\in \bST[\tau'][\![e^{\tau'},\tau'',z^{-1}]\!]$.
\end{remark}

\begin{definition}[$\bT$-equivariant big $J$-function] \label{big-J}
The $\bT$-equivariant big $J$-function $J_{\bT}^{\mathrm{big}}(z)$  is characterized by
$$
(J_{\bT}^{\mathrm{big}}(z),a)_{\cX,\bT} = (1,\cS(a))_{\cX,\bT}
$$
for any $a\in H_{\bT}^*(\cX;\bST)$. Equivalently,
$$
J_{\bT}^{\mathrm{big}}(z)= 1+ \sum_{\bsi\in I_\Si}\llangle 1, \frac{\hat{\phi}_{\bsi}}{z-\hat{\psi}}\rrangle_{0,2}^{\cX,\bT}\hat{\phi}_{\bsi}.
$$
\end{definition}

We consider several different (flat) bases for $H_{\CR,\bT}^*(\cX;\bST)$:
\begin{enumerate}
\item The classical canonical basis $\{ \phi_{\bsi}:\bsi\in I_\Si \}$ defined in Section \ref{sec:equivariantCR}.
\item The basis dual to the classical canonical basis with respect to the $\bT$-equivariant Poincare pairing:
$\{ \phi^{\bsi} =\Delta^{\bsi} \phi_{\bsi}: \bsi \in I_\Si \} $.
\item The classical normalized canonical basis
$\{ \hat{\phi}_{\bsi}=\sqrt{\Delta^{\bsi}}\phi_{\bsi} :\bsi\in I_\Si\}$ which is self-dual: $\{ \hat{\phi}^{\bsi}=\hat{\phi}_{\bsi}: \bsi \in I_\Si \}$.
\end{enumerate}

For $\bsi, \bsi'\in I_\Si$, define
$$
S^{\bsi'}_{\spa \bsi}(z) := (\phi^{\bsi'}, \cS(\phi_{\bsi})).
$$
Then $(S^{\bsi'}_{\spa \bsi}(z))$ is the matrix  of the $\cS$-operator with respect to the canonical basis
$\{\phi_{\bsi}:\bsi\in I_\Si \}$:
\begin{equation}\label{eqn:S}
\cS(\phi_{\bsi}) =\sum_{\bsi'\in I_\Si}
\phi_{\bsi'} S^{\bsi'}_{\spa \bsi}(z).
\end{equation}

For $\bsi,\bsi'\in I_\Si$, define
$$
S_{\bsi'}^{\spa \widehat{\bsi} }(z) := (\phi_{\bsi'}, \cS(\hat{\phi}^{\bsi})).
$$
Then $(S_{\bsi'}^{\spa  \widehat{\bsi}})$ is the matrix of the $\cS$-operator
with respect to the basis $\{\hat{\phi}^{\bsi}:\bsi\in I_\Si\}$ and
$\{\phi^{\bsi}: \bsi\in I_{\Si}\}$:
\begin{equation}\label{eqn:barS}
\cS(\hat{\phi}^{\bsi})=\sum_{\bsi'\in I_{\Si}} \phi^{\bsi'}
 S_{\bsi'}^{\spa \widehat{\bsi}}(z).
\end{equation}

Introduce
\begin{align*}
S_z(a,b)&=(a,\cS(b))_{\cX,\bT},\\
V_{z_1,z_2}(a,b)&=\frac{(a,b)_{\cX,\bT}}{z_1+z_2}+\llangle \frac{a}{z_1-\psi_1},
                  \frac{b}{z_2-\psi_2}\rrangle^{\cX,\bT}_{0,2}.
\end{align*}
The following identity is known (see e.g. \cite{G97}, \cite{GT}):
\begin{equation}
\label{eqn:two-in-one}
V_{z_1,z_2}(a,b)=\frac{1}{z_1+z_2}\sum_i S_{z_1}(T_i,a)S_{z_2}(T^i,b),
\end{equation}
where $T_i$ is any basis of $H^*_{\CR,\bT}(\cX;\bST)$ and $T^i$ is its dual basis.
In particular,
$$
V_{z_1,z_2}(a,b)=\frac{1}{z_1+z_2}\sum_{\bsi\in I_\Si} S_{z_1}(\hat{\phi}_{\bsi},a)S_{z_2}(\hat{\phi}_{\bsi},b).
$$

\subsection{The A-model $R$-matrix}


Let $U$ denote the diagonal matrix whose diagonal entries are the canonical coordinates.
The results in \cite{G97, G98} and \cite{Zo} imply the following statement.
\begin{theorem}\label{R-matrix}
There exists a unique matrix power series $R(z)= \one + R_1z+R_2 z^2+\cdots$
satisfying the following properties.
\begin{enumerate}
\item The entries of $R_d$ lie in $\bSTQ$.
\item $\tS=\Psi R(z) e^{U/z}$  is a fundamental solution to the $\bT$-equivariant
big QDE \eqref{eqn:qde}.
\item $R$ satisfies the unitary condition $R^T(-z)R(z)=\one$.
\item
\begin{equation}\label{eqn:R-at-zero}
\lim_{\tQ,\tau''\to 0} R_{\rho,\delta}^{\spa\si,\gamma}(z)
= \frac{\delta_{\rho,\si}}{|G_\si|}\sum_{h\in G_\si}\chi_\rho(h) \chi_\gamma(h^{-1})
\prod_{i=1}^3 \exp\Big( \sum_{m=1}^\infty \frac{(-1)^m}{m(m+1)}B_{m+1}(c_i(h))
(\frac{z}{\w_i(\si)})^m \Big),
\end{equation}
where $B_m(x)$ is the $m$-th Bernoulli polynomial, defined by the following identity:
$$
\frac{te^{tx}}{e^t-1}=\sum_{m\geq 0}\frac{B_m(x)t^m}{m!}.
$$
\end{enumerate}
\end{theorem}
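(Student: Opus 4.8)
The plan is to invoke the general existence and uniqueness theorem for the asymptotic fundamental solution of a semisimple Frobenius manifold, due to Givental \cite{G01}, applied here to the formal Frobenius structure on $\hat H$ built from the equivariant big quantum cohomology of $\cX$, and then to pin down the large radius limit \eqref{eqn:R-at-zero} by comparing with the known $R$-matrix of the product of affine pieces $\cX_\si = [\bC^3/G_\si]$. First I would recall that in Section \ref{sec:bigQH} and Section \ref{sec:A-canonical} we have already produced the idempotent basis $\{\phi_{\bsi}(t)\}$, the canonical coordinates $\{u^{\bsi}\}$, the $\Psi$-matrix, and the fundamental solution $\cS = \Psi \cdot (\text{something}) \cdot e^{U/z}$ in the sense of the QDE; the content of the theorem is that the "something" can be taken to be a power series $R(z)$ in $z$ (not merely a formal series in $1/z$) with the stated properties. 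Since $(A,\star_t)$ is semisimple after base change to $\bSTQ$ (each fibre being a product of the semisimple algebras $H^*_{\CR,\bT}(\cX_\si)\otimes\bST$), Givental's theorem applies verbatim over the ring $\bSTQ$: there is a unique $R(z) = \one + R_1 z + R_2 z^2 + \cdots$ with $R_d$ having entries in $\bSTQ$ such that $\Psi R(z) e^{U/z}$ is a (formal, asymptotic) fundamental solution of the QDE \eqref{eqn:qde}, and any such $R$ automatically satisfies the unitarity $R^T(-z)R(z) = \one$ because both the classical and quantum normalized canonical bases are orthonormal for the equivariant Poincaré pairing (parts (1), (2), (3)). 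Uniqueness follows from the fact that a gauge ambiguity $R \mapsto R \cdot \text{diag}(\exp(a_{\bsi}(z)))$ with $a_{\bsi}(z)$ odd would violate unitarity unless $a_{\bsi} = 0$; this is the standard rigidity argument.

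For part (4), the key observation is that the large radius limit $\tQ, \tau'' \to 0$ degenerates the big quantum cohomology Frobenius structure to the direct sum $\bigoplus_{\si\in\Si(3)} H^*_{\CR,\bT}(\cX_\si)\otimes_{\RT}\bST$ of the Frobenius structures of the affine orbifolds $[\bC^3/G_\si]$, with $\Psi \to \one$ (by the normalization chosen in Section \ref{sec:A-canonical}) and $u^{\bsi} \to 0$. By uniqueness of $R$ and its compatibility with direct sums of Frobenius manifolds, $\lim_{\tQ,\tau''\to 0} R(z)$ is block-diagonal, with the block indexed by $\si$ equal to the $R$-matrix of the Frobenius structure of $[\bC^3/G_\si]$; in particular $\lim R_{\rho,\delta}^{\si,\gamma} = 0$ unless $\rho = \si$. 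What remains is to identify the $R$-matrix of $[\bC^3/G_\si]$ with the explicit product of exponentials of Bernoulli polynomials in \eqref{eqn:R-at-zero}. This is precisely the computation carried out in \cite{Zo} (and, for the affine case, matching the formula already recorded in \cite{FLZ}); it comes from the $\bT$-fixed point localization on $\Mbar_{g,n}([\bC^3/G_\si],0)$, where the contribution of each Hodge bundle produces, via Mumford's formula, the factor $\exp\bigl(\sum_{m\geq 1} \frac{(-1)^m}{m(m+1)} B_{m+1}(c_i(h)) (z/\w_i(\si))^m\bigr)$ for the $i$-th coordinate direction with weight $\w_i(\si)$, and the sum over the group element $h \in G_\si$ with the characters $\chi_\rho(h)\chi_\gamma(h^{-1})$ and the factor $1/|G_\si|$ arises from the change of basis between the natural basis indexed by $h$ and the idempotent basis indexed by $\gamma \in G_\si^*$.

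I expect the main obstacle to be bookkeeping rather than conceptual: carefully verifying that the hypotheses of Givental's theorem (a conic/homogeneous semisimple Frobenius manifold with an Euler field) hold in the \emph{equivariant} and \emph{formal} (Novikov-completed) setting over the non-field ring $\bSTQ$, so that the asymptotic solution $R(z)e^{U/z}$ exists with power-series (not Laurent-series) $R$. This is exactly where one must cite \cite{Zo}, which establishes the Givental quantization formalism for GKM orbifolds precisely in order to license this step; once that is granted, parts (1)--(3) are immediate and part (4) reduces to the already-known localization computation. A secondary, purely technical point is to confirm that the normalization of canonical coordinates and of $\Psi$ fixed in Section \ref{sec:A-canonical} is the one that makes the limit of $R$ block-diagonal with no residual $z$-dependent conjugation; this follows because those choices force $\Psi \to \one$ and $u^{\bsi} \to 0$ simultaneously, so no compensating gauge factor can appear.
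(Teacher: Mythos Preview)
Your overall approach matches the paper's: the paper gives no proof beyond the sentence ``The results in \cite{G01} and \cite{Zo} imply the following statement,'' and your sketch (Givental's semisimple formalism for existence and form of $R$, then the affine computation from \cite{Zo}/\cite{FLZ} for the large-radius limit) is exactly how one unpacks that citation.

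There is, however, one concrete error in your uniqueness argument. You claim that the gauge ambiguity $R \mapsto R\cdot\mathrm{diag}(\exp(a_{\bsi}(z)))$ with $a_{\bsi}(z)$ odd in $z$ ``would violate unitarity unless $a_{\bsi}=0$.'' The opposite is true: for a diagonal factor with odd $a$, one has $\exp(a(-z))\exp(a(z))=\exp(-a(z))\exp(a(z))=\one$, so unitarity is \emph{preserved}. This is precisely why the ambiguity survives, and it is recorded as such in the paper's own Proposition~\ref{prop:fundamental}: a unitary $R$ solving the QDE is determined only up to right multiplication by $\exp(\sum_{i\geq 1} a_i z^{2i-1})$ with constant diagonal $a_i$. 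So conditions (1)--(3) alone do not give uniqueness.

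The fix is one of two things. Either you invoke the equivariant grading: the parameters $\su_1,\su_2,\su_3$ supply an Euler-type operator, and Givental's homogeneity argument in \cite{G01} then kills the constant ambiguity. Or---and this is the reading consistent with how the paper later uses the theorem in Section~\ref{Id fundamental}---you treat condition (4) not as a consequence but as part of the \emph{characterization}: the large-radius initial value fixes the residual diagonal constants, and the block-diagonal localization computation you describe shows this initial value is achieved by some solution of the QDE. Your last paragraph already gestures at the homogeneity issue, so the repair is small; just do not attribute the rigidity to unitarity.
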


Each matrix in (2) of Theorem \ref{R-matrix} represents an operator with respect to the classical canonical basis  $\{ \hat{\phi}_{\bsi}: \bsi\in I_\Si\}$.
So $R^T$ is the adjoint of $R$ with respect to the $\bT$-equivariant
Poincar\'{e} pairing $(\ , \ )_{\cX,\bT}$.
The matrix $(\tS_{\bsi'}^{\spa \widehat{\bsi}})(z)$ is of the form
$$
\tS_{\bsi'}^{\spa\widehat{\bsi}}(z)
= \sum_{\brho\in I_\Si} \Psi_{\bsi'}^{\spa \brho}
R_{\brho}^{\spa \bsi}(z) e^{u^{\bsi}/z}
=(\Psi R(z))_{\bsi'}^{\spa \bsi} e^{u^{\bsi}/z}
$$
where $R(z)= (R_{\brho}^{\spa\bsi}(z)) = \one + \sum_{k=1}^\infty R_k z^k$.

We call the unique $R(z)$ in Theorem \ref{R-matrix} the {\em A-model $R$-matrix}.
The A-model $R$-matrix plays a central role in the quantization formula of the descendent potential of $\bT$-equivariant Gromov-Witten
theory of $\cX$. We will state this formula in terms of graph sum in the the next subsection.


\subsection{The A-model graph sum} \label{sec:Agraph}
In \cite{Zo}, the third author generalizes Givental's formula for the total descendant
potential of equivariant Gromov-Witten theory of GKM manifolds to GKM orbifolds; recall that
a complex manifold/orbifold is GKM if it is equipped with a torus action with finitely many 0-dimensional
and 1-dimensional orbits.
In order to state this formula, we need to introduce some definitions.

\begin{itemize}
\item We define
$$
S^{\widehat{\underline{\bsi}}}_{\spa \widehat{\underline{\bsi'}} }(z)
:= (\hat{\phi}_{\bsi}(t), \cS(\hat{\phi}_{\bsi'}(t))).
$$
Then $(S^{ \widehat{\underline{\bsi}}  }_{\spa \widehat{\underline{\bsi'}} }(z))$ is the matrix of the $\cS$-operator with
respect to the normalized canonical basis  $\{ \hat{\phi}_{\bsi}(t): \bsi\in I_\Si\} $:
\begin{equation}
\cS(\hat{\phi}_{\bsi'}(t))=\sum_{\bsi\in I_\Si} \hat{\phi}_{\bsi}(t)
S^{\widehat{\underline{\bsi}} }_{\spa \widehat{\underline{\bsi'}} }(z).
\end{equation}
\item We define
$$
S^{\widehat{\underline{\bsi}}}_{\spa \bsi'}(z)
:= (\hat{\phi}_{\bsi}(t), \cS(\phi_{\bsi'})).
$$
Then $(S^{ \widehat{\underline{\bsi}}  }_{\spa \bsi'}(z))$ is the matrix of the $\cS$-operator with
respect to the  basis $\{\phi_{\bsi}:\bsi\in I_\Si\}$ and
$\{ \hat{\phi}_{\bsi}(t): \bsi\in I_\Si\} $:
\begin{equation}
\cS(\phi_{\bsi'})=\sum_{\bsi\in I_\Si} \hat{\phi}_{\bsi}(t)
S^{\widehat{\underline{\bsi}} }_{\spa \bsi'}(z).
\end{equation}
\end{itemize}

Given a connected graph $\Ga$, we introduce the following notation.
\begin{enumerate}
\item $V(\Ga)$ is the set of vertices in $\Ga$.
\item $E(\Ga)$ is the set of edges in $\Ga$.
\item $H(\Ga)$ is the set of half edges in $\Ga$.
\item $L^o(\Ga)$ is the set of ordinary leaves in $\Ga$. The ordinary
leaves are ordered: $L^o(\Ga)=\{l_1,\ldots,l_n\}$ where
$n$ is the number of ordinary leaves.
\item $L^1(\Ga)$ is the set of dilaton leaves in $\Ga$. The dilaton leaves are unordered.
\end{enumerate}

With the above notation, we introduce the following labels:
\begin{enumerate}
\item (genus) $g: V(\Ga)\to \bZ_{\geq 0}$.
\item (marking) $\bsi: V(\Ga) \to I_\Si$. This induces
$\bsi :L(\Ga)=L^o(\Ga)\cup L^1(\Ga)\to I_\Si$, as follows:
if $l\in L(\Ga)$ is a leaf attached to a vertex $v\in V(\Ga)$,
define $\bsi(l)=\bsi(v)$.
\item (height) $k: H(\Ga)\to \bZ_{\geq 0}$.
\end{enumerate}

Given an edge $e$, let $h_1(e),h_2(e)$ be the two half edges associated to $e$. The order of the two half edges does not affect the graph sum formula in this paper. Given a vertex $v\in V(\Ga)$, let $H(v)$ denote the set of half edges
emanating from $v$. The valency of the vertex $v$ is equal to the cardinality of the set $H(v)$: $\val(v)=|H(v)|$.
A labeled graph $\vGa=(\Ga,g,\bsi,k)$ is {\em stable} if
$$
2g(v)-2 + \val(v) >0
$$
for all $v\in V(\Ga)$.

Let $\bGa(\cX)$ denote the set of all stable labeled graphs
$\vGa=(\Gamma,g,\bsi,k)$. The genus of a stable labeled graph
$\vGa$ is defined to be
$$
g(\vGa):= \sum_{v\in V(\Ga)}g(v)  + |E(\Ga)|- |V(\Ga)|  +1
=\sum_{v\in V(\Ga)} (g(v)-1) + (\sum_{e\in E(\Gamma)} 1) +1.
$$
Define
$$
\bGa_{g,n}(\cX)=\{ \vGa=(\Gamma,g,\bsi,k)\in \bGa(\cX): g(\vGa)=g, |L^o(\Ga)|=n\}.
$$

We assign weights to leaves, edges, and vertices of a labeled graph $\vGa\in \bGa(\cX)$ as follows.
\begin{enumerate}
\item {\em Ordinary leaves.}
To each ordinary leaf $l_j \in L^o(\Ga)$ with  $\bsi(l_j)= \bsi\in I_\Si$
and  $k(l)= k\in \bZ_{\geq 0}$, we assign the following descendant  weight:
\begin{equation}\label{eqn:u-leaf}
(\cL^{\bu})^{\bsi}_k(l_j) = [z^k] (\sum_{\bsi',\brho\in I_\Si}
\left(\frac{\bu_j^{\bsi'}(z)}{\sqrt{\Delta^{\bsi'}(t)} }
S^{\widehat{\underline{\brho}} }_{\spa
  \widehat{\underline{\bsi'}}}(z)\right)_+ R(-z)_{\brho}^{\spa \bsi} ),
\end{equation}
where $(\cdot)_+$ means taking the nonnegative powers of $z$.

\item {\em Dilaton leaves.} To each dilaton leaf $l \in L^1(\Ga)$ with $\bsi(l)=\bsi
\in I_\Si$
and $2\leq k(l)=k \in \bZ_{\geq 0}$, we assign
$$
(\cL^1)^{\bsi}_k := [z^{k-1}](-\sum_{\bsi'\in I_\Si}
\frac{1}{\sqrt{\Delta^{\bsi'}(t)}}
R_{\bsi'}^{\spa \bsi}(-z)).
$$

\item {\em Edges.} To an edge connecting a vertex marked by $\bsi\in I_\Si$ and a vertex
marked by $\bsi'\in I_\Si$, and with heights $k$ and $l$ at the corresponding half-edges, we assign
$$
\cE^{\bsi,\bsi'}_{k,l} := [z^k w^l]
\Bigl(\frac{1}{z+w} (\delta_{\bsi\bsi'}-\sum_{\brho\in I_\Si}
R_{\brho}^{\spa \bsi}(-z) R_{\brho}^{\spa \bsi'}(-w)\Bigr).
$$
\item {\em Vertices.} To a vertex $v$ with genus $g(v)=g\in \bZ_{\geq 0}$ and with
marking $\bsi(v)=\bsi$, with $n$ ordinary
leaves and half-edges attached to it with heights $k_1, ..., k_n \in \bZ_{\geq 0}$ and $m$ more
dilaton leaves with heights $k_{n+1}, \ldots, k_{n+m}\in \bZ_{\geq 0}$, we assign
$$
 \Big(\sqrt{\Delta^{\bsi}(t)}\Big)^{2g(v)-2+\val(v)}\langle  \tau_{k_1}\cdots\tau_{k_{n+m}}\rangle_g,
$$
where $\langle  \tau_{k_1}\cdots\tau_{k_{n+m}}\rangle_{g}=\int_{\Mbar_{g,n+m}}\psi_1^{k_1} \cdots \psi_{n+m}^{k_{n+m}}$.
\end{enumerate}

We define the weight of a labeled graph $\vGa\in \bGa(\cX)$ to be
\begin{eqnarray*}
w_A^{\bu}(\vGa) &=& \prod_{v\in V(\Ga)} \Bigl(\sqrt{\Delta^{\bsi(v)}(t)}\Bigr)^{2g(v)-2+\val(v)} \langle \prod_{h\in H(v)} \tau_{k(h)}\rangle_{g(v)}
\prod_{e\in E(\Ga)} \cE^{\bsi(v_1(e)),\bsi(v_2(e))}_{k(h_1(e)),k(h_2(e))}\\
&& \cdot \prod_{l\in L^1(\Ga)}(\cL^1)^{\bsi(l)}_{k(l)}\prod_{j=1}^n(\cL^{\bu})^{\bsi(l_j)}_{k(l_j)}(l_j).
\end{eqnarray*}
With the above definition of the weight of a labeled graph, we have
the following theorem which expresses the $\bT$-equivariant descendent
Gromov-Witten potential of $\cX$ in terms of graph sum.

\begin{theorem}[{Zong \cite{Zo}}]
\label{thm:Zong}
 Suppose that $2g-2+n>0$. Then
$$
\llangle \bu_1,\ldots, \bu_n\rrangle_{g,n}^{\cX,\bT}=\sum_{\vGa\in \bGa_{g,n}(\cX)}\frac{w_A^{\bu}(\vGa)}{|\Aut(\vGa)|}.
$$
\end{theorem}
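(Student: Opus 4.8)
The plan is to reduce the statement to Givental's quantization formula for a generically semisimple Frobenius structure and then to convert that formula into a Feynman/Wick graph sum. Over $\bST$ the equivariant big quantum cohomology of $\cX$ is generically semisimple --- the canonical idempotents $\phi_{\bsi}(t)$ and the $R$-matrix of Theorem \ref{R-matrix} have already been produced --- and $\bT$ acts on $\cX$ with isolated fixed points and finitely many $1$-dimensional orbits, so $\cX$ is a GKM orbifold. The first goal is the identity
\[
\mathcal{D}_{\cX} \;=\; \widehat{\cS^{-1}}\cdot\widehat{\Psi\,R(z)\,e^{U/z}}\cdot\prod_{\bsi\in I_\Si}\mathcal{D}_{\mathrm{pt}},
\]
where $\mathcal{D}_{\cX}$ is the total descendant potential of $\cX$, $\mathcal{D}_{\mathrm{pt}}$ is the Witten--Kontsevich tau-function (the descendant potential of a point), and hats denote Givental quantization; the leaf weight \eqref{eqn:u-leaf} is precisely the contribution of the leg operator $\widehat{\cS^{-1}\Psi R e^{U/z}}$. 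Extracting the genus-$g$, $n$-point part of $\log\mathcal{D}_{\cX}$ and Wick-expanding the quantized quadratic and linear operators then produces $\sum_{\vGa\in\bGa_{g,n}(\cX)}w_A^{\bu}(\vGa)/|\Aut(\vGa)|$.

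For the quantization identity I would use $\bT$-equivariant virtual localization \cite{GP99} on $\Mbar_{g,n}(\cX,d)$, in its extension to Deligne--Mumford stacks and twisted stable maps \cite{AGV02,AGV08}. The $\bT$-fixed loci are indexed by decorated graphs: vertices lie over the fixed points $\fp_\si\cong\cB G_\si$ and carry a twisted-stable-map moduli space with a Hurwitz--Hodge obstruction bundle; edges lie over the $1$-dimensional orbits and contribute rational factors built from the tangent weights $\w_i(\si)$; the insertions $\bu_j$ decorate the ordinary legs. The crucial point is that, after summing over the isotropy $h\in G_\si$ at each vertex, the Hurwitz--Hodge integrals collapse to \emph{ordinary} descendant integrals $\int_{\Mbar_{g(v),\val(v)}}\psi_1^{k_1}\cdots\psi_{\val(v)}^{k_{\val(v)}}$ times exactly the correction factors in \eqref{eqn:R-at-zero}; this is the orbifold Mumford relation, i.e. the Grothendieck--Riemann--Roch computation of the Chern character of the Hodge bundle on the cyclotomic moduli stack, whose exponential generating series is the product $\prod_{i=1}^3\exp\big(\sum_{m\geq1}\tfrac{(-1)^m}{m(m+1)}B_{m+1}(c_i(h))(z/\w_i(\si))^m\big)$ appearing in \eqref{eqn:R-at-zero}. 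Feeding in the identification $\tS=\Psi R(z)e^{U/z}$ of the fundamental solution (Theorem \ref{R-matrix}(2)) reorganizes the localization sum into the ancestor quantization formula $\widehat{\Psi R e^{U/z}}\prod_{\bsi}\mathcal{D}_{\mathrm{pt}}$; the remaining factor $\widehat{\cS^{-1}}$ is the standard ancestor-to-descendant correction, which is where the $\cS$-operator --- and hence the leaf weight \eqref{eqn:u-leaf} --- enters.

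The second step is purely combinatorial. The quantized operator is $\exp$ of a sum of a quadratic piece with kernel $\frac{1}{z+w}\big(\delta_{\bsi\bsi'}-\sum_{\brho\in I_\Si}R_\brho^{\bsi}(-z)R_\brho^{\bsi'}(-w)\big)$, a linear (dilaton-shift) piece $-\sum_{\bsi'\in I_\Si}\frac{1}{\sqrt{\Delta^{\bsi'}(t)}}R_{\bsi'}^{\bsi}(-z)$, and the input data $\bu_j$ transported by the leg operator, all acting on $\prod_{\bsi}\mathcal{D}_{\mathrm{pt}}$. Wick's theorem then gives: contractions of the quadratic piece become edges with weight $\cE^{\bsi,\bsi'}_{k,l}$; the linear piece attaches dilaton leaves with weight $(\cL^1)^{\bsi}_k$; the input legs become ordinary leaves with weight $(\cL^{\bu})^{\bsi}_k(l_j)$; a correlator of $\mathcal{D}_{\mathrm{pt}}$ at a genus-$g(v)$ valence-$\val(v)$ vertex contributes $\int_{\Mbar_{g(v),\val(v)}}\prod\psi^k$ together with the normalization factor $\big(\sqrt{\Delta^{\bsi(v)}(t)}\big)^{2g(v)-2+\val(v)}$; and the overall symmetry factor of a Feynman graph is $1/|\Aut(\vGa)|$. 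The stability inequality $2g(v)-2+\val(v)>0$ is exactly the condition for the point correlator at $v$ to be nonzero, so the sum ranges over $\bGa_{g,n}(\cX)$.

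I expect the main obstacle to be the orbifold localization analysis: decomposing the $\bT$-fixed loci of the twisted-stable-map space, computing their virtual normal bundles, and proving that the Hurwitz--Hodge integrals at vertices over $\cB G_\si$ reduce, after the $R$-matrix substitution, to untwisted $\psi$-integrals over $\Mbar_{g,n+m}$ --- i.e. checking that the Bernoulli-polynomial exponentials produced by orbifold GRR match \eqref{eqn:R-at-zero} entrywise. A secondary but necessary point is that the infinite graph sum is well defined: for fixed $(g,n)$ only finitely many graph topologies occur, and by the dimension and divisor constraints only finitely many height labelings contribute to each Novikov degree, so every coefficient lies in $\bST[\![\tQ,\tau'']\!]$ and the identity holds coefficientwise.
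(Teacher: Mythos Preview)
The paper does not prove this theorem; it is imported verbatim from \cite{Zo} and used as a black box. There is therefore no in-paper proof to compare against. Your outline is a faithful sketch of the argument carried out in \cite{Zo}: generic semisimplicity of the big equivariant quantum cohomology over $\bST$, virtual localization on the twisted stable map spaces for the GKM orbifold $\cX$, orbifold Grothendieck--Riemann--Roch (as in \cite{Ts10}) to turn the Hurwitz--Hodge integrals at the gerby fixed points into ordinary $\psi$-integrals times the Bernoulli-polynomial exponentials in \eqref{eqn:R-at-zero}, identification with Givental's ancestor potential via Theorem \ref{R-matrix}(2), and the descendant/ancestor comparison via $\cS$ followed by Wick expansion to obtain the stated graph sum. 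The obstacles you flag (the orbifold fixed-locus analysis and the matching of the Bernoulli factors) are exactly the technical core of \cite{Zo}.
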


\begin{remark}\label{descendent Q}
In the above graph sum formula, we know that the restriction $S^{\widehat{\underline{\brho}} }_{\spa
  \widehat{\underline{\bsi'}}}(z)|_{Q=1}$ is well-defined by Remark \ref{Novikov}. Meanwhile by (1) in Theorem \ref{R-matrix}, we know that the restriction $R(z)|_{Q=1}$ is also well-defined. Therefore by Theorem \ref{thm:Zong}, we have $\llangle \bu_1,\ldots, \bu_n\rrangle_{g,n}^{\cX,\bT}|_{Q=1}$ is well-defined.
\end{remark}

%

We make the following observation.
\begin{lemma}\label{lm:dilaton}
$$
(\cL^{\bu})^{\bsi}_k(l_j)\Big|_{\bu_j(z)=\one z}
= -(\cL^1)^{\bsi}_k  + (\cL^{\bu})^{\bsi}_k(l_j)\Big|_{\bu_j(z)= t}
$$
\end{lemma}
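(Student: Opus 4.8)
The plan is to unwind the definition \eqref{eqn:u-leaf} of the ordinary-leaf weight and substitute the two special values of $\bu_j(z)$, using the dilaton and string equations for the $\cS$-operator. First I would recall that for the dilaton input $\bu_j(z)=\one z$ we have $\bu_j^{\bsi'}(z)/\sqrt{\Delta^{\bsi'}(t)} = z\cdot (\one)^{\bsi'}/\sqrt{\Delta^{\bsi'}(t)}$; since $\one=\sum_{\bsi'}\phi_{\bsi'}(t)=\sum_{\bsi'}\hat\phi_{\bsi'}(t)/\sqrt{\Delta^{\bsi'}(t)}$, the coefficient of $\hat\phi_{\bsi'}(t)$ in $\one$, measured in the normalized canonical frame, is $1/\sqrt{\Delta^{\bsi'}(t)}$, and the inner sum over $\bsi'$ collapses nicely. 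Then $\Big(z\sum_{\bsi'}\tfrac{1}{\sqrt{\Delta^{\bsi'}(t)}}S^{\widehat{\underline\brho}}_{\spa\widehat{\underline{\bsi'}}}(z)\Big)_+$ is the truncation of $z$ times a matrix that is $\one+O(1/z)$ (the $\cS$-operator in the normalized canonical basis has the form $\one+\cS_1/z+\cdots$), so its nonnegative-$z$ part is exactly $z\cdot\delta_{\brho,\bsi'}\,(\text{const})$ plus the degree-zero term of $\cS$; more precisely the only surviving contributions are the $z^1$-term (which is $z$ times identity) and the $z^0$-term of $z\cdot\cS$, i.e. the $z^{-1}$-coefficient of $\cS$ in that basis, which by the string equation is controlled by $t$.

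The cleaner route is to use linearity of $\bu_j\mapsto (\cL^{\bu})^{\bsi}_k(l_j)$ in $\bu_j$ together with the identity $(\cL^{\bu})^{\bsi}_k(l_j)\big|_{\bu_j(z)=\one z} = (\cL^{\bu})^{\bsi}_k(l_j)\big|_{\bu_j(z)=\one z - t} + (\cL^{\bu})^{\bsi}_k(l_j)\big|_{\bu_j(z)= t}$, so it suffices to identify the first term on the right with $-(\cL^1)^{\bsi}_k$. For $\bu_j(z)=\one z - t$, write $\one z - t$ in the normalized canonical frame; the point is that $\cS$ applied to $(\one z - t)$ has no nonnegative powers of $z$ other than the obvious $z$-term, because $(\cS - \one)$ lowers $z$-degree and $\cS(t)$'s contribution at nonnegative $z$-power cancels against the $-t$. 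Concretely, $\big(\tfrac{\bu_j^{\bsi'}(z)}{\sqrt{\Delta^{\bsi'}(t)}}S^{\widehat{\underline\brho}}_{\spa\widehat{\underline{\bsi'}}}(z)\big)_+$ with $\bu_j(z)=\one z-t$ reduces to $z\cdot\delta$ (identity part, $z^1$) with the $z^0$ term vanishing by the combination of the string/dilaton equations; therefore multiplying by $R(-z)_{\brho}^{\spa\bsi}$ and extracting $[z^k]$ gives $[z^k]\big( z\sum_{\bsi'}\tfrac{1}{\sqrt{\Delta^{\bsi'}(t)}}R_{\bsi'}^{\spa\bsi}(-z)\big) = [z^{k-1}]\sum_{\bsi'}\tfrac{1}{\sqrt{\Delta^{\bsi'}(t)}}R_{\bsi'}^{\spa\bsi}(-z)$, which is precisely $-(\cL^1)^{\bsi}_k$ by the definition of the dilaton-leaf weight. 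Summing the two pieces yields the claim.

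The main obstacle I expect is bookkeeping the interaction between the $(\ )_+$ truncation and the multiplication by $R(-z)$: one must be careful that truncating \emph{before} multiplying by $R(-z)$ (as the formula prescribes) is compatible with the reduction above, i.e. that for the special input $\one z - t$ the truncated factor is already a polynomial in $z$ (in fact just the linear term $z\cdot\one$), so there is no loss in commuting the truncation past $R(-z)$ in the relevant $z$-degrees. This rests on the precise form of the $z^{-1}$-coefficient of the $\cS$-operator — namely $(\cS_1)$ acting as $t\star$-type insertion via the two-point function, so that $\cS(\one z - t)$ has vanishing $z^0$-part — which is exactly the classical string equation $\llangle \one, \tfrac{a}{z-\hat\psi}\rrangle_{0,2} = $ (lower order) reorganized; I would cite the string equation for the $\cS$-operator and the self-duality $\hat\phi^{\bsi}=\hat\phi_{\bsi}$ to make the index juggling between $S^{\widehat{\underline\brho}}_{\spa\widehat{\underline{\bsi'}}}$ and $S^{\widehat{\underline\brho}}_{\spa\bsi'}$ transparent. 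The remaining steps are routine manipulations of formal power series in $z$.
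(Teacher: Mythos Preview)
Your approach is essentially the same as the paper's. Both compute the truncated factor for the inputs $\one z$ and $t$ separately and find that the difference is exactly $z/\sqrt{\Delta^{\brho}(t)}$, which after multiplying by $R(-z)_{\brho}^{\spa\bsi}$ and extracting $[z^k]$ yields $-(\cL^1)^{\bsi}_k$; you just reorganize this via linearity into a single computation for $\bu_j(z)=\one z - t$.

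One small cleanup: only the string equation is needed, not the dilaton equation. The key identity is $\cS_1(\one)=t$, equivalently $\llangle a,\one\rrangle_{0,2}^{\cX,\bT}=(a,t)_{\cX,\bT}$, which kills the $z^0$-term in $\big(z\,S^{\widehat{\underline{\brho}}}_{\spa\one}(z)-S^{\widehat{\underline{\brho}}}_{\spa t}(z)\big)_+$ and leaves precisely $z/\sqrt{\Delta^{\brho}(t)}$. Your first paragraph is a bit meandering; the ``cleaner route'' paragraph is the actual argument and matches the paper.
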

\begin{proof} Let $(\ ,\ )$ denote the $\bT$-equivariant
Poincar\'{e} pairing $(\ , \ )_{\cX,\bT}$. Given $u\in H^*_{\CR,\bT}(\cX)\otimes_{\RT}\otimes\bST$, we define
$S^{\widehat{\underline{\brho}} }_{\spa u}(z) := (\hat{\phi}_\brho, \cS(u))$. Then
\begin{eqnarray*}
(\cL^{\bu})^{\bsi}_k(l_j)\Big|_{\bu_j(z)= \one z} &=& [z^k]
(\sum_{\brho\in I_\Si} \left(z
S^{\widehat{\underline{\brho}} }_{\spa
  \one}(z)\right)_+ R(-z)_{\brho}^{\spa \bsi} )\\
(\cL^{\bu})^{\bsi}_k(l_j)\Big|_{\bu_j(z)= t} &=& [z^k]
(\sum_{\brho\in I_\Si} \left(
S^{\widehat{\underline{\brho}} }_{\spa
  t}(z)\right)_+ R(-z)_{\brho}^{\spa \bsi} ).
\end{eqnarray*}
where
\begin{eqnarray*}
\left(zS^{\widehat{\underline{\brho}} }_{\spa
  \one}(z)\right)_+
&=& z(\hat{\phi}^{\brho}(t), 1)
+\llangle \hat{\phi}^{\brho}(t),1\rrangle^{\cX,\bT}_{0,2}
= \frac{z}{\sqrt{\Delta^\brho(t)}} + (\hat{\phi}^{\rho}(t), t)\\
\left(S^{\widehat{\underline{\brho}} }_{\spa
  t}(z)\right)_+ &=& (\hat{\phi}^\brho(t), t)
\end{eqnarray*}
So
\[
(\cL^{\bu})^{\bsi}_k(l_j)\Big|_{\bu_j(z)= \one z}
= [z^k] \big(\sum_{\brho\in I_\Si}\frac{z}{\sqrt{\Delta^\brho(t)}}
R_\brho^{\spa\bsi}(-z)\big)\
+(\cL^{\bu})^{\bsi}_k(l_j)\Big|_{\bu_j(z)= t}
= -(\cL^1)^{\bsi}_k  +(\cL^{\bu})^{\bsi}_k(l_j)\Big|_{\bu_j(z)= t}.
\]
\end{proof}

As a special case of Theorem \ref{thm:Zong}, if $g>1$ then
$$
\llangle \ \rrangle_{g,0}^{\cX,\bT}
=\sum_{\vGa\in \Gamma_{g,0}(\cX)}
\frac{w_A^{\bu}(\vGa)}{|\Aut(\vGa)|}.
$$
We end this subsection with the following alternative graph sum formula for $\llangle \ \rrangle_{g,0}^{\cX,\bT}$.
\begin{proposition}\label{prop:stable-Fg-A}
If $g>1$ then
$$
\llangle \ \rrangle_{g,0}^{\cX,\bT}
=\frac{1}{2-2g} \sum_{\vGa\in \Gamma_{g,1}(\cX)}
\frac{w_A^{\bu}(\vGa)\Big|_{(\cL^{\bu})^{\bsi}_k(l_1) = (\cL^1)^{\bsi}_k} }{|\Aut(\vGa)|}
$$
\end{proposition}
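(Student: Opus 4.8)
The plan is to deduce this from the $n=1$ case of Theorem \ref{thm:Zong} by specializing the unique ordinary leaf to the ``dilaton direction'' $\bu_1(z)=\one z$ and comparing with the dilaton equation. Both sides of Theorem \ref{thm:Zong} with $n=1$ are available here since $2g-2+1>0$, and $\llangle\ \rrangle_{g,0}^{\cX,\bT}$ itself is governed by Theorem \ref{thm:Zong} since $2g-2>0$.

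First I would evaluate the left-hand side $\llangle \bu_1\rrangle_{g,1}^{\cX,\bT}$ at $\bu_1(z)=\one z$, obtaining $\llangle \one\hat\psi\rrangle_{g,1}^{\cX,\bT}=\sum_{m,d}\tfrac{Q^d}{m!}\langle\one\hat\psi_1,t^m\rangle^{\cX,\bT}_{g,1+m,d}$. Because $\hat\psi_1=p^*\psi_1$ is pulled back from the coarse moduli of stable maps, the usual dilaton equation for the $\hat\psi$-classes applies verbatim: $\langle\one\hat\psi_1,t^m\rangle_{g,1+m,d}=(2g-2+m)\langle t^m\rangle_{g,m,d}$, valid for all $m\geq0$ since $2g-2>0$. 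Splitting $2g-2+m=(2g-2)+m$ and re-indexing the ``$+m$'' part as a single extra insertion of $t$ gives
\[
\llangle \one\hat\psi\rrangle_{g,1}^{\cX,\bT}=(2g-2)\,\llangle\ \rrangle_{g,0}^{\cX,\bT}+\llangle t\rrangle_{g,1}^{\cX,\bT}.
\]

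Next I would expand the graph-sum side. Every $\vGa\in\bGa_{g,1}(\cX)$ has exactly one ordinary leaf $l_1$, and $w_A^{\bu}(\vGa)$ depends linearly on its weight $(\cL^{\bu})^{\bsi(l_1)}_{k(l_1)}(l_1)$; hence Lemma \ref{lm:dilaton} yields
\[
w_A^{\bu}(\vGa)\big|_{\bu_1(z)=\one z}= w_A^{\bu}(\vGa)\big|_{(\cL^{\bu})^{\bsi}_k(l_1)=-(\cL^1)^{\bsi}_k}+ w_A^{\bu}(\vGa)\big|_{\bu_1(z)=t}.
\]
Summing over $\vGa$ and applying Theorem \ref{thm:Zong} once more with $\bu_1(z)=t$ (i.e. $(u_1)_0=t$), the last term sums to $\llangle t\rrangle_{g,1}^{\cX,\bT}$. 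Cancelling this against the $\llangle t\rrangle_{g,1}^{\cX,\bT}$ produced by the dilaton equation leaves
\[
(2g-2)\,\llangle\ \rrangle_{g,0}^{\cX,\bT}=\sum_{\vGa\in\bGa_{g,1}(\cX)}\frac{w_A^{\bu}(\vGa)\big|_{(\cL^{\bu})^{\bsi}_k(l_1)=-(\cL^1)^{\bsi}_k}}{|\Aut(\vGa)|},
\]
and, using once more the linearity of $w_A^{\bu}(\vGa)$ in $(\cL^{\bu})^{\bsi}_k(l_1)$ to absorb the minus sign into the prefactor $\tfrac1{2g-2}=-\tfrac1{2-2g}$, this is exactly the asserted formula.

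The step requiring the most care is the bookkeeping that makes the two ``inhomogeneous'' contributions cancel: the $\llangle t\rrangle_{g,1}^{\cX,\bT}$ coming from the ``$+m$'' term of the dilaton equation must match exactly the $w_A^{\bu}(\vGa)\big|_{\bu_1(z)=t}$ term furnished by Lemma \ref{lm:dilaton} — which is precisely the form in which that lemma was recorded. Once this matching is in place the rest is routine; one only needs to keep track of the sign and of the (harmless) appearance of $t$ both as the bulk parameter and as the extra insertion in $\llangle t\rrangle_{g,1}^{\cX,\bT}$.
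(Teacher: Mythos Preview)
Your proposal is correct and follows essentially the same approach as the paper: specialize Theorem \ref{thm:Zong} at $n=1$ with $\bu_1(z)=\one z$, apply Lemma \ref{lm:dilaton} to split the ordinary-leaf weight, recognize the $\bu_1(z)=t$ piece as $\llangle t\rrangle_{g,1}^{\cX,\bT}$ via Theorem \ref{thm:Zong} again, and match against the dilaton-equation expansion of $\llangle \one\hat\psi\rrangle_{g,1}^{\cX,\bT}$ to cancel the $\llangle t\rrangle_{g,1}^{\cX,\bT}$ terms. The sign-and-linearity bookkeeping you flag is exactly what the paper does as well.
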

\begin{proof}
Theorem \ref{thm:Zong} and Lemma \ref{lm:dilaton} imply
\begin{equation}\label{eqn:psi-graph-sum}
\llangle \hat{\psi} \rrangle_{g,1}^{\cX,\bT} =
-\sum_{\vGa\in \Gamma_{g,1}(\cX)}
\frac{w_A^{\bu}(\vGa)\Big|_{(\cL^{\bu})^{\bsi}_k(l_1) = (\cL^1)^{\bsi}_k} }{|\Aut(\vGa)|} + \llangle t \rrangle_{g,1}^{\cX,\bT}.
\end{equation}

On the other hand,
\begin{eqnarray*}
\llangle \hat{\psi}\rrangle_{g,1}^{\cX,\bT} &=&
\sum_{m=0}^\infty\sum_{d\in E(\cX)}\frac{Q^d}{m!}\langle \hat{\psi}, t^m\rangle^{\cX,\bT}_{g,1+m,d} \\
&=& \sum_{m=0}^\infty\sum_{d\in E(\cX)}\frac{Q^d}{m!} (2g-2+m)
\langle t^m\rangle^{\cX,\bT}_{g,m,d} \\
&=& (2g-2) \sum_{m=0}^\infty\sum_{d\in E(\cX)}\frac{Q^d}{m!}
\langle t^m\rangle^{\cX,\bT}_{g,m,d}
+\sum_{m=1}^\infty \sum_{d\in E(\cX)}\frac{Q^d}{(m-1)!}
\langle t^m \rangle^{\cX,\bT}_{g,m,d}\\
&=& (2g-2) \llangle \ \rrangle_{g,0}^{\cX,\bT}
+ \llangle t\rrangle_{g,1}^{\cX,\bT},
\end{eqnarray*}
where the second equality follows from the dilaton equation. Therefore,
\begin{equation}\label{eqn:dilaton-equation}
\llangle\hat{\psi}\rrangle_{g,1}^{\cX,\bT} = (2g-2)\llangle \ \rrangle_{g,0}^{\cX,\bT} +\llangle t\rrangle_{g,1}^{\cX,\bT}.
\end{equation}
The proposition follows from Equation \eqref{eqn:psi-graph-sum}
and Equation \eqref{eqn:dilaton-equation}.
\end{proof}

\subsection{Genus zero mirror theorem over the small phase space} \label{sec:I-J}
In \cite{CCIT}, Coates-Corti-Iritani-Tseng proved a genus-zero mirror theorem
for toric Deligne-Mumford stacks. This theorem is also a consequence of genus-zero wall-crossing in orbifold
quasimap theory \cite{CCK}, and takes a particularly simple form when
the extended stacky fan satisfies the {\em weak Fano} condition \cite[Section 4.1]{Iri}.
We state this theorem for toric Calabi-Yau 3-folds over the small phase space.

\subsubsection{The small phase space}
\label{sec:choice-H}
So far we work with the big phase space $H_{\CR,\bT}^*(\cX;\bST)\cong \bST^{\oplus \chi}$.
In this paper, we define the small phase space to be  $H_{\CR,\bT}^2(\cX) \cong \bC^{3+\fp}$.

Following  \cite[Section 3.1]{Iri},
we choose $H_1,\ldots, H_\fp \in \bL^\vee\cap \Nef(\bSi^{\ext})$ (where
$H_a$ corresponds to the symbol $p_a$ in \cite{Iri}) such that
\begin{itemize}
\item $\{H_1,\ldots,H_\fp\}$ is a $\bQ$-basis of $\bL_\bQ^\vee$.
\item $\{\bar{H}_1,\ldots, \bar{H}_{\fp'}\}$ is a $\bQ$-basis
of $H^2(\cX;\bQ)=H^2(X_\Si;\bQ)$.
\item $H_a= D_{3+a}$ for $a= \fp'+1,\dots,\fp$.
\item $\bar H_a=H_a$ for $i=1,\dots,\fp'$, where we regard $H^2(\cX;\bQ)$ as a subspace of $\bL^\vee_\bQ$ as in Equation \eqref{eqn:split}, i.e. $D^\vee_i(H_a)=0$.
\item Given any $3$-cone $\si\in \Si(3)$, $\{ D_i: b_i\in I_\si\}$ is a $\bQ$-basis of $\bL_\bQ^\vee$ and
$H_a \in \widetilde{\Nef}_\si =\sum_{i\in I_\si} \bR_{\geq 0} D_i$, so
\begin{equation}\label{eqn:s}
H_a =\sum_{b_j\in I_\si} s^\si_{aj} D_j
\end{equation}
where $s^\si_{aj} \in \bQ_{\geq 0}$.  We choose $H_a$ such that $s_{aj}^\si \in \bZ_{\geq 0}$ for
all $\si\in \Si(3)$, $j\in \{ j: b_j\in I_\si\}$ and $a\in \{1,\ldots,\fp\}$.
\end{itemize}
Then $H_1,\ldots, H_\fp$ is a $\bC$-basis of $H_{\CR}^2(\cX)\cong \bC^\fp$, and
$\bar{H}_1,\ldots, \bar{H}_{\fp'}$ lie in the Kahler cone $C(\Si)$.

For $a=1,\ldots,\fp'$ let $\bar{H}_a^{\bT}\in H^2_{\bT}(\cX)$ be the unique
$\bT$-equivariant lifting of $\bar{H}_a\in H^2(\cX)$ such that
$\bar H_a^{\bT}|_{\fp_{\si_0}}=0$. Then
$$
H_{\CR,\bT}^2(\cX) = \bigoplus_{i=1}^{3+\fp}\bC D_i^{\bT}
=  \bC \su_1 \oplus \bC \su_2 \oplus \bC \su_3\oplus  \bigoplus_{a=1}^{\fp'} \bC \bar{H}_a^{\bT} \oplus \bigoplus_{a=\fp'+1}^\fp \bC \one_{b_{a+3}}
$$
Any $\btau\in H^2_{\CR,\bT}(\cX,\bC)$ can be written as
$$
\btau = \tau_0 +\sum_{a=1}^{\fp'} \tau_a \bar{H}^{\bT}_a \oplus \sum_{a=\fp'+1}^\fp \tau_a \one_{b_{a+3}}
$$
where $\tau_0 \in H^2(B\bT)=\bC u_1\oplus \bC u_2\oplus \bC u_3$ and $\tau_1,\ldots,\tau_\fp\in \bC$.
We write $\btau=\btau'+\btau''$, where
$$
\btau'\in H^2_{\bT}(\cX)= H^2_{\bT}(X_\Si), \quad \btau'' = \sum_{a=\fp'+1}^\fp \tau_a \one_{b_{a+3}}.
$$

\subsubsection{The small equivariant quantum cohomology ring}
In section \ref{sec:bigQH}, we defined the big equivariant quantum cohomology ring, where the quantum product $\star_t$ depends on the point $t$ in the big phase space $H_{\CR,\bT}^*(\cX;\bST)$. The small equivariant quantum cohomology ring is defined by restricting $t$ to the small phase space $H_{\CR,\bT}^2(\cX)$.

More concretely, let $QH^*_{\CR,\bT}(\cX):=H_{\CR,\bT}^*(\cX;\bST)\otimes_{\bST}
\bST[\![\tQ,\tau_a]\!]_{a=\fp'+1,\cdots,\fp}$. Then $QH^*_{\CR,\bT}(\cX)$ is a free $\bST[\![\tQ,\tau_a]\!]_{a=\fp'+1,\cdots,\fp}$-module of rank $\chi$. Define the small quantum product $\star_{\btau}$ to be $\star_{\btau}:=\star_t|_{t=\btau}$, where $\btau$ is in the small phase space $H_{\CR,\bT}^2(\cX)$. The pair $(QH^*_{\CR,\bT}(\cX),\star_{\btau})$ is called the \emph{small equivariant quantum cohomology ring} of $\cX$.

The small equivariant quantum cohomology ring $(QH^*_{\CR,\bT}(\cX),\star_{\btau})$ is still semisimple. In fact, let $\phi_\bsi(\btau):=\phi_\bsi(t)|_{t=\btau}$ be the restriction of $\phi_\bsi(t)$ to the small phase space. Then
$$
\{\phi_\bsi(\btau):\bsi\in I_\Si\}
$$
is a canonical basis of $(QH^*_{\CR,\bT}(\cX),\star_{\btau})$.

\subsubsection{The equivariant small $J$-function}
The $\bT$-equivariant small $J$-function $J_{\bT}(\btau,z)$ is the restriction
of the $\bT$-equivariant big $J$-function to the small phase space. Given
$\btau \in  H^2_{\CR,\bT}(\cX)$
$$
J_{\bT}(\btau,z) := J^{\mathrm{big}}_{\bT}(z)|_{t=\btau, Q=1},
$$
where $J^{\mathrm{big}}_{\bT}(z)$ is defined in Definition \ref{big-J}.
The restriction to $Q=1$ is well-defined by Remark \ref{Novikov}.
Therefore,
\begin{eqnarray*}
&& J_{\bT}(\btau,z) = 1+\sum_{m=0}^\infty \sum_{d\in E(\cX)}\sum_{\bsi\in I_\Si}\frac{1}{m!}\langle 1,\frac{\hat{\phi}_{\bsi}}{z-\hat{\psi}},
\btau^m\rangle_{0,2+m,d}^{\cX,\bT}\hat{\phi}_{\bsi}\\
&=&
e^{ \btau'/z }(1+\sum_{m=0}^\infty \sum_{d\in E(\cX)}\sum_{\bsi\in I_\Si}
\frac{e^{\langle \btau', d\rangle}}{m!}
\langle 1, \frac{\hat{\phi}_{\bsi}}{z-\hat{\psi}}, (\btau'')^m\rangle_{0,2+m,d}^{\cX,\bT}
\hat{\phi}_{\bsi})
\end{eqnarray*}

\subsubsection{The equivariant small $I$-function}
\label{sec:small-I}

We define \emph{charges} $m^{(a)}_i\in \bQ$ by
$$
D_i=\sum_{a=1}^\fp m^{(a)}_i H_a.
$$
Let $t_0,q=(q_1,\ldots, q_\fp)$ be formal variables, and define
$q^\beta = q_1^{\langle H_1,\beta\rangle} \cdots q_\fp^{\langle H_\fp,\beta\rangle}$
for $\beta\in \bK$. Given the choice of $H_a$ in Section \ref{sec:choice-H}, $q_i=q^{D_i^\vee}$ for $i=\fp'+1,\dots,\fp$. The limit point $q\to 0$ is a B-model large
complex structure/orbifold mixed-type limit point. Let $q_K=(q_1,\dots,q_{\fp'})$, and $q_\mathrm{orb}=(q_{\fp'+1},\dots,q_{\fp})$. Then we say \emph{one takes the large complex structure limit} by setting $q_K=0$. Under mirror symmetry, this corresponds to taking the large radius limit  (i.e. setting all K\"ahler classes to infinity) while preserving the twisted classes. When $\cX$ is a smooth toric
variety, we have $\fp'=\fp$.  Following \cite[Definition 4.1]{Iri}, and \cite[Definition 28]{CCIT},
we define $\bT$-equivariant small $I$-function as follows.
\begin{definition}\label{def-I}
\begin{eqnarray*}
I_{\bT}(t_0,q,z) &=& e^{ (t_0+\sum_{a=1}^{\fp'} \bar H_a^\bT \log q_a)/z}
\sum_{\beta\in \bK_\eff} q^\beta \prod_{i=1}^{3+\fp'}
\frac{\prod_{m=\lceil \langle D_i, \beta \rangle \rceil}^\infty
(\bar{D}^\bT_i  +(\langle D_i,\beta\rangle -m)z)}{\prod_{m=0}^\infty(\bar{D}^\bT_i+(\langle D_i,\beta\rangle -m)z)} \\
&& \quad \cdot \prod_{i=4+\fp'}^{3+\fp} \frac{\prod_{m=\lceil \langle D_i, \beta \rangle \rceil}^\infty (\langle D_i,\beta\rangle -m)z}
{\prod_{m=0}^\infty(\langle D_i,\beta\rangle -m)z} \one_{v(\beta)}.
\end{eqnarray*}
Note that $\langle H_a,\beta\rangle\geq 0$ for $\beta\in \bK_{\eff}$.
\end{definition}

\begin{remark}
$\bar{D}_i^{\bT}\in H^2_{\bT}(\cX)$ in this paper corresponds to $u_i$ in \cite[Definition 28]{CCIT},
and $H_a$ (resp. $\bar{H}_a$) in this paper corresponds to $p_a$ (resp. $\bar{p}_a$) in \cite{Iri}.
The $I$-function in \cite[Definition 28]{CCIT} depends on variables
$t_1,\ldots, t_{3+\fp'}$, which are related to the variables $t_0, \log q_1,\ldots, \log q_{\fp'}$
in Definition \ref{def-I} by
$$
\sum_{i=1}^{3+\fp'} t_i \bar{D}_i^{\bT} = t_0 +\sum_{a=1}^{\fp'} \log q_a \bar{H}_a^\bT.
$$
Equivalently,
$$
t_0 = \sum_{i=1}^3 t_i \sw_i,\quad
\log q_a =\sum_{i=1}^{3+\fp'} m_i^{(a)} t_i,
$$
where $\sw_i\in \bC \su_1\oplus \bC \su_2\oplus \bC \su_3$ is the restriction
of $D_i^{\bT}$ to the fixed point $\fp_{\si_0}$.
\end{remark}

We now study the expansion of $I_{\bT}(t_0,q,z)$ in powers of $z^{-1}$.
It can be rewritten as
\begin{eqnarray*}
I_{\bT}(t_0,q,z) &=& e^{ (t_0 +\sum_{a=1}^{\fp'} \bar{H}_a^\bT \log q_a )/z}
\sum_{\beta\in \bK_\eff} \frac{q^\beta}{z^{\langle \hat \rho ,\beta\rangle + \age(v(\beta))} } \prod_{i=1}^{3+\fp'}
\frac{\prod_{m=\lceil \langle D_i, \beta \rangle \rceil}^\infty ( \frac{\bar D^\bT_i}{z}  +\langle D_i,\beta\rangle -m)}
{\prod_{m=0}^\infty(\frac{\bar D^\bT_i}{z}+ \langle D_i,\beta\rangle -m)} \\
&& \quad \cdot \prod_{i=4+\fp'}^{3+\fp} \frac{\prod_{m=\lceil \langle D_i, \beta \rangle \rceil}^\infty
(\langle D_i,\beta\rangle -m)}{\prod_{m=0}^\infty(\langle D_i,\beta\rangle -m)} \one_{v(\beta)}
\end{eqnarray*}
where $\hat \rho = D_1+\cdots +D_{3+\fp} \in C(\Si^{\ext})$.

For $i=1,\ldots, 3+\fp$, we will define $\Omega_i \subset \bK_\eff-\{0\}$.
and $A_i(q)$ supported on $\Omega_i$. We observe that, if $\beta\in \bK_\eff$ and $v(\beta)=0$ then
$\langle D_i,\beta\rangle\in \bZ$ for $i=1,\ldots, 3+\fp$.
\begin{itemize}
\item For $i=1,\ldots,3+\fp'$, let
$$
\Omega_i =\left \{ \beta\in \bK_\eff: v(\beta)=0, \langle D_i,\beta\rangle <0 \textup{ and  }
\langle D_j, \beta \rangle \geq 0 \textup{ for } j\in \{1,\ldots, 3+\fp\}-\{i\} \right \}.
$$
Then $\Omega_i\subset \{ \beta\in \bK_\eff: v(\beta)=0, \beta\neq 0\}$. We define
$$
A_i(q):=\sum_{\beta \in \Omega_i} q^\beta
\frac{(-1)^{-\langle D_i,\beta\rangle-1}(-\langle D_i, \beta\rangle -1)! }{
\prod_{j\in\{1,\ldots, 3+\fp\}-\{i\}}\langle D_j,\beta\rangle!}.
$$
\item For $i=4+\fp',\ldots, 3+\fp$, let
$$
\Omega_i := \{ \beta\in \bK_\eff: v(\beta)= b_i, \langle D_j, \beta\rangle \notin \bZ_{<0} \textup{ for }j=1,\ldots, 3+\fp\},
$$
and define
$$
A_i(q) = \sum_{\beta\in \Omega_i} q^\beta
\prod_{j=1}^{3+\fp} \frac{\prod_{m=\lceil \langle D_j, \beta \rangle \rceil}^\infty (\langle D_j,\beta\rangle -m)}{\prod_{m=0}^\infty(\langle D_j,\beta\rangle -m)}.
$$
\end{itemize}
Note that  $A_i(q) =q_i+O(|q_{\mathrm{orb}}|^2)+O(q_K)$ for $i=4+\fp',\ldots, 3+\fp$.

$$
I(t_0,q,z)= 1 +\frac{1}{z}(t_0 +\sum_{a=1}^{\fp'} \log(q_a) \bar{H}^{\bT}_a +
\sum_{i=1}^{3+\fp'} A_i(q) \bar{D}_i^{\bT}+ \sum_{i=4+\fp'}^{3+\fp} A_i(q) \one_{b_i}) + o(z^{-1}).
$$
where $o(z^{-1})$ involves $z^{-k}$, $k\geq 2$ We have
$$
\bar{D}^{\bT}_i =
\begin{cases} \sum_{a=1}^{\fp'}m_i^{(a)} \bar{H}_a^{\bT} +  \sw_i, & i=1,2,3,\\
\sum_{a=1}^{\fp'} m_i^{(a)} \bar{H}_a^{\bT}, & 4\leq i\leq 3+\fp'.
\end{cases}
$$
Let $S_a(q):=\sum_{i=1}^{3+\fp'} m_i^{(a)} A_i(q)$. Then
$$
I_{\bT}(t_0,q,z)= 1 +\frac{1}{z}( (t_0 +\sum_{i=1}^3 \sw_i A_i(q))
 +\sum_{a=1}^{\fp'} (\log(q_a) + S_a(q)) \bar{H}^{\bT}_a
+ \sum_{i=4+\fp'}^{3+\fp} A_i(q) \one_{b_i}) + o(z^{-1}).
$$

\subsubsection{The mirror theorem}
The results in \cite{CCK, CCIT} imply the following $\bT$-equivariant
mirror theorem:
\begin{theorem} \label{thm:IJ}
$$
J_{\bT}(\btau,z) = I_{\bT}(t_0,q, z),
$$
where the equivariant closed mirror map  $(t_0,q)\mapsto  \btau(t_0,q)$
is determined by the first-order term in the asymptotic expansion of the $I$-function
$$
I(t_0,q,z)=1+\frac{ \btau(t_0,q)}{z}+o(z^{-1}).
$$
More explicitly, the equivariant closed mirror map is given by
$$
\btau =\tau_0(t_0,q) + \sum_{a=1}^{\fp'}\tau_a(q) \bar{H}^\bT_a +\sum_{a=\fp'+1}^\fp \tau_a(q) \one_{b_{a+3}},
$$
where
\begin{eqnarray}
\label{eqn:closed-mirror-map}
\tau_0(t_0,q) &=& t_0 +\sum_{i=1}^3 \w_i A_i(q), \nonumber\\
\tau_a(q) &=& \begin{cases}
\log(q_a)+ S_a(q)
, & 1\leq a\leq \fp',\\
A_{a+3}(q), & \fp'+1\leq a\leq \fp.
\end{cases}
\end{eqnarray}
\end{theorem}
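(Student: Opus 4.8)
The plan is to deduce the statement from the genus-zero mirror theorem of Coates--Corti--Iritani--Tseng \cite{CCIT}, in the weak-Fano packaging of Iritani \cite{Iri}. That theorem asserts that, when the extended stacky fan $\bSi^\ext$ is weak Fano, the $\bT$-equivariant $I$-function $I_\bT(t_0,q,z)$ lies, as a family over $(t_0,q)$, on Givental's Lagrangian cone $\mathcal{L}_\cX$ of the $\bT$-equivariant Gromov--Witten theory of $\cX$. So the first step is to observe that our semi-projective toric Calabi--Yau $3$-orbifold meets this hypothesis: as recorded in Section \ref{sec:small-I}, the class $\hat\rho = D_1 + \cdots + D_{3+\fp}$ lies in the extended K\"ahler cone $C(\bSi^\ext)$, which is precisely the (extended) weak-Fano condition of \cite{Iri}; hence \cite{CCIT} applies. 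One works after specializing the Novikov variable $Q=1$, which is legitimate by Remark \ref{Novikov}: the divisor equation lets the factors $e^{\langle\btau',d\rangle}$ carry the Novikov grading, so everything is well defined over $\bST[\tau'][\![e^{\tau'},\tau'',z^{-1}]\!]$.

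Next I would invoke the standard characterization of the small $J$-function among the points of $\mathcal{L}_\cX$: using the string and divisor equations together with the conical structure, $J_\bT(\btau,z)$ is the unique family of points on $\mathcal{L}_\cX$ of the normalized shape $1 + \btau/z + o(z^{-1})$ with $\btau$ ranging over a formal neighborhood of the large radius/orbifold limit in $H^2_{\CR,\bT}(\cX)$, and whose $z^{-1}$-coefficient is the very point $\btau$ at which it is evaluated. Consequently it suffices to check that $I_\bT(t_0,q,z)$ is of this normalized shape and to read off its $z^{-1}$-coefficient.

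That is exactly the expansion carried out in Section \ref{sec:small-I}. The weak-Fano condition, together with the accounting of leading $z$-powers done there (each factor $\langle D_i,\beta\rangle<0$ with $i\le 3+\fp'$ contributing an extra $\bar D_i^\bT/z$), shows that the $z^0$-term of $I_\bT$ is exactly $1$, i.e. there is no $z^0$ correction to the mirror map, while the coefficient of $z^{-1}$ -- obtained by expanding $\bar D_i^\bT$ in the frame $\{\sw_i,\bar H_a^\bT\}$ and setting $S_a(q) = \sum_i m_i^{(a)} A_i(q)$ -- is the class $\btau(t_0,q)$ displayed in \eqref{eqn:closed-mirror-map}. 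Combining with the previous paragraph: $I_\bT(t_0,q,z)$ is a family on $\mathcal{L}_\cX$ in normalized form whose $z^{-1}$-coefficient is $\btau(t_0,q)$, so by uniqueness $I_\bT(t_0,q,z) = J_\bT(\btau(t_0,q),z)$; reading off the coordinates of $\btau(t_0,q)$ in the frame $\{\su_1,\su_2,\su_3,\bar H_1^\bT,\dots,\bar H_{\fp'}^\bT,\one_{b_{a+3}}\}$ yields the explicit formulas for $\tau_0,\tau_1,\dots,\tau_\fp$.

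The main obstacle, I expect, is not geometric but a matter of convention-matching with \cite{CCIT, Iri}: one must verify that the parameters $t_1,\dots,t_{3+\fp'}$ and the equivariant lifts used there correspond, under the dictionary recorded in the remark following Definition \ref{def-I}, to $t_0,\log q_1,\dots,\log q_{\fp'}$ and to the lifts $\bar H_a^\bT$ normalized by $\bar H_a^\bT|_{\fp_{\si_0}}=0$, and that the weak-Fano specialization of \cite{Iri} reproduces the $I$-function of Definition \ref{def-I} on the nose. Once these identifications are in place the theorem follows immediately.
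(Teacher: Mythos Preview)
Your proposal is correct and is essentially the same approach as the paper's: the paper does not give a proof at all but simply states that the theorem is implied by the main result of \cite{CCIT}, and your outline is precisely the standard deduction one performs to extract this specific $I=J$ statement from the Lagrangian-cone formulation there, using the $z^{-1}$-expansion already computed in Section~\ref{sec:small-I}. One small remark: in the Calabi--Yau case $\hat\rho$ lies on the boundary of the extended nef cone rather than in the open extended K\"ahler cone (its $H^2(\cX)$-component is $c_1(\cX)=0$), but this is still weak Fano in the sense of \cite{Iri} and does not affect your argument.
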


Note $S_a(q)$ and $A_{a+3}(q)$ do not contain any equivaraint parameter $\w_i$ and is an element in $\bC[\![q_1,\dots,q_\fp]\!]$ by degree reason. Under this mirror map, the B-model large complex structure/orbifold mixed-type
limit $q\to 0$ corresponds to the A-model large radius/orbifold
mixed type limit $\tQ\to 0, \tau''\to 0$.

\subsection{Non-equivariant small $I$-function}
Choose a basis $\{ e_1, \ldots, e_\fp\}$ of $H^2_{\CR}(\cX)$ such
that $\{e_1,\ldots, e_{\fg} \}$ is a basis of $H^2_{\CR,\mathrm{c}}(\cX)$.
We choose a basis $\{e^1, \ldots, e^{\fg}\}$ of $H^4_{\CR}(\cX)$ which is
dual to $\{e_1,\ldots, e_{\fg}\}$ under the perfect pairing
$H^2_{\CR,\mathrm{c}}(\cX)\times H^4_{\CR}(\cX)\to \bC$.
Then
\begin{align*}
I(q,z):= &I_{\bT}(0,q,z)\Bigr|_{\bar{H}_a^{\bT}=\bar{H}_a, \bar{D}_i^{\bT}= \bar{D}_i} =  1 +\frac{1}{z} \sum_{a=1}^\fp T^a(q) e_a +
\frac{1}{z^2} \sum_{b=1}^{\fg} W_b(q) e^b\\
=&1 +\frac{1}{z} \sum_{a=1}^\fp \tau_a(q) H_a +
\frac{1}{z^2} \sum_{b=1}^{\fg} W_b(q) e^b
\end{align*}
for some generating functions $T^a(q), W_b(q)$ of $q$.

For $b=1,\ldots,\fg$, the non-equivariant limit of
$(I_\bT(0, q,z),e_b)$ exists, and is equal to $z^{-2}W_b(q)$;  the non-equivariant limit of
$\llangle e_b\rrangle_{0,1}^{\cX,\bT}$ exists, and is denote by $\llangle e_b\rrangle_{0,1}^{\cX}$.
By the mirror theorem
$$
z^2(I_\bT(0, q,z),e_b) = z^2(J_{\bT}(\btau(0,q),z), e_b) = z^2 \llangle \frac{e_b}{z(z-\psi)}\rrangle_{0,1}^{\cX,\bT}\Bigr|_{t=\btau(0,q), Q=1}.
$$
Taking the non-equivariant limit of the above equation, we obtain
$$
W_b(q) =  \llangle e_b \rrangle_{0,1}^{\cX}\Bigr|_{t=\btau, Q=1} 
$$
under the mirror map.  

When the coarse moduli space $X_\Si$ of $\cX$ is a smooth toric variety (so that
$X_\Si=\cX$),
$T^1,\ldots, T^\fp$ have logarithm singularities
and $W_1,\ldots, W_{\fg}$  have double logarithm singularities.

\subsection{Non-equivariant Picard-Fuchs System} \label{sec:PF}
Given $\beta\in \bL$, define
\begin{align*}
\bD_\beta :=  q^\beta  \prod_{i: \langle D_i,\beta\rangle <0} \prod_{m=0}^{-\langle D_i,\beta\rangle -1}
(\bD_i-m) -\prod_{i:\langle D_i,\beta\rangle >0} \prod_{m=0}^{\langle D_i,\beta\rangle -1}(\bD_i -m)
\end{align*}
where
$$
\bD_i =\sum_{a=1}^{\fp} m_i^{(a)}  q_a\frac{\partial}{\partial q_a}.
$$

\begin{proposition} \label{prop:PF}
The solution space to the non-equivariant Picard-Fuchs system
\begin{equation}
\bD_\beta F(q) =0, \quad \beta \in \bL.
\label{eqn:PF}
\end{equation}
is $1+\fp+\fg$-dimensional. It is spanned by the coefficients of the non-equivariant small $I$-function:
  \[
  \{1,\tau_1(q),\dots,\tau_\fp(q),W_1(q),\dots,W_\fg(q)\}
  \]
  or equivalently by
  \[
  \{1,T^1(q),\dots,T^\fp(q),W_1(q),\dots,W_\fg(q)\}.
  \]
\end{proposition}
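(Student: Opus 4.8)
The plan is to prove the three assertions separately: that the listed $1+\fp+\fg$ functions are solutions, that they are linearly independent, and that the solution space has dimension exactly $1+\fp+\fg$. For the first assertion I would argue directly from the combinatorial structure of the $\bT$-equivariant $I$-function. The operator $\bD_\beta$ is essentially the classical Gelfand--Kapranov--Zelevinsky (GKZ) operator attached to the extended stacky fan, and it is a standard computation that $I_{\bT}(t_0,q,z)$, as a function of $q$, is annihilated by $\bD_\beta$ for every $\beta\in\bL$ (this is how the hypergeometric $I$-function is characterized; see \cite{Iri,CCIT}). Taking the non-equivariant limit $\bar H_a^\bT=\bar H_a$, $\bar D_i^\bT=\bar D_i$ and then extracting the coefficient of each cohomology class $1$, $H_a$ (equivalently $e_a$), and $e^b$ in the $z$-expansion $I(q,z)=1+z^{-1}\sum_a T^a(q)e_a+z^{-2}\sum_b W_b(q)e^b$ shows that each of the scalar functions $1$, $T^a(q)$ (equivalently $\tau_a(q)$, since the two bases $\{e_a\}$ and $\{H_a\}$ of $H^2_\CR(\cX)$ differ by an invertible constant change of basis), and $W_b(q)$ is annihilated by $\bD_\beta$. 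The passage from ``$I$ is a solution'' to ``each coefficient is a solution'' uses only that $\bD_\beta$ acts on the $q$-variables and commutes with multiplication by the (constant) cohomology classes $\bar D_i, \bar H_a$, and that these classes together with their products span the relevant graded pieces.

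For linear independence, I would use the leading asymptotics near the large complex structure / orbifold limit point $q\to 0$ established in Section \ref{sec:small-I} and Section \ref{sec:I-J}: the constant solution is $1+O(q)$; each $\tau_a(q)$ with $1\le a\le\fp'$ has the form $\log q_a+S_a(q)$ with $S_a$ a power series vanishing at $q=0$, hence contributes a genuine logarithm $\log q_a$; each $\tau_a(q)$ with $\fp'+1\le a\le\fp$ satisfies $\tau_a=A_{a+3}(q)=q_{a+3}+O(\ldots)$ and so is a power series with distinct leading monomial; and each $W_b(q)$ has a double-logarithmic leading term (as noted at the end of Section \ref{sec:I-J} in the manifold case, and more generally a $\log^2$ or $q\log$ type singularity that is not in the span of the lower-order solutions). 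Since $1$, the various $\log q_a$, the algebraically independent monomials, and the double-logarithmic terms are linearly independent as (multivalued) functions on a punctured neighborhood of $0$, the $1+\fp+\fg$ functions are linearly independent over $\bC$.

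Finally, for the dimension count I would invoke the fact that the solution space of this particular Picard--Fuchs/GKZ system is finite-dimensional and compute its rank. There are two natural routes. One is to identify the solution sheaf with (a subquotient of) the Gauss--Manin system of the mirror family, or equivalently with the space of relative periods, whose rank equals $\dim_\bC H^*_{\CR}(\cX)$ cut down appropriately; by the cohomology dimension formulas recorded in Section \ref{sec:CR}, namely $\dim H^0=1$, $\dim H^2_\CR=\fp$, $\dim H^4_\CR=\fg$ (and $H^6=0$ by non-compactness), this gives exactly $1+\fp+\fg$. The cleaner route, and the one I would write out, is to use the genus-zero mirror theorem (Theorem \ref{thm:IJ}): the components of the $J$-function $J_\bT(\btau,z)$ span the solution space of the quantum differential equation, and under the mirror map the $J$-function equals the $I$-function, so the solution space of \eqref{eqn:PF} is spanned by (the nonequivariant limits of) the components of $I(q,z)$; since $I(q,z)$ takes values in $H^0\oplus H^2_\CR\oplus H^4_\CR$ and its $H^0$-component is the constant $1$, its $H^2_\CR$-components are $T^1,\dots,T^\fp$, and its $H^4_\CR$-components are $W_1,\dots,W_\fg$ (the higher $z$-powers produce no new functions because the quantum product is a deformation of the cup product, which vanishes in degrees $>6$), the span is at most $1+\fp+\fg$-dimensional. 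Combined with the linear independence above, the dimension is exactly $1+\fp+\fg$ and the two displayed sets are bases.

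The main obstacle is the upper bound on the dimension: one must rule out that solving $\bD_\beta F=0$ for all $\beta\in\bL$ produces extra solutions beyond those coming from the $I$-function. Handling this requires care because $\bL$ has rank $\fp$ while the expected solution dimension is $1+\fp+\fg$, so the GKZ system is far from ``complete'' in an obvious sense; the honest argument has to go through the identification with periods of the mirror (or with the $\cD$-module generated by $I$), using the Calabi--Yau and semi-projectivity hypotheses to control the relevant cohomology. I expect to lean on \cite{Iri} and \cite{CCIT}, where precisely this hypergeometric system is analyzed, rather than reproving the finiteness and rank statement from scratch.
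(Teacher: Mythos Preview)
Your bottom line matches the paper exactly: the paper does not prove this proposition in place but simply cites Givental \cite{G98} and \cite[Lemma~4.6]{Iri} for the fact that the $I$-function is annihilated by each $\bD_\beta$, and \cite[Proposition~4.4]{Iri} together with \cite[Section~5.2]{CCIT-hodge} for the dimension of the solution space of the associated GKZ-type $\cD$-module. Your final paragraph lands on precisely these references, so in that sense your proposal is correct and agrees with the paper.

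That said, the route you call ``cleaner'' --- bounding the solution space of \eqref{eqn:PF} from above via the mirror theorem --- does not work as written. The quantum differential equation is a connection in the flat coordinates $\btau$ on $H^2_{\CR,\bT}(\cX)$, whereas \eqref{eqn:PF} is a system of commuting operators in the $q$-variables. Knowing $I_{\bT}(t_0,q,z)=J_{\bT}(\btau,z)$ under the mirror map tells you that the components of $I$ solve \eqref{eqn:PF} (the lower bound), but it says nothing about whether \eqref{eqn:PF} has \emph{additional} solutions not arising from $J$; the mirror theorem is an equality of two specific functions, not an isomorphism of $\cD$-modules. The honest upper bound really does require the analysis of the GKZ $\cD$-module carried out in \cite{Iri} (holonomicity and rank computation via the secondary fan / characteristic variety), which is why both you and the paper ultimately defer to that reference. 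Your linear-independence sketch via leading asymptotics is fine and more explicit than anything the paper writes, but it is subsumed by the cited rank computation.
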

The fact that the non-equivariant $I$-function is annihilated by $\bD_\beta$ is due to Givental \cite{G98} (see also \cite[Lemma 4.6]{Iri}). See \cite[Proposition 4.4]{Iri} (and more recently \cite[Section 5.2]{CCIT-hodge}) for the dimension of the solution space and the discussion of the GKZ-style $D$-module related to the operators $\bD_\beta$.

\subsection{Restriction to the Calabi-Yau torus}
The inclusion $\bT'\hookrightarrow \bT$ induces a surjective ring homomorphism
$$
R_{\bT}= H^*_{\bT}(\mathrm{pt})=\bC[\su_1,\su_2,\su_3]\longrightarrow
R_{\bT'}=H^*_{\bT'}(\mathrm{pt})=\bC[\su_1,\su_2]
$$
given by $\su_1\mapsto \su_1$, $\su_2\mapsto \su_2$, $\su_3\mapsto 0$.
The image of $\{ \phi_{\bsi}:\bsi\in I_\Si\}$ under
the surjective ring homomorphism
\begin{equation}\label{eqn:iota}
H^*_{\CR,\bT}(\cX)\otimes_\RT\bST\longrightarrow H^*_{\CR,\bT'}(\cX)\otimes_\Rt\bSt
\end{equation}
is a canonical basis of the semisimple $\bSt$-algebra
$H^*_{\CR,\bT'}(\cX)\otimes_\Rt \bSt$.
In the rest of this paper, we will often consider $\bT'$-equivariant
cohomology. By slight abuse of notation, we also use the symbol $\phi_{\bsi}$ to denote the image of
$\phi_{\bsi}$ under the ring homomorphism in \eqref{eqn:iota}.

\subsection{Open-closed Gromov-Witten invariants}\label{sec:open-closed-GW}
Let $\cL\subset \cX$ be an outer Aganagic-Vafa Lagrangian brane.
Let $G_0 := G_{\si_0}$ be the stabilizer of the stacky point $\fp_{\si_0}$.
Our notation is similar to that in \cite[Section 5]{FLZ}. In
particular, the integer $f$ is a framing, and $\bT_f:=\Ker
(\su_2-f\su_1)$. The morphism
$H^*(B\bT';\bQ)=\bQ[\su_1,\su_2]\to H^*(B\bT_f; \bQ)=\bQ[\sv]$
is given by $\su_1\mapsto \sv$, $\su_2\mapsto f\sv$.
The weights of $\bT'$-action on $T_{\fp_{\si_0}}\cX$ are
$$
\sw'_1 = \frac{1}{\fr}\su_1 ,\quad \sw'_2 = \frac{\fs}{\fr\fm}\su_1 +\frac{1}{\fm}\su_2, \quad
\sw'_3 = -\frac{\fs+\fm}{\fr\fm}\su_1 -\frac{1}{\fm}\su_2,
$$
so the weights of $\bT_f$-action on $T_{\fp_{\si_0}}\cX$ are $w_1\sv$, $w_2\sv$, $w_3\sv$, where
$$
w_1= \frac{1}{\fr},\quad w_2=\frac{\fs +\fr f }{\fr\fm} ,\quad
w_3=\frac{-\fm- \fs- \fr f}{\fr\fm}.
$$
Recall that the integers $\fm,\fs,\fr$ are defined in the end of Section \ref{sec:av-branes}.

Let the correlator $\langle \btau^\ell \rangle^{\cX,(\cL,f)}_{g,d,(\mu_1,k_1),\ldots, (\mu_n,k_n)}$ denote
the equivariant open-closed Gromov-Witten invariant defined in \cite[Section 3]{FLT}.
Define the open-closed Gromov-Witten potential
\begin{eqnarray*}
\tF_{g,n}^{\cX,(\cL,f)}(\btau,Q; \tX_1,\ldots, \tX_n)
&=&\sum_{d\in \Eff(\cX)}\sum_{\mu_1,\ldots,\mu_n>0}\sum_{k_1,\ldots,k_n=0}^{\fm-1}\sum_{\ell\geq 0}
\frac{\langle \btau^\ell \rangle^{\cX,(\cL,f)}_{g,d,(\mu_1,k_1),\ldots, (\mu_n,k_n)}}{\ell !}\\
&& \cdot Q^d \cdot
\prod_{j=1}^n (\tX_j)^{\mu_j}\cdot ((-1)^\frac{-k_1}{\fm}) \one'_\frac{-k_1}{\fm}\otimes \cdots \otimes
((-1)^\frac{-k_n}{\fm})\one'_\frac{-k_n}{\fm},
\end{eqnarray*}
which is an $H^*_{\CR}(\cB\bmu_\fm;\bC)^{\otimes n}$-valued function, where
$$
H^*_{\CR}(\cB\bmu_\fm;\bC)=\bigoplus_{k=0}^{\fm-1} \bC\one'_\frac{k}{\fm}.
$$
We introduce some notation.
\begin{enumerate}
\item Given $d_0\in \bZ_{\geq 0}$ and $k\in \{0,\dots,\fm-1\}$ let $D'(d_0,k)$ be the disk factor
defined by Equation (13) in \cite{FLZ}, and define
$$
h(d_0,k): = (e^{2\pi\sqrt{-1}d_0w_1},e^{2\pi\sqrt{-1}(d_0w_2-\frac{k}{\fm})},
e^{2\pi\sqrt{-1}(d_0w_3+\frac{k}{\fm})}) \in G_0\subset \bT=(\bC^*)^3.
$$

\item Given $h\in G_0$, define
\begin{eqnarray*}
\Phi^h_0(\tX)&:=&\frac{1}{|G_0|}
\sum_{\substack{(d_0,k)\in \bZ_{\geq 0}\times \{0,\dots,\fm-1\}\\ h(d_0,k)=h} } D'(d_0,k) \tX^{d_0} ((-1)^{-k/\fm})\one'_\frac{-k}{\fm} \\
&=& -\frac{1}{|G_0|}
\sum_{\substack{(d_0,k)\in \bZ_{\geq 0}\times \{0,\dots,\fm-1\} \\ h(d_0,k)=h} }
\frac{\fr}{\sv}e^{\sqrt{-1}\pi(d_0 w_3-c_3(h))}
\big(\frac{\sv}{d_0}\big)^{\age(h)-1} \\
&&  \cdot \frac{\Gamma(d_0 (w_1+w_2)+c_3(h)) }{
\Gamma(d_0 w_1-c_1(h)+1)\Gamma(d_0 w_2 -c_2(h)+1)}\tX^{d_0} \one'_\frac{-k}{\fm}.
\end{eqnarray*}
Then $\Phi^h_0(\tX)$ takes values in $\bigoplus_{k=0}^{\fm-1}\bC \sv^{\age(v)-2} \one'_\frac{k}{\fm}$.

For $a\in \bZ$ and $h\in G_0$,  we define
$$
\Phi^h_a(\tX) := \frac{1}{|G_0|}\sum_{\substack{d_0> 0 \\
    h(d_0,k)=h} } D'(d_0,k)(\frac{d_0}{\sv})^a \tX^{d_0}((-1)^{-k/\fm})
\one'_\frac{-k}{\fm}.
$$
Then $\Phi^h_a(\tX)$ takes values in $\bigoplus_{k=0}^{\fm-1}\bC \sv^{\age(v)-2-a} \one'_\frac{k}{\fm}$, and
$$
\Phi^h_{a+1}(\tX)=(\frac{1}{\sv}\tX\frac{d}{d\tX})\Phi^h_a(\tX).
$$

\item For $a\in \bZ$ and $\alpha \in G_0^*$, we define
$$
\txi^\alpha_a(\tX):= |G_0|\sum_{h\in G}\chi_\alpha(h^{-1})
\bigl(\prod_{i=1}^3 (w_i\sv)^{1-c_i(h)}\bigr) \Phi^h_a(\tX).
$$
Then $\txi^\alpha_a(\tX)$ takes values in $\bigoplus_{k=0}^{\fm-1}\bC
\sv^{1-a} \one'_\frac{k}{\fm}$. We introduce
$$
\txi^\alpha(z,\tX):=\sum_{a\in \bZ_{\geq-2}} z^a \txi^\alpha_a(\tX).
$$

\item Given $h\in G_0=G_{\si_0}$, let $\one_{\si_0,h}$ be characterized by
$\one_{\si_0,h}|_{\fp_{\si}} =\delta_{\si,\si_0} \one_h$. We define
$\one_{\si_0,h}^*=|G|\be_h\one_{\si_0, h^{-1}}$, where $\be_h = \prod_{i=1}^3 (w_i\sv)^{\delta_{0,c_i(h)}}$.
\end{enumerate}

With the above notation, we have:
\begin{proposition}\label{prop:open-descendant}
\begin{enumerate}
\item {\em (disk invariants)}
\begin{eqnarray*}
&& \tF_{0,1}^{\cX,(\cL,f)}(\btau,Q;\tX)\\
&=&\Phi^1_{-2}(\tX) +\sum_{a=1}^{\fp} \tau_a \Phi^{h_a}_{-1}(\tX)+ \sum_{a\in \bZ_{\geq 0}}
\sum_{h\in G_0} \big( \llangle \one_{\si_0,h}^*\hat{\psi}^a \rrangle^{\cX,\bT_f}_{0,1}\big|_{t=\btau}\big)
\Phi^h_a(\tX) \\
&=& \frac{1}{|G_0|^2 w_1 w_2 w_3}\Big( \sum_{\alpha\in G_0^*}\txi^\alpha_{-2}(\tX)
+ \sum_{a=1}^{\fp} \tau_a \prod_{i=1}^3 w_i^{c_i(h_a)}\sum_{\alpha \in G_0^*}
\chi_\alpha(h_a)\txi^\alpha_{-1}(\tX)\Big)\Big|_{\sv=1}\\
&&  +\sum_{a\in \bZ_{\geq 0}}\sum_{\alpha\in G_0^*}
\big(\llangle \phi_{\si_0,\alpha}\hat{\psi}^a \rrangle^{\cX,\bT_f}_{0,1}\big|_{t=\btau} \big) \txi^\alpha_a(\tX)\\
&=&[z^{-2}]\sum_{\alpha\in
    G_0^*}S_z(1,\phi_{\si_0,\alpha})\txi^\alpha(z,\tX).
\end{eqnarray*}
\item{\em (annulus invariants)}
\begin{eqnarray*}
&& \tF_{0,2}^{\cX,(\cL,f)}(\btau,Q; \tX_1,\tX_2)-F^{\cX,(\cL,f)}_{0,2}(0;\tX_1,\tX_2)\\
&=& \sum_{a_1,a_2\in \bZ_{\geq 0}}
\sum_{h_1,h_2\in G_0}
\big(  \llangle \one_{\si_0, h_1}^*\hat{\psi}^{a_1}, \one_{\si_0, h_2}^* \hat{\psi}^{a_1}\rrangle^{\cX,\bT_f}_{0,2}\big|_{t=\btau}\big)
\Phi^{h_1}_{a_1}(\tX_1)\Phi^{h_2}_{a_2}(\tX_2) \\
&=& \sum_{a_1, a_2\in \bZ_{\geq 0}} \sum_{\alpha_1, \alpha_2\in G_0^*}
\big( \llangle {\phi}_{\si_0, \alpha_1}\hat{\psi}^{a_1},
\phi_{\si_0,\alpha_2} \hat{\psi}^{a_1} \rrangle^{\cX,\bT_f}_{0,2}\big|_{t=\btau}\big)
\txi^{\alpha_1}_{a_1}(\tX_1)\txi^{\alpha_2}_{a_2}(\tX_2)\\
\end{eqnarray*}
where
\begin{equation}\label{eqn:annulus-zero}
\begin{aligned}
&(\tX_1\frac{\partial}{\partial \tX_1}+ \tX_2\frac{\partial}{\partial \tX_2}) \tF_{0,2}^{\cX,(\cL,f)}(0;\tX_1,\tX_2)\\
=&|G_0| \Big(\sum_{h\in G_0} \be_h \Phi^h_0(\tX_1) \Phi^{h^{-1}}_0(\tX_2)\Big)\Big|_{\sv=1}\\
=&\frac{1}{|G_0|^2 w_1w_2w_3} \Big(\sum_{\gamma\in G_0^*}\big(\txi^\gamma_0(\tX_1)\txi^\gamma_0(\tX_2)\Big)\Big|_{\sv=1}.
\end{aligned}
\end{equation}

So we have
\begin{align*}
&\tF_{0,2}^{\cX,(\cL,f)}(\btau, Q; \tX_1, \tX_2)\\
=&[z_1^{-1}z_2^{-1}]\sum_{\alpha_1,\alpha_2\in
   G_0^*}V_{z_1,z_2}(\phi_{\si_0, \alpha_1}, \phi_{\si_0,\alpha_2})\txi^{\alpha_1}(z_1,\tX_1)\txi^{\alpha_2}(z_2,\tX_2).
\end{align*}

\item For $2g-2+n>0$,
\begin{eqnarray*}
&& \tF_{g,n}^{\cX,(\cL,f)}(\btau,Q; \tX_1,\ldots, \tX_n)\\
&=&\sum_{a_1,\ldots, a_n\in \bZ_{\geq 0}}
\sum_{h_1,\ldots, h_n\in G_0}
\Big( \llangle \one_{\si_0, h_1}^*\hat{\psi}^{a_1},\ldots, \one_{\si_0, h_n}^*\hat{\psi}^{a_n} \rrangle^{\cX,\bT_f}_{g,n}\Big|_{t=\btau}\Big)
\prod_{j=1}^n \Phi^{h_j}_{a_j}(\tX_j) \\
&=&\sum_{a_1,\ldots, a_n\in \bZ_{\geq 0}}
\sum_{\alpha_1,\ldots, \alpha_n\in G_0^*}
\Big(\llangle {\phi}_{\si_0,\alpha_1}\hat{\psi}^{a_1},\ldots
{\phi}_{\si_0,\alpha_n} \hat{\psi}^{a_n}\rrangle^{\cX,\bT_f}_{g,n}\Big|_{t=\btau}\Big)
\prod_{j=1}^n \txi^{\alpha_j}_{a_j}(\tX_j).\\
&=&[z_1^{-1}\dots z_n^{-1}]\sum_{\alpha_1,\dots,\alpha_n\in G_0^*}
\Big( \llangle\frac{\phi_{\si_0,\alpha_1}}{z_1-\hat{\psi}_1},
    \frac{\phi_{\si_0,\alpha_2}}{z_2-\hat{\psi}_2}, \dots, \frac{\phi_{\si_0,\alpha_n}}{z_n-\hat{\psi}_n}\rrangle_{g,n}^{\cX,\bT_f}\Big|_{t=\btau}\Big)
    \prod_{j=1}^n\txi^{\alpha_j}(z_j,\tX_j).
\end{eqnarray*}
\end{enumerate}
\end{proposition}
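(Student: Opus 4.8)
The plan is to prove Proposition \ref{prop:open-descendant} by identifying open-closed Gromov-Witten potentials with certain localization contributions at the torus fixed point $\fp_{\si_0}$, and then repackaging these contributions using the $\cS$-operator and the $\txi^\alpha$-functions. The three parts are proved in increasing order of generality, but the core mechanism is the same throughout: (i) a localization computation expressing the open-closed invariant as a sum over graphs whose only unbounded parts are disks mapped into the $\bC$-leg $\fl_{\tau_0}$, (ii) a disk-factor computation identifying the disk contributions with the $\Phi^h_a(\tX)$ functions (this is exactly the content of \cite[Section 5]{FLZ} and the disk mirror theorem \cite{FLT}), and (iii) an algebraic identity rewriting the basis change between $\{\one^*_{\si_0,h}\}$ and $\{\phi_{\si_0,\alpha}\}$.

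\medskip
For part (1), \textbf{the disk invariants}, I would start from the definition of $\tF_{0,1}^{\cX,(\cL,f)}$ as a sum of open GW invariants with one boundary, apply virtual localization with respect to $\bT_f$, and observe that the only fixed loci contributing are those where the single disk is mapped to the leg $\fl_{\tau_0}$, with a closed genus-zero stable map attached at the center $\fp_{\si_0}$. The disk factor $D'(d_0,k)$ from \cite[Equation (13)]{FLZ} packages the integral over the disk moduli, giving the second and third lines in terms of $\Phi^h_a$ and the one-point descendant correlators $\llangle \one^*_{\si_0,h}\hat\psi^a\rrangle_{0,1}^{\cX,\bT_f}$; the first two terms ($\Phi^1_{-2}$ and the $\tau_a\Phi^{h_a}_{-1}$ term) come from the unstable $(g,n)=(0,1)$ and $(0,2)$ parts, handled exactly as in \cite{FLZ}. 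The conversion from the basis $\{\one^*_{\si_0,h}: h\in G_0\}$ to the idempotent basis $\{\phi_{\si_0,\alpha}:\alpha\in G_0^*\}$ uses the explicit change-of-basis formulas in \cite[Section 4.2]{FLZ} together with the character sums defining $\txi^\alpha_a$; this is precisely why $\txi^\alpha_a(\tX) = |G_0|\sum_h \chi_\alpha(h^{-1})(\prod_i (w_i\sv)^{1-c_i(h)})\Phi^h_a(\tX)$ was defined the way it was. Finally, the last line — expressing everything as $[z^{-2}]\sum_\alpha S_z(1,\phi_{\si_0,\alpha})\txi^\alpha(z,\tX)$ — follows by recognizing $\sum_a \llangle\phi_{\si_0,\alpha}\hat\psi^a\rrangle_{0,1}^{\cX,\bT_f}\cdot(\text{coefficient of }z^a)$ as the expansion of $S_z(1,\phi_{\si_0,\alpha})$ in powers of $z^{-1}$, using the definition of the $\cS$-operator in Section \ref{sec:A-S} and the fact that $\txi^\alpha(z,\tX)=\sum_{a\geq -2}z^a\txi^\alpha_a(\tX)$.

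\medskip
For part (2), \textbf{the annulus invariants}, the localization computation now has two disks attached to a genus-zero two-point closed stable map, plus an unstable contribution (the constant-map piece with no closed part) which is the $F^{\cX,(\cL,f)}_{0,2}(0;\tX_1,\tX_2)$ term; its $\tX$-derivative is computed in \eqref{eqn:annulus-zero} directly from disk factors, using $\Phi^h_0(\tX_1)\Phi^{h^{-1}}_0(\tX_2)$ and the Euler-class normalization $\be_h$. The key new ingredient is the two-point function $V_{z_1,z_2}$, and the passage to the last displayed identity uses \eqref{eqn:two-in-one}: $V_{z_1,z_2}(\phi_{\si_0,\alpha_1},\phi_{\si_0,\alpha_2}) = \frac{1}{z_1+z_2}\sum_i S_{z_1}(T_i,\phi_{\si_0,\alpha_1})S_{z_2}(T^i,\phi_{\si_0,\alpha_2})$, which reorganizes the genus-zero two-point descendants into the desired closed form after pairing against the $\txi$-functions. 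Part (3), \textbf{the stable case $2g-2+n>0$}, is then formally identical: localization reduces the open-closed invariant to a sum of closed descendant invariants at $\fp_{\si_0}$ with $n$ disk factors, the basis change $\one^*_{h_j}\leftrightarrow\phi_{\si_0,\alpha_j}$ is applied $n$ times, and the generating-function identity \eqref{eqn:gamma-u} relating $\llangle\bu_1,\dots,\bu_n\rrangle$ to $\llangle\frac{\gamma_1}{z_1-\hat\psi_1},\dots\rrangle$ gives the last line.

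\medskip
\textbf{The main obstacle} will be the careful bookkeeping in the localization argument identifying exactly which fixed loci contribute and matching their weights with the disk factors $D'(d_0,k)$ — in particular, tracking the isotropy group $G_0$, the fractional ages $c_i(h)$, the framing weights $w_i$, and the Gamma-function factors so that they assemble precisely into the stated $\Phi^h_a$. This is essentially a careful generalization of \cite[Section 5]{FLZ} from the affine case $[\bC^3/G]$ to an arbitrary semi-projective $\cX$, where now the closed part of the stable map can be non-constant and contribute genuine closed GW descendants; the technical point is that all the non-fixed-point geometry is absorbed into these closed descendant correlators, so the disk-side analysis is literally the same local computation at $\fp_{\si_0}$. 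A secondary subtlety is checking that all the $Q=1$ specializations and non-equivariant limits implicit in the correlators $\llangle\cdot\rrangle^{\cX,\bT_f}$ are well-defined, which follows from Remark \ref{Novikov} and the degree/dimension constraints.
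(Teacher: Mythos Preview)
Your proposal is correct and follows the same approach the paper implicitly relies on: the proposition is stated in the paper without a self-contained proof, as it is a direct repackaging of the $\bT_f$-virtual localization computation carried out in \cite[Section 3]{FLT} (for the general semi-projective $\cX$) together with the disk-factor analysis from \cite[Section 5]{FLZ} (local at $\fp_{\si_0}$), followed by the character-sum basis change $\{\one^*_{\si_0,h}\}\leftrightarrow\{\phi_{\si_0,\alpha}\}$ and the identification of the resulting descendant sums with $S_z$ and $V_{z_1,z_2}$. Your identification of the unstable pieces (the $\Phi^1_{-2}$, $\tau_a\Phi^{h_a}_{-1}$ terms in part (1) and the $F^{\cX,(\cL,f)}_{0,2}(0;\tX_1,\tX_2)$ term in part (2)) and of the bookkeeping obstacle is accurate; there is no missing idea.
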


\begin{remark}
$\tF_{0,2}^{\cX,(\cL,f)}(0;\tX_1,\tX_2)$ is an $H^*(\cB\bmu_\fm;\bC)^{\otimes 2}$-valued
power series in $\tX_1,\tX_2$ which vanishes at $(\tX_1,\tX_2)=(0,0)$,
so it is determined by \eqref{eqn:annulus-zero}.
\end{remark}

We now combine Section \ref{sec:Agraph} and the above Proposition \ref{prop:open-descendant} to obtain
a graph sum formula for $\tF_{g,n}^{\cX,(\cL,f)}$.  We use the notation in Section \ref{sec:Agraph},
and introduce the notation
\[
\txi^\bsi(z,\tX)=
\begin{cases}
\txi^\alpha(z,\tX), &\text{ if
    $\bsi=(\si_0,\alpha)$},\\
0,& \text{ if $\bsi=(\si,\alpha)$ and $\si\neq \si_0$}.
\end{cases}
\]

\begin{itemize}
\item Given a labeled graph $\vGa\in \bGa_{g,n}(\cX)$, to each ordinary leaf $l_j\in L^o(\Gamma)$
with $\bsi(l_j)=\bsi \in I_\Si$ and $k(l_j)\in \bZ_{\geq 0}$ we assign the following weight (open leaf)
\begin{equation}\label{eqn:O-leaf}
(\tcL^O)^{\bsi}_k(l_j) = [z^k]\big(\sum_{\brho,\bsi' \in I_\Si}
\Big( \txi^{\bsi'}(z,\tX_j) S^{\widehat{\underline{\brho}}}_{\spa  \bsi'}\bigg|_{\substack{t=\btau\\ \sw_i=w_i\sv}}
\Big)_+ R(-z)_{\brho}^{\spa \bsi}\bigg|_{\substack{t=\btau\\\sw_i=w_i \sv}} \big).
\end{equation}

\item Given a labeled graph $\bGa_{g,n}(\cX)$, we define a weight
\begin{eqnarray*}
 \tw_A^O(\vGa) &=& \prod_{v\in V(\Ga)} (\sqrt{\Delta^{\bsi(v)}(t)}\bigg|_{\substack{t=\btau\\\sw_i=w_i\sv} })^{2g(v)-2+\val(v)} \langle \prod_{h\in H(v)} \tau_{k(h)}\rangle_{g(v)} \\
&& \cdot \Big( \prod_{e\in E(\Ga)} \cE^{\bsi(v_1(e)),\bsi(v_2(e))}_{k(h_1(e)),k(h_2(e))}\cdot
\prod_{l\in L^1(\Ga)}(\cL^1)^{\bsi(l)}_{k(l)}\Big)\bigg|_{\substack{t=\btau\\ \sw_i = w_i \sv} } \prod_{j=1}^n(\tcL^O)^{\bsi(l_j)}_{k(l_j)}(l_j).\\
\end{eqnarray*}
\end{itemize}

Then we have the following graph sum formula for $\tF_{g,n}^{\cX,(\cL,f)}$.
\begin{theorem}\label{tF-graph-sum}
$$
\tF_{g,n}^{\cX,(\cL,f)} = \sum_{\vGa\in \bGa_{g,n}(\cX)}\frac{\tw_A^O(\vGa)}{|\Aut(\vGa)|}.
$$
\end{theorem}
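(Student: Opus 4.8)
The plan is to combine the closed-string graph sum of Theorem \ref{thm:Zong} with the structural formulas for the open-closed potentials in Proposition \ref{prop:open-descendant}. The bridge is the observation that, by part (3) of Proposition \ref{prop:open-descendant}, the open-closed potential $\tF_{g,n}^{\cX,(\cL,f)}$ is obtained from the closed-string descendant potential $\llangle \frac{\phi_{\si_0,\alpha_1}}{z_1-\hat\psi_1},\dots,\frac{\phi_{\si_0,\alpha_n}}{z_n-\hat\psi_n}\rrangle_{g,n}^{\cX,\bT_f}$ by (i) restricting to $t=\btau$ and $\sw_i=w_i\sv$, (ii) capping the $j$-th insertion slot with $\txi^{\alpha_j}(z_j,\tX_j)$, summing over $\alpha_j\in G_0^*$, and (iii) extracting the $[z_1^{-1}\cdots z_n^{-1}]$ coefficient. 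So the strategy is to feed the specialized $\bu_j$'s into Theorem \ref{thm:Zong} and track how each ingredient of the closed-string weight $w_A^{\bu}(\vGa)$ specializes under this procedure.

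First I would use the relation \eqref{eqn:gamma-u} to rewrite the descendant correlator with $\frac{\gamma_j}{z_j-\hat\psi_j}$ insertions as $\llangle \bu_1,\dots,\bu_n\rrangle_{g,n}^{\cX,\bT}$ with $\bu_j(z)=\frac{\gamma_j}{z_j-z}$. Applying Theorem \ref{thm:Zong} then gives a graph sum whose only $\bu_j$-dependent pieces are the ordinary-leaf weights $(\cL^{\bu})^{\bsi}_k(l_j)$ defined in \eqref{eqn:u-leaf}. Next, in Proposition \ref{prop:open-descendant}(3) we sum over $\alpha_j\in G_0^*$ the insertions $\gamma_j=\phi_{\si_0,\alpha_j}$ weighted by $\txi^{\alpha_j}(z_j,\tX_j)$ and take $[z_j^{-1}]$; after the specialization $t=\btau$, $\sw_i=w_i\sv$, this converts $(\cL^{\bu})^{\bsi}_k(l_j)$ into exactly the open-leaf weight $(\tcL^O)^{\bsi}_k(l_j)$ of \eqref{eqn:O-leaf}. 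Concretely, one substitutes $\bu_j(z)=\frac{\phi_{\si_0,\alpha_j}}{z_j-z}$ into \eqref{eqn:u-leaf}, sums over $\alpha_j$ against $\txi^{\alpha_j}(z_j,\tX)$, extracts $[z_j^{-1}]$, and recognizes $\sum_{\alpha_j}\txi^{\alpha_j}(z_j,\tX)\,S^{\widehat{\underline\brho}}_{\ \ \bsi'}(z_j)\big|_{\dots}$ — with $S^{\widehat{\underline\brho}}_{\ \ \bsi'}$ as in Section \ref{sec:Agraph} — as the inner factor of $(\tcL^O)^{\bsi}_k$, using $\txi^{\bsi}(z,\tX)$ notation to absorb the vanishing of contributions from cones $\si\neq\si_0$. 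The edge weights $\cE^{\bsi,\bsi'}_{k,l}$, dilaton-leaf weights $(\cL^1)^{\bsi}_k$, and vertex weights are $\bu$-independent; they simply acquire the specialization $t=\btau$, $\sw_i=w_i\sv$, matching the definition of $\tw_A^O(\vGa)$. The sum over the same index set $\bGa_{g,n}(\cX)$ and the same automorphism factors $|\Aut(\vGa)|$ carry over verbatim.

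The cases $(g,n)=(0,1)$ and $(0,2)$ (where $2g-2+n\le 0$) are not covered by Theorem \ref{thm:Zong} and must be handled separately using Proposition \ref{prop:open-descendant}(1) and (2): there the relevant ``graphs'' are a single vertex (for the disk) or a single edge/an $S$-operator pairing (for the annulus), and one checks directly that $[z^{-2}]\sum_\alpha S_z(1,\phi_{\si_0,\alpha})\txi^\alpha(z,\tX)$ and $[z_1^{-1}z_2^{-1}]\sum_{\alpha_1,\alpha_2}V_{z_1,z_2}(\phi_{\si_0,\alpha_1},\phi_{\si_0,\alpha_2})\txi^{\alpha_1}(z_1,\tX_1)\txi^{\alpha_2}(z_2,\tX_2)$ coincide with the degenerate graph sums, using the identity \eqref{eqn:two-in-one} relating $V_{z_1,z_2}$ to a sum of products of $S$-operators; here one uses that $\tF_{0,2}^{\cX,(\cL,f)}(0;\tX_1,\tX_2)$ is pinned down by \eqref{eqn:annulus-zero}.

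\textbf{Main obstacle.} The genuinely delicate point is the bookkeeping of the $z$-expansions and the $(\cdot)_+$ truncations: the open-leaf weight \eqref{eqn:O-leaf} involves $\big(\txi^{\bsi}(z,\tX)\,S^{\widehat{\underline\brho}}_{\ \ \bsi}\big)_+$, i.e. the nonnegative part in $z$, and the combination of the positive-power expansion of $\txi^\bsi(z,\tX)=\sum_{a\ge -2}z^a\txi^\bsi_a(\tX)$ with the $z^{-1}$-expansion of $S$ and the $[z_j^{-1}]$-extraction from Proposition \ref{prop:open-descendant}(3) must be shown to interlock correctly, and well-definedness after the $Q=1$ specialization must be invoked (via Remark \ref{Novikov}, Remark \ref{descendent Q}, and part (1) of Theorem \ref{R-matrix}). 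I expect that once the dictionary ``ordinary leaf of the closed theory with $\bu_j=\frac{\phi_{\si_0,\alpha_j}}{z_j-z}$, summed against $\txi^{\alpha_j}$'' $\leftrightarrow$ ``open leaf $(\tcL^O)^{\bsi}_k$'' is verified, the remainder is a formal identification of the two graph sums term by term.
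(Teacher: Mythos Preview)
Your approach is essentially the paper's: the proof there is a single sentence invoking Theorem \ref{thm:Zong} and Proposition \ref{prop:open-descendant}, and your plan of feeding $\bu_j(z)=\frac{\phi_{\si_0,\alpha_j}}{z_j-z}$ into the closed graph sum, capping with $\txi^{\alpha_j}$, and extracting $[z_j^{-1}]$ is exactly the mechanism behind that sentence. Two minor points: the theorem is only asserted for the stable range $2g-2+n>0$ (the set $\bGa_{g,n}(\cX)$ consists of \emph{stable} labeled graphs, so it is empty for $(0,1)$ and $(0,2)$), so your separate treatment of the unstable cases is extraneous here---the paper handles those later, directly, in Theorem \ref{thm:AKV-disk} and Proposition \ref{prop:BKMP-annulus}; and the $Q=1$ specialization you mention is not needed for $\tF_{g,n}$ (it still carries the Novikov variable) but rather for the subsequent Corollary defining $F_{g,n}$.
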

\begin{proof}
This follows from Theorem \ref{thm:Zong} and Proposition \ref{prop:open-descendant}.
\end{proof}

\begin{definition}[Restriction to $Q=1$]
  \label{def:Fgn}
We define
$$
F_{g,n}^{\cX,(\cL,f)}(\btau; \tX_1,\ldots, \tX_n):=
\tF_{g,n}^{\cX,(\cL,f)}(\btau, 1;\tX_1,\ldots,\tX_n).
$$
\end{definition}

\noindent
By Remark \ref{descendent Q} and Proposition \ref{prop:open-descendant},
$F_{g,n}^{\cX,(\cL,f)}$ is well-defined. When $n=0$, it does not depend on the brane $(\cL,f)$, and we denote $F^\cX_g=F^\cX_{g,0}$. Theorem \ref{tF-graph-sum} implies
\begin{corollary}
$$
F_{g,n}^{\cX,(\cL,f)} = \sum_{\vGa\in \bGa_{g,n}(\cX)}\frac{w_A^O(\vGa)}{|\Aut(\vGa)|}.
$$
where $w_A^O(\vGa)= \tw_A^O(\vGa)\big|_{Q=1}$.
\end{corollary}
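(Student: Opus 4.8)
The plan is to derive the identity by specializing the graph-sum formula of Theorem~\ref{tF-graph-sum} at $Q=1$ and invoking Definition~\ref{def:Fgn}. By Theorem~\ref{tF-graph-sum}, $\tF_{g,n}^{\cX,(\cL,f)} = \sum_{\vGa\in \bGa_{g,n}(\cX)}\tw_A^O(\vGa)/|\Aut(\vGa)|$ as an identity in the ring of formal series in $\tQ,\tau',\tau''$ and the open variables $\tX_1,\ldots,\tX_n$ (graded by the equivariant/framing parameter $\sv$). Since $F_{g,n}^{\cX,(\cL,f)}$ is \emph{defined} to be $\tF_{g,n}^{\cX,(\cL,f)}|_{Q=1}$, it remains to check: (i) the substitution $Q=1$ is legitimate on each weight $\tw_A^O(\vGa)$ individually, yielding $w_A^O(\vGa)=\tw_A^O(\vGa)|_{Q=1}$; and (ii) this substitution commutes with the sum over $\vGa$.

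For (i), I would decompose $\tw_A^O(\vGa)$ into its constituent factors, as listed in Section~\ref{sec:Agraph}: the vertex contributions $\bigl(\sqrt{\Delta^{\bsi(v)}(t)}\bigr)^{2g(v)-2+\val(v)}$ times Hodge integrals, the edge weights $\cE^{\bsi,\bsi'}_{k,l}$, the dilaton-leaf weights $(\cL^1)^\bsi_k$, and the open-leaf weights $(\tcL^O)^\bsi_k(l_j)$ from \eqref{eqn:O-leaf}. The only $Q$-dependence sits in the $R$-matrix entries $R_\brho^{\spa\bsi}(z)$, the normalization $\Delta^{\bsi}(t)$, and the $\cS$-operator matrices $S^{\widehat{\underline{\brho}}}_{\spa\bsi}$ entering the open leaves; the functions $\txi^\bsi(z,\tX)$ carry no $Q$. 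By Theorem~\ref{R-matrix}(1) the entries of each $R_d$ lie in $\bST[\tau'][\![\tQ,\tau'']\!]$, so $R(z)|_{Q=1}$ is well-defined (as recorded in Remark~\ref{descendent Q}); likewise $\sqrt{\Delta^{\bsi}(t)}$ and $1/\sqrt{\Delta^{\bsi}(t)}$ are units in $\bST[\tau'][\![\tQ,\tau'']\!]$ reducing to $\sqrt{\Delta^{\bsi}}$ and $1/\sqrt{\Delta^{\bsi}}$ at $\tQ=\tau''=0$; and by Remark~\ref{Novikov} the $\cS$-operator matrix elements admit the $Q=1$ restriction, since $Q^d$ enters only through $\tQ^d=Q^d e^{\sum_a \tau_a\langle T_a,d\rangle}$ by the divisor equation. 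Being a finite product of such factors, $\tw_A^O(\vGa)$ admits a well-defined restriction $w_A^O(\vGa)$.

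For (ii), the point is that $Q$ enters the entire identity of Theorem~\ref{tF-graph-sum} only through $\tQ^d=Q^d e^{\sum_a\tau_a\langle T_a,d\rangle}$. After the divisor equation is applied (Remark~\ref{Novikov}, Proposition~\ref{prop:open-descendant}), both sides are elements of $\bST[\tau'][\![e^{\tau'},\tau'',\tX_1,\ldots,\tX_n]\!]$, a ring in which each coefficient of a fixed monomial receives contributions from only finitely many labeled graphs $\vGa$: the genus $g$ and leaf count $n$ bound the graph topology, and the dimension constraint $\sum_{h\in H(v)}k(h)=3g(v)-3+\val(v)$ at each vertex bounds the heights. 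Hence $Q=1$ acts coefficient-by-coefficient and commutes with this sum. Applying it to both sides of Theorem~\ref{tF-graph-sum} gives $F_{g,n}^{\cX,(\cL,f)}=\sum_{\vGa\in\bGa_{g,n}(\cX)} w_A^O(\vGa)/|\Aut(\vGa)|$ with $w_A^O(\vGa)=\tw_A^O(\vGa)|_{Q=1}$, as claimed.

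The only genuinely substantive point — and thus the main, though modest, obstacle — is articulating (ii): that the $Q=1$ specialization and the a priori infinite graph sum may be interchanged. This is a formal rather than an analytic statement, and it is already essentially set up by Remarks~\ref{Novikov} and~\ref{descendent Q} together with the bookkeeping of Proposition~\ref{prop:open-descendant}; once one records that $Q$ occurs only via $\tQ^d$ and that each graded piece is a finite sum, the corollary follows at once.
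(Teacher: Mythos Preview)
Your proposal is correct and follows essentially the same approach as the paper, which simply records that the corollary is implied by Theorem~\ref{tF-graph-sum} once the well-definedness of the restriction $Q=1$ is noted (via Remark~\ref{descendent Q} and Proposition~\ref{prop:open-descendant}). Your write-up spells out in more detail why each factor of $\tw_A^O(\vGa)$ admits the specialization and why the graph sum is locally finite, but the underlying argument is identical.
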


\section{Hori-Vafa mirror, Landau-Ginzburg mirror, and the mirror curve} \label{sec:HV-LG}

\subsection{Notation} \label{sec:H}
A pair $(\tau,\si)$ is called a flag if $\tau\in \Si(2)$, $\si\in \Si(3)$, and $\tau\subset \si$. Given a flag $(\tau,\si)$, there is
a short exact sequence of finite abelian groups
$$
1\to G_\tau \to G_\si \to \bmu_{\fr(\tau,\si)} \to 1
$$
where $G_\tau \cong \bmu_{\fm_\tau}$ for some positive integer $\fm_\tau$.
The flag $(\tau,\si)$ determines an ordered triple
$(i_1,i_2,i_3)$, where $i_1,i_2, i_3\in \{1,\ldots, \fp'+3\}$, characterized by the following three conditions (i) $I_\si' = \{ i_1, i_2, i_3\}$, (ii) $I_\tau'=\{ i_2, i_3\}$, and (iii)
 $(m_{i_1}, n_{i_1})$, $(m_{i_2}, n_{i_2})$, $(m_{i_3}, n_{i_3})$ are vertices of a triangle $P_\si$ in $\bR^2$ in counterclockwise order.
 Note that the 3-cone $\si$ is the cone over the triangle $P_\si\times \{1\}$.
 There exists an ordered  $\bZ$-basis $(e_1^{(\tau,\si)}, e_2^{(\tau,\si)},
e_3^{(\tau,\si)})$ of $N$ such that
$$
b_{i_1} = \fr(\tau,\si) e_1^{(\tau,\si)}   -\fs(\tau,\si)e_2^{(\tau,\si)}  + e_3^{(\tau,\si)} ,\quad
b_{i_2} = \fm_\tau e_2^{(\tau,\si)}   + e_3^{(\tau,\si)},\quad
b_{i_3} = e_3^{(\tau,\si)},
$$
where $\fs(\tau,\si) \in \{0,1,\ldots, \fr(\tau,\si)-1\}$. Then
$\{ e_1^{(\tau,\si)}, e_2^{(\tau,\si)} \}$ is a $\bZ$-basis of $\bZ e_1\oplus \bZ e_2$, and $e_1^{(\tau,\si)}\wedge
e_2^{(\tau,\si)} = e_1\wedge e_2$.
Define integers $a(\tau,\si), b(\tau,\si), c(\tau,\si), d(\tau,\si)$ by
$$
e_1^{(\tau,\si)} = a(\tau,\si) e_1 + b(\tau,\si) e_2,\quad
e_2^{(\tau,\si)} = c(\tau,\si)e_1 + d(\tau,\si) e_2.
$$
Then $a(\tau,\si) d(\tau,\si)-b(\tau,\si)c(\tau,\si)=1$.

For $i=1,\ldots, \fp+3$, we define $(m_i^{(\tau,\si) }, n_i^{(\tau,\si)})$ by
$$
b_i = m_i^{(\tau,\si)} e_1^{(\tau,\si)}  + n_i^{(\tau,\si)} e_2^{ (\tau,\si)}  + e_3^{(\tau,\si)}
$$
Note that $m_i^{(\tau,\si)}$ and $n_i^{(\tau,\si)}$ are determined by $\{ (m_i,n_i): i=1,\ldots,\fp+3\}$ and $(\tau,\si)$.

Given any 3-cone $\si\in \Si(3)$ and $i\in  \{1,\ldots, \fp+3\}$, we define a monomial $a^{\si}_i(q)$ in $q=(q_1,\ldots,q_\fp)$ as follows:
$$
a_i^\si(q):=
\begin{cases}
1, & i\in I'_\si,\\
\prod_{a=1}^{\fp} q_a^{s^\si_{ai}}, & i\in I_\si
\end{cases}
$$
where $s^\si_{ai}$ are non-negative integers defined by Equation \eqref{eqn:s}.  Observe that:
\begin{enumerate}
\item [(a)] In the large complex structure limit $q_K\to 0$,
$$
\lim_{q_K\to 0} a_i^{\si}(q)=0 \textup{ if } b_i \notin \si.
$$
\item [(b)] If $\fp'+1\leq a\leq \fp$ then $s_{ai}^\si = \delta_{i, a+3}$. So
$$
\lim_{q_{\mathrm{orb}}\to 0} a_i^{\si}(q)=0 \textup{ if }  i> \fp'+3.
$$
\end{enumerate}

Given a flag $(\tau,\si)$, we define
$$
H_{(\tau,\si)} (\XX , \YY, q):= \sum_{i=1}^{\fp+3} a_i^\si(q) (\XX) ^{m_i^{(\tau,\si)}} (\YY)^{n_i^{(\tau,\si)}}
$$
which is an element in the ring  $\bZ[q_1,\ldots,q_\fp] [ \XX, (\XX)^{-1}, \YY, (\YY)^{-1} ]$. Then
\begin{equation}\label{eqn:untwisted-affine}
H_{(\tau,\si)}(\XX, \YY,0) = (\XX) ^{\fr(\tau,\si)} (\YY) ^{-\fs(\tau,\si)} + (\YY)^{\fm_\tau} +1.
\end{equation}

In Section \ref{sec:av-branes}, by choosing the Lagrangian $\cL$, we fix a preferred flag $(\tau_0,\si_0)$, and choose $b_i$  such that  $I'_{\si_0}=\{1,2,3\}$ and
$I'_{\tau_0}=\{2,3\}$. Then
$$
\fr = \fr(\tau_0,\si_0),\quad \fm=\fm_{\tau_0}, \quad m_i = m_i^{(\tau_0,\si_0)}, \quad n_i = n_i^{(\tau_0,\si_0)}.
$$
Define $a_i(q):= a_i^{\si_0}(q)$, and define
$$
H(X,Y,q):=  \sum_{i=1}^{\fp+3} a_i(q) X^{m_i} Y^{n_i} = X^{\fr} Y^{-\fs} + Y^{\fm} + 1 + \sum_{a=1}^\fp a_{3+a}(q) X^{m_{3+a}} Y^{m_{3+a}}
$$
which is an element in the ring $\bZ[q_1,\ldots,q_{\fp}][ X, X^{-1}, Y, Y^{-1}]$.

\subsection{The mirror curve and its compactification} \label{sec:mirror-curve}
The mirror curve of $\cX$ is
\[
C_q=\{(X,Y)\in (\bC^*)^2: H(X,Y,q)=0\}.
\]
For fixed $q\in \bC^{\fp}$, $C_q$ is an affine curve in
$(\bC^*)^2$. Note that
$$
C_q\cong \{ (X_{(\tau,\si)}, Y_{(\tau,\si)})\in (\bC^*)^2: H_{(\tau,\si)}(X_{(\tau,\si)}, Y_{(\tau,\si)},q)=0\}
$$
for any flag $(\tau, \si) \in \Si(3)$. More explicitly, for fixed $q\in (\bC^*)^{\fp}$ the isomophism is induced by the following reparametrization of
$(\bC^*)^2$ (we use the notation in Section \ref{sec:H}):
\begin{eqnarray}
X_{(\tau,\si)} &=& X^{a(\tau,\si)} Y^{b(\tau,\si)} a_{i_1}(q)^{w_1(\tau,\si)} a_{i_2}(q)^{w_2(\tau,\si)} a_{i_3}(q)^{w_3(\tau,\si)}, \label{eqn:Xab} \\
Y_{(\tau,\si)} &=& X^{c(\tau,\si)} Y^{d(\tau,\si)} \left(\frac{a_{i_2}(q)}{a_{i_3}(q)}\right)^\frac{1}{\fm_\tau}, \label{eqn:Ycd}
\end{eqnarray}
where
\begin{equation}\label{eqn:ccc}
w_1(\tau,\si)=\frac{1}{\fr(\tau,\si)},\quad
w_2(\tau,\si)= \frac{\fs(\tau,\si)}{\fr(\tau,\si)m_\tau},  \quad
w_3(\tau,\si) = -w_1(\tau,\si)-w_2(\tau,\si).
\end{equation}
Under the above change of variables, we have
\begin{equation}\label{eqn:HH}
H(X,Y,q) = a_{i_3}(q) X^{m_{i_3}} Y^{m_{i_3}}H_{(\tau,\si)}(X_{(\tau,\si)}, Y_{(\tau,\si)}, q).
\end{equation}

The polytope $P$ determines a toric surface
$\bS_P$ with a polarization $L_P$, and $H(X,Y,q)$
extends to a section $s_q\in H^0(\bS_P,L_P)$. The compactified
mirror curve $\Cbar_q\subset \bS_P$ is the zero locus of $s_q$.
For generic $q\in \bC^{\fp}$, $\Cbar_q$ is a compact
Riemann surface of genus $\fg$ and $\Cbar_q$ intersects
the anti-canonical divisor $\partial \bS_P=\bS_P\setminus (\bC^*)^2$ transversally
at $\fn$ points, where $\fg$ and $\fn$ are the number of lattice
points in the interior and the boundary of $P$, respectively.

\subsection{Three mirror families}
\label{sec:three-mirror}
The symplectic toric orbifold $(\cX,\omega(\eta))$ has three mirror families:
\begin{itemize}
\item The Hori-Vafa mirror  $(\check{\cX}_q,\Omega_q)$, where
\begin{equation}
\check{\cX}_q=\{ (u,v,X,Y)\in \bC\times\bC\times \bC^*\times
\bC^*: uv=H(X,Y,q)\}
\label{eqn:mirror-3-fold}
\end{equation}
is a non-compact Calabi-Yau 3-fold, and
$$
\Omega_q :=\Res_{\check{\cX}_q}\Big(
\frac{1}{H(X,Y,q)-uv} du\wedge dv \wedge \frac{dX}{X}\wedge
\frac{dY}{Y}\Big)
$$
is a holomorphic 3-form on $\check{\cX}_q$.

\item The $\bT'$-equivariant Landau-Ginzburg mirror
$( (\bC^*)^3, W_q^{\bT'})$, where
\begin{equation}
\label{eqn:mirror-LG}
W^{\bT'}_q(X,Y,Z) = H(X,Y,q)Z -\su_1\log X-\su_2\log Y
\end{equation}
is the $\bT'$-equivariant superpotential.

\item The mirror curve $C_q = \{ (X,Y)\in (\bC^*)^2: H(X,Y,q)=0\}$ and its compactification $\overline{C}_q$.
 \end{itemize}

We define
\begin{equation}
  \label{eqn:U-ep}
 U_\ep:= \{ q=(q_1,\ldots, q_{\fp}) \in (\bC^*)^{\fp'}\times \bC^{\fp-\fp'}:  |q_a|<\ep \}.
\end{equation}
We choose $\ep$ small enough such that $C_q,\Cbar_q$ and $\check \cX_q$ are smooth.

In Section \ref{sec:HV} (resp. Section \ref{sec:LG}) below, we will reduce the genus zero B-model
on the  Hori-Vafa mirror (resp. equivariant Landau-Ginzburg mirror) to a theory on the mirror curve.

\subsection{Dimensional reduction of the Hori-Vafa mirror}\label{sec:HV}

In this subsection, we describe the precise relations among the following 3-dimensional, 2-dimensional, and 1-dimensional integrals when $q\in U_\epsilon$:
\begin{itemize}
\item[(3d)] period integrals of the holomorphic 3-form $\Omega_q$ over 3-cycles in
the Hori-Vafa mirror $\check{\cX}_q$,
\item[(2d)] integral of the holomorphic 2-form $\frac{dX}{X}\wedge \frac{dY}{Y}$ on $(\bC^*)^2$
over relative 2-cycles of the pair $( (\bC^*)^2, C_q)$, and
\item[(1d)] integrals of a Liouville form along 1-cycles in the mirror curve $C_q$,
\end{itemize}
The references of this subsection are \cite{DK11} and \cite{CLT13}; see also \cite{KM11}.

The inclusion $J: C_q\to \Cbar_q$ induces a surjective homomorphism
$$
J_*: H_1(C_q;\bZ)\cong \bZ^{\oplus{2\fg +\fn-1}} \to H_1(\Cbar_q;\bZ)\cong \bZ^{\oplus 2\fg}.
$$
Let $J_1(C_q;\bZ)$ denote the kernel of the above map. Then $J_1(C_q;\bZ)\cong \bZ^{\oplus (\fn-1)}$
is generated by  $\delta_1, \ldots, \delta_{\fn}$, where $\delta_i \in H_1(C_q;\bZ)$
is the class of a small loop around the puncture $\bar{p}_i$. They satisfy
$$
\delta_1 +\cdots + \delta_\fn = 0.
$$

The inclusion $I:C_q\to (\bC^*)^2$ induces a homomorphism
$$
I_*: H_1(C_q;\bZ)\cong \bZ^{2\fg+\fn-1} \to H_1( (\bC^*)^2;\bZ)=\bZ^2
$$
whose cokernel is finite (but not necessarily trivial).
Let $K_1(C_q;\bZ)\cong \bZ^{2\fg+\fn-3}$ denote the kernel of the above map.

For any flag $(\tau,\si)$,  let
$\alpha_{(\tau,\si)}$ and $\beta_{(\tau,\si)}$ be classes in $H^1((\bC^*)^2;\bC)$ represented
by the closed 1-forms
$$
\frac{dX_{(\tau,\si)} }{2\pi\sqrt{-1}X_{(\tau,\si)}}  \quad \textup{and} \quad
\frac{dY_{(\tau,\si)} }{2\pi\sqrt{-1} Y_{(\tau,\si)} }
$$
on $(\bC^*)^2$, respectively. Then $\alpha_{(\tau,\si)}$ and $\beta_{(\tau,\si)}$ lie in $H^1((\bC^*)^2;\bZ)\subset H^1((\bC^*)^2;\bC)$
and form a $\bZ$-basis of $H^1((\bC^*)^2,\bZ)\cong \bZ^2$.  Define
\begin{equation}\label{eqn:Mx}
\fM_{\XX}: H_1(C_q;\bZ)\lra \bZ,  \quad \gamma \mapsto   \langle \alpha_{(\tau,\si)}, I_* \gamma\rangle,
\end{equation}
\begin{equation}\label{eqn:My}
\fM_{\YY} : H_1(C_q;\bZ)\lra \bZ,  \quad \gamma \mapsto \langle  \beta_{(\tau,\si)}, I_* \gamma\rangle,
\end{equation}
where $\langle \ \ , \ \  \rangle: H^1( (\bC^*)^2;\bZ)\times H_1((\bC^*)^2;\bZ)\lra \bZ$ is the natural pairing.

Let $K_{\XX}(C_q;\bZ)$ and $K_{\YY}(C_q;\bZ)$ denote the kernels of  \eqref{eqn:Mx} and \eqref{eqn:My}, respectively. Then they
are isomorphic to $\bZ^{2\fg-2+\fn}$, and
$$
K_{\XX}(C_q;\bZ) \cap K_{\YY} (C_q;\bZ) = K_1(C_q;\bZ).
$$
Let
$\fM_X=\fM_{X_{(\tau_0,\si_0)}}$,
$\fM_Y= \fM_{Y_{(\tau_0,\si_0)}}$,
$K_X =K_{X_{(\tau_0,\si_0)}}$,
$K_Y=K_{Y_{(\tau_0,\si_0)}}$.

\medskip

Let $J_1(C_q;\bQ):= J_1(C_q;\bZ)\otimes_{\bZ}\bQ$ and $K_1(C_q;\bQ) := K_1(C_q;\bZ)\otimes_{\bZ}\bQ$. Then
we have the following diagram:
\begin{equation}
\begin{CD}
& & & &  0 \\
& & & & @VVV \\
& & & & K_1(C_q;\bQ) \\
& & & & @V{\iota}VV \\
0 @>>> J_1(C_q;\bQ) @>>> H_1(C_q;\bQ) @>{J_*}>> H_1(\overline{C}_q;\bQ) @>>> 0 \\
& & & & @V{I_*}VV \\
& & & &  H_1((\bC^*)^2;\bQ) \\
& & & & @VVV \\
& & & & 0
\end{CD}
\end{equation}
In the above diagram, the row and the column are short exact sequences of
vector spaces over $\bQ$. Let $\tC_q$ be the fiber product
of the inclusion $I:C_q\to (\bC^*)^2$ and the universal cover
$\bC^2\to (\bC^*)^2$. Then $p: \tC_q \to C_q$ is a regular covering
with fiber $\bZ^2$, and there is an injective group homomorphism
$$
p_*: H_1(\tC_q;\bQ)\to H_1(C_q;\bQ)
$$
whose image is $K_1(C_q;\bQ)$.

Since $I_*(J_1(C_q;\bQ))=H_1((\bC^*)^2;\bQ)$
(i.e. $I_*|_{J_1(C_q;\bQ)}$ is surjective),
$K_1(C_q;\bQ)+J_1(C_q;\bQ)=H_1(C_q;\bQ)$. Then $J_*\circ \iota$ is
surjective, and we can lift any element $\gamma \in H_1(\overline
C_q;\bQ)$ to $K_1(C_q;\bQ)$.

The long exact sequence of relative homology for the
pair $( (\bC^*)^2, C_q)$ is
$$
\begin{array}{ccccccccc}
\cdots &\to & H_2(C_q;\bZ) &\to &H_2((\bC^*)^2;\bZ) &\to & H_2( (\bC^*)^2, C_q;\bZ)&& \\
&\to & H_1(C_q;\bZ) &\to& H_1((\bC^*)^2;\bZ) &\to& H_1( (\bC^*)^2, C_q;\bZ) &\to& \cdots
\end{array}
$$
where $H_2(C_q;\bZ)=0$. So we have a short exact sequence
$$
0\to H_2((\bC^*)^2;\bZ) \to H_2((\bC^*)^2, C_q;\bZ) \stackrel{\partial}{\to} K_1(C_q;\bZ)\to 0.
$$
Varying $q$ in $U_\ep$, we obtain the following short exact sequence of  local systems of lattices over $U_\ep$:
$$
0\to \underline{\bZ} \to \bH_{\bZ} \to \bK_{\bZ} \to 0
$$
where $\underline{\bZ}$ is the trivial $\bZ$-bundle over $U_\ep$, and  the fibers of $\bH_{\bZ}$ and $\bK_{\bZ}$ over $q\in U_\ep$ are
$H_2((\bC^*)^2,C_q;\bZ)$ and $K_1(C_q;\bZ)$, respectively. Tensoring with $\bC$, we obtain the following  short exact sequence of
flat complex vector bundles over $U_\ep$:
$$
0\to \underline{\bC} \to \bH \to \bK \to 0
$$
where $\underline{\bC}$ is the trivial complex line bundle, and the fibers of $\bH$ and $\bK$ over $q\in U_\ep$ are
$H_2((\bC^*)^2,C_q,\bC)$ and $K_1(C_q;\bC)$, respectively.
Similarly, let $\tbK$ and $\tbH$ (resp. $\tbK_\bZ$ and $\tbH_\bZ$) be the flat bundles (resp. local systems of lattices) over $U_\ep$ whose fibers over $q$ are $H_1(\tC_q;\bC)$ and $H_2(\bC^2,\tC_q;\bC)$ (resp. $H_1(\tC_q;\bZ)$ and $H_2(\bC^2,\tC_q;\bZ)$) respectively. The ranks of $\bH$ and $\bK$ are $2\fg+\fn-1$ and $2\fg+\fn-2$ respectively, while the ranks of $\tbH$ and $\tbK$ are infinite. We have the following commutative diagrams of homology groups and bundle maps
\begin{gather*}
\begin{CD}
  0 @>>> 0=H_2(\bC^2;\bC) @>>>  H_2(\bC^2,\tC_q;\bC) @>{\tilde \partial}>{\cong}> H_1(\tC_q;\bC) @>>> 0\\
      &    &      @VVV           @VVV               @VV{p_*}V\\
  0@>>> H_2((\bC^*)^2;\bC)@>>> H_2((\bC^*)^2,C_q;\bC) @>{\partial} >> K_1(C_q;\bC) @>>> 0,
\end{CD}\\
\begin{CD}
  0 @>>> 0 @>>>  \tbH @>{\tilde \partial}>{\cong}> \tbK @>>> 0\\
      &    &      @VVV           @VVV               @VV{p_*}V\\
  0@>>> \underline \bC@>>> \bH @>{\partial} >> \bK @>>> 0
\end{CD}
\end{gather*}
where $p_*$ is surjective.

Let $\tp: \tU_\ep  \to U_\ep$ be the universal cover of $U_\ep$. Then the coordinates on $\tU_\ep$ are $(\log q_1,\ldots, \log q_{\fp'}, q_{\fp'+1},\ldots, q_\fp)$, and
$\tp^*\bH_{\bZ}$, $\tp^*\bK_{\bZ}$, $\tp^*\tbH_\bZ$ and $\tp^* \tbK_\bZ$ are trivial local systems of lattices over $\tU_\ep$. We say a section of these flat bundles is constant if it is flat w.r.t. the Gauss-Manin connection.

Let $x = -\log X$ and $y=-\log Y$. Then
$$
\omega:=dx \wedge dy =\frac{dX}{X}\wedge \frac{dY}{Y}
$$
is the standard holomorphic symplectic form on $(\bC^*)^2$. Note that $\omega|_{C_q}=0$, so
$\omega$ represents a class in $H^2((\bC^*)^2,C_q;\bC)$. The inclusion $T^2= (S^1)^2 \subset (\bC^*)^2$ is a homotopy equivalence, so
$H_2((\bC^*)^2;\bZ)\cong H_2(T^2;\bZ)=\bZ[T^2]$. We have
$$
\int_{[T^2]}\omega = (2\pi\sqrt{-1})^2.
$$

For $q\in U_\ep$, define $\mu: \check{\cX}_q\to \bR$ by $(u,v,X,Y)\mapsto |u|^2-|v|^2$
and $\pi: \check \cX_q\to (\bC^*)^2$ by $(u,v,X,Y)\mapsto (X,Y)$. Then
$\pi:\mu^{-1}(0)\to (\bC^*)^2$ is an circle fibration which degenerate along $C_q\subset (\bC^*)^2$.
Given a relative 2-cycle  $\Lambda \in Z_2( (\bC^*)^2,C_q)$, $\pi^{-1}(\Lambda)\cap \mu^{-1}(0)$ 
is a 3-cycle in $\check{\cX}_q$. The map $\Lambda\mapsto \pi^{-1}(\Lambda)\cap \mu^{-1}(0)$ induces
a group homomorphism $M: H_2((\bC^*)^2,C_q;\bZ) \to H_3(\check{\cX}_q;\bZ)$. Let  $\bH'$ (resp. $\bH'_\bZ$) be the flat complex vector bundle
(resp. local system of lattices) over $U_\ep$ whose fiber over $q\in U_\ep$ is  $H_3(\check \cX_q;\bC)$ (resp. $H_3(\check \cX_q;\bZ)$).

\begin{lemma}[{\cite[Section 5.1]{DK11} \cite[Section 4.2]{CLT13}}]
The map $M: H_2((\bC^*)^2,C_q;\bZ)\to H_3(\check{\cX}_q;\bZ)$ is an isomorphism,
$$
\int_{M(\Lambda)}\Omega_q  = \int_{M(\Lambda)} \frac{du}{u}\wedge
\frac{dX}{X}\wedge\frac{dY}{Y}
= 2\pi\sqrt{-1}\int_\Lambda \omega
$$
for any $\Lambda\in H_2((\bC^*)^2;C_q;\bQ)$.
In particular,
$$
\int_{M([T^2])} \Omega_q = (2\pi\sqrt{-1})^3.
$$
\end{lemma}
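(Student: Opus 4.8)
The plan is to establish the lemma in three steps: (i) write $\Omega_q$ explicitly; (ii) prove that $M$ is an isomorphism; (iii) evaluate $\int_{M(\Lambda)}\Omega_q$ by integrating over the circle fibres, which will give both displayed identities. For (i), on $\check{\cX}_q$ one has $uv=H(X,Y,q)$; setting $g:=uv-H(X,Y,q)$ and using $dg=u\,dv+v\,du-dH$ together with $dH\wedge\frac{dX}{X}\wedge\frac{dY}{Y}=0$ one gets the identity
$$
\frac{du\wedge dv\wedge\frac{dX}{X}\wedge\frac{dY}{Y}}{H-uv}=\frac{dg}{g}\wedge\frac{du}{u}\wedge\frac{dX}{X}\wedge\frac{dY}{Y},
$$
and taking the Poincar\'e residue along $\{g=0\}=\check{\cX}_q$ yields $\Omega_q=\frac{du}{u}\wedge\frac{dX}{X}\wedge\frac{dY}{Y}$, i.e.\ the first equality of the lemma (by the $u\leftrightarrow v$ symmetry one also gets the $\frac{dv}{v}$-form).

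For (ii), write $M_0:=\mu^{-1}(0)$ and $V:=(\bC^*)^2\setminus C_q$. I would first note that $\check{\cX}_q$ deformation retracts onto $M_0$: in each conic fibre $\{uv=H\}$ the region $\{|u|\ge|v|\}$ is the exterior of a disc in $\bC^*_u$ if $H\ne0$ and the line $\{v=0\}$ if $H=0$, each retracting onto $\{|u|=|v|\}$, and the analogous statement holds for $\{|u|\le|v|\}$; gluing the two halves along $M_0$ retracts $\check{\cX}_q$ onto $M_0$, so $H_3(\check{\cX}_q)\cong H_3(M_0)$ and, under this, $M(\Lambda)=\pi^{-1}(\Lambda)\cap M_0$. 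Over $V$ the coordinate $u$ is nowhere zero, so the preimage of $V$ in $\check{\cX}_q$ is $\cong\bC^*\times V$ and the part of $M_0$ over $V$ is $\{|u|^2=|H|\}\cong S^1\times V$, whereas the $M_0$-fibre over each point of $C_q$ is the single point $u=v=0$. Since $C_q$ is smooth for $q\in U_\ep$, $dH$ trivialises the normal bundle of $C_q$, so there is a tubular neighbourhood $N$ of $C_q$ on whose normal disc bundle $H$ is fibrewise linear; the fibrewise contraction of $N$ onto $C_q$ then lifts (rescaling $u$ to keep $|u|^2=|H|$) to a deformation retraction of $M_0|_N$ onto the point section $\cong C_q$. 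Assembling this: $H_2(C_q)=H_3(C_q)=0$ gives $H_3(M_0)\cong H_3(M_0,C_q)$ from the pair sequence; excision gives $H_*(M_0,C_q)\cong H_*\big(S^1\times V,\,S^1\times(N\setminus C_q)\big)$; K\"unneth gives $H_*(V,N\setminus C_q)\oplus H_{*-1}(V,N\setminus C_q)$; a further excision gives $H_*(V,N\setminus C_q)\cong H_*((\bC^*)^2,C_q)$; and $H_3((\bC^*)^2,C_q)=0$ because $H_3((\bC^*)^2)=H_2(C_q)=0$. Hence in degree $3$ only the summand $H_2((\bC^*)^2,C_q)$ survives, and unwinding the K\"unneth/excision maps (the surviving classes have the form $S^1\times\Lambda$ over relative $2$-cycles $\Lambda$, which close up in $M_0$ over $C_q$ where the $S^1$ collapses) identifies the resulting isomorphism $H_2((\bC^*)^2,C_q)\xrightarrow{\sim}H_3(M_0)\cong H_3(\check{\cX}_q)$ with $M$.

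For (iii), I would integrate by Fubini over the circle fibration. For a relative $2$-cycle $\Lambda$ with interior $\Lambda^{\circ}$ mapping into $V$, $M(\Lambda)$ fibres over $\Lambda^{\circ}$ with fibre the circle $\{u=|H(X,Y,q)|^{1/2}e^{\sqrt{-1}\theta}\}$; writing $\frac{du}{u}=\sqrt{-1}\,d\theta+\tfrac12 d\log|H|$, the second term is constant along each fibre and integrates to $0$ while $\int_{S^1}\sqrt{-1}\,d\theta=2\pi\sqrt{-1}$, so by (i)
$$
\int_{M(\Lambda)}\Omega_q=2\pi\sqrt{-1}\int_{\Lambda}\frac{dX}{X}\wedge\frac{dY}{Y}=2\pi\sqrt{-1}\int_{\Lambda}\omega .
$$
Both sides depend only on $[\Lambda]\in H_2((\bC^*)^2,C_q)$ since $\Omega_q$ is closed and $\omega|_{C_q}=0$, so the identity is valid for all rational classes; applying it to $\Lambda=[T^2]$ (represented by a torus $T^2\subset(\bC^*)^2\setminus C_q$, e.g.\ at large $|X|$, so that $M([T^2])$ is the $3$-torus $\pi^{-1}(T^2)\cap M_0$) and using $\int_{[T^2]}\omega=(2\pi\sqrt{-1})^2$ gives $\int_{M([T^2])}\Omega_q=(2\pi\sqrt{-1})^3$. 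I expect the main obstacle to be step (ii): building the $H$-adapted tubular neighbourhood of the discriminant $C_q$, controlling the collapse of the circle fibres of $M_0\to(\bC^*)^2$ there, and matching the homological isomorphism coming from excision and K\"unneth with the geometrically defined map $M$; steps (i) and (iii) are routine, and all details are carried out in \cite[Section 5.1]{DK11} and \cite[Section 4.2]{CLT13}.
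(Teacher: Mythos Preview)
The paper itself does not supply a proof of this lemma; it simply records the statement with the citations \cite[Section 5.1]{DK11} and \cite[Section 4.2]{CLT13} and moves on. Your proposal therefore goes well beyond what the paper does: you give an actual sketch of the argument, and your three steps (residue computation for $\Omega_q$, the topological analysis of $M_0=\mu^{-1}(0)$ via excision and K\"unneth, and the fibrewise integration) are correct and are precisely the route taken in the cited references. Your residue manipulation in step (i) and the Fubini argument in step (iii) are clean and accurate; in step (ii) the deformation retraction of $\check{\cX}_q$ onto $M_0$, the triviality of the $S^1$-bundle over $V$, and the vanishing $H_3((\bC^*)^2,C_q)=0$ are all correctly argued.

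The one point you rightly flag as delicate --- identifying the abstract isomorphism coming from excision/K\"unneth with the geometric map $\Lambda\mapsto\pi^{-1}(\Lambda)$ --- is genuinely the crux, and is handled in \cite{DK11,CLT13} essentially as you outline: one checks that the K\"unneth class $[S^1]\times[\Lambda]$ in $H_3(S^1\times V,\,S^1\times(N\setminus C_q))$ is carried, under the chain of excisions and retractions, to the closed $3$-cycle obtained by capping off $S^1\times\Lambda$ along $\partial\Lambda\subset C_q$ where the circle collapses, which is exactly $M(\Lambda)$. So there is no gap; your proposal is a faithful expansion of the argument the paper is citing.
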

In particular, this lemma gives an isomorphism between $\bH$ and $\bH'$. Given a flat section $\bold \Gamma$ of $\tp^*\tbK$ (resp. $\tp^*\bH$,  $\tp^*\bH'$), let
\begin {equation}\label{eqn:period}
\int_{\bold\Gamma} ydx, \quad  \big(\textup{resp.}
\int_{\bold\Gamma}\omega, \ \int_{\bold\Gamma} \Omega_q \big)
\end{equation}
denote the paring of $\bold\Gamma$ and
$[ydx]\in H^1(\tC_q,\bC)$, (resp. $[\omega]\in H^2((\bC^*)^2,C_q;\bC)$, $[\Omega_q]\in H^3(\check{\cX}_q;\bC)$).
The integrals in  \eqref{eqn:period} are holomorphic functions on $\tU_\ep$, so $[ydx]$ (resp. $[\omega]$, $[\Omega_q]$)
can be viewed as a holomorphic (but non-flat) section of the dual vector bundle $\tp^*\tbK^\vee$  (resp. $\tp^*\bH^\vee$, $\tp^*(\bH')^\vee$) over
$\tU_\ep$.


Let $\tA'_0:= M([T^2])\in H_3(\check \cX_q;\bZ)$, so that
$$
\frac{1}{(2\pi\sqrt{-1})^3} \int_{\tA'_0}\Omega_q = 1.
$$
We regard $\tA_0'$ as a flat section of $\bH'$ (and also of $\tp^* \bH'$), since $[T^2]$ is a flat section of $\bH$.
\begin{proposition}
There exist $\tA'_1,\ldots, \tA'_{\fp}, \tB'_1,\ldots, \tB'_{\fg}$ as flat sections
of $\tp^*\bH'$ such that
$$
\frac{1}{(2\pi\sqrt{-1})^2}\int_{\tA'_a} \Omega_q = T^a(q),\quad a=1,\ldots, \fp,
$$
$$
\frac{1}{2\pi\sqrt{-1}} \int_{\tB'_i}\Omega_q = W_i(q),\quad i=1,\ldots,\fg,
$$
on $\tU_{\epsilon}$ for sufficiently small $\epsilon>0$.
\end{proposition}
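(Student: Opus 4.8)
The plan is to push the $3$-dimensional period down to the $2$-dimensional period already set up, and then pin down the relevant flat cycles using the Picard--Fuchs system of Section \ref{sec:PF} together with Proposition \ref{prop:PF}. First I would invoke the dimensional reduction lemma of \cite{DK11,CLT13} recalled above: $M\colon H_2((\bC^*)^2,C_q;\bZ)\to H_3(\check{\cX}_q;\bZ)$ is an isomorphism and $\int_{M(\Lambda)}\Omega_q=2\pi\sqrt{-1}\int_\Lambda\omega$ for every $\Lambda$. Since $\tU_\ep$ is simply connected, $\tp^*\bH$ is a trivial local system and its flat sections are just elements of the fibre $H_2((\bC^*)^2,C_q;\bC)$. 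Thus it suffices to produce flat sections $\tA_1,\dots,\tA_\fp,\tB_1,\dots,\tB_\fg$ of $\tp^*\bH$ with
\[
\frac{1}{2\pi\sqrt{-1}}\int_{\tA_a}\omega=T^a(q),\qquad \int_{\tB_i}\omega=W_i(q),
\]
and then to set $\tA'_a:=M(\tA_a)$, $\tB'_i:=M(\tB_i)$; these are flat sections of $\tp^*\bH'$, and the asserted identities follow from $\int_{M(\Lambda)}\Omega_q=2\pi\sqrt{-1}\int_\Lambda\omega$ (this is also consistent with the already-fixed $\tA'_0=M([T^2])$ and $\int_{[T^2]}\omega=(2\pi\sqrt{-1})^2$).

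Next I would check that every period $q\mapsto\int_\Lambda\omega$, for $\Lambda$ a flat section of $\tp^*\bH$, is annihilated by all the operators $\bD_\beta$, $\beta\in\bL$, of Section \ref{sec:PF}. Over $\tU_\ep$, Stokes' theorem reduces these periods to the constant $(2\pi\sqrt{-1})^2$ together with the $1$-dimensional integrals $\int_{\tgamma}y\,dx$ along $1$-cycles $\tgamma$ in $\tC_q$; for the explicit Laurent polynomial $H(X,Y,q)=\sum_i a_i(q)X^{m_i}Y^{n_i}$, the operators $\bD_i=\sum_a m_i^{(a)}q_a\partial_{q_a}$ act on the monomials of $H$ in the standard GKZ fashion, and the linear relations among the $b_i$ encoded by $\beta\in\bL$ together with the Calabi--Yau condition then yield $\bD_\beta\int_\Lambda\omega=0$. (Equivalently this is the content of \cite{DK11,CLT13,KM11}.) Hence the $\bC$-linear period map $\Lambda\mapsto\int_\Lambda\omega$ carries the fibre $H_2((\bC^*)^2,C_q;\bC)$ into the solution space of $\{\bD_\beta F=0:\beta\in\bL\}$.

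By Proposition \ref{prop:PF} that solution space is $(1+\fp+\fg)$-dimensional with basis $\{1,T^1(q),\dots,T^\fp(q),W_1(q),\dots,W_\fg(q)\}$; on the other hand, the short exact sequence $0\to\bC\to H_2((\bC^*)^2,C_q;\bC)\to K_1(C_q;\bC)\to 0$ with $\dim_\bC K_1(C_q;\bC)=2\fg+\fn-3=\fp+\fg$ (using $\fp=\fg+\fn-3$) shows the source also has dimension $1+\fp+\fg$. The period map is moreover injective: $[\omega]\in H^2((\bC^*)^2,C_q;\bC)$ generates the Gauss--Manin/Picard--Fuchs $\mathcal D_{\tU_\ep}$-module $\tp^*\bH^\vee$ (the standard cyclicity of the GKZ system), so if $\int_\Lambda\omega\equiv 0$ then $\langle\bD_{i_1}\!\cdots\bD_{i_k}[\omega],\Lambda\rangle\equiv 0$ for a spanning family of $\tp^*\bH^\vee$ and $\Lambda=0$. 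Comparing dimensions, the period map is an isomorphism onto the solution space, so flat sections $\tA_a$, $\tB_i$ with periods $T^a(q)$ and $W_i(q)$ exist, and transporting them through $M$ as in Step 1 finishes the proof.

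I expect the cyclicity/injectivity input of the previous paragraph to be the point requiring the most care. A more hands-on alternative that avoids it would be to exhibit the cycles explicitly near the limit $q\to 0$: take $\tA'_0=M([T^2])$; realize the $\tA'_a$ for toric directions $a\le\fp'$ as images under $M$ of Lefschetz thimbles over the vanishing $1$-cycles of $C_q$ dual to the charge vectors $l^{(a)}$, whose periods have leading term $2\pi\sqrt{-1}(\log q_a+O(q))$; realize the remaining $\tA'_a$ ($\fp'<a\le\fp$) as thimbles produced by the orbifold/mixed limit; and realize the $\tB'_i$ as iterated thimbles with double-logarithmic leading term. One then identifies each period with the corresponding component $T^a$, $W_i$ of the non-equivariant $I$-function by invoking that a solution of the regular-singular Picard--Fuchs system is determined by its leading asymptotics at the large complex structure/orbifold limit. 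In either route the remaining work is the routine bookkeeping of the powers of $2\pi\sqrt{-1}$ in Step 1.
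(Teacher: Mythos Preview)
Your proposal is correct and follows the same line as the paper's proof, which is a one-liner: ``This follows from Proposition \ref{prop:PF}, and the fact that solutions to the non-equivariant Picard-Fuchs system \eqref{eqn:PF} are period integrals of $\Omega_q$.'' You unpack exactly the two ingredients implicit there---periods land in the solution space, and the period map is onto that space---and your dimension count $\dim H_2((\bC^*)^2,C_q;\bC)=1+\fp+\fg$ together with the cyclicity/injectivity argument is the standard way to justify the second point, which the paper leaves to the GKZ literature (cf.\ the references in Proposition \ref{prop:PF} and the remark citing \cite{CCLT} immediately after the proposition).
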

\begin{proof} This follows from Proposition \ref{prop:PF}, and the fact that
solutions to the non-equivariant Picard-Fuchs system \eqref{eqn:PF}
are period integrals of $\Omega_q$.
\end{proof}

\begin{remark}
The existence of $\tA'_1,\ldots,\tA'_{\fp}$ follows from \cite[Theorem 1.6]{CCLT}, which covers semi-projective toric Calabi-Yau orbifolds of any dimension.
\end{remark}


The image of $[ydx]\in H^1(\tC_q;\bC)$ under the isomorphism $H^1(\tC_q;\bC)\stackrel{\cong}{\to} H^2(\bC^2,\tC_q;\bC)$
is the relative class $[dy\wedge dx] = -[dx\wedge dy] \in H^2(\bC^2,\tC_q;\bC)$. So for any flat section $\tD$ of $\tp^*\bH$ we have
$$
\int_{\tD} dx\wedge dy = - \int_{\tilde \partial \tD} ydx.
$$
For any flat section $\gamma$ of $\tp^* \bK$, let $\tilde \gamma_1,\tilde \gamma_2$ be two flat sections of $\tp^* \tbK$ with $p_* \tilde \gamma_i=\gamma$.
For $i\in \{1,2\}$, there exists a unique flat section $\tD_i$ of $\tp^* \tbH$ such that $\tilde \partial \tD_i =\tilde \gamma_i$. Then
\[
-\int_{\tilde\gamma_1 -\tilde\gamma_2} ydx = \int_{\tD_1-\tD_2} dx\wedge dy = \int_{D_1-D_2} \frac{dX}{X}\wedge \frac{dY}{Y}
\]
where flat sections $D_1$ and $D_2$ are the images of $\tD_1$ and $\tD_2$ under $\tp^* \tbH\to \tp^* \bH$, respectively.  Since $\partial(D_1-D_2)=0$ we must have
$D_1 -D_2 = c[T^2]$ for some $c\in \bC$ -- the flatness of $D_i$ and $[T^2]$ implies $c$ is a constant.

Based on the above discussion, we have:
\begin{lemma} \label{lemm:integral-K}
Given $\gamma$ as  a flat section of $\tp^* \bK$, and let flat section $\Gamma$ (resp. $\tilde\gamma$) of $\tp^* \bH$ (resp. $\tp^* \tbK$) be in the preimage $\gamma$ under the surjective  map bundle map $\tp^* \bH \to \tp^* \bK$ (resp. $\tp^* \tbK \to \tp^* \bK$). Then
$$
\frac{1}{(2\pi\sqrt{-1})^2 } \int_{\tilde{\gamma}} ydx = \frac{-1}{(2\pi\sqrt{-1})^2} \int_{\Gamma}\omega +c
$$
for some constant $c\in \bC$. Moreover, if $\gamma$ is a section of $\tp^* \bK_\bZ$ and we choose $\Gamma$, $\tilde \gamma$ to be sections of $\tp^* \bH_\bZ$ and $\tp^* \tbK_\bZ$, then
the constant $c$ is an integer.
\end{lemma}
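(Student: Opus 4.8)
The plan is to deduce the lemma directly from the two commutative diagrams of homology groups and the displayed identities established immediately before its statement; the only real content is to track which lattice, respectively which complex vector space, each cycle lives in.

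First I would fix the flat section $\gamma$ of $\tp^*\bK$ together with the chosen flat section $\tilde\gamma$ of $\tp^*\tbK$ with $p_*\tilde\gamma=\gamma$. Since $\tilde\partial\colon\tbH\stackrel{\cong}{\to}\tbK$ is an isomorphism of flat bundles, there is a unique flat section $\tD$ of $\tp^*\tbH$ with $\tilde\partial\tD=\tilde\gamma$. Let $D$ be the image of $\tD$ under the bundle map $\tp^*\tbH\to\tp^*\bH$, which is the pushforward on relative homology induced by the covering of pairs $(\bC^2,\tC_q)\to((\bC^*)^2,C_q)$. Commutativity of the square relating $\partial$, $\tilde\partial$, the bundle map $\tp^*\tbH\to\tp^*\bH$ and $p_*$ gives $\partial D=p_*(\tilde\partial\tD)=p_*\tilde\gamma=\gamma$, so $D$ is a valid preimage of $\gamma$ in $\tp^*\bH$. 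Since $x=-\log X$ and $y=-\log Y$, this covering pulls $\omega=\frac{dX}{X}\wedge\frac{dY}{Y}$ back to $dx\wedge dy$, hence $\int_{\tD}dx\wedge dy=\int_{D}\omega$. Substituting into the identity $\int_{\tilde\partial\tD}ydx=-\int_{\tD}dx\wedge dy$ proved above yields
\[\int_{\tilde\gamma}ydx=-\int_{D}\omega .\]

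Next I would compare $D$ with an arbitrary preimage $\Gamma$ of $\gamma$ in $\tp^*\bH$. Then $\partial(\Gamma-D)=0$, so $\Gamma-D$ is a flat section of $\ker(\partial\colon\bH\to\bK)$, which by the short exact sequence $0\to\underline{\bC}\to\bH\to\bK\to0$ is the trivial line bundle spanned by $[T^2]$; flatness of $\Gamma-D$ and of $[T^2]$ forces $\Gamma-D=c\,[T^2]$ for a constant $c\in\bC$. Using $\int_{[T^2]}\omega=(2\pi\sqrt{-1})^2$ we obtain $\int_{\Gamma}\omega=\int_{D}\omega+c(2\pi\sqrt{-1})^2$, and substituting into the previous display and dividing by $(2\pi\sqrt{-1})^2$ gives the asserted formula. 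For the integrality statement I would rerun the argument with $\tilde\gamma$ taken in $\tp^*\tbK_\bZ$ and $\Gamma$ in $\tp^*\bH_\bZ$: then $\tD=(\tilde\partial)^{-1}\tilde\gamma$ lies in $\tp^*\tbH_\bZ$ because $\tilde\partial$ restricts to an isomorphism of local systems of lattices, hence $D\in\tp^*\bH_\bZ$, and $\Gamma-D$ is an integral flat section of $\ker(\partial\colon\bH_\bZ\to\bK_\bZ)=\underline{\bZ}\cdot[T^2]$, so $c\in\bZ$.

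I do not expect a substantive obstacle: the lemma is essentially a bookkeeping consequence of the diagrams and identities preceding it. The one point that demands care is the identification of the bundle map $\tp^*\tbH\to\tp^*\bH$ with the covering pushforward and the attendant change-of-variables identity $\int_{\tD}dx\wedge dy=\int_{D}\omega$; once that is in place, everything reduces to linear algebra over $\bZ$ and $\bC$ within the short exact sequences already recorded.
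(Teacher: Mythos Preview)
Your proposal is correct and follows essentially the same approach as the paper. The paper's argument (recorded in the paragraph immediately preceding the lemma) also lifts $\tilde\gamma$ to the unique $\tD\in\tp^*\tbH$ via $\tilde\partial^{-1}$, pushes forward to $D\in\tp^*\bH$, uses $\int_{\tD}dx\wedge dy=\int_{D}\omega$ together with $\int_{\tilde\partial\tD}ydx=-\int_{\tD}dx\wedge dy$, and then invokes $\ker\partial=\bC\cdot[T^2]$ to get the constant; your write-up is in fact a bit more explicit in verifying $\partial D=\gamma$ via the commutative square and in the integrality step.
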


\begin{remark}  \label{rem:integral-K}
Lemma \ref{lemm:integral-K} says the integration of $ydx$ over the flat cycles (sections) of $\tp^* \bK$ is well-defined up to a constant -- the constant ambiguity comes from the choice of lift in $\tp^*\bH$ (or $\tp^* \tbK$). For a given lift one may always choose another such that the difference of the integrations could be any constant $c\in\bC$. Similarly, the integration of $ydx$ over flat cycles of $\bK$ is also well-defined up to a constant -- it is defined as the integration of $ydx$ over its lift in $\tbK$, or equivalently as the integration of $\omega$ over its lift in $\bH$. Given such a lift one may find another such that the difference of their integrals is an arbitrary constant.
\end{remark}

\subsection{The equivariant small quantum cohomology}
\label{sec:qcoh}

Let $H(X,Y,q)$ be defined as in Section \ref{sec:H}. The
$\bT$-equivariant Landau-Ginzburg mirror of $\cX$ is
$( (\bC^*)^3, W_q^{\bT})$, where
$$
W^{\bT}_q(X,Y,Z)=H(X,Y,q)Z -\su_1\log X-\su_2\log Y-
\su_3\log Z
$$
Consider the universal superpotential $W^{\bT}(X,Y,Z,q)=W^{\bT}_q(X,Y,Z)$ defined
on $(\bC^*)^3\times \bC^{\fp}$. Then
$$
\mathrm{Jac}(W^{\bT}):= \frac{\bST[q_a, q_a^{-1},X,X^{-1}, Y, Y^{-1}, Z, Z^{-1}]}
{\langle \frac{\partial W^{\bT}}{\partial X}, \frac{\partial W^{\bT}}{\partial Y},
\frac{\partial W^{\bT}}{\partial Z}\rangle }
$$
is an algebra over $\bST[q_a, q_a^{-1}]$. For each fixed $q = (q_1,\ldots,q_p) \in \bC^{\fp}$, we
obtain
$$
\mathrm{Jac}(W_q^{\bT}):= \frac{\bST[X,X^{-1}, Y, Y^{-1}, Z, Z^{-1}]}
{\langle \frac{\partial W_q^{\bT}}{\partial X}, \frac{\partial W_q^{\bT}}{\partial Y},
\frac{\partial W_q^{\bT}}{\partial Z}\rangle }
$$
which is an algebra over $\bST$. By the argument in \cite{TsWa}, Theorem \ref{thm:IJ} ($\bT$-equivariant
mirror theorem) implies the following isomorphism of $\bST$-algebras for $q\in U_\ep$ with small enough $\epsilon$:
\begin{align}
\label{eqn:QH=Jac}
QH_{\CR,\bT}^*(\cX)\big|_{\btau=\btau({q}), Q=1} &\stackrel{\cong}{\longrightarrow}
\mathrm{Jac}(W_q^{\bT}):= \frac{\bST[X,X^{-1}, Y, Y^{-1}, Z, Z^{-1}]}
{\langle \frac{\partial W_q^{\bT}}{\partial X}, \frac{\partial W_q^{\bT}}{\partial Y},
\frac{\partial W^{\bT}_q}{\partial Z}\rangle }\\
\nonumber
H_a &\mapsto  [\frac{\partial W^{\bT}_q}{\partial \tau_a}(X,Y,Z)]
\end{align}
where $QH_{\CR,\bT}^*(\cX)$ is the small $\bT$-equivariant quantum cohomology of $\cX$,
and $\btau(q)$ is the closed mirror map.

Under this isomorphism,  the $\bT$-equivariant Poincar\'{e} pairing
on $QH^*_{\CR,\bT}(\cX)|_{\btau=\btau(q),Q=1}$ corresponds to the residue pairing
on $\Jac(W^{\bT}_q)$. More precisely, for $q\in U_\ep$ with generic $\su_1,\su_2,\su_3$,
$W_q^{\bT}$ is (locally) a holomorphic Morse function, i.e.,
the holomorphic 1-form $dW_q^{\bT}: (\bC^*)^3 \to T^*(\bC^*)^3$
intersects the zero section  of the cotangent bundle $T^*(\bC^*)^3$ transversally. The canonical basis of $\Jac(W_q^\bT)$ is represented by functions taking value $1$ at one critical point while being zero at other critical points, so the set of the zeros of $dW_q^{\bT}$ is identified with the set $I_\Si=\{(\si,\gamma): \si\in \Si(3),\gamma\in G_\si^*\}$, the labels of the canonical basis of the quantum cohomology. Let
$\bp_{\bsi}\in (\bC^*)^3$ be the zero of $dW_q^{\bT}$
associated to $\bsi \in I_\Si$.
Then
$$
(f,g) := \frac{1}{(2\pi\sqrt{-1})^3}
\int_{|dW_q^{\bT}|=\epsilon} \frac{fg dx\wedge dy \wedge dz}
{\frac{\partial W_q^{\bT}}{\partial x}\frac{\partial W_q^{\bT}}{\partial y}
\frac{\partial W_q^{\bT}}{\partial z}}
=\sum_{\bsi \in I_\Si}
\frac{f(\bp_\bsi)g(\bp_\bsi)}{
\det(\Hess(W_q^{\bT}))(\bp_\bsi)},
$$
where $x=-\log X$, $y=-\log Y$, $z=-\log Z$, and
$$
\Hess(W^{\bT}_q) =\left(\begin{array}{ccc}
(W^{\bT}_q)_{xx} & (W^{\bT}_q)_{xy} & (W^{\bT}_q)_{xz}\\
(W^{\bT}_q)_{yx} & (W^{\bT}_q)_{yy} & (W^{\bT}_q)_{yz}\\
(W^{\bT}_q)_{zx} & (W^{\bT}_q)_{zy} & (W^{\bT}_q)_{zz}
\end{array}\right).
$$

To summarize, there is an isomorphisms of Frobenius algebras
$$
QH^*_{\CR,\bT}(\cX)\big|_{\btau=\btau(q), Q=1}\cong \Jac(W^{\bT}_q)
$$
with
\begin{equation}\label{eqn:Delta-Hess}
\Delta^{\bsi}(\btau)\big|_{\btau=\btau(q),Q=1}= \det\left(\Hess(W^{\bT}_q)(\bp_{\bsi})\right)
\end{equation}
under the closed mirror map
\eqref{eqn:closed-mirror-map}. Setting $\su_3=0$, we have
\begin{equation}\label{eqn:qcoh-Tprime}
QH^*_{\CR,\bT'}(\cX)\big|_{\btau=\btau(q),Q=1}\cong \Jac(W^{\bT'}_q).
\end{equation}

\subsection{Dimensional reduction of the equivariant Landau-Ginzburg model}
\label{sec:LG}
In this subsection, we will see that the 3d Landau-Ginzburg B-model $((\bC^*)^3, W_q^{\bT'})$ is equivalent a 1d Landau-Ginzburg B-model
$(C_q,\hat{x})$ in two ways: (1) isomorphism of Frobenius algebras, and (2) identification of oscillatory integrals.
\medskip

\paragraph{\bf (1) Isomorphism of Frobenius algebras}
The $\bT'$-equivariant mirror of $\cX$ is a Landau-Ginzburg model
$( (\bC^*)^3, W_q^{\bT'})$, where
$W_q^{\bT'}: (\bC^*)^3 \to \bC$
is the $\bT'$-equivariant superpotential
\begin{equation}\label{eqn:WTprime}
W_q^{\bT'} =H(X,Y,q)Z - \su_1\log X- \su_2 \log Y
\end{equation}
which is multi-valued. Here we view $\su_1$ and $\su_2$ as complex parameters.
The differential
$$
dW^{\bT'}_q = \frac{\partial W_q^{\bT'}}{\partial X} dX
+\frac{\partial W_q^{\bT'}}{\partial Y} dY
+ \frac{\partial W_q^{\bT'}}{\partial Z} dZ
= Z dH + H dZ - \su_1\frac{dX}{X} -\su_2\frac{dX}{X}
$$
is a well-defined holomorphic 1-form on $(\bC^*)^3$.

We have
\begin{eqnarray*}
\frac{\partial W_q^{\bT'}}{\partial X}(X,Y,Z) &=& Z \frac{\partial H}{\partial X}(X,Y,q)
-\frac{\su_1}{X}\\
\frac{\partial W_q^{\bT'}}{\partial Y}(X,Y,Z) &=& Z\frac{\partial H}{\partial Y}(X,Y,q)
-\frac{\su_2}{Y}\\
\frac{\partial W_q^{\bT'}}{\partial Z}(X,Y,Z) &=& H(X,Y,q)
\end{eqnarray*}
Therefore,
$$
\frac{\partial W_q^{\bT'}}{\partial X}=0,\quad
\frac{\partial W_q^{\bT'}}{\partial Y}=0,\quad
\frac{\partial W_q^{\bT'}}{\partial Z}=0
$$
are equivalent to
$$
H(X,Y,q)=0,\quad \frac{\partial H}{\partial X}(X,Y,q) =
-\frac{1}{Z} \frac{\partial \hat{x}}{\partial X},
\quad \frac{\partial H}{\partial Y}(X,Y,q)= -\frac{1}{Z}\frac{\partial \hat{x}}{\partial Y}.
$$
where $\hat{x} = \su_1 x + \su_2 y$.
Therefore, the critical points of $W_q^{\bT'}(X,Y,Z)$, which are zeros
of the holomorphic differential $dW_q^{\bT'}$ on $(\bC^*)^3$,
can be identified with the critical points of $\hat{x}$, which
are zeros of the holomorphic differential
$$
d\hat{x} = -\su_1\frac{dX}{X} - \su_2 \frac{dY}{Y} = -\su_1(\frac{dX}{X}+\frac{\su_2}{\su_1}\frac{dY}{Y}).
$$
on the mirror curve
$$
C_q= \{ (X,Y)\in (\bC^*)^2: H(X,Y,q)=0\}.
$$
For $q\in U_\ep$, $C_q$ is a smooth Riemann surface
of genus $\fg$ with $\fn$ punctures. For fixed $q$, the zeros of $d\hat{x}$ depend only
on $f= \su_2/\su_1$.  For a fixed generic $f\in \bC$, there exists $\ep(f) \in (0,\ep)$ such that for
all $q\in U_{\ep(f)}$,
the section $d\hat{x}: C_q\to T^*C_q$ intersects
the zero section transversally at $2\fg-2+\fn$ points, and $W_q^{\bT'}$ is holomorphic Morse with $2\fg-2+\fn$ critical points. In the remainder of this section, we assume $f$ is generic and $q\in U_{\ep(f)}$.

We have the following isomorphism of $\bar \bS_{\bT'}$-algebras:

\begin{lemma}\label{lm:product}
$$
\Jac(W^{\bT'}_q)\cong H_B,
$$where
\begin{eqnarray*}
\Jac(W^{\bT'}_q)&:=& \frac{\bar{S}_{\bT'}[X, X^{-1}, Y, Y^{-1}, Z, Z^{-1}]}
{ \langle \frac{\partial W_q^{\bT'}}{\partial X},
\frac{\partial W_q^{\bT'}}{\partial Y},
\frac{\partial W_q^{\bT'}}{\partial X} \rangle},\\
H_B &:=&
\frac{\bar{S}_{\bT'}[X, X^{-1}, Y, Y^{-1}]}{\langle H(X,Y,q),
\su_2 X\frac{\partial H}{\partial X}(X,Y,q)
-\su_1 Y\frac{\partial H}{\partial Y}(X,Y,q)\rangle }.
\end{eqnarray*}
\end{lemma}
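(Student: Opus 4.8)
The plan is to exhibit an explicit isomorphism by eliminating the variable $Z$ from the Jacobian ring. Concretely, in $\Jac(W^{\bT'}_q)$ the relation $\partial W_q^{\bT'}/\partial Z = H(X,Y,q) = 0$ already lives purely in the $X,Y$-variables, so I first show that the quotient map
\[
\bar{S}_{\bT'}[X^{\pm 1}, Y^{\pm 1}, Z^{\pm 1}] \longrightarrow \bar{S}_{\bT'}[X^{\pm 1}, Y^{\pm 1}]
\]
obtained by using $\partial W_q^{\bT'}/\partial X = 0$ and $\partial W_q^{\bT'}/\partial Y = 0$ to solve for $Z$ descends to a ring homomorphism $\Jac(W^{\bT'}_q)\to H_B$. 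The point is that on the locus $H = 0$ the two equations $Z\,\partial H/\partial X = \su_1/X$ and $Z\,\partial H/\partial Y = \su_2/Y$ determine $Z$ (generically $\partial H/\partial X$ and $\partial H/\partial Y$ do not both vanish on $C_q$, since $C_q$ is smooth for $q\in U_{\ep(f)}$), and their compatibility is exactly the single relation $\su_2 X\,\partial H/\partial X - \su_1 Y\,\partial H/\partial Y = 0$ that defines $H_B$. So the assignment $Z \mapsto \su_1/(X\,\partial H/\partial X)$ (equivalently $\su_2/(Y\,\partial H/\partial Y)$) sends the ideal $\langle \partial_X W, \partial_Y W, \partial_Z W\rangle$ into $\langle H,\ \su_2 X\partial_X H - \su_1 Y\partial_Y H\rangle$ and gives a well-defined $\bar S_{\bT'}$-algebra map $\Jac(W^{\bT'}_q)\to H_B$.

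Next I construct the inverse. Since $Z$ is a unit in $\Jac(W^{\bT'}_q)$, the above map is surjective: every element of $H_B$ is represented by a Laurent polynomial in $X,Y$ alone, which lifts tautologically. For injectivity I would instead count dimensions: both sides are finite-dimensional $\bar S_{\bT'}$-vector spaces, and by the discussion preceding the lemma $W^{\bT'}_q$ is holomorphic Morse with exactly $2\fg - 2 + \fn$ critical points, so $\dim_{\bar S_{\bT'}}\Jac(W^{\bT'}_q) = 2\fg-2+\fn$; likewise the critical points of $\hat x$ on $C_q$ number $2\fg-2+\fn$ (transversality of $d\hat x$ with the zero section, for generic $f$ and $q\in U_{\ep(f)}$), and these are precisely the points cut out by $H = 0$ together with $\su_2 X\partial_X H = \su_1 Y\partial_Y H$, giving $\dim_{\bar S_{\bT'}} H_B = 2\fg-2+\fn$. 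A surjective $\bar S_{\bT'}$-linear map between free modules of the same finite rank is an isomorphism, so we are done with the algebra isomorphism; the Frobenius structure (residue pairing) can be matched afterward, if needed, by the same elimination-of-$Z$ change of variables in the residue integral, using that $\partial_Z W^{\bT'}_q = H$ contributes the factor that reduces the $3$-dimensional residue to a $2$-dimensional one on $C_q$.

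The main obstacle I anticipate is the bookkeeping around the scheme-theoretic (as opposed to set-theoretic) identification of critical loci: one must check that the two ideals define reduced $0$-dimensional schemes with the same length, not merely the same support. For $\Jac(W^{\bT'}_q)$ this is the Morse (nondegeneracy of the Hessian) hypothesis already invoked in Section~\ref{sec:qcoh}; for $H_B$ it is the transversality of $d\hat x$ with the zero section of $T^*C_q$, again already established for generic $f$ and $q\in U_{\ep(f)}$. Granting these, the equality of dimensions is immediate and the surjective-map-of-free-modules argument closes the proof. The only mild care needed is that the Jacobian ideal generators $\partial_X W, \partial_Y W$ are written in terms of $1/X, 1/Y$, so one should clear denominators by multiplying by the units $X, Y$ before comparing with the generators of the ideal defining $H_B$; this is routine.
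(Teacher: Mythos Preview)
Your proposal is correct and is essentially the paper's approach made explicit: the paper gives no proof of this lemma, relying instead on the sentence immediately following it, which records the bijection between critical points of $W^{\bT'}_q$ and critical points of $\hat{x}$ on $C_q$ via $(X_0,Y_0)\leftrightarrow (X_0,Y_0,\su_1/(X_0\,\partial_X H))$, together with the already-established Morse/transversality hypotheses that make both algebras semisimple of dimension $2\fg-2+\fn$. One small logical point worth tightening: your map $Z\mapsto \su_1/(X\,\partial_X H)$ only lands in $H_B$ once you know $X\,\partial_X H$ is a unit there, which uses the semisimplicity you invoke later; it is slightly cleaner to run the map the other way (the inclusion of Laurent polynomial rings induces $H_B\to \Jac(W^{\bT'}_q)$ directly, since $H=\partial_Z W$ and $Z\cdot(\su_2 X\partial_X H-\su_1 Y\partial_Y H)=\su_2\cdot X\partial_X W-\su_1\cdot Y\partial_Y W$ with $Z$ a unit) and then do your dimension count.
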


It is straightforward to check that, $(X_0, Y_0)$ is a solution to
$$
H(X,Y,q)= \su_2 X\frac{\partial H}{\partial X}(X,Y,q)
-\su_1 Y\frac{\partial H}{\partial Y}(X,Y,q) =0
$$
if and only if $\displaystyle{ (X_0, Y_0, \frac{\su_1}{X_0\frac{\partial H}{\partial X}(X_0,Y_0,q)})}$
is a solution to
$$
\frac{\partial W^{\bT'}_q}{\partial X} = \frac{\partial W_q^{\bT'}}{\partial Y}
=\frac{\partial W^{\bT'}_q}{\partial Z} =0.
$$
 Moreover, $\hat{y}:= y/\su_1$ is a local holomorphic coordinate
near $(X_0,Y_0)$, and
\begin{lemma}\label{lm:pairing}
$$
\det\Hess(W^{\bT'}_q)\left(X_0, Y_0,\frac{-\su_1}{\frac{\partial H}{\partial x}(X_0,Y_0,q)}\right)
= -\frac{d^2 \hat{x}}{d\hat{y}^2}.
$$
\end{lemma}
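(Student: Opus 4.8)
The plan is to prove the identity by computing both sides explicitly in the logarithmic coordinates $(x,y,z)=(-\log X,-\log Y,\log Z)$ in which the Hessian of the superpotential is taken (the third variable has to be $\log Z$, as dictated by the $-\su_3\log Z$ term in $W_q^{\bT}$; the overall sign of $z$ is immaterial for the determinant), and then matching the two results. In these coordinates $W_q^{\bT'}=H(X,Y,q)Z-\su_1\log X-\su_2\log Y = He^{z}+\hat x$, where $\hat x=\su_1 x+\su_2 y$ is \emph{linear} in $(x,y)$ and $H$ is independent of $z$; this linearity of $\hat x$ is the structural feature that makes the dimensional reduction work. Throughout, subscripts on $H$ denote partial derivatives with respect to $x$ and $y$.

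First I would assemble the Hessian at the critical point $\bp=(X_0,Y_0,Z_0)$ with $Z_0=-\su_1/\tfrac{\partial H}{\partial x}(X_0,Y_0,q)$ (the identification of this point as a critical point of $W_q^{\bT'}$ is established in the discussion preceding the lemma). Since $\hat x$ is linear it drops out of the $2\times 2$ upper block, and since $\partial_z W_q^{\bT'}=He^z$ vanishes at $\bp$ (this is $H=0$, i.e.\ $\partial_Z W_q^{\bT'}=0$), the $(z,z)$-entry is zero, so
\[
\Hess(W_q^{\bT'})(\bp)=Z_0\begin{pmatrix} H_{xx} & H_{xy} & H_x\\ H_{xy} & H_{yy} & H_y\\ H_x & H_y & 0\end{pmatrix},
\]
all partials evaluated at $(X_0,Y_0)$. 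Expanding this bordered determinant along the last row gives $\det\Hess(W_q^{\bT'})(\bp)=-Z_0^{3}\bigl(H_x^2H_{yy}-2H_xH_yH_{xy}+H_y^2H_{xx}\bigr)$, and substituting $Z_0=-\su_1/H_x$ turns this into $\tfrac{\su_1^3}{H_x^3}\bigl(H_x^2H_{yy}-2H_xH_yH_{xy}+H_y^2H_{xx}\bigr)$.

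Next I would compute the right-hand side. At $\bp$ one has $H_x\neq 0$: the critical-point equation is $\su_1H_y=\su_2H_x$, so $H_x=0$ would force $H_y=0$ as well (for generic $f$ we have $\su_1\neq 0$), contradicting the smoothness of $C_q=\{H=0\}$; this is also exactly what makes $y$, hence $\hat y=y/\su_1$, a local holomorphic coordinate on $C_q$ near $(X_0,Y_0)$. Writing $C_q$ locally as a graph $x=x(y)$ and differentiating $H(x(y),y)\equiv 0$ twice gives $x'(y)=-H_y/H_x$ and then $x''(y)=-\bigl(H_x^2H_{yy}-2H_xH_yH_{xy}+H_y^2H_{xx}\bigr)/H_x^{3}$. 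Since $\hat x=\su_1 x+\su_2 y$ we get $\tfrac{d^2\hat x}{dy^2}=\su_1 x''(y)$, and passing from $y$ to $\hat y=y/\su_1$ multiplies the second derivative by $\su_1^2$, so $\tfrac{d^2\hat x}{d\hat y^2}=-\su_1^3\bigl(H_x^2H_{yy}-2H_xH_yH_{xy}+H_y^2H_{xx}\bigr)/H_x^{3}$. Comparing with the previous paragraph yields $\det\Hess(W_q^{\bT'})(\bp)=-\tfrac{d^2\hat x}{d\hat y^2}$, which is the claim.

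The conceptual content is just the bordered-Hessian determinant identity combined with one round of implicit differentiation, so the only real obstacle is bookkeeping. One must keep the logarithmic coordinate conventions fixed throughout — in particular that the $Z$-variable entering the Hessian is $\log Z$ rather than $Z$, since using $Z$ itself changes the power of $Z_0$ from $3$ to $1$ and destroys the identity — and carefully track the signs introduced by $x=-\log X$, $y=-\log Y$ and by the rescaling $\hat y=y/\su_1$.
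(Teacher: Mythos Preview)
Your proof is correct and follows essentially the same approach as the paper: compute the Hessian in logarithmic coordinates, expand the resulting bordered determinant, differentiate $H=0$ implicitly twice to obtain $d^2\hat x/d\hat y^2$, and match. The only cosmetic differences are that the paper keeps $HZ$ in the $(z,z)$-slot and sets $H=0$ at the end (whereas you impose it immediately), and the paper's sign convention for the $z$-column differs from yours by the choice $z=-\log Z$ versus $z=\log Z$, which as you note does not affect the determinant.
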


\begin{proof}
Since
$$
W^{\bT'}_q(X,Y,Z)= H(X,Y,q)Z-\su_1 \log X -\su_2 \log Y,
$$
we have
$$
\Hess(W^{\bT'}_q):= \left(
\begin{array}{ccc}
(W^{\bT'}_q)_{xx} & (W^{\bT'}_q)_{xy} & (W^{\bT'}_q)_{xz} \\
(W^{\bT'}_q)_{yx} & (W^{\bT'}_q)_{yy} & (W^{\bT'}_q)_{yz} \\
(W^{\bT'}_q)_{zx} & (W^{\bT'}_q)_{zy} & (W^{\bT'}_q)_{zz}
\end{array} \right)
=\left(
\begin{array}{ccc}
H_{xx} Z & H_{xy}Z & -H_x Z \\
H_{yx} Z & H_{yy}Z & -H_y Z \\
-H_x Z & -H_y Z & H Z
\end{array} \right),
$$
$$
\det(\Hess(W_q^{\bT'})) = Z^3 \det \left(
\begin{array}{ccc}
H_{xx} & H_{xy} & -H_x  \\
H_{yx}  & H_{yy} & -H_y  \\
-H_x  & -H_y  & H
\end{array} \right).
$$
Taking differential on both sides of
$$
H(X,Y,q)=0,
$$
$$
H_x \frac{dx}{dy} + H_y =0 \implies \frac{dx}{dy}= -\frac{H_y}{H_x}.
$$
Taking differential once again
$$
H_{xx} (\frac{dx}{dy})^2 + H_{xy} \frac{dx}{dy} + H_x \frac{d^2 x}{dy^2}
+ H_{yx}\frac{dx}{dy}+ H_{yy} =0
\implies H_{xx} \frac{H_y^2}{H_x^2} - 2H_{xy} \frac{H_y}{H_x} + H_x \frac{d^2 x}{dy^2}
+ H_{yy} =0.
$$
We conclude that
\begin{align*}
&\frac{d^2 x}{dy^2} = \frac{2H_{xy} H_x H_y - H_{xx}H_y^2 -H_{yy} H_x^2}{H_x^3}
= \frac{1}{H_x^3}\det \left(
\begin{array}{ccc}
H_{xx} & H_{xy} & -H_x  \\
H_{yx}  & H_{yy} & -H_y  \\
-H_x  & -H_y  & 0
\end{array} \right),\\
&\frac{d^2\hx}{d\hy^2} = (\frac{\su_1}{H_x})^3 \det \left(
\begin{array}{ccc}
H_{xx} & H_{xy} & -H_x  \\
H_{yx}  & H_{yy} & -H_y  \\
-H_x  & -H_y  & 0
\end{array} \right).
\end{align*}
Recall that $(X_0, Y_0)= (e^{-x_0}, e^{-y_0})$ satisfies
$$
H(X_0,Y_0,q)=0, \quad -\su_2 H_x(X_0,Y_0,q) +\su_1 H_y(X_0,Y_0,q)=0.
$$
So
$$
 \det(\Hess(W^{\bT'}_q))(X_0, Y_0, \frac{-\su_1}{H_x(X_0,Y_0,q)})\\
= (\frac{-\su_1}{H_x(X_0,Y_0,q)})^3 \det \left(
\begin{array}{ccc}
H_{xx} & H_{xy} & -H_x  \\
H_{yx}  & H_{yy} & -H_y  \\
-H_x  & -H_y  & H
\end{array} \right)(X_0, Y_0,q)\\
=\left. -\frac{d^2\hx}{d\hy^2}\right|_{\hy = \frac{y_0}{\su_1}}.
$$
\end{proof}

Combining Lemma \ref{lm:product} and Lemma \ref{lm:pairing},  we have an isomorphism $\Jac(W^{\bT'}_q)\cong H_B$ of Frobenius algebras.

\medskip

\paragraph{\bf (2) Identification of oscillatory integrals}

We first introduce some notation.
\begin{notation}
We use the notation in Section \ref{sec:H} and Section \ref{sec:mirror-curve}. For each flag $(\tau,\si)$, we define the following objects.
\begin{eqnarray}
&& x_{(\tau,\si)} := -\log (X_{(\tau,\si)}) = a(\tau,\si)x + b(\tau,\si)y + \delta_{(\tau,\si)}(q), \label{eqn:x-flag} \\
&& y_{(\tau,\si)} := -\log (Y_{(\tau,\si)}) = c(\tau,\si)x + d(\tau,\si)y + \ep_{(\tau,\si)}(q), \label{eqn:y-flag}
\end{eqnarray}
where $\delta_{(\tau,\si)}(q)$ and $\ep_{(\tau,\si)}(q)$ are linear functions in $\log q_i$ with rational coefficients.
\begin{equation}\label{eqn:su-flag}
\su_1(\tau,\si) = d(\tau,\si) \su_1 - c(\tau,\si) \su_2,\quad \su_2(\tau,\si) = -b(\tau,\si)\su_1 +a(\tau,\si) \su_2.
\end{equation}
Define
\begin{equation}
\hx_{(\tau,\si)}:= \su_1(\tau,\si) x_{(\tau,\si)} + \su_2(\tau,\si) y_{(\tau,\si)}.
\end{equation}
It follows from \eqref{eqn:x-flag}, \eqref{eqn:y-flag} and \eqref{eqn:su-flag} that
\begin{equation}\label{eqn:hx-flag}
\hx_{(\tau,\si)} =  \hx + \tc(\tau,\si)
\end{equation}
where $\hx = \su_1 x + \su_2 y$ and $\tc(\tau,\si)= \su_1(\tau,\si)\delta_{(\tau,\si)}(q) + \su_2(\tau,\si) \ep_{(\tau,\si)}(q)$.  Finally, we define
\begin{equation}\label{eqn:sw-flag}
\sw_1'(\tau,\si)= \frac{\su_1(\tau,\si)}{\fr(\tau,\si)}, \quad
\sw_2'(\tau,\si)= \frac{\fs(\tau,\si)}{\fr(\tau,\si)\fm_\tau}\su_1(\tau,\si) +\frac{\su_2(\tau,\si)}{\fm_\tau},
\quad\quad \sw_3'(\tau,\si)=-\sw_1'(\tau,\si)-\sw'_2(\tau,\si).
\end{equation}
\end{notation}

Denote $\hat X=e^{-\hat x}$ and $\hat Y=e^{-\hat y}$. Recall that there is a bijection between the zeros of $dW^{\bT'}_q$ (and also
$d \hat x$) and the set
$I_\Sigma$, and we denote the corresponding critical point by $\bp_\bsi(q)=(X_\bsi(q), Y_\bsi(q), Z_\bsi(q))$ and $p_\bsi(q)=(\check{u}^{\bsi}(q), \check{v}^\bsi(q))$.
Around  $p_{\bsi}$ we have
\begin{eqnarray*}
\hx &=& \check{u}^{\bsi} +\zeta_\bsi^2 \\
\hy &=& \check{v}^{\bsi} +\sum_{d=1}^\infty h^{\bsi}_d \zeta_\bsi^d
\end{eqnarray*}
where
\begin{equation}
  \label{eqn:h-Hess}
h_1^{\bsi}=\sqrt{ \frac{2}{\frac{d^2\hat{x}}{d\hat{y}^2}(p_{\bsi}) } }=\sqrt{\frac{2}{-\det\Hess(W_q^{\bT'})(\bp_\bsi)}}
\end{equation}
Let $\Gamma_{\bsi}$ be the Lefschetz thimble of the superpotential $\hat{x}:C_q\to \bC$
such that $\hat{x}(\Gamma_{\bsi})= \check{u}^{\bsi} +\bR_{\geq 0}$.
Then $\{\Gamma_{\bsi}:\bsi \in I_\Si\}$ is a basis of the relative
homology group $H_1(C_q, \{ \hat{x}\gg 0\})$.

\begin{lemma}\label{lm:oscillatory}
Suppose that $\su_1,\su_2$ are real numbers such that such that $\sw_i'(\tau,\si)$ is a nonzero real number for any flag $(\tau,\si)$ and for any $i\in \{1,2,3\}$, so
that $f=\su_2/\su_1$ is generic and $W_q^{\bT'}$ is holomorphic Morse with $2\fg-2+\fn$ critical points for $q\in U_{\ep(f)}$. There exists $\delta\in (0,\ep(f)]$ such that
if $q\in U_\delta$ then for each  $\bsi=(\si,\gamma)\in I_\Si$, there exists $\tGa_{\bsi}
\in H_3((\bC^*)^3,\{\Re(\frac{W_q^{\bT'}}{z} \gg 0\};\bZ)$ such that
$$
I_{\bsi}:=
\int_{\tGa_{\bsi}} e^{-\frac{W_q^{\bT'}}{z} } \Omega
=2\pi\sqrt{-1} \int_{\Gamma_{\bsi}} e^{-\hat{x}/z}\Phi
$$
where $\Phi= \hat{y}d\hat{x}$ and $\Omega=\frac{dX}{X}\wedge \frac{dY}{Y}\wedge \frac{dZ}{Z}$.
\end{lemma}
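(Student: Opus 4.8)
The plan is to reduce the 3-dimensional oscillatory integral over $(\bC^*)^3$ to a 1-dimensional oscillatory integral over the mirror curve $C_q$ by explicitly integrating out the $Z$-variable, mirroring the algebraic dimensional reduction already carried out in parts (1) and (2) of Section \ref{sec:LG}. Recall that $W_q^{\bT'}(X,Y,Z) = H(X,Y,q)Z - \su_1\log X - \su_2\log Y = H(X,Y,q)Z + \hat{x}$ where $\hat{x} = \su_1 x + \su_2 y$ does not depend on $Z$. So in the integral $\int e^{-W_q^{\bT'}/z}\,\Omega$ with $\Omega = \frac{dX}{X}\wedge\frac{dY}{Y}\wedge\frac{dZ}{Z}$, the $Z$-dependence is purely through $e^{-H(X,Y,q)Z/z}$. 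First I would set up, for each critical point $\bp_{\bsi}$, a Lefschetz thimble $\tGa_{\bsi} \subset (\bC^*)^3$ for the function $W_q^{\bT'}/z$; because $W_q^{\bT'}$ is holomorphic Morse for $q \in U_{\ep(f)}$ (established above via Lemma \ref{lm:pairing} and the reduction to $d\hat{x}$ on $C_q$), the thimbles $\{\tGa_{\bsi}\}$ form a basis of $H_3((\bC^*)^3, \{\Re(W_q^{\bT'}/z) \gg 0\};\bZ)$, and the passage $\delta \le \ep(f)$ just ensures transversality persists and the thimbles are well-defined over $U_\delta$.

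The core computation is the fibered structure of $\tGa_{\bsi}$ over $C_q$ (or rather over a tube around it). Writing the thimble as fibered over the $(X,Y)$-projection, the fiber over a point $(X,Y)$ near $C_q$ is a contour in the $Z$-line along which $\Re(H(X,Y,q)Z/z) \to +\infty$; integrating $e^{-H(X,Y,q)Z/z}\frac{dZ}{Z}$ along such a contour produces, up to the constant $2\pi\sqrt{-1}$ coming from the residue/loop around $Z=0$, the delta-function-like localization onto $H(X,Y,q)=0$, i.e. onto $C_q$. Concretely, $\int \frac{dZ}{Z} e^{-H Z/z}$ over the appropriate ray gives (after the standard contour argument) $2\pi\sqrt{-1}$ times evaluation on $\{H=0\}$, so that $\int_{\tGa_{\bsi}} e^{-W_q^{\bT'}/z}\Omega = 2\pi\sqrt{-1}\int_{\Gamma_{\bsi}} e^{-\hat{x}/z}\,\frac{dX}{X}\wedge\frac{dY}{Y}\big|_{C_q}$, and the restriction of $\frac{dX}{X}\wedge\frac{dY}{Y}$ to the locus $H=0$, paired against the remaining one real dimension of the thimble, is exactly $\hat{y}\,d\hat{x} = \Phi$ by the same manipulation used in Lemma \ref{lemm:integral-K} (the relative class $[dx\wedge dy]$ corresponds to $-[y\,dx]$, hence to $\hat{y}\,d\hat{x}$ after rescaling coordinates by $\su_1,\su_2$). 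The image contour $\Gamma_{\bsi}$ in $C_q$ is precisely the Lefschetz thimble of $\hat{x}:C_q \to \bC$ with $\hat{x}(\Gamma_{\bsi}) = \check{u}^{\bsi} + \bR_{\ge 0}$, which matches the identification of critical points of $W_q^{\bT'}$ with critical points of $\hat{x}$ on $C_q$ already recorded above.

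I would organize the argument in three steps: (i) choose coordinates so that $W_q^{\bT'} = \hat{x} + ZH$ and observe $\hat{x}$ is $Z$-independent; (ii) construct $\tGa_{\bsi}$ as an $S^1$-or-ray bundle over a neighborhood of $\Gamma_{\bsi}$ in $C_q$ inside $(\bC^*)^2$, with the fiber direction being the contour in $Z$ along which the integrand decays, and check this is a genuine relative cycle in $H_3((\bC^*)^3, \{\Re(W_q^{\bT'}/z)\gg 0\};\bZ)$; (iii) apply Fubini, do the $Z$-integral first to extract the factor $2\pi\sqrt{-1}$ and localize onto $C_q$, then identify the residual 2-form restricted to $C_q$ with $\Phi = \hat{y}\,d\hat{x}$ via the relative-homology bookkeeping of Section \ref{sec:HV}. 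The main obstacle I anticipate is step (ii): carefully defining the thimble $\tGa_{\bsi}$ so that the fibered $Z$-contour is globally consistent over the (non-compact, punctured) curve $\Gamma_{\bsi}$ and so that the boundary behavior matches the relative-homology condition $\Re(W_q^{\bT'}/z)\gg 0$; one must control the contour near the punctures of $C_q$ and near $H=0$ simultaneously, and verify that the orientation and the choice of decay ray in $Z$ are compatible with the orientation of $\Gamma_{\bsi}$ so that the constant works out to exactly $2\pi\sqrt{-1}$ with no spurious sign or multiplicity. The choice of small enough $\delta \in (0,\ep(f)]$ is exactly what is needed to make this fibered picture valid uniformly.
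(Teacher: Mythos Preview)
Your approach diverges substantially from the paper's, and the divergence occurs precisely at the step you treat as routine. The paper does \emph{not} construct $\tGa_{\bsi}$ as an abstract Lefschetz thimble and then integrate out $Z$ by a fibered residue argument. Instead it passes to flag-adapted coordinates $(X_{(\tau,\si)},Y_{(\tau,\si)},Z_{(\tau,\si)})$, writes down an \emph{explicit} product cycle $\tGa_{\bsi,q}=\tGa^{\mathrm{red}}_{\bsi,q}\times C$ with $\tGa^{\mathrm{red}}_{\bsi,q}$ a real $2$-cycle in the $(X,Y)$-plane and $C=\{-1+\sqrt{-1}\,\bR\}$ a vertical line in $Z$, evaluates the integral directly as a Gamma-function series (quoting \cite[Section 7.2]{FLZ}), and then performs the dimensional reduction via an auxiliary $(v^+,v^-)$ change of variables, again citing \cite{FLZ}. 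The identity is first established for $q$ in a totally real slice $L_{\bsi}\subset\bC^{\fp}$ and then extended to $U_\delta$ by analyticity.

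The gap in your proposal is step (iii). The assertion that
\[
\int e^{-H(X,Y,q)Z/z}\,\frac{dZ}{Z}
\]
along ``the appropriate ray'' yields $2\pi\sqrt{-1}$ times evaluation on $\{H=0\}$ is not a valid identity: there is no mechanism by which this one-dimensional contour integral in $Z$ produces a delta function in $H$. A residue at $Z=0$ gives $2\pi\sqrt{-1}$ \emph{independently} of $H$, while an integral along a decay ray gives an exponential integral / incomplete Gamma function, not a localization. What actually localizes the $3$-dimensional integral to $C_q$ is a \emph{two}-variable stationary-phase (or Gaussian) reduction in both $Z$ and the direction transverse to $C_q$ inside $(\bC^*)^2$, and the Hessian in those two directions is what matches Lemma \ref{lm:pairing}; your argument collapses this to a single $Z$-integral and loses the transverse direction entirely. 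Relatedly, the appeal to Lemma \ref{lemm:integral-K} to identify $\frac{dX}{X}\wedge\frac{dY}{Y}\big|_{C_q}$ with $\hat y\,d\hat x$ is a non sequitur: that lemma compares \emph{periods} of closed forms over flat cycles, not pointwise restrictions inside an oscillatory integrand. To salvage your outline you would need either a genuine two-variable Gaussian reduction normal to $C_q$ (tracking the Jacobian against Lemma \ref{lm:pairing}), or to revert to the paper's strategy of an explicit product cycle plus the computations of \cite{FLZ}.
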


\begin{proof}
We have
$$
W^{\bT'}_q = H(X,Y,q)Z + \hx
$$
where $\hx = \su_1 x +\su_2 y$. By  \eqref{eqn:HH} and \eqref{eqn:hx-flag}, for any flag $(\tau,\si)$, we may write
$$
W^{\bT'}_q = H_{(\tau,\si)}(X_{(\tau,\si)}, Y_{(\tau,\si)},q) Z_{(\tau,\si)} + \hx_{(\tau,\si)} - \tc(\tau,\si),
$$
where $Z_{(\tau,\si)}=a_{i_3}(q) X^{m_{i_3}} Y^{n_{i_3}}$.

In the remainder of this proof, we fix a flag $(\tau,\si)$, and use the following notation:
$$
\tX = X_{(\tau,\si)},\quad \tY = Y_{(\tau,\si)},\quad \tZ= Z_{(\tau,\si)},\quad \tx = x_{(\tau,\si)},\quad \widetilde{\hx} = \hx_{(\tau,\si)},\quad \tc = \tc_{(\tau,\si)},
$$
$$
\tfr=\fr(\tau,\si),\quad  \tfs=\fs(\tau,\si), \quad \tsu_i =\su_i(\tau,\si),\quad  \tsw_j=\sw'_j(\tau,\si),
$$
where $i\in \{1,2\}$ and $j\in \{1,2,3\}$.
We further assume $\tsw_1 > \tsw_2 > 0$, so that $\tsw_3=-\tsw_1-\tsw_2 <0$; the other cases are similar.

The relative cycle $\Gamma_\bsi\subset C_q$ is characterized by $\tx(\Gamma_\bsi)=[\tx_{\si,\gamma},+\infty)$, where  $\tx_{\si,\gamma}= -\log \tX_{\si,\gamma}$. Thus $\Gamma_\bsi$ are actually defined for all $|q|<\ep$, even when $C_q$ is not smooth.
At $q=0$, the mirror curve equation $H_{(\tau,\si)}(\tX,\tY,q)=0$ becomes
$$
\tX^{\tfr} \tY^{-\tfs}+\tY^{\fm_\tau}+1=0,
$$
which is the equation of the mirror curve $C_{\si}$ of the affine toric Calabi-Yau 3-orbifold $\cX_\si$ defined by
the 3-cone $\si$. There are $\tilde{\fr}\fm_\tau$ critical points of the function
$\widetilde{\su}_1\tx + \widetilde{\su}_2 \ty =\hat{\tx}- \tc$ on
$C_q$ which can be holomorphically extended to
$C_{\si}$ when $q=0$ -- they are all critical points on
$C_{\si}$ (see Section \ref{sec:toric-degeneration}).  By direct computation (see e.g. \cite[Section 6.5]{FLZ}), for $\gamma\in G_\si^*$,
each critical point $(\tX_{\si,\gamma}(q),\tY_{\si,\gamma}(q))$ at $q=0$ satisfies
\begin{align*}
&-1<\tX_{\si,\gamma}(0)^{\tfr}\tY_{\si,\gamma}(0)^{-\tfs}=\frac{-\tsw_1}{\tsw_1+\tsw_2}<0,\quad
-1<\tY_{\si,\gamma}(0)^{\fm_\tau}=\frac{-\tsw_2}{\tsw_1+\tsw_2}<0
\end{align*}
For each $\bsi=(\si,\gamma)$, we define a relative cycle
$$
\tGamma^{\mathrm{red}}_{\bsi,q}=\{\tX\in \bR^+ \tX_\bsi(q), \tY_\si \in \bR^+ \tY_\bsi(q)\}.
$$
When $q=0$, we have a disjoint union of connected components
$$
\{ \tX^{\tfr}\tY^{-\tfs}\in \bR^-,\ \tY^{\fm_\tau} \in \bR^- \} =\bigsqcup_{\gamma\in G_\sigma^*} \tGamma^{\mathrm{red}}_{(\si,\gamma),0}
$$
where $\tGamma^{\mathrm{red}}_{(\si,\gamma),0}$ is the connected component
passing through $(\tX_{\si,\gamma}(0),\tY_{\si,\gamma}(0))$.
We define $\tGamma_\bsi=\tGamma_{\bsi,q}=\tGamma^{\mathrm{red}}_{\bsi,q} \times C$, where
$C=\{\tZ\in -1+\sqrt{-1}\bR\}$.
The cone $\si$ in the flag $(\tau,\si)$ is a $3$-cone. Then for any $\vec h= (r_i)_{i\in I_\si}\in \bZ^\fp$, define $c^\si_j(\vec h)\in \bQ$ for $j=1,2,3$ by
\[
\sum_{i\in I_\si}r_i b_i=\sum_{j=1}^3 c^\si_{j}(\vec h) b_{i_j}.
\]
We further define
\[
\chi_\alpha(\vec h)=\chi_\alpha(\sum_{j=1}^3 \{c^\si_{j}(\vec h)\} b_{i_j}),
\]
where $\alpha \in G_\si^*$. We consider $\sum_{j=1}^3 \{c^\si_{j}(\vec h)\} b_{i_j}$ as a box element in $\mathrm{Box}(\si)\cong G_\si$ (see Section \ref{sec:CR}).

As in \cite[Section 7.2]{FLZ}, we can compute
\begin{align*}
e^{-\tc/z}I_{\bsi}
&=\int_{\tGamma_{\bsi,q}} e^{-\tc/z} \cdot e^{-\frac{W_q^{\bT'}}{z}} \Omega\\
&=\frac{2\pi\sqrt{-1}}{|G_\si|}e^{\sqrt{-1} \mathrm{Im}\left(\widetilde{\hx}\right)/z}\sum_{\vec{h}=(r_i)_{i\in I_\si}, r_i\in \bZ_{\geq 0}}e^{\sqrt{-1}c_3(\vec h)}\chi_\gamma(\vec h)\prod_{i\in I_\si} \frac{(-a_i^\si(q))^{r_i}}{r_i!}
\cdot \frac{\Gamma(\frac{\tsw_1}{z}+c^{\si}_1(\vec h))\Gamma(\frac{\tsw_2}{z}+c^{\si}_2(\vec h))}{\Gamma(-\frac{\tsw_3}{z}-c^{\si}_3(\vec h)+1)}.
\end{align*}
We define
$$
L_{\bsi}:= \{q \in \bC^\fp: a_i^\si(q)\tX_{\bsi}(0)^{ m_i^{(\tau,\sigma)}   }\tY_{\bsi}(0)^{ n_i^{(\tau,\sigma)} }\in \bR \textup{ for }i\in I_\si \}
$$
which is a union of totally real linear subspace of $\bC^\fp$.
When $|q|$ is small (then $|a_i^\si(q)|$ are small), $\tGamma_{\bsi,q}=\tGamma_{\bsi,0}$ for $q$ in a sufficiently small neighborhood of $0\in  L_{\bsi}$. Set
\begin{align*}
&v^+,v^-\in \bC,\quad \overline \Gamma_{\bsi,q}=\tGamma_{\bsi,q}\times \{v^+=\overline{v^-}\}, \quad
  \overline \Gamma^{\mathrm{red}}_{\bsi,q}=\tGamma^{\mathrm{red}}_{\bsi,q}\times
  \{v^+=\overline{v^-}\},\\
&\Omega'=\frac{d\hat X}{\hat X}\wedge \frac{d\hat Y}{\hat Y} \wedge \frac{dv^-}{v^-}=\frac{d\tX}{\tX }\wedge \frac{d\tY}{\tY}\wedge \frac{dv^-}{v^-}.
\end{align*}
Let $q\in L_{\bsi}$. The dimensional reduction is \cite[Section 7.2]{FLZ}
$$
e^{-\tc/z}I_{\bsi}=-\int_{\overline \Gamma_{\bsi,q}^{\mathrm{red}}}e^{-\widetilde{\hx}/z}
  \Omega' =2\pi\sqrt{-1} \int_{\Gamma_\bsi=\overline \Gamma_{\bsi,q}^{\mathrm{red}}\cap \{\tH=v^-=0\}}
  e^{-\widetilde{\hx}/z} \hat yd\hat x.
$$
So when $q\in L_{\bsi}$,
$$
I_\bsi=2\pi\sqrt{-1}\int_{\Gamma_\bsi} e^{-\hx/z} \hy d\hx.
$$
Since both sides of the above equation are analytic in $q$, it holds for all $q$ in a small neighborhood of $0\in \bC^\fp$. We set $\delta\in (0,\ep(f)]$ small enough such that $U_\delta$ is contained in this neighborhood.
\end{proof}

\subsection{Action by the stacky Picard group}\label{sec:stacky-Pic}
In \cite{CCIT-hodge}, Coates-Corti-Iritani-Tseng introduce
the {\em stacky Picard group} $\Picst(\cX):= \Pic(\cX)/\Pic(X)$, where $\cX$ is a semi-projective smooth toric DM stack
and $X$ is its coarse moduli space. Coates-Corti-Iritani-Tseng define a $\Picst(\cX)$-action on the Landau-Ginzburg mirror of $\cX$.  Note that
$\Picst(\cX)$ is a finite abelian group.

In this subsection, we describe a $\Picst(\cX)$-action on the total spaces of the Hori-Vafa mirror family and  the family of mirror curves, based on the Galois action on the Landau-Ginzburg model described in \cite[Section 4.3]{CCIT-hodge}.

Given a line bundle $\cL$ on $\cX$ and an object $(x,k)$ in the inertia stack $\cI\cX$, where $x$ is an object in the groupoid $\cX$ and $k\in \Aut(x)$, $k$ acts on the fiber
$\cL_x$ with eigenvalue $\exp(2\pi\sqrt{-1} \ep(x,k))$ for a unique $\ep(x,k)\in [0,1) \cap \bQ$. The map
$(x,k)\mapsto \ep(x,k)$ defines a map $I\cX \to [0,1)\cap \bQ$ which is constant on each connected component $\cX_v$ of $\cI\cX$. We define the age of
$\cL$ along $\cX_v$ to be $\age_v(\cL)= \ep(x,k)$ for any $(x,k)$ in $\cX_v$. If $\cL= p^*L$, where $p:\cX\to X$ is the projection to the coarse moduli space and
$L\in \Pic(X)$ is a line bundle on $X$, then $\age_v(\cL)=0$ for all $v\in \BoxS$. So $\age_v(\cL)$ depends only on the class $[\cL]$ in the quotient group
$\Picst(\cX)=\Pic(\cX)/\Pic(X)$.

Let  $\Picst(\cX)$ act trivially on the variables $u,v$ in the Hori-Vafa mirror, and act on the variables $X$, $Y$, $q=(q_1,\ldots, q_{\fp})$ as follows:
\begin{equation}
[\cL]\cdot X = \exp\left(-2\pi\sqrt{-1} \age_{(1,0,1)}(\cL)\right) X, \quad [\cL]\cdot Y = \exp\left(-2\pi\sqrt{-1}\age_{(0,1,1)}(\cL)\right) Y
\end{equation}
\begin{equation}\label{eqn:L-q}
[\cL]\cdot q_a=
\begin{cases}
q_a , & 1\leq a\leq \fp';\\
\exp\left(2\pi\sqrt{-1} \age_{b_{i+3}}(\cL)\right) q_a, & \fp'+1\leq a\leq  \fp.
\end{cases}
\end{equation}
It is straightforward to check that $\Picst(\cX)$ acts trivially on $H(X,Y,q)$, so it acts on the total spaces of the Hori-Vafa mirror and the family of mirror curves.

When $\cX=[\bC^3/G]$ is affine, where $G$ is a finite subgroup of the maximal torus of $SL(3,\bC)$, the action of $\Picst(\cX)$ on the total space of the family of mirror curves
specializes to the action defined by Equation (24) of \cite{FLZ}; note that in this case $\Picst(\cX)=\Pic(\cX)= G^*$.

By Proposition \ref{prop:PF},  the solution space of the non-equivariant Picard-Fuchs system is  $(\fp+\fg+1)$-dimensional, spanned
the coefficients $1,\tau_1(q),\ldots \tau_{\fp}(q), W_1(q),\ldots, W_{\fg}(q)$ of the non-equivariant small $I$-function.
We have:
\begin{lemma} \label{lm:monodromy}
\begin{enumerate}
\item For $1\leq a\leq \fp'$, $\tau_a(q)$ is the unique solution to the non-equivariant Picard-Fuchs system such that
$$
[\cL]\cdot \tau_a(q)=\tau_a(q)
$$
for all $[\cL]\in \Picst(\cX)$, and
$$
\tau_a(q)= \log (q_a) + O(|q|).
$$
\item For $\fp'+1\leq a\leq \fp$, $\tau_a$ is the unique solution to the non-equivariant Picard-Fuchs system such that
$$
[\cL]\cdot \tau_a(q)= \exp\left(2\pi\sqrt{-1} \age_{b_{a+3}}(\cL)\right)\tau_a(q)
$$
for all $[\cL]\in \Picst(\cX)$, and
$$
\tau_a(q)= q_a + O(|q_{\mathrm{orb}}|^2) + O(|q_K|).
$$
\end{enumerate}
\end{lemma}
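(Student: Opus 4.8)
The plan is to characterize each mirror-map component $\tau_a(q)$ (for $a=1,\ldots,\fp$) by three properties: it solves the non-equivariant Picard--Fuchs system \eqref{eqn:PF}, it transforms under $\Picst(\cX)$ in a prescribed way, and it has a prescribed leading-order behavior as $q\to 0$. The first property is already supplied by Proposition \ref{prop:PF}: the solution space of \eqref{eqn:PF} is spanned by $\{1,\tau_1(q),\ldots,\tau_\fp(q),W_1(q),\ldots,W_\fg(q)\}$. So what remains is (i) to verify that $\tau_a(q)$ has the stated equivariance and leading term, and (ii) to show these two conditions pin down $\tau_a(q)$ uniquely within the $(\fp+\fg+1)$-dimensional solution space.

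First I would compute the $\Picst(\cX)$-action on each basis solution. Since $\Picst(\cX)$ acts on the mirror curve family and (by the Lemma following \eqref{eqn:L-q}, or its analogue in \cite{CCIT-hodge}) trivially on $H(X,Y,q)$, it acts on the total space of the Hori-Vafa mirror and hence permutes the period integrals $\int_{\tA'_a}\Omega_q$, $\int_{\tB'_i}\Omega_q$; equivalently it acts linearly on the solution space of \eqref{eqn:PF}. I would use the explicit hypergeometric expressions: from Section \ref{sec:small-I}, $\tau_a(q)=\log(q_a)+S_a(q)$ for $1\le a\le\fp'$ and $\tau_a(q)=A_{a+3}(q)$ for $\fp'+1\le a\le\fp$, where $S_a(q)$ and $A_{a+3}(q)$ lie in $\bC[\![q_1,\ldots,q_\fp]\!]$. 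Using $q^\beta=\prod q_j^{\langle H_j,\beta\rangle}$ and the definition of $\Omega_i$, one checks that every monomial $q^\beta$ appearing in $S_a$ (resp. in $A_{a+3}$ beyond the leading $q_{a+3}$) satisfies $v(\beta)=0$ (resp. $v(\beta)=b_{a+3}$), so the $\Picst(\cX)$-weight of $q^\beta$ via \eqref{eqn:L-q} is exactly $\exp(2\pi\sqrt{-1}\,\age_{v(\beta)}(\cL))$ — which is $1$ when $v(\beta)=0$ and $\exp(2\pi\sqrt{-1}\age_{b_{a+3}}(\cL))$ when $v(\beta)=b_{a+3}$. The only subtlety for $1\le a\le\fp'$ is the $\log(q_a)$ term: since $\age_{(1,0,1)},\age_{(0,1,1)}$ control the $X,Y$-action and $q_a$ for $a\le\fp'$ is fixed by $\Picst(\cX)$, $\log(q_a)$ is invariant. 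This establishes the stated transformation laws; the leading-order claims are then immediate from $A_i(q)=q_i+O(|q_{\mathrm{orb}}|^2)+O(q_K)$ (stated after the definition of $A_i$) and $S_a(q)=O(|q|)$.

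For uniqueness I would argue as follows. Suppose $F(q)$ solves \eqref{eqn:PF} and has the same $\Picst(\cX)$-weight and the same leading term as $\tau_a(q)$. Then $F-\tau_a$ solves \eqref{eqn:PF}, has no constant/leading term (it is $O(|q|)$, or $O(|q_{\mathrm{orb}}|^2)+O(|q_K|)$ in the second case), and carries the same $\Picst(\cX)$-weight. Decomposing the solution space into $\Picst(\cX)$-isotypic pieces and using the explicit spanning set, the weight-$\one$ subspace is spanned by $1$ and $\{\tau_a:1\le a\le\fp'\}$ (the $W_i$ either lie in other isotypic pieces or, being double-logarithmic/$H^4$-type, are ruled out by the growth condition), and within this span the linear functionals "extract the $\log q_b$ coefficient" separate the $\tau_b$; hence $F-\tau_a$ is a constant, and the growth condition forces it to vanish. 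The case $\fp'+1\le a\le\fp$ is similar: the relevant isotypic piece is one-dimensional over the constants in the appropriate graded sense, spanned by $\tau_a$, and the normalization $\tau_a=q_a+\cdots$ fixes the scalar. The main obstacle I anticipate is the careful bookkeeping in this last step — precisely identifying which of $\{1,\tau_1,\ldots,\tau_\fp,W_1,\ldots,W_\fg\}$ lie in a given $\Picst(\cX)$-isotypic subspace, and confirming that the $W_i$ cannot contribute (this uses both their $\Picst(\cX)$-weights, computed as above from $v(\beta)$ for the $\beta$'s in their hypergeometric series, and their double-logarithmic growth, which is incompatible with the single-logarithmic or holomorphic normalization of $\tau_a$). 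Everything else is a direct unwinding of the definitions in Sections \ref{sec:small-I}, \ref{sec:PF}, and \ref{sec:stacky-Pic}.
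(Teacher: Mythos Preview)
Your proposal is correct and follows essentially the same approach as the paper: the key computation is $[\cL]\cdot q^\beta = \exp\bigl(2\pi\sqrt{-1}\,\age_{v(\beta)}(\cL)\bigr)q^\beta$ for $\beta\in\bK_{\eff}$, after which the transformation law, leading behavior, and uniqueness are read off from the explicit hypergeometric expressions of the basis $\{1,\tau_1,\ldots,\tau_\fp,W_1,\ldots,W_\fg\}$ given in Proposition~\ref{prop:PF}. The paper's proof is a one-line version of exactly this, so your more detailed isotypic-decomposition argument for uniqueness is a faithful elaboration rather than a different route.
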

\begin{proof} Equation \eqref{eqn:L-q} implies
$$
[\cL]\cdot q^\beta= \exp\left(2\pi\sqrt{-1}\age_{v(\beta)}(\cL)\right) q^\beta
$$
for any $\beta\in \bK_{\eff}$. The lemma follows from the above equation and the explicit expression of $\tau_a(q)$ and $W_i(q)$.
\end{proof}

\section{Geometry of the Mirror Curve}
\label{sec:mirror-curve-geometry}

\subsection{Riemann surfaces}

In this subsection, we recall some classical results on Riemann surfaces.
The main reference of this subsection is \cite{Fay}.

Let $C$ be a non-singular complex projective curve, which can also be viewed
as a compact Riemann surface. Let $\fg\in \bZ_{\geq 0}$ be the genus of $C$.
Let $\cap$ denote the intersection pairing $H_{1}(C;\bZ)\times H_{1}(C;\bZ)\to \bZ$. We choose
a symplectic basis $\{ A_i, B_i: i=1,\ldots, \fg\}$ of $(H_{1}(C;\bZ), \cap)$:
$$
A_i \cap A_j = B_i\cap B_j =0,\quad A_i\cap B_j = -B_j\cap A_i =\delta_{ij},\quad i,j\in \{1,\ldots,\fg\}.
$$

Recall that on a Riemann surface $C$,
a differential of the first kind on $C$ is a holomorphic 1-form; a differential of the second kind on $C$ is a meromorphic
1-form whose residue at any of its pole is zero; a differential of the third kind on $C$ is a meromorphic 1-form
with only simple poles. If $\omega$ is a differential of the first or second kind then
$\int_A \omega$ is well-defined for $A\in H_1(C;\bZ)$.

The fundamental differential of the second kind on $C$ normalized by $A_1,\ldots, A_{\fg}$ is a bilinear symmetric meromorphic
differential $B(p_1,p_2)$ characterized by
\begin{itemize}
\item $B(p_1,p_2)$ is holomorphic everywhere except for a double pole along the diagonal $p_1=p_2$, where,
if $z_1$, $z_2$ are local coordinates on $C\times C$ near $(p,p)$ then
$$
B(z_1,z_2)= (\frac{1}{(z_1-z_2)^2} + f(z_1,z_2) )dz_1 dz_2.
$$
where $f(z_1, z_2)$ is holomorphic and $f(z_1,z_2)=f(z_2,z_1)$.
\item $\displaystyle{\int_{p_1\in A_i} B(p_1,p_2) =0}$, $i=1,\ldots, \fg$.
\end{itemize}
It is also called the Bergman kernel in \cite{EO07,EO15}.

Let $\omega_i\in H^0(C,\omega_C)$ be the unique holomorphic 1-form on $C$ such that
$$
\frac{1}{2\pi\sqrt{-1}}\int_{A_j}\omega_i =\delta_{ij}.
$$
Then $\{\omega_1,\ldots, \omega_{\fg}\}$ is a basis of $H^0(C,\omega_C)\cong \bC^{\fg}$, the space
of holomorphic 1-form on $C$, and
$$
\int_{p'\in B_i} B(p,p') =\omega_i(p).
$$
More generally, for any $\gamma\in H_1(C;\bZ)$,
$$
\omega_\gamma(p):= \int_{p'\in \gamma} B(p,p')
$$
is a holomorphic 1-form on $C$ and
$$
\frac{1}{2\pi\sqrt{-1}}\int_{A_j} \omega_\gamma = A_j\cap \gamma.
$$

We may extend the intersection pairing to a skew-symmetry $\bC$-bilinear map
$\cap: H_1(C;\bC)\times H_1(C;\bC)\to \bC$.
The above discussion remains valid if we choose a symplectic basis
$\{A_i, B_i:i=1,\ldots,\fg\}$ of $(H_1(C;\bC),\cap)$ instead of $(H_1(C;\bZ),\cap)$.

Let $\gamma$ be a path connecting two distinct points $p_1, p_2\in C$, oriented such that
$\partial \gamma = p_1 -p_2$. Then
$$
\omega_\gamma(p):= \int_{p'\in \gamma} B(p,p')
$$
is a meromorphic 1-form on $C$ which is holomorphic on $C\setminus\{p_1,p_2\}$ and has
simple poles at $p_1,p_2$. The residues of $\omega_\gamma$ at $p_1,p_2$ are
$$
\Res_{p\to p_1} \omega_\gamma(p) = 1,\quad \Res_{p\to p_2}\omega_\gamma(p)=-1.
$$

\subsection{The Liouville form}
Let
$$
\hx= \su_1 x + \su_2 y,\quad \hy=\frac{y}{\su_1}, \quad f=\frac{\su_2}{\su_1}
$$
as before. Define
$$
\lambda := \hy d\hx = y d(x + f  y),\quad \Phi:= \lambda|_{C_q}.
$$
Then $\Phi$ is a multi-valued holomorphic 1-form on $C_q$. Recall that there is
a regular covering map  $p: \tC_q\to C_q$ with fiber $\bZ^2$ which is the restriction of
$\bC^2\to (\bC^*)^2$ given by $(x,y)\mapsto (e^{-x}, e^{-y})$. Then $p^*\Phi$ is a holomorphic 1-form
on $\tC_q$.

\subsection{Differentials of the first kind and the third kind} \label{sec:first-third}
For any integers $m,n$,
$$
\varpi_{m,n}:= \Res_{H(X,Y,q)=0}\Bigl( \frac{X^m Y^n}{H(X,Y,q)}\cdot \frac{dX}{X}\wedge\frac{dY}{Y} \Bigr) =
\frac{-X^m Y^n}{\frac{\partial}{\partial \hat{y}} H(X,Y,q)}d\hat{x}
$$
is a holomorphic 1-form on the mirror curve $C_q$ and a meromorphic 1-form on the
compactified mirror curve $\Cbar_q$.

By results in \cite{BC94}, $\varpi_{m,n}$ is holomorphic on $\Cbar_q$ iff $(m,n)\in \Int(P)\cap N'$.
Recall that
$$
\fp=|P\cap N'|-3,\quad \fg=|\Int(P)\cap N'|,\quad \fn = |\partial P\cap N'|.
$$
For generic $q$, $\Cbar_q$ is a compact Riemann surface of genus $\fg$, intersecting
the anti-canonical divisor $\partial \bS_P:=\bS_P\setminus (\bC^*)^2$ transversally at $\fn$ points
$\bar{p}_1,\ldots, \bar{p}_{\fn}$, so $C_q$
a Riemann surface of genus $\fg$ with $\fn$ punctures.
The space of holomorphic 1-forms on $\Cbar_q$, $H^0(\Cbar_q,\omega_{\Cbar_q})$,
is $\fg$-dimensional, where $\fg$ can be zero. A basis of $H^0(\Cbar_q,\omega_{\Cbar_q})$ is given by
$$
\{ \varpi_{m,n} : (m,n)\in \Int(P)\cap N'\}.
$$
Let $D^\infty_q:= \Cbar_q\cap \partial \bS_P = \bar{p}_1 +\cdots+ \bar{p}_{\fn}$.
The space of meromorphic 1-forms on $\Cbar_q$ with at most simple poles at $\bar{p}_1,\ldots,\bar{p}_{\fn}$,  $H^0(\Cbar_q,\omega_{\Cbar_q}(D^\infty_q))$, is
$(\fg+\fn-1)$-dimensional.
It is spanned by the  $(\fg+\fn)$ 1-forms
$$
\{ \varpi_{m,n}: (m,n)\in P\cap N'\}= \{ \varpi_{m_i,n_i}: i=1,\ldots,\fp+3\}.
$$
with a single relation
$$
\sum_{i=1}^{\fp+3} a_i(q) \varpi_{m_i,n_i} =0.
$$

Let $U_\ep \subset (\bC^*)^{\fp'}\times \bC^{\fp-\fp'}$ be defined as in \eqref{eqn:U-ep}.
Choose $\ep>0$ small enough such that if $q\in U_\ep$ then $\Cbar_q$ is smooth and intersects $\partial\bS_P$ transversally
at $\fn$ points.
There is a holomorphic vector bundle $\bE$ of rank $\fg+\fn-1$ over $U_\ep$ whose fiber over
$q\in U_\ep$ is $H^0(\Cbar_q,\omega_{\Cbar_q}(D_q^\infty))$. The vector bundle $\bE$ has a natural $\Picst(\cX)$-equivariant structure, so $\Picst(\cX)$ acts linearly on the space of sections of $\bE$. For $i=1,\ldots, \fp+3$,
$\varpi_{m_i,n_i}$ defines a holomorphic section of $\bE$, and
$$
[\cL]\cdot \varpi_{m_i,n_i} = \exp\left(-2\pi\sqrt{-1} \age_{b_i}(\cL) \right) \varpi_{m_i,n_i}.
$$

For $a=1,\ldots, \fp'$, we have
\begin{eqnarray*}
q_a\frac{\partial \Phi}{\partial q_a} &= &
\Res_{H(X,Y,q)=0}\Bigl( \frac{q_a\frac{\partial H}{\partial q_a}(X,Y,q)}{H(X,Y,q)}\cdot \frac{dX}{X}\wedge\frac{dY}{Y}\Bigr),\\
{ [\cL]} \cdot q_a\frac{\partial \Phi}{\partial q_a}  &=& q_a\frac{\partial \Phi}{\partial q_a}  \quad \textup{ for all  }[\cL]\in \Picst(\cX).
\end{eqnarray*}

For $a=\fp'+1,\ldots, \fp$, we have
\begin{eqnarray*}
\frac{\partial \Phi}{\partial q_a} &=&
\Res_{H(X,Y,q)=0}\Bigl( \frac{\frac{\partial H}{\partial q_a}(X,Y,q)}{H(X,Y,q)}\cdot \frac{dX}{X}\wedge\frac{dY}{Y} \Bigr)
= \frac{a_{a+3}(q)}{q_a} \cdot\varpi_{m_{3+a},n_{3+a}}\\
{ [\cL]} \cdot \frac{\partial \Phi}{\partial q_a}  & = & \exp\left(-2\pi\sqrt{-1}\age_{b_{3+a}}(\cL)\right)\frac{\partial \Phi}{\partial q_a}
\quad  \textup{ for all }[\cL]\in \Picst(\cX).
\end{eqnarray*}

\subsection{Toric degeneration}
\label{sec:toric-degeneration}
The main reference of this subsection is \cite[Section 3]{NS06}.

For a generic choice of
$\eta \in \bL^\vee_\bQ$,
\[
\Theta_\eta=\bigcap_{I\in \cA_\eta} \sum_{i\in I} \bQ_{\geq 0} D_i\subset \bL^\vee_\bQ
\]
is a top dimensional convex cone in $\bL^\vee_\bQ \cong \bQ^{\fp}$. Given a semi-projective toric Calabi-Yau
$3$-orbifold $\cX$, there is always a choice of such $\eta$ such that
$\Theta_\eta$ is the extended Nef cone $\Nef(\bSi^\ext)$, and the
construction in Section \ref{sec:anticones} also defines $\cX$. Any $\eta$ in the extended K\"ahler cone $C(\bSi^\ext)$ gives rise to the same $\Theta_\eta$. The cone
$\Theta_\eta$ together with its faces is a fan in $\bL^\vee_\bR$
(still denoted by $\Theta_\eta$), and
determines a $\fp$-dimensional affine toric variety $X_{\Theta_\eta}$.

Consider  the exact sequence
\[
0\longrightarrow M'  \stackrel{\phi^\vee}{\longrightarrow} \tM' \stackrel{\psi^\vee}{\longrightarrow} \bL^\vee \longrightarrow 0
\]
where $M'=M/\langle e_3 \rangle$ and $\tM'=\tM/ \langle \phi^\vee(e_3)
\rangle.$ Let $D^{\bT'}_i$ be the image of $D^\bT_i$ when passing to $\tM'$.

For any proper subset $I\subset \{1,\dots,r\}$, define
\[
\tTheta_I=\sum_{i\in I} \bQ_{\geq 0} D^{\bT'}_i, \quad \tTheta_{I,\eta}=(\psi^\vee)^{-1} (\Theta_\eta)\cap \tTheta_I.
\]
Define a fan
\[
\tTheta_\eta=\{ \tTheta_{I,\eta}|I \subsetneq\{1,\dots,3+\fp\}\}.
\]
This fan determines a toric variety $X_{\tTheta_\eta}$. There is a fan
morphism $\rho': \widetilde \Theta_\eta\to \Theta_\eta$, which induces a flat
family of toric surfaces $\rho: X_{\tTheta_\eta}\to X_{\Theta_\eta}$.

Let $\Theta_H\subset \bL^\vee_{\bQ}$ be the cone spanned by $H_1,\ldots, H_{\fp}$. Let $\bL^\vee_H := \bigoplus_{a=1}^{\fp} \bZ H_a$ and
let $\bL_H$ be the dual lattice. Then $\bL^\vee_H$ is a sublattice of
$\bL^\vee$ of finite index, and $\bL$ is a sublattice of $\bL_H$ of finite index.
Let $\Theta_H\subset \bL^\vee_\bQ$ be the top dimensional cone spanned by the vectors $H_1,\ldots, H_{\fp}$ chosen in
Section \ref{sec:I-J}. Let $\Theta_\eta^\vee$ and $\Theta_H^\vee$ be the dual cones of $\Theta_\eta$ and
$\Theta_H$, respectively.  We have inclusions
$$
\Theta_H\subset \Theta_\eta \subset \bL^\vee_\bQ,\quad
\Theta_\eta^\vee \subset \Theta_H^\vee \subset \bL_\bQ.
$$
Note that $\Theta_\eta^\vee\cap \bL$ is a subset of
$\Theta^\vee_H \cap \bL_H$, so we have an injective ring homomorphism
$$
\bC[\Theta_\eta^\vee\cap \bL]\to \bC[\Theta_H^\vee \cap \bL_H]=\bC[q_1,\ldots, q_{\fp}]
$$
where $q_1,\ldots, q_{\fp}$ are the variables in Section \ref{sec:I-J}.  Taking the spectrum, we obtain
a morphism
$$
\bA^{\fp}= \mathrm{Spec}\left(\bC[q_1,\ldots,q_{\fp}]\right) \longrightarrow
X_{\Theta_\eta}= \mathrm{Spec}\left(\bC[\Theta_\eta^\vee\cap \bL]\right).
$$
and a cartesian diagram
\begin{equation} \label{eqn:pullback}
\begin{CD}
\fX @>{\tnu}>> X_{\tTheta_\eta} \\
@V{\trho}VV  @V{\rho}VV\\
\bA^{\fp} @>{\nu}>> X_{\Theta_\eta}
\end{CD}
\end{equation}
where $\trho:\fX\to \bA^{\fp}$ is a flat family of toric surfaces.

Given $\si\in \Si(1)\cup \Si(2)\cup \Si(3)$, let
$P_\si$ be the convex hull of $\{ (m_i,n_i): b_i\in \si\}$.
This gives a triangulation $T$ of $P$ with
vertices $\{P_\si: \si\in \Si(1)\}$, edges
$\{P_\si:\si\in \Si(2)\}$, and faces $\{ P_\si:\si\in \Si(3)\}$.

We choose a K\"{a}hler class $[\omega(\eta)]\in H^2(X_\Si;\bZ)$ associated to a lattice point
$\eta\in \bL^\vee$; $[\omega(\eta)]$  is the first Chern class of some
ample line bundle over $X_\Si$.  Then it determines a toric graph
$\Gamma\subset  M'_\bR\cong \bR^2$ up to translation by an element in $M'\cong\bZ^2$ (see Section \ref{sec:toric-graph}). The toric graph gives
a {\em polyhedral decomposition} of $M_\bQ$ in the sense of \cite[Section 3]{NS06}.
It is a covering $\cP$ of $M_\bQ$ by strongly convex lattice polyhedra. The asymptotic fan of $\cP$
is defined to be
$$
\Si_{\cP}:=\{ \lim_{a\to 0}a\Xi  \subset M'_\bQ:  \Xi\in \cP\}.
$$
The fan $\Si_{\cP}=\widetilde \Theta_\eta\cap \rho'^{-1}(0)$ defines the toric surface $\bS_P$, i.e.,
$X_{\Si_{\cP}}= \bS_P$.
For each $\Xi\in \cP$, let $C(\Xi)\subset M'_\bQ\times\bQ_{\geq 0}$ be the closure of the
cone over $\Xi\times \{1\}$ in $M'_\bQ\times \bQ$. Then
$$
\tSi_\cP:=\{ \sigma\text{ is a face of $C(\Xi)$}: \Xi\in
\cP\}=\widetilde \Theta_\eta \cap \rho'^{-1} (\bQ_{\geq0} \eta)
$$
is a fan in $M'_\bQ\times \bQ$ with support $|\tSi_\cP|= M_\bQ'\times
\bQ_{\geq 0}$. The projection $\pi':M'_\bQ\times \bQ\to \bQ$ to the second factor defines a map
from the fan $\tSi_\cP$ to the fan $\{0, \bQ_{\geq 0} \}$. This map of fans determines
a flat toric morphism $\pi: X_{\tSi_{\cP}}\to \bA^1$, where
$X_{\tSi_{\cP}}$ is the toric 3-fold defined by the fan $\tSi_\cP$, as shown in the following commutative diagram.
\begin{equation*}
\begin{CD}
X_{\tSi_{\cP}} @>{\tnu}>> X_{\tTheta_\eta} \\
@V{\pi}VV  @V{\rho}VV\\
\bA^{1}  @>>> X_{\Theta_\eta}.
\end{CD}
\end{equation*}

Let $t$ be a closed point in $\bA^1$, and let $X_t$ denote the fiber of $\pi$ over $t$. Then $X_t\cong \bS_P$ for
$t\neq 0$. As shown in \cite{NS06},  when $t=0$, we have a union of irreducible components
$$
X_0 =\bigcup_{\si\in \Si(3)}\bS_{P_\si}.
$$
If $(\si,\tau)$ is a flag, then $P_\tau$ is one of the three edges of the triangle $P_\si$ and  corresponds
to a torus invariant divisor $\bD_\tau$ in the toric surface $\bS_{P_\si}$.

The polytope $\mathrm{Hull}(\tb_1,\dots,\tb_{\fp+3})\subset \tN$ lies
on the hyperplane $\langle \phi^\vee(e^*_3),\bullet \rangle =1$. It
determines a polytope on $\tN'=\{\langle \phi^\vee(e_3^*),\bullet \rangle
=0\}$ up to a translation. The associated line bundle $\mathfrak L$ on
$X_{\widetilde \Theta_\eta}$ has sections $s_i,\ i=1,\dots, \fp+3$ associated to each
integer point in this polytope. Define
\[
s=\sum_{i=1}^{\fp+3} s_i,\quad \widetilde \fC=s^{-1}(0).
\]

The  divisor $\widetilde \fC\subset X_{\tTheta_\eta}$ forms a flat family of curves of
arithmetic genus $\fg$ over $X_{\Theta_\eta}$. Let $\fC:=\tnu^{-1}(\widetilde\fC)\subset \fX$
be the pullback divisor under the morphism $\tnu:\fX\to X_{\tTheta_\eta}$.  Then $\fC\to \bA^{\fp}$ is a flat family of curves of arithmetic genus $\fg$
over $\bA^{\fp}$.

For $q \neq 0$, $\fC_q \subset X_q=\rho^{-1}(q)$ can
be identified with the zero locus of
$$
H(X,Y,q)=\sum_{i=1}^{\fp+3} a_i(q) X^{m_i} Y^{n_i}.
$$
When $\fC_q$ is smooth, it is isomorphic to the compactified mirror curve $\Cbar_q$ defined in Section \ref{sec:mirror-curve}. When $q=0$, we have a union of irreducible components
$$
\fC_0 =\bigcup_{\si\in \Si(3)}\Cbar_\si,
$$
where $\Cbar_\si\subset \bS_{P_\si}$ is the zero locus of $H_{(\tau,\si)}(X_{(\tau,\si)},Y_{(\tau,\si)},0)$ in  \eqref{eqn:untwisted-affine}, viewed as a section
of the line bundle on $\bS_{P_\si}$ associated to the polytope $P_\si$;
here $\tau$ is a 2-cone contained in $\si$, so that  $X_{(\tau,\si)}$, $Y_{(\tau,\si)}$ are coordinates on an affine chart in $\bS_{P_\si}$  (see Section \ref{sec:H}) which extend to
rational functions on $\bS_{P_\si}$.

Let $C_\si = \Cbar_\si \cap (\bS_{P_\si}\setminus \partial \bS_{P_\si})$,
where $\partial \bS_{P_\si}=\bigcup_{\tau\subset \si,\tau\in \Si(2)} \bD_\tau$. Given $\tau\in \Si(2)$, let
$\fm_\tau= |G_\tau|= |P_\tau\cap N'|-1$ as before. Then
$\fC_0\cap \bD_\tau$ consists of $\fm_\tau$ points. When $q=0$,
the group of $\fm_\tau$-th roots of unity,
$\bmu_{\fm_\tau}\cong \bZ/\fm_\tau\bZ$,  acts freely and transitively on
the set $\fC_0\cap \bD_\tau$.

We have
$$
\Cbar_\si = C_\si \cup \left (\bigcup_{\tau\in \Si(2), \tau\subset \si} (\fC_0\cap \bD_\tau)\right).
$$
Let
$\si_1,\si_2 \in \Si(3)$ be two distinct 3-cones
in $\Si$. The intersection of $\Cbar_{\si_1}$ and $\Cbar_{\si_2}$ is non-empty if and only if
$\si_1\cap\si_2=\tau$ for some 2-dimensional cone $\tau\in \Si(2)$. In this case,
$\Cbar_{\si_1}$ and $\Cbar_{\si_2}$ intersect at $\fm_\tau$ nodes.

The genus of $\Cbar_\si$ is
$$
\fg_\si=|\mathrm{Int}(P_\si)\cap N'|.
$$
Let
$$
\fn_\sigma = |\partial P_\si \cap N'|=\sum_{\tau\in \Si(2),\tau\subset \si} \fm_\tau.
$$
Then $C_\si$ is a genus $\fg_\si$  Riemann surface with $\fn_\si$ punctures.
Let $\Gamma_{\fC_0}$ be the dual graph of the nodal curve $\fC_0$. Then
$$
\fg =\sum_{\si\in \Si(3)} \fg_\si + b_1(\Gamma_{\fC_0}).
$$

\begin{example}[Example \ref{exp:graph}, continued]
  \label{exp:b-model}

  We choose $H_1=D_4$ and $H_2=D_5$. Then $q_K=q_1$ and $q_\mathrm{orb}=q_2$. The mirror curve equation $H(X,Y,q)=0$ for Example \ref{exp:polytope} in ($\fr=1,\fs=0,\fm=2$) is given by
  \[
  H(X,Y,q)=X+Y^2+1+q_1 X^2 Y^{-1}+q_2 Y,
  \]
  while the $\mathbb T'$-equivariant Landau-Ginzburg  is given by
  \[
  W^{\bT'}=H(X,Y,q)-\su_1 \log X - \su_2 log Y.
  \]
  The mirror curve $C_q=\{(X,Y)\in \mathbb C^*|H(X,Y,q)=0\}$ is illustrated in Figure \ref{fig:mirror-curve}. It is a fattened tube around the toric graph Figure \ref{fig:graph}. Notice that the gerby leg contributes to two punctures $\{\bar p_0,\bar p_1\}\subset \overline C_q\setminus C_q$, on which $X=0$ and $Y^2=-1+O(q)$. The degenerated mirror curve $\mathfrak C_0$ is illustrated in Figure \ref{fig:degen-mirror-curve}. It is a nodal curve with four components.

\end{example}

\begin{figure}[h]
\begin{center}
\psfrag{p1}{\small $\bar p_0$}
\psfrag{p2}{\small $\bar p_1$}
\includegraphics[scale=0.3]{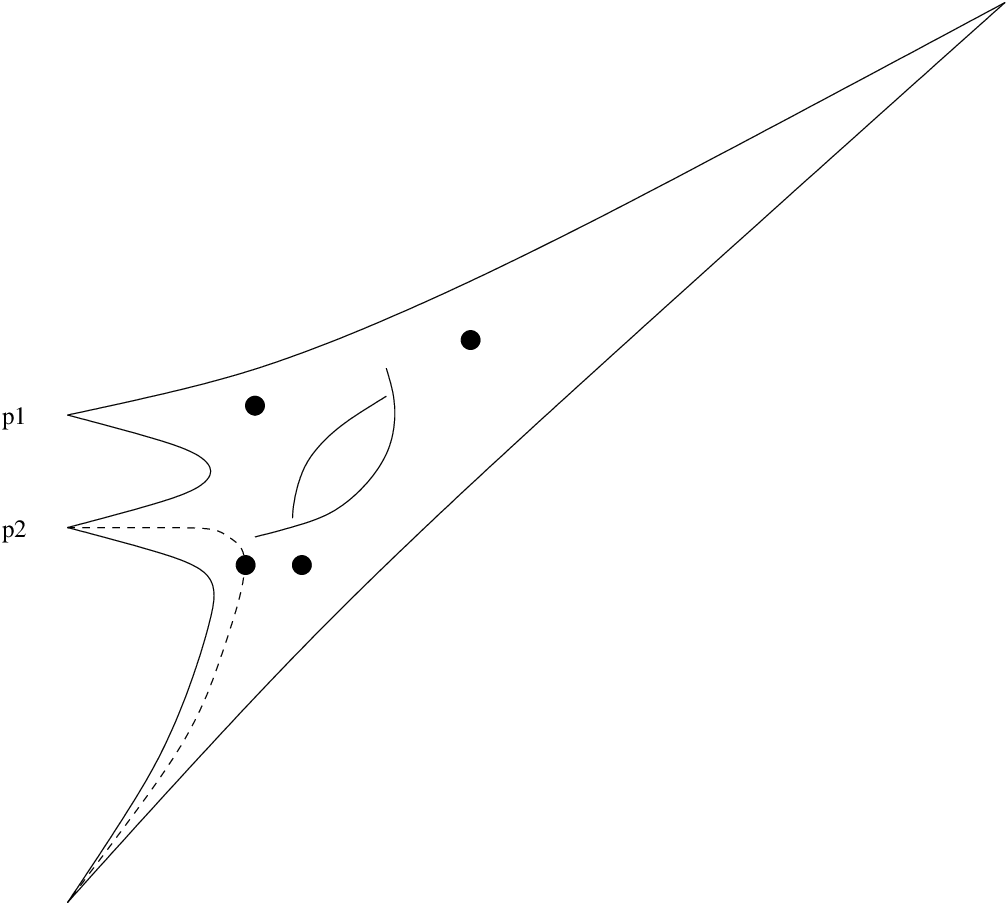}
\caption{The mirror curve of $\cX$ in Example \ref{exp:polytope}. It can be regarded as a curve in the toric surface $\bS_P$. Black dots are the critical points $\{d\hat x=0\}$, while the dashed curve is the Lefschetz thimble passing through one critical point, and at its ends $\hat x \to \infty$.}
\label{fig:mirror-curve}
\end{center}
\end{figure}

\begin{figure}[h]
\begin{center}
\psfrag{p1}{\small $\bar p_0$}
\psfrag{p2}{\small $\bar p_1$}
\includegraphics[scale=0.6]{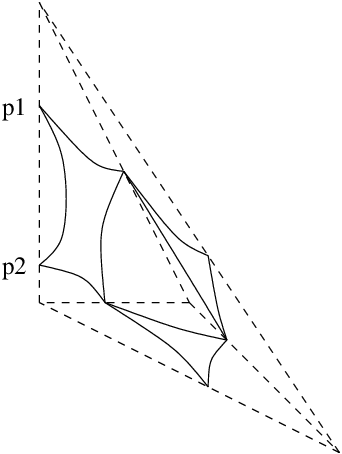}
\caption{The degenerated mirror curve $\fC_0$ of $\cX$ in Example \ref{exp:polytope} at $q=0$. It can be regarded as a curve in the degenerated toric surface - a normal crossing of two $\bP^2$ and a $\bP(1,1,2)$, whose moment polytopes are triangles with dashed lines.}
\label{fig:degen-mirror-curve}
\end{center}
\end{figure}

\subsection{Degeneration of 1-forms}

We first introduce some notation. Let $s_{a i}^\si \in \bZ_{\geq 0}$ be defined as in Section \ref{sec:choice-H}. For $i=1,\ldots,\fp$, let
$$
s_{ai} = \begin{cases}
0, & 1\leq i\leq 3,\\
s_{ai}^{\si_0}, & i\in I_{\si_0}=\{ 4,\ldots, \fp+3\}.
\end{cases}
$$

\begin{proposition}\label{prop:untwisted-limit}
If $1\leq a\leq \fp'$,  the 1-form $q_a\frac{\partial \Phi}{\partial q_a}  \in H^0(\fC_q,\omega_{\fC_q}(D^\infty_q))$ degenerates to
$$
\lim_{q\to 0} q_a\frac{\partial \Phi}{\partial q_a}  \in H^0(\fC_0,\omega_{\fC_0}(D_0^\infty)),
$$
where
$$
\lim_{q\to 0} \left. q_a\frac{\partial \Phi}{\partial q_a}\right|_{C_\si}
=  \frac{ -\sum_{j=1}^3 s_{a i_j} (X_{(\tau,\si)})^{m^{(\tau,\si)}_{i_j}} (Y_{(\tau,\si)} )^{n^{(\tau,\si)}_{i_j}}   }{
Y_{(\tau,\si)}\frac{\partial  H_{(\tau,\si)}}{\partial Y_{(\tau,\si)}}(X_{(\tau,\si)}, Y_{(\tau,\si)},0)} \cdot  \frac{dX_{(\tau,\si)}}{X_{(\tau,\si)}} .
$$
Here $\tau$ is a 2-cone contained in the 3-cone $\si$.
\end{proposition}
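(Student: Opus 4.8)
The plan is to compute the limit of $q_a\frac{\partial\Phi}{\partial q_a}$ directly from the residue formula in Section \ref{sec:first-third} and to track how the terms in $H(X,Y,q)$ behave under the toric degeneration of Section \ref{sec:toric-degeneration}. Recall from Section \ref{sec:first-third} that
\[
q_a\frac{\partial\Phi}{\partial q_a}
= \Res_{H(X,Y,q)=0}\Bigl( \frac{q_a\frac{\partial H}{\partial q_a}(X,Y,q)}{H(X,Y,q)}\cdot \frac{dX}{X}\wedge\frac{dY}{Y}\Bigr),
\]
and that $H(X,Y,q)=\sum_{i=1}^{\fp+3} a_i(q) X^{m_i}Y^{n_i}$ with $a_i(q)=a_i^{\si_0}(q)=\prod_b q_b^{s^{\si_0}_{bi}}$ for $i\in I_{\si_0}$ and $a_i(q)=1$ for $i\in I'_{\si_0}=\{1,2,3\}$. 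So $q_a\frac{\partial H}{\partial q_a} = \sum_{i=1}^{\fp+3} s_{ai} a_i(q) X^{m_i}Y^{n_i}$, where $s_{ai}$ is exactly the integer defined just before the statement.

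First I would fix a flag $(\tau,\si)$ and pass to the coordinates $X_{(\tau,\si)}, Y_{(\tau,\si)}$ of Section \ref{sec:H}, using the key identity \eqref{eqn:HH}: $H(X,Y,q) = a_{i_3}(q) X^{m_{i_3}}Y^{n_{i_3}} H_{(\tau,\si)}(X_{(\tau,\si)},Y_{(\tau,\si)},q)$. Since the residue of a $2$-form along the zero locus of $H$ is unchanged by multiplying $H$ by a nonvanishing function (on $(\bC^*)^2$) and simultaneously multiplying the numerator, the quotient $\frac{q_a\partial_{q_a} H}{H}$ can be rewritten on the chart as $\frac{\sum_i s_{ai} a_i(q) X^{m_i}Y^{n_i}}{a_{i_3}(q) X^{m_{i_3}}Y^{n_{i_3}} H_{(\tau,\si)}}$, and after dividing numerator and denominator by $a_{i_3}(q)X^{m_{i_3}}Y^{n_{i_3}}$ the numerator becomes $\sum_i s_{ai}\, a_i^{\si}(q)\, (X_{(\tau,\si)})^{m_i^{(\tau,\si)}}(Y_{(\tau,\si)})^{n_i^{(\tau,\si)}}$ up to the reparametrization \eqref{eqn:Xab}–\eqref{eqn:Ycd} (here one must carefully check that the monomial change of variables turns $a_i(q)X^{m_i}Y^{n_i}/(a_{i_3}(q)X^{m_{i_3}}Y^{n_{i_3}})$ into $a_i^{\si}(q)(X_{(\tau,\si)})^{m_i^{(\tau,\si)}}(Y_{(\tau,\si)})^{n_i^{(\tau,\si)}}$, which is precisely the content of the definitions of $a_i^\si$ and $H_{(\tau,\si)}$). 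Then I would take the residue with respect to the explicit denominator, using the second form of $\varpi_{m,n}$ in Section \ref{sec:first-third}, to get
\[
\left.q_a\frac{\partial\Phi}{\partial q_a}\right|_{\text{chart}}
= \frac{-\sum_i s_{ai}\, a_i^\si(q)\, (X_{(\tau,\si)})^{m_i^{(\tau,\si)}}(Y_{(\tau,\si)})^{n_i^{(\tau,\si)}}}{Y_{(\tau,\si)}\frac{\partial H_{(\tau,\si)}}{\partial Y_{(\tau,\si)}}(X_{(\tau,\si)},Y_{(\tau,\si)},q)}\cdot\frac{dX_{(\tau,\si)}}{X_{(\tau,\si)}}.
\]

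Next I would take the limit $q\to 0$. By observation (a) of Section \ref{sec:H}, $\lim_{q\to0} a_i^\si(q)=0$ whenever $b_i\notin\si$, i.e. whenever $i\notin I'_\si=\{i_1,i_2,i_3\}$; and for $i\in I'_\si$ we have $a_i^\si(q)\equiv 1$. Hence in the numerator only the three terms $j=1,2,3$ survive, giving $-\sum_{j=1}^3 s_{a i_j}(X_{(\tau,\si)})^{m^{(\tau,\si)}_{i_j}}(Y_{(\tau,\si)})^{n^{(\tau,\si)}_{i_j}}$, and in the denominator $H_{(\tau,\si)}(\cdot,\cdot,q)\to H_{(\tau,\si)}(\cdot,\cdot,0)$, which by \eqref{eqn:untwisted-affine} is the defining equation of $C_\si$. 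This gives exactly the claimed formula on $C_\si$. Finally I would argue that these chart-wise limits glue to a section of $H^0(\fC_0,\omega_{\fC_0}(D^\infty_0))$: the family of $1$-forms $q_a\frac{\partial\Phi}{\partial q_a}$ is a holomorphic section of the bundle $\bE$ over $U_\ep$ (Section \ref{sec:first-third}), which extends across $q=0$ via the flat family $\fC\to\bA^\fp$ of Section \ref{sec:toric-degeneration} with central fiber $\fC_0=\bigcup_\si\Cbar_\si$, and the limiting section's restriction to each component $C_\si$ (hence $\Cbar_\si$) is what we just computed; compatibility at the nodes $\fC_0\cap\bD_\tau$ is automatic because all expressions are restrictions of the same global $1$-form before taking the limit.

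The main obstacle I expect is the bookkeeping in the coordinate change: verifying precisely that the reparametrization \eqref{eqn:Xab}–\eqref{eqn:Ycd} together with \eqref{eqn:HH} converts $\frac{a_i(q)X^{m_i}Y^{n_i}}{a_{i_3}(q)X^{m_{i_3}}Y^{n_{i_3}}}$ into $a_i^\si(q)(X_{(\tau,\si)})^{m_i^{(\tau,\si)}}(Y_{(\tau,\si)})^{n_i^{(\tau,\si)}}$ for every $i$ (not just $i\in I'_\si$), and that the resulting denominator is literally $Y_{(\tau,\si)}\partial_{Y_{(\tau,\si)}} H_{(\tau,\si)}$ — this requires carefully matching the exponents $s^\si_{ai}$ appearing in $a_i^\si$ with the way $a_i(q)$ was defined relative to the fixed chart $\si_0$, and confirming that the base change $(m_i,n_i)\mapsto(m_i^{(\tau,\si)},n_i^{(\tau,\si)})$ does not introduce spurious factors. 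Everything else is a routine residue computation plus an application of the observations (a)–(b) of Section \ref{sec:H} that are already recorded in the excerpt.
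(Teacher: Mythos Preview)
Your proposal is correct and follows essentially the same approach as the paper: rewrite the residue formula for $q_a\frac{\partial\Phi}{\partial q_a}$ in the flag coordinates $(X_{(\tau,\si)},Y_{(\tau,\si)})$ via \eqref{eqn:HH}, use that $a_i^\si(q)\to 0$ for $i\notin I'_\si$ as $q\to 0$, and evaluate the remaining residue. The paper differs only cosmetically by first setting $q_{\mathrm{orb}}=0$ (which kills the $i>\fp'+3$ terms, since for $1\leq a\leq\fp'$ the relevant $a_i$ are independent of $q_{\mathrm{orb}}$) and then letting $q_K\to 0$, and by taking the limit of the $2$-form before computing the residue rather than after; neither change affects the argument.
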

\begin{proof} In this proof, $1\leq a\leq \fp'$, and we fix a flag $(\tau,\si)$.
Recall that (see Section \ref{sec:first-third})
$$
q_a \frac{\partial \Phi}{\partial q_a} = \Res_{H(X,Y,q)=0}\left(\frac{q_a\frac{\partial H}{\partial q_a}(X,Y,q)}{H(X,Y,q)}\cdot \frac{dX}{X}\wedge \frac{dY}{Y}\right)
$$
To take the desired limit on $C_\si$, we  rewrite the above expression in coordinates $\chX:=X_{(\tau,\si)}$ and $\chY:=Y_{(\tau,\si)}$.
If $1\leq i\leq \fp'+3$ then  $a_i(q)$ and $a_i^\si(q)$ do not depend on $q_\mathrm{orb}$,  so we may write
$a_i(q)=a_i(q_K)$ and $a_i^\si(q)=a_i^\si(q_K)$. We have
\begin{eqnarray*}
\left.H(X,Y,q)\right|_{q_\mathrm{orb} =0} &=& \sum_{i=1}^{\fp'+3} a_i(q_K) X^{m_i} Y^{n_i} =  a_{i_3}(q_K) X^{m_{i_3}} Y^{m_{i_3}}
\left( \sum_{i=1}^{\fp'+3} a_i^\si(q_K) \chX^{m_i^{(\tau,\si)} }\chY^{m_i^{(\tau,\si)} }\right), \\
\left.q_a\frac{\partial H}{\partial q_a} (X,Y,q)\right|_{q_\mathrm{orb} =0} &=& \sum_{i=1}^{\fp'+3} s_{ai} a_i(q_K) X^{m_i} Y^{n_i} =  a_{i_3}(q_K) X^{m_{i_3}} Y^{m_{i_3}}
\left( \sum_{i=1}^{\fp'+3} s_{ai} a_i^\si(q_K) \chX^{m_i^{(\tau,\si)}} \chY^{m_i^{(\tau,\si)}} \right).
\end{eqnarray*}
So
\begin{eqnarray*}
\left. \frac{ q_a\frac{\partial H}{\partial q_a}(X,Y,q)}{H(X,Y,q)}\right|_{q_\mathrm{orb}=0} \cdot \frac{dX}{X}\wedge \frac{dY}{Y} &=&
\frac{\sum_{i=1}^{\fp'+3} s_{ai} a_i^\si(q_K) \chX^{m_i^{(\tau,\si)}} \chY^{m_i^{(\tau,\si)}}  }{ \sum_{i=1}^{\fp'+3} a_i^\si(q_K) \chX^{m_i^{(\tau,\si)} }\chY^{m_i^{(\tau,\si)} }
}\cdot  \frac{d\chX}{\chX} \wedge \frac{d\chY}{\chY}\\
&\stackrel{q_K\to 0}{\longrightarrow}&  \frac{\sum_{j=1}^3 s_{a i_j} \chX^{m_{i_j}^{(\tau,\si)}} \chY^{n_{i_j}^{(\tau,\si)} } }{ H_{(\tau,\si)}(\chX,\chY, 0)}
\cdot  \frac{d\chX}{\chX}\wedge \frac{d\chY}{\chY}.
\end{eqnarray*}
$$
\lim_{q\to 0} \left. q_a \frac{\partial \Phi}{\partial q_a}\right|_{C_\si}
= \Res_{H_{(\tau,\si)}(\chX,\chY,0)} \left(  \frac{\sum_{j=1}^3 s_{a i_j} \chX^{m_{i_j}^{(\tau,\si)} } \chY^{n_{i_j}^{(\tau,\si)} } }{ H_{(\tau,\si)}(\chX,\chY, 0)}
\cdot  \frac{d\chX}{\chX}\wedge \frac{d\chY}{\chY}.\right)
= \frac{- \sum_{j=1}^3 s_{a i_j} \chX^{m_{i_j}^{(\tau,\si)} } \chY^{n_{i_j}^{(\tau,\si)} } }{ \chY\frac{\partial H_{(\tau,\si)} }{\partial \chY} (\chX,\chY, 0)}
\cdot  \frac{d\chX}{\chX}
$$
\end{proof}

\begin{proposition} \label{prop:twisted-limit}
If  $\fp'+1 \leq a \leq \fp$, the 1-form $\frac{\partial \Phi}{\partial q_a}  \in H^0(\fC_q,\omega_{\fC_q}(D^\infty_q))$ degenerates to
$$
\lim_{q\to 0} \frac{\partial \Phi}{\partial q_a}  \in H^0(\fC_0,\omega_{\fC_0}(D_0^\infty))
$$
such that
$$
\lim_{q\to 0} \left.\frac{\partial \Phi}{\partial q_a}\right|_{C_\si}
=\begin{cases}
 \displaystyle{
\frac{-(X_{(\tau,\si)})^{m^{(\tau,\si)}_{3+a}} (Y_{(\tau,\si)} )^{n^{(\tau,\si)}_{3+a}}}{
Y_{(\tau,\si)}\frac{\partial  H_{(\tau,\si)}}{\partial Y_{(\tau,\si)}}(X_{(\tau,\si)}, Y_{(\tau,\si)},0)} \cdot  \frac{dX_{(\tau,\si)}}{X_{(\tau,\si)}}  },
& b_{3+a}\in \mathrm{Box}(\si),\\
\quad 0, & b_{3+a} \notin \mathrm{Box}(\si),
\end{cases}
$$
where $\tau$ is a 2-cone contained in $\si$.
\end{proposition}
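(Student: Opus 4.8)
\medskip
\noindent\textbf{Proof plan.} The argument will run parallel to that of Proposition~\ref{prop:untwisted-limit}, so I will only describe where it differs. Fix a flag $(\tau,\si)$ and work in the coordinates $\chX=X_{(\tau,\si)}$, $\chY=Y_{(\tau,\si)}$. The decisive simplification in the range $\fp'+1\le a\le\fp$ comes from observation~(b) of Section~\ref{sec:H}, namely $s^\si_{ai}=\delta_{i,3+a}$: the coefficient $a_i(q)=a_i^{\si_0}(q)$ depends on $q_a$ only through $i=3+a$, so $q_a\,\partial H/\partial q_a=a_{3+a}(q)\,X^{m_{3+a}}Y^{n_{3+a}}$, and the reparametrization \eqref{eqn:Xab}--\eqref{eqn:Ycd} involves only the factors $a_{i_1}(q),a_{i_2}(q),a_{i_3}(q)$ with $i_1,i_2,i_3\in I'_\si\subseteq\{1,\dots,\fp'+3\}$, none of which contains $q_a$. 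Running the residue computation of Section~\ref{sec:first-third} in the $(\chX,\chY)$ chart, using \eqref{eqn:HH} to cancel the common factor $a_{i_3}(q)X^{m_{i_3}}Y^{n_{i_3}}$ and the identity $\frac{dX}{X}\wedge\frac{dY}{Y}=\frac{d\chX}{\chX}\wedge\frac{d\chY}{\chY}$ (the change of variables has determinant $1$), I would obtain
$$
\frac{\partial\Phi}{\partial q_a}
=\frac{a_{3+a}(q)}{q_a}\,\varpi_{m_{3+a},n_{3+a}}
=\frac{a^\si_{3+a}(q)}{q_a}\,\Res_{H_{(\tau,\si)}(\chX,\chY,q)=0}\Bigl(\frac{\chX^{m^{(\tau,\si)}_{3+a}}\chY^{n^{(\tau,\si)}_{3+a}}}{H_{(\tau,\si)}(\chX,\chY,q)}\cdot\frac{d\chX}{\chX}\wedge\frac{d\chY}{\chY}\Bigr).
$$

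Next I would let $q\to 0$. As in Proposition~\ref{prop:untwisted-limit}, and using the toric degeneration of Section~\ref{sec:toric-degeneration} (under which the $\si$-component of $\fC_q$ degenerates to $C_\si=\{H_{(\tau,\si)}(\chX,\chY,0)=0\}$), the residue converges in $H^0(\fC_0,\omega_{\fC_0}(D_0^\infty))$, with $\si$-component $-\chX^{m^{(\tau,\si)}_{3+a}}\chY^{n^{(\tau,\si)}_{3+a}}\bigl(\chY\,\partial H_{(\tau,\si)}/\partial\chY(\chX,\chY,0)\bigr)^{-1}\,d\chX/\chX$. It then remains to control the scalar prefactor: $a^\si_{3+a}(q)=\prod_{b=1}^{\fp}q_b^{s^\si_{b,3+a}}$ with all $s^\si_{b,3+a}\in\bZ_{\ge0}$, and observation~(b) gives $\prod_{b=\fp'+1}^{\fp}q_b^{s^\si_{b,3+a}}=q_a$, so $a^\si_{3+a}(q)/q_a=\prod_{b=1}^{\fp'}q_b^{s^\si_{b,3+a}}$ is a monomial in $q_K$ with non-negative integer exponents; its limit as $q\to0$ is therefore $1$ if all $s^\si_{b,3+a}$ vanish and $0$ otherwise.

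The crux of the proof is thus the combinatorial equivalence: $s^\si_{b,3+a}=0$ for all $b\le\fp'$ if and only if $b_{3+a}\in\Boxs$. To prove it: by \eqref{eqn:s} and the definition of $q^\beta$ one has $a^\si_{3+a}(q)=q^{\beta^\si_{3+a}}$, where $\beta^\si_{3+a}\in\bK_{\eff,\si}$ is the unique element with $\langle D_j,\beta^\si_{3+a}\rangle=\delta_{j,3+a}$ for $j\in I_\si$; since $H_b=\bar H_b$ for $b\le\fp'$ spans $H^2(\cX;\bQ)\subseteq\bL^\vee_\bQ$, and $(H^2(\cX;\bQ))^\perp=\bigoplus_{j=\fp'+4}^{\fp+3}\bQ D^\vee_j$ by the splitting \eqref{eqn:split}, the condition ``$s^\si_{b,3+a}=\langle H_b,\beta^\si_{3+a}\rangle=0$ for all $b\le\fp'$'' is equivalent to $\beta^\si_{3+a}\in\bigoplus_j\bQ D^\vee_j$. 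As $\{D_j:j\in I_\si\}$ is a $\bQ$-basis of $\bL^\vee_\bQ$, this forces $\beta^\si_{3+a}=D^\vee_{3+a}$ (compute $\langle D_{3+a'},\,\cdot\,\rangle$ against $\sum_j\lambda_j D^\vee_j$ for $3+a'\in\{\fp'+4,\dots,\fp+3\}\subseteq I_\si$ to see $\lambda_j=\delta_{j,3+a}$), and conversely $\beta^\si_{3+a}=D^\vee_{3+a}$ clearly lies in $\bigoplus_j\bQ D^\vee_j$; so the equivalence reduces to: $D^\vee_{3+a}$ satisfies $\langle D_j,D^\vee_{3+a}\rangle=\delta_{j,3+a}$ for all $j\in I_\si$. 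Comparing with the definition of $D^\vee_{3+a}$ (with $\si'$ the smallest cone containing $b_{3+a}$: $\langle D_j,D^\vee_{3+a}\rangle$ is $1$ for $j=3+a$, $-c_j(b_{3+a})$ for $j\in I'_{\si'}$, and $0$ for $j\in I_{\si'}\setminus\{3+a\}$) and using $c_j(b_{3+a})\in(0,1)$, one sees this holds for all $j\in I_\si$ exactly when $I'_{\si'}\cap I_\si=\emptyset$, i.e.\ $I'_{\si'}\subseteq I'_\si$, i.e.\ $\si'$ is a face of $\si$; and $\si'$ being a face of $\si$ is precisely the condition that $b_{3+a}=\sum_{j\in I'_{\si'}}c_j(b_{3+a})b_j$ be a box element of $\si$, that is, $b_{3+a}\in\Boxs$. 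Assembling the three steps gives the proposition.

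I expect the main obstacle to be this last combinatorial identity: the analytic part (convergence of the residue on the compact degenerate curve $\fC_0$) is identical to Proposition~\ref{prop:untwisted-limit}, whereas one must follow the monomial $a^\si_{3+a}(q)/q_a$ carefully through the chart change and match its triviality with the box condition $b_{3+a}\in\Boxs$. A secondary, routine point is verifying that both sides of \eqref{eqn:HH} and the reparametrization factors $a_{i_j}(q)^{w_j(\tau,\si)}$ are independent of $q_a$ for $a\ge\fp'+1$, which again follows from observation~(b).
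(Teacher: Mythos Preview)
Your argument is correct and follows the same route as the paper's proof: rewrite $\frac{\partial\Phi}{\partial q_a}=\frac{a_{a+3}(q)}{q_a}\varpi_{m_{3+a},n_{3+a}}$ in the $(\chX,\chY)$ chart via \eqref{eqn:HH}, take the residue, and let $q\to 0$ so that the scalar factor becomes $\lim_{q\to 0} a^\si_{a+3}(q)/q_a$. The paper simply asserts this last limit is $1$ or $0$ according to whether $b_{3+a}\in\Boxs$, whereas you supply a full justification via the splitting \eqref{eqn:split} and the characterization of $D^\vee_{3+a}$; this extra combinatorial step is correct and a genuine addition of detail, not a different approach.
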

\begin{proof}
In this proof, $\fp'+1\leq a\leq \fp$, and we fix a flag $(\tau,\si)$.
Recall that (see Section \ref{sec:first-third})
$$
\frac{\partial \Phi}{\partial q_a} = \Res_{H(X,Y,q)=0}\left(\frac{a_{a+3}(q)}{q_a} \cdot \frac{X^{m_{a+3}} Y^{m_{a+3}}}{H(X,Y,q)}\cdot \frac{dX}{X}\wedge \frac{dY}{Y}\right)
$$
Again, in order to take the desired limit on $C_\si$, we rewrite the above expression in coordinates $\chX:=X_{(\tau,\si)}$ and $\chY:=Y_{(\tau,\si)}$:
$$
\frac{a_{a+3}(q)}{q_a} \cdot \frac{X^{m_{a+3}} Y^{n_{a+3}}}{H(X,Y,q)}\cdot \frac{dX}{X}\wedge \frac{dY}{Y}
=\frac{a_{a+3}^\tau(q)}{q_a} \cdot \frac{\chX^{m^{(\tau,\si)}_{a+3}} \chY^{n^{(\tau,\si)}_{a+3}}}{H_{(\tau,\si)}(\chX,\chY,q)}\cdot \frac{d\chX}{\chX}\wedge \frac{d\chY}{\chY}.
$$
So
\begin{eqnarray*}
\lim_{q\to 0} \left.\frac{\partial \Phi}{\partial q_a}\right|_{C_\si} &=& \lim_{q\to 0} \frac{a_{a+3}^\si(q)}{q_a}\cdot
\Res_{H_{(\tau,\si)}(\chX,\chY,0)=0} \left( \frac{\chX^{m^{(\tau,\si)}_{a+3}} \chY^{n^{(\tau,\si)}_{a+3}}}{H_{(\tau,\si)}(\chX,\chY,0)}\cdot \frac{d\chX}{\chX}\wedge \frac{d\chY}{\chY} \right)\\
&=& \Bigl( \lim_{q\to 0} \frac{a_{a+3}^\si(q)}{q_a} \Bigr)\cdot \frac{-\chX^{m^{(\tau,\si)}_{a+3}} \chY^{n^{(\tau,\si)}_{a+3}}}{\chY \frac{\partial H_{(\tau,\si)}}{\partial \chY}(\chX,\chY,0)} \cdot \frac{d\chX}{\chX},
\end{eqnarray*}
where
$$
\lim_{q\to 0} \frac{a_{a+3}^\si(q)}{q_a} =\begin{cases}
1, & b_{3+a}\in \mathrm{Box}(\si)\\
0, & b_{3+a}\notin \mathrm{Box}(\si).
\end{cases}
$$
\end{proof}

\subsection{The action of the stacky Picard group on on the central fiber}
The action of the stacky Picard group $\Picst(\cX)$ described in Section \ref{sec:stacky-Pic} extends to an action on $\fC$ which preserves the
central fiber $\fC_0$, so we have a group homomorphism
$\Picst(\cX) \to \Aut'(\fC_0)$,
where $\Aut'(\fC_0)$ is the subgroup of $\Aut(\fC_0)$ given by
$$
\Aut'(\fC_0) =\{ \phi\in \Aut(\fC_0): \phi(C_\si)=C_\si \textup{ for all } \si\in \Si(3)\}.
$$
Each element of $\Aut'(\fC_0)$ restricts to an automorphism of $C_\si$, which gives rise to a group homomorphism
$j_\si^*: \Aut'(\fC_0)\to \Aut(C_\si)$ for each $\si\in \Si(3)$.

For each $\si\in \Si(3)$, the inclusion $\cX_\si\hookrightarrow \cX$ induces a surjective group homomorphism
$\Pic(\cX) \to \Pic(\cX_\si)$  given by $\cL\mapsto \cL|_{\cX_\si}$, which descends to a surjective group homomorphism
$$
i_\si^* :\Picst(\cX)=\Pic(\cX)/\Pic(X)\longrightarrow \Pic(\cX_\si) =\Picst(\cX_\si).
$$
We have a commutative diagram
$$
\begin{CD}
\Picst(\cX) @>>> \Aut'(\fC_0)\\
@VV{i_\si^*}V  @VV{j_\si^*}V\\
\Picst(\cX_\si) @>>> \Aut(C_\si)
\end{CD}
$$

\subsection{The Gauss-Manin connection and flat sections}  \label{sec:GM}

Let $U_\ep \subset (\bC^*)^{\fp'}\times \bC^{\fp-\fp'}$ be defined as in \eqref{eqn:U-ep} as before. We assume that $\ep>0$ small enough such that
$\fC_q \cong \Cbar_q$ is smooth and intersects  $\partial \bS_P$ transversally at $\fn$ points.  We introduce some notation.
\begin{itemize}
\item Let $\bU$ (resp. $\bU_\bZ$) be the flat complex vector bundle (resp. local system of lattices) over $U_\ep$ whose fiber
over $q\in U_\ep$ is $H_1(C_q;\bC)$ (resp. $H_1(C_q;\bZ)$).
\item Let $\overline{\bU}$ (resp. $\overline{\bU}_\bZ$) be the flat complex vector bundle (resp. local system of lattices) over $U_\ep$ whose fiber
over $q\in U_\ep$ is $H_1(\Cbar_q;\bC)$ (resp. $H_1(\Cbar_q;\bZ)$).
\end{itemize}
The flat vector bundle $\bK$ defined in Section \ref{sec:HV}, where each fiber is $K_1(C_q;\bC)$, is a flat subbundle of $\bU$. Continuous sections of $\bU_\bZ$ (resp. $\overline{\bU}_\bZ$) are flat sections of $\bU$ (resp. $\overline{\bU}$) w.r.t. the Gauss-Manin connection.

The $\Picst(\cX)$-action on the total space of the family of mirror curves over $U_\ep$ induces an action on $\bU_\bZ$ and $\bU$. In particular,
$\bU$ is a $\Picst(\cX)$-equivariant flat complex vector bundle over $U_\ep$, so $\Picst(\cX)$ acts on the space of sections
of $\bU$.

\begin{lemma} \label{lm:flat}
For each $\si\in \Si(3)$ there exists $\ep(\si) \in (0,\ep]$ and a $\Picst(\cX)$-equivariant linear map
$H_1(C_\si;\bC) \to \Gamma(U_{\ep(\si)}, \bU)$, $\gamma\mapsto \gamma(q)$, such that
if $\gamma\in H_1(C_\si;\bZ)$ then $\gamma(q)\in H_1(C_q;\bZ)$ for every $q\in U_{\ep(\si)}$.
In particular, $\gamma(q)$ is flat w.r.t. the Gauss-Manin connection for all $\gamma\in H_1(C_\si;\bC)$. Moreover, this linear map restricts to $K_1(C_\si;\bC)\to \Gamma(U_{\epsilon(\si)},\bK)$.
\end{lemma}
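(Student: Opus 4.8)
The plan is to realize the cycles $\gamma(q)$ on $C_q$ as deformations of cycles on the central fiber $C_\si$ of the toric degeneration $\fC\to \bA^\fp$ constructed in Section \ref{sec:toric-degeneration}, using the fact that a flat family of smooth curves over a simply connected base carries a trivial local system of first homology groups. First I would restrict the family $\fC\to \bA^\fp$ to a small polydisc $U_{\ep'}$ around the origin; the central fiber $\fC_0 = \bigcup_{\si\in\Si(3)} \Cbar_\si$ is nodal, so over a punctured neighborhood the fibers are smooth but the total space is singular at the nodes of $\fC_0$. The key point is that $C_\si = \Cbar_\si \cap (\bS_{P_\si}\setminus \partial\bS_{P_\si})$ is an open piece of the central fiber which is smooth, and a cycle $\gamma\in H_1(C_\si;\bZ)$ can be pushed into the smooth locus of the total family and then transported to nearby fibers.

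Concretely, the approach has three steps. First, I would use the semistable (or simply the explicit toric) structure of $\fX\to \bA^\fp$ near a node: in suitable local coordinates the family looks like $\{xy = q^{(\cdot)}\}$, which is the standard vanishing-cycle picture, and a cycle $\gamma$ supported in $C_\si$ away from the nodes lies in the part of the fiber that extends to a trivial $C^\infty$-fibration over $U_{\ep(\si)}$ after removing the vanishing cycles. Applying Ehresmann's theorem to the complement of (a neighborhood of) the nodal locus gives a locally trivial fibration over $U_{\ep(\si)}$ (shrinking $\ep$ and using that $U_{\ep(\si)}$ is contractible, hence supports no monodromy), which produces the continuous family $q\mapsto \gamma(q)\in H_1(C_q;\bZ)$; flatness with respect to the Gauss–Manin connection is automatic for an integral (hence locally constant) section. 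Second, I would check $\Picst(\cX)$-equivariance: the action of $\Picst(\cX)$ on the total space of the family (Section \ref{sec:stacky-Pic}) preserves $\fC_0$ and restricts to $\Aut(C_\si)$ via $j_\si^*$ (Section ``The action of the stacky group on the central fiber''), so if the trivialization is chosen $\Picst(\cX)$-equivariantly — which one can arrange by averaging, since $\Picst(\cX)$ is finite — the resulting map $H_1(C_\si;\bC)\to \Gamma(U_{\ep(\si)},\bU)$ intertwines the two actions. Third, for the last sentence I would observe that $K_1(C_q;\bZ) = \ker(I_*\colon H_1(C_q;\bZ)\to H_1((\bC^*)^2;\bZ))$ is cut out by the two integral linear conditions $\fM_X = \fM_Y = 0$ of Section \ref{sec:HV}, and these conditions vary flatly in $q$; since $\gamma\in K_1(C_\si;\bZ)$ satisfies them on the central fiber and $\gamma(q)$ is a locally constant integral lift, $\gamma(q)$ satisfies them for all $q\in U_{\ep(\si)}$, i.e. $\gamma(q)\in \bK_q$.

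The main obstacle is the local analysis at the nodes of $\fC_0$: one must verify that a cycle supported in $C_\si$ genuinely avoids the vanishing cycles and so extends over the full polydisc rather than only over a slit neighborhood, and that the extension can be made compatible simultaneously for all $\si\in\Si(3)$ sharing a common node. This is where I would invoke the explicit toric coordinates from Section \ref{sec:toric-degeneration} (the local model $\{xy = q^\beta\}$ for each 2-cone $\tau = \si_1\cap\si_2$), together with the fact that $C_\si$ meets each boundary divisor $\bD_\tau$ in exactly $\fm_\tau$ points which are precisely the nodes; a cycle in the open curve $C_\si$ is bounded away from these points, hence lies in the locus where the family is trivial. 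Once the geometry near the nodes is pinned down, the rest — Ehresmann trivialization, the averaging for equivariance, and the flatness of the linear conditions defining $\bK$ — is routine, so I would keep that part brief.
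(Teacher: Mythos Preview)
Your approach is correct and rests on the same idea as the paper's --- cycles in the smooth open curve $C_\si$ lie in the locus where the family is locally trivial and hence can be transported to nearby fibers --- but the execution differs. The paper does not invoke the toric degeneration or Ehresmann's theorem at all: it simply writes the curve in flag coordinates as $H_{(\tau,\si)}(X_{(\tau,\si)},Y_{(\tau,\si)},q)=0$, chooses loop representatives $\tilde\gamma_i:S^1\to C_\si$ that avoid the ramification points of the $X_{(\tau,\si)}$-projection, and applies the implicit function theorem to deform $Y_{(\tau,\si)}$ along the loop for small $q$. This is a hands-on substitute for your Ehresmann step, more elementary and fully explicit. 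For equivariance the paper does not average: since the resulting class $\gamma(q)\in H_1(C_q;\bZ)$ is independent of the chosen loop representative (it is just parallel transport in the Gauss--Manin local system), the map $\gamma\mapsto\gamma(q)$ is canonical and hence automatically intertwines the $\Picst(\cX)$-action by diffeomorphisms $C_q\to C_{[\cL]\cdot q}$. Your averaging is unnecessary and, as phrased, would need care to keep integral cycles integral. The $K_1$ argument is the same in both versions.
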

\begin{proof}  Choose a 2-cone $\tau\subset \si$, such that $(\tau,\si)$ is a flag. From Section \ref{sec:H}, the equation of $C_q$ can be written
as
$$
0 = H_{(\tau,\si)} (X_{(\tau,\si)}, Y_{(\tau,\si)}, q)
=\sum_{i=1}^{\fp+3} a_i^\si(q) (X_{(\tau,\si)})^{ \fm_i^{(\tau,\si)}  } (Y_{(\tau,\si)})^{\fn_i^{(\tau,\si)} }.
$$
and the equation of $C_\si$ is given by
$$
0 = H_{(\tau,\si)}(X_{(\tau,\si)}, Y_{(\tau,\si)}, 0) = (X_{(\tau,\si)})^{\fr(\tau,\si)} (Y_{(\tau,\si)})^{-\fs(\tau,\si)} + (Y_{(\tau,\si)})^{\fm(\tau,\si)} +1.
$$

We have $H_1(C_\si;\bC)\cong \bZ^{2\fg_\si+\fn_\si-1}$. We choose smooth loops $\tgamma_i: S^1 \to C_\si$,
where $1\leq i\leq 2\fg_\si+\fn_\si -1$, such that the image of $\gamma_i$ does not meet the set or critical points
of $X_{(\tau,\si)}:C_\si \to \bC^*$, and $\{ \gamma=[\tgamma_i]\in H_1(C_q;\bZ): 1\leq i\leq 2\fg_\si+\fn_\si-1 \}$ is a $\bZ$-basis
of $H_1(C_\si;\bZ)$. Then $\tgamma_i(t) = (a_i(t), b_i(t))$, where $a_i, b_i: S^1 \to \bC^*$ are smooth maps such that
$$
H_{(\tau,\si)}(a_i(t), b_i(t), 0) =0,\quad \frac{\partial H_{(\tau,\si)} }{\partial Y_{(\tau,\si)} }(a_i(t), b_i(t)) \neq 0
$$
for all $t\in S^1$. By the implicit function theorem and compactness of $S^1$,  there exists
$\ep(\si)\in (0,\ep)$ and $b_i(t, q)$ which is a smooth function in $t, a_i^\si(q)$ such that
$b_i(t,0)= b_i(t)$ and
$$
H_{(\tau,\si)}(a_i(t), b_i(t,q),q)=0,\quad 1\leq i\leq 2\fg_\si+\fn_\si-1
$$
for small enough $q$. Then $\tgamma_i(t,q)= (a_i(t), b_i(t,q))$, where $t\in S^1$, is a loop in $C_q$ and defines $\gamma_i(q) \in H_1(C_q;\bZ)$. We may view
$\gamma_i(q)$ as a section in  $\Gamma(U_{\ep(\si)},\bU_{\bZ})$, so it is a flat section of $\bU$ on $U_{\ep(\si)}$ w.r.t. the Gauss-Manin connection.
Note that $\gamma_i(q) \in H_1(C_q;\bZ)$ depends only on the class $\gamma_i \in H_1(C_\si;\bZ)$, not the choice of loop $\tgamma_i$ such that
$[\tgamma_i]=\gamma_i$.

For any $\gamma\in H_1(C_\si;\bC)$ there exist unique constants $\{ c_i\in \bC: 1\leq i\leq 2\fg_\si+\fn_\si-1\}$ such that
$$
\gamma = \sum_{i=1}^{2\fg_\si+n_\si -1} c_i[\gamma_i].
$$
Define
$$
\gamma(q) = \sum_{i=1}^{2\fg_\si+ n_\si -1} c_i \gamma_i(q) \in H^1(C_q;\bC).
$$
It follows from the construction that  the map $H_1(C_q;\bC) \to \Gamma(\bU_{\ep(\si)}, \bU)$ is $\bC$-linear, and that $\gamma(q) \in K_1(C_q;\bC)$ if
$\gamma\in K_1(C_\si;\bC)$; $\gamma(q)$ is flat since each $\gamma_i(q)$ is flat.

An element $[\cL]\in \Picst(\cX)$ defines a diffeomorphism $\phi_{[\cL]}: C_q \longrightarrow C_{[\cL]\cdot q}$ which induces an isomorphism
$\phi_{[\cL]*}: H^1(C_q;\bC)\to H^1(C_{[\cL]\cdot q};\bC)$. The section $\gamma(q)$ is $\Picst(\cX)$-equivariant in the following sense:
$$
\gamma([\cL]\cdot q) = \phi_{[\cL]*} (\gamma(q)) \quad \textup{for all } [\cL]\in \Picst(\cX).
$$
\end{proof}

\subsection{Vanishing cycles and loops around punctures}  \label{sec:vanishing}
Let $(\tau,\si)$ be a flag.
For $0\leq \ell \leq \fm_\tau-1$, let $\bar{p}^{(\tau,\si)}_\ell  \in \Cbar_\si\cap \bD_\tau$ be given by
\begin{equation}\label{eqn:pjXY}
X_{(\tau,\si)}=0,  \quad Y_{(\tau,\si)} = \exp(\frac{\pi\sqrt{-1}}{\fm_\tau}(1+2\ell))
\end{equation}
And let $\delta^{(\tau,\si)}_\ell$ be a small loop in $C_\si$ around the puncture $\bar{p}^{(\tau,\si)}_\ell$ such that
$\delta^{(\tau,\si)}_\ell$ is contractible in $\Cbar_\si$.  Then we may construct
$\delta^{(\tau,\si)}_{\ell}(q) \in H_1(C_q;\bZ)$ as in the proof of Lemma \ref{lm:flat}.  We will often
view $\delta^{(\tau,\si)}_\ell$ as a section of $\bU_\bZ$ and write $\delta^{(\tau,\si)}_\ell$ instead of $\delta_\ell^{(\tau,\si)}(q)$.
\begin{itemize}
\item  If $\tau\in \Si(2)_c := \{ \tau\in \Si(2): \ell_\tau=\bP^1\}$, then $\tau$ is the intersection of two 3-cones $\si, \si'$. From the construction,  it is straightforward
to check that
$$
\bar{p}_\ell^{(\tau,\si')} = \bar{p}_{\fm_\tau-1-\ell}^{(\tau,\si)},\quad
\delta_\ell^{(\tau,\si')} = - \delta^{(\tau,\si)}_{\fm_\tau-1-\ell}  \in H_1(C_q;\bZ)
$$
for $0 \leq \ell \leq \fm_\tau-1$. The class $\delta_\ell^{(\tau,\si')}(q) \in H_1(C_q;\bZ)$ is the vanishing cycle associated to
the node $\bar{p}_\ell^{(\tau,\si)} \in \Cbar_\si\cap \Cbar_{\si'}$.
\item If $\tau \in \Si(2)\setminus \Si(2)_c$ then $\sigma$ is the unique 3-cone containing $\tau$, and $\delta_\ell^{(\tau,\si)}$ is the class of a small loop around a puncture
$\bar{p}^{(\tau,\si)}_\ell(q)$ in $\Cbar_q \setminus C_q$.
\end{itemize}

\begin{lemma} \label{lm:delta}
\begin{enumerate}
\item[(a)] If $1\leq  a \leq \fp'$ then
$$
\lim_{q\to 0}\frac{1}{2\pi\sqrt{-1}} \int_{\delta^{(\tau,\si)}_\ell}  q_a \frac{\partial \Phi}{\partial q_a} =
\frac{s_{a i_3} - s_{a i_2} }{\fm_\tau}.
$$
\item[(b)] If $\fp'+1\leq a \leq \fp$ then
$$
\lim_{q\to 0}\frac{1}{2\pi\sqrt{-1}} \int_{\delta^{(\tau,\si)}_\ell} \frac{\partial \Phi}{\partial q_a} =
\begin{cases}
\displaystyle{ \frac{ e^{\pi\sqrt{-1}(2\ell+1)j/\fm_\tau} }{\fm_\tau} }, & \textup{if }b_{3+a} = v_j^{(\tau,\si)} \textup{ for some } j\in \{1,\ldots,\fm_\tau-1\},\\
0, & \textup{otherwise}.
\end{cases}
$$
\end{enumerate}
\end{lemma}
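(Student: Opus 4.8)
The plan is to reduce both identities to residue computations on the central fiber $\fC_0$ of the toric degeneration of Section~\ref{sec:toric-degeneration}, using the explicit degeneration of the $1$-forms $q_a\frac{\partial\Phi}{\partial q_a}$ and $\frac{\partial\Phi}{\partial q_a}$ established in Propositions~\ref{prop:untwisted-limit} and \ref{prop:twisted-limit}. By construction (Lemma~\ref{lm:flat}, Section~\ref{sec:vanishing}), $\delta^{(\tau,\si)}_\ell$ comes from a small loop in the component $C_\si$ of $\fC_0$ around the puncture $\bar{p}^{(\tau,\si)}_\ell$, extended to a flat (locally constant) section of $\bU_\bZ$ over $U_{\ep(\si)}$; and $q_a\frac{\partial\Phi}{\partial q_a}$, $\frac{\partial\Phi}{\partial q_a}$ extend over $q=0$ to sections of the family $\{H^0(\fC_q,\omega_{\fC_q}(D^\infty_q))\}$, with limits at $q=0$ given component-by-component by Propositions~\ref{prop:untwisted-limit} and \ref{prop:twisted-limit}. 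Since the period, over a flat cycle, of a family of meromorphic differentials having at worst simple poles along a section of the family of curves depends continuously on $q$, we obtain
\[
\lim_{q\to 0}\frac{1}{2\pi\sqrt{-1}}\int_{\delta^{(\tau,\si)}_\ell} q_a\frac{\partial\Phi}{\partial q_a}
=\Res_{\bar{p}^{(\tau,\si)}_\ell}\Big(\Big(\lim_{q\to 0}q_a\frac{\partial\Phi}{\partial q_a}\Big)\Big|_{C_\si}\Big),
\]
and likewise with $\frac{\partial\Phi}{\partial q_a}$ in place of $q_a\frac{\partial\Phi}{\partial q_a}$. So everything reduces to evaluating residues of the explicit limiting $1$-forms at $\bar{p}^{(\tau,\si)}_\ell$.

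For these residues I would use the flag coordinates $\chX=X_{(\tau,\si)}$, $\chY=Y_{(\tau,\si)}$ of Section~\ref{sec:H}, in which $H_{(\tau,\si)}(\chX,\chY,0)=\chX^{\fr(\tau,\si)}\chY^{-\fs(\tau,\si)}+\chY^{\fm_\tau}+1$ by \eqref{eqn:untwisted-affine}, and $\bar{p}^{(\tau,\si)}_\ell=\{\chX=0,\ \chY=\zeta_\ell\}$ with $\zeta_\ell:=\exp(\tfrac{\pi\sqrt{-1}(2\ell+1)}{\fm_\tau})$, $\zeta_\ell^{\fm_\tau}=-1$, by \eqref{eqn:pjXY}. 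In a neighborhood of $\bar{p}^{(\tau,\si)}_\ell$ in $\bS_{P_\si}$, $\chX$ and $\chY$ are local coordinates with $\chX$ cutting out $\bD_\tau$, and $\Cbar_\si$ is $\{H_{(\tau,\si)}(\chX,\chY,0)=0\}$; since $\chY\tfrac{\partial H_{(\tau,\si)}}{\partial\chY}(0,\zeta_\ell,0)=\fm_\tau\zeta_\ell^{\fm_\tau}=-\fm_\tau\ne 0$, this curve is locally the graph $\chY=\chY(\chX)$, so $\chX$ restricts to a local coordinate on $\Cbar_\si$ at $\bar{p}^{(\tau,\si)}_\ell$ and $\Res_{\bar{p}^{(\tau,\si)}_\ell}\!\big(g\,\tfrac{d\chX}{\chX}\big)=g(0,\zeta_\ell)$ for $g$ regular there. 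For part (a) with $1\le a\le\fp'$, Proposition~\ref{prop:untwisted-limit} together with $(m^{(\tau,\si)}_{i_1},n^{(\tau,\si)}_{i_1})=(\fr(\tau,\si),-\fs(\tau,\si))$, $(m^{(\tau,\si)}_{i_2},n^{(\tau,\si)}_{i_2})=(0,\fm_\tau)$, $(m^{(\tau,\si)}_{i_3},n^{(\tau,\si)}_{i_3})=(0,0)$ gives, at the puncture, numerator $-\big(s_{ai_1}\cdot0+s_{ai_2}\zeta_\ell^{\fm_\tau}+s_{ai_3}\big)=s_{ai_2}-s_{ai_3}$ and denominator $-\fm_\tau$, whence the residue $\tfrac{s_{ai_3}-s_{ai_2}}{\fm_\tau}$. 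For $\fp'+1\le a\le\fp$, $q_a\frac{\partial\Phi}{\partial q_a}=a_{a+3}(q)\,\varpi_{m_{a+3},n_{a+3}}$ with $a_{a+3}(q)$ a monomial divisible by $q_a$, so this $1$-form tends to $0$; and $s_{ai_2}=s_{ai_3}=0$ there since $i_2,i_3\le\fp'+3<a+3$, so both sides of (a) vanish.

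For part (b), with $\fp'+1\le a\le\fp$, I would likewise invoke Proposition~\ref{prop:twisted-limit}. If $b_{3+a}\notin\mathrm{Box}(\si)$, the limiting form vanishes on $C_\si$, so the limit is $0$. If $b_{3+a}\in\mathrm{Box}(\si)$, write $b_{3+a}=\sum_{j=1}^3c_{i_j}b_{i_j}$ with $c_{i_j}\in[0,1)$ and $\sum_jc_{i_j}=1$ (the $b_i$ have last coordinate $1$), so that $(m^{(\tau,\si)}_{3+a},n^{(\tau,\si)}_{3+a})=(c_{i_1}\fr(\tau,\si),\,-c_{i_1}\fs(\tau,\si)+c_{i_2}\fm_\tau)$. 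If $c_{i_1}>0$ then $m^{(\tau,\si)}_{3+a}\ge 1$, the limiting form is proportional to $\chX^{m^{(\tau,\si)}_{3+a}}\tfrac{d\chX}{\chX}$, hence holomorphic at $\bar{p}^{(\tau,\si)}_\ell$ with residue $0$, and $b_{3+a}$ is not on the edge $P_\tau$, i.e.\ $b_{3+a}\ne v^{(\tau,\si)}_j$ --- this is the ``otherwise'' case. If $c_{i_1}=0$ then $m^{(\tau,\si)}_{3+a}=0$ and $c_{i_2}>0$ (else $c_{i_3}=1$), so $n^{(\tau,\si)}_{3+a}=c_{i_2}\fm_\tau=j$ with $j\in\{1,\dots,\fm_\tau-1\}$, i.e.\ $b_{3+a}=v^{(\tau,\si)}_j$; evaluating $\dfrac{-\chY^{j}}{\chY\frac{\partial H_{(\tau,\si)}}{\partial\chY}(0,\zeta_\ell,0)}=\dfrac{-\zeta_\ell^{j}}{-\fm_\tau}=\dfrac{e^{\pi\sqrt{-1}(2\ell+1)j/\fm_\tau}}{\fm_\tau}$ gives the asserted value.

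The main obstacle is the first step --- justifying that the limit $q\to 0$ may be taken inside the period integral, i.e.\ that $q\mapsto\int_{\delta^{(\tau,\si)}_\ell(q)}$ of the relevant $1$-form extends continuously to $q=0$ with value $\int_{\delta^{(\tau,\si)}_\ell}$ of the limiting form on $C_\si$. This has to combine the flat extension of $\delta^{(\tau,\si)}_\ell$ and its degeneration to a small loop around $\bar{p}^{(\tau,\si)}_\ell$ in $C_\si$ (Lemma~\ref{lm:flat}, Section~\ref{sec:vanishing}) with the holomorphic extension of the family of $1$-forms over $q=0$ and their explicit limits (Propositions~\ref{prop:untwisted-limit}, \ref{prop:twisted-limit}), together with the elementary fact that the residue of a varying family of meromorphic differentials along a section of simple poles is continuous in the parameter. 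Once this is in place, the rest is the residue bookkeeping in flag coordinates sketched above, which is routine.
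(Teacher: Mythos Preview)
Your approach is essentially the paper's: it too derives (a) from Proposition~\ref{prop:untwisted-limit} together with \eqref{eqn:pjXY}, and (b) from Proposition~\ref{prop:twisted-limit} together with \eqref{eqn:pjXY}. You have correctly unpacked the residue computations in flag coordinates that the paper leaves implicit, and your separate treatment of the range $\fp'+1\le a\le\fp$ in part~(a) (where both sides vanish) is a detail the paper's one-line proof glosses over; your justification via $s_{ai}=\delta_{i,a+3}$ for such $a$ and $i_2,i_3\le\fp'+3<a+3$ is exactly right.
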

\begin{proof}
(a) follows from Proposition \ref{prop:untwisted-limit} and \eqref{eqn:pjXY}. (b) follows from Proposition  \ref{prop:twisted-limit} and \eqref{eqn:pjXY}.
\end{proof}

We now describe the action of the stacky Picard group $\Picst(\cX)$ more explicitly. We fix a flag $(\tau,\si)$ and define $i_1, i_2, i_3$ as in Section \ref{sec:H}. Then
$$
\mathrm{Box}(\tau)\setminus \{0\} = \{ v^{(\tau,\si)}_j:= (1-\frac{j}{\fm_\tau}) b_{i_2} + \frac{j}{\fm_\tau} b_{i_3} :  j=1,\ldots, \fm_\tau-1\}.
$$
For any $[\cL]\in \Picst(\cX)$, we have
$$
[\cL]\cdot Y_{(\tau,\si)} =\exp\left(-2\pi\sqrt{-1} \age_{v_1^{(\tau,\si)} }(\cL)\right) Y_{(\tau,\si)},
$$
where $\age_{v_1^{(\tau,\si)} }(\cL) \in  \frac{1}{\fm_\tau} \bZ$. If $\phi\in \Aut'(\fC_0)$ is the image of $[\cL] \in \Picst(\cX)$ under the group homomorphism
$\Picst(\cX) \to \Aut'(\fC_0)$, and $\age_{v_1^{(\tau,\si)}} (\cL)= \frac{j}{\fm_\tau}$, where $j\in \{0,1,\ldots, \fm_\tau-1\}$, then
$$
\phi({ \bar p}_\ell^{(\tau,\si)}) =
\begin{cases}
\bar{p}_{\ell- j}^{(\tau,\si)}, &\ell\geq j,\\
 \bar{p}_{\fm_\tau+ \ell- j }^{(\tau,\si)}, &  \ell<j.
\end{cases}
$$
So
\begin{equation}\label{eqn:L-delta}
[\cL]\cdot \delta_\ell^{(\tau,\si)}=
\begin{cases}
 \delta_{\ell-j}^{(\tau,\si)}, & \ell\geq j,\\
 \delta_{\fm_\tau+ \ell -j}^{(\tau,\si)}, &  \ell<j.
\end{cases}
\end{equation}

Let  $\fM_{X_{(\tau,\si)}}, \fM_{Y_{(\tau,\si)}}, \fM_X, \fM_Y: H_1(C_q;\bZ) \to \bZ$ be defined as in Section \ref{sec:HV}. Then
\begin{equation}\label{eqn:fM}
\fM_{X_{(\tau,\si)}} = a(\tau,\si)\fM_X + b(\tau,\si)\fM_Y,\quad  \fM_{Y_{(\tau,\si)}} = c(\tau,\si)\fM_X + d(\tau,\si)\fM_Y
\end{equation}
where $a(\tau,\si), b(\tau,\si), c(\tau,\si), d(\tau,\si)\in \bZ$ are defined as in Section \ref{sec:H}. Recall that
$a(\tau,\si)d(\tau,\si)-b(\tau,\si)c(\tau,\si)=1$.

We have
\begin{equation}\label{eqn:fM-vanishing}
\fM_{X_{(\tau,\si)}}(\delta_\ell^{(\tau,\si)})=1,\quad \fM_{Y_{(\tau,\si)}} (\delta^{(\tau,\si)}_\ell)=0.
\end{equation}
Equation \eqref{eqn:fM} and Equation \eqref{eqn:fM-vanishing} imply
\begin{equation}\label{eqn:fM-vanishing-cycle}
\fM_X(\delta_\ell^{(\tau,\si)})=d(\tau,\si),\quad \fM_Y (\delta^{(\tau,\si)}_\ell)= -c(\tau,\si).
\end{equation}

For $k=0,1,\ldots, \fm_\tau-1$, we define
\begin{equation} \label{eqn:A-delta}
A_k^{(\tau,\si)} = \sum_{\ell=0}^{\fm_\tau-1} e^{-\pi\sqrt{-1}(2\ell+1)k} \delta^{(\tau,\si)}_\ell.
\end{equation}
Then
\begin{equation}\label{eqn:Ak-monodromy}
\fM_X(A_k^{(\tau,\si)}) = \delta_{0,k} \fm_\tau d(\tau,\si),\quad
\fM_Y(A_k^{(\tau,\si)}) = - \delta_{0,k} \fm_\tau c(\tau,\si).
\end{equation}
In particular, $A_k^{(\tau,\si)} \in K_1(C_q,\bC)$ for $k\neq 0$.

\begin{lemma}
\begin{enumerate}
\item[(a)] For any $[\cL]\in \Picst(\cX)$,
$$
[\cL]\cdot A_k^{(\tau,\si)} = \exp\left(2\pi\sqrt{-1} k \cdot \age_{v_1^{(\tau,\si)}} (\cL)\right) A_k^{(\tau,\si)}.
$$
\item[(b)] If $1\leq a\leq \fp'$ then
$$
\lim_{q\to 0} \frac{1}{2\pi\sqrt{-1}} \int_{A_k^{(\tau,\si)}} q_a \frac{\partial \Phi}{\partial q_a} = \delta_{0,k}(s_{a i_3}- s_{a i_2}).
$$
In particular, when $\si=\si_0$ we have
$\; \displaystyle{
\lim_{q\to 0} \frac{1}{2\pi\sqrt{-1}} \int_{A_k^{(\tau,\si_0)}} q_a \frac{\partial \Phi}{\partial q_a} = 0.
}$
\item[(c)] If $\fp'+1\leq a\leq \fp$ then
$$
\lim_{q\to 0} \frac{1}{2\pi\sqrt{-1}} \int_{A_k^{(\tau,\si)}} \frac{\partial \Phi}{\partial q_a}
=\begin{cases}
1, & \textup{ if $k\in \{1,\ldots, \fm_\tau-1\}$ and $b_{a+3}= v_k^{(\tau,\si)}$},\\
0, & \textup{otherwise}.
\end{cases}
$$
\end{enumerate}
\end{lemma}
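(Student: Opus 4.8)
The plan is to reduce each of (a), (b), (c) to an elementary sum of $\fm_\tau$-th roots of unity, using the definition \eqref{eqn:A-delta} of $A_k^{(\tau,\si)}$ together with the corresponding statements already proved for the individual small loops / vanishing cycles $\delta_\ell^{(\tau,\si)}$: the monodromy formula \eqref{eqn:L-delta} for part (a), and Lemma \ref{lm:delta}(a) and (b) for parts (b) and (c). The only extra ingredient is the character-orthogonality relation $\sum_{\ell=0}^{\fm_\tau-1} e^{-\pi\sqrt{-1}(2\ell+1)(j-k)/\fm_\tau}=\fm_\tau\,\delta_{j,k}$ for $j,k\in\{0,\ldots,\fm_\tau-1\}$, which is where the cyclic structure enters.

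For (a), I would substitute \eqref{eqn:L-delta} into \eqref{eqn:A-delta}. Writing $\age_{v_1^{(\tau,\si)}}(\cL)=j/\fm_\tau$ with $j\in\{0,\ldots,\fm_\tau-1\}$, the class $[\cL]$ sends $\delta_\ell^{(\tau,\si)}$ to $\delta_{\ell-j}^{(\tau,\si)}$ with the index read cyclically modulo $\fm_\tau$; since the phases $e^{-\pi\sqrt{-1}(2\ell+1)k/\fm_\tau}$ in \eqref{eqn:A-delta} are $\fm_\tau$-periodic in $\ell$, the two cases of \eqref{eqn:L-delta} glue into a single shift, and after reindexing one extracts the common factor $e^{2\pi\sqrt{-1}jk/\fm_\tau}=\exp\!\big(2\pi\sqrt{-1}\,k\,\age_{v_1^{(\tau,\si)}}(\cL)\big)$, leaving exactly $A_k^{(\tau,\si)}$. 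This wraparound bookkeeping is the one place that needs care, though it is routine.

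For (b), Lemma \ref{lm:delta}(a) gives $\lim_{q\to0}\tfrac{1}{2\pi\sqrt{-1}}\int_{\delta_\ell^{(\tau,\si)}}q_a\tfrac{\partial\Phi}{\partial q_a}=\tfrac{s_{ai_3}-s_{ai_2}}{\fm_\tau}$, which is independent of $\ell$; pairing against the coefficients in \eqref{eqn:A-delta} then yields $(s_{ai_3}-s_{ai_2})$ times $\tfrac{1}{\fm_\tau}\sum_\ell e^{-\pi\sqrt{-1}(2\ell+1)k/\fm_\tau}=\delta_{0,k}$, which is the asserted value. For the special case $\si=\si_0$ I would note that the two triangle vertices $i_2,i_3$ spanning $I'_\tau$ lie in $I'_{\si_0}=\{1,2,3\}$, so $s_{ai_2}=s_{ai_3}=0$ by the normalization of $s_{ai}$ fixed at the beginning of Section \ref{sec:vanishing} (inherited from the choice of $H_a$ in Section \ref{sec:choice-H}), forcing the limit to vanish. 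For (c), Lemma \ref{lm:delta}(b) gives $\lim_{q\to0}\tfrac{1}{2\pi\sqrt{-1}}\int_{\delta_\ell^{(\tau,\si)}}\tfrac{\partial\Phi}{\partial q_a}=\tfrac{1}{\fm_\tau}e^{\pi\sqrt{-1}(2\ell+1)j/\fm_\tau}$ when $b_{3+a}=v_j^{(\tau,\si)}$ for some $j\in\{1,\ldots,\fm_\tau-1\}$, and $0$ otherwise; pairing with the conjugate phase in \eqref{eqn:A-delta} and applying the orthogonality relation collapses the sum to $\delta_{j,k}$, which is $1$ precisely when $k\in\{1,\ldots,\fm_\tau-1\}$ and $b_{a+3}=v_k^{(\tau,\si)}$, and $0$ otherwise.

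I do not anticipate a genuine obstacle: all of the geometric content has already been packaged into \eqref{eqn:L-delta} and Lemma \ref{lm:delta}, and what is left is the discrete Fourier identity. The steps most prone to a slip are the cyclic reindexing in (a) and, in the $\si=\si_0$ part of (b), correctly identifying which of $i_1,i_2,i_3$ is the vertex excluded by $I'_\tau$, so that the relevant structure constants $s_{ai}$ indeed vanish.
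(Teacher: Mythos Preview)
Your proposal is correct and follows exactly the approach of the paper, whose proof consists of the single sentence ``(a) follows from \eqref{eqn:L-delta}; (b) and (c) follow from Lemma \ref{lm:delta}.'' You have simply filled in the discrete Fourier details that the paper leaves implicit, including the $\si=\si_0$ observation that $i_2,i_3\in I'_{\si_0}=\{1,2,3\}$ forces $s_{ai_2}=s_{ai_3}=0$.
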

\begin{proof} (a) follows from  \eqref{eqn:L-delta}. (b) and (c) follow from Lemma \ref{lm:delta}.
\end{proof}

\subsection{B-model flat coordinates} \label{sec:B-flat}

We first introduce some notation.
\begin{itemize}
\item For any $\si\in \Si(3)$, we define
\begin{equation}\label{eqn:Up-si}
\Up_\si:= \{ a\in \{ \fp'+1,\ldots, \fp\}: (m_{3+a}, n_{3+a})\in \Int(P_\si)\}.
\end{equation}
Then $|\Up_\si|=\fg_\si$.
Define
$$
\Up_3:= \bigcup_{\si\in \Si(3)} \Up_\si.
$$

\item For any $\tau\in \Si(2)$, we define
$$
\Up_\tau=\{ a\in \{\fp'+1,\ldots, \fp\}: b_{3+a}\in \mathrm{Box}(\tau)\}.
$$
Then $|\Up_\tau| =\fm_\tau-1$. Define
$$
\Up_2:= \bigcup_{\tau\in \Si(2)} \Up_\tau.
$$
\end{itemize}
Note that $\Up_2\cap \Up_3=\emptyset$ and $\Up_2\cup \Up_3 = \{\fp'+1,\ldots, \fp\}$.

\subsubsection{Interior of a $3$-cone}
In this subsection, we define B-model flat coordinates $\check{\tau}_a$ for $a\in \Up_3$.

If $\si\in \Si(3)$ then
$$
\{ \varpi^\si_a:= \left(\lim_{q\to 0}\frac{\partial \Phi}{\partial q_a}\right)\Bigr|_{C_\si}:  a\in \Up_\si \}
$$
is a basis of $H^0(\Cbar_\si, \omega_{\Cbar_\si})$, and
$$
[\cL] \cdot \varpi^\si_a =  \exp\left(-2\pi\sqrt{-1} \age_{b_{3+a}}(\cL)\right)\varpi^\si_a
$$
for all $a\in \Up_\si$ and all $[\cL]\in \Picst(\cX)$. For each $a\in \Up_\si$, there is a unique $A_a^\si \in H_1(\Cbar_\si;\bC)$ such that
$$
[\cL]\cdot A^\si_a = \exp\left(2\pi\sqrt{-1} \age_{b_{3+a}}(\cL)\right)A^\si_a.
$$
and
$$
\frac{1}{2\pi\sqrt{-1}}\int_{A_a^\si} \varpi^\si_{a'} = \delta_{aa'}\quad \textup{ for all }a'\in \Up_\si.
$$
There exists a unique $A_a\in K_1(C_\si;\bC)\subset H_1(C_\si; \bC)$ such that $J_*(A_a)=A^\si_a$, and
$$
[\cL]\cdot A_a = \exp\left(2\pi\sqrt{-1}\age_{b_{3+a}}(\cL)\right) A_a
$$
for all $[\cL]\in \Picst(\cX)$. By Lemma \ref{lm:flat} we extend $A_a$ to a flat section of $\bK$ over $U_{\ep(\si)}$. Then
$$
[\cL] \cdot A_a = \exp\left(2\pi\sqrt{-1}\age_{b_{3+a}}(\cL)\right)A_a
$$
for all $a\in \Up_3$ and $[\cL]\in \Picst(\cX)$.

For $a\in \Up_3$, we define B-model flat coordinates
\begin{equation}\label{eqn:flat-three}
\check{\tau}_a(q):= \frac{1}{2\pi\sqrt{-1}} \int_{\tA_a} \Phi.
\end{equation}
where $\tA_a$ is a flat section of $\widetilde{\bK}$ such that $p_*\tA_a =A_a$.
The right hand side of \eqref{eqn:flat-three} is a holomorphic function in $q$, which we fix at $\check{\tau}(0)=0$ by choosing the lift $\tA_a$ (see Lemma \ref{lemm:integral-K} and Remark \ref{rem:integral-K}).
Then $\check{\tau}_a(q)$ is a solution of the non-equivariant Picard-Fuch system, and
\begin{itemize}
\item $\displaystyle{
[\cL]\cdot \check{\tau}_a(q) = \exp\left(2\pi\sqrt{-1} \age_{b_{3+a}}(\cL)\right) \check{\tau}_a(q)
}$ for all $[\cL]\in \Picst(\cX)$,
\item $\check{\tau}_a(q) = q_a + O(|q_\mathrm{orb}|^2) + O(|q_K|)$.
\end{itemize}
By Lemma \ref{lm:monodromy},
$$
\check{\tau}_a(q)= \tau_a(q).
$$

\subsubsection{Interior of a 2-cone}
In this subsection, we define B-model flat coordinates $\check{\tau}_a$ for $a\in \Up_2$.

Given any $a\in \Up_2$, we have
$$
b_{a+3}= v^{(\tau,\si)}_k
$$
for some flag $(\tau,\si)$ and some $k\in \{1,\ldots,\fm_\tau-1\}$.
Define  $A_a= A_k^{(\tau,\si)}$, where $A_k^{(\tau,\si)}$ is defined in Section \ref{sec:vanishing}. Then
$A_a$ is a flat section of $\bK$ over $U_{\ep(\si)}$.
We define
\begin{equation}\label{eqn:flat-two}
\check{\tau}_a(q):= \frac{1}{2\pi\sqrt{-1}}\int_{\tA_a}\Phi
\end{equation}
where $\tA_a$ is a flat section of $\widetilde{\bK}$ such that $p_*\tA_a =A_a$.
The right hand side of \eqref{eqn:flat-two} is a holomorphic function in $q$
defined up to addition of a constant, which we fix by requiring $\check{\tau}(0)=0$.
Then $\check{\tau}_a(q)$ is a solution of the non-equivariant Picard-Fuch system, and
\begin{itemize}
\item $\displaystyle{
[\cL]\cdot \check{\tau}_a(q) = \exp\left(2\pi\sqrt{-1} \age_{b_{3+a}}(\cL)\right) \check{\tau}_a(q)
}$ for all $[\cL]\in \Picst(\cX)$,
\item $\check{\tau}_a(q) = q_a + O(|q_\mathrm{orb}|^2) + O(|q_K|)$. 
\end{itemize}
By Lemma \ref{lm:monodromy},
$$
\check{\tau}_a(q)= \tau_a(q).
$$

\subsubsection{Coordinates mirror to K\"{a}hler parameters}
In this subsection we define B-model flat coordinates $\check{\tau}_a$ for $a\in \{1,\ldots,\fp'\}$.
Recall that $I'_{\si_0} =\{1,2,3\}$ and $I'_{\tau_0}=\{2,3\}$. Let $\tau_1$ be the 2-cone spanned by $b_3$ and $b_1$, so that
$\tau_1\subset \sigma_0$ and $I'_{\tau_1} =\{ 1, 3\}$.
For every flag $(\tau,\si)$, there exist unique  $e_0(\tau,\si),e_1(\tau,\si)\in \bQ$ such that
$$
A_0^{(\tau,\si)} - e_0(\tau,\si) A_0^{(\tau_0,\si_0)} - e_1(\tau,\si) A_0^{(\tau_1,\si_0)} \in K_1(C_q,\bQ).
$$
Denote the above cycle by $\hat{A}^{(\tau,\si)}$. Then  $\hat{A}^{(\tau,\si)}$ is a flat section of $\bK$ over $U_{\ep(\si)}$. Note that  $\hat{A}^{(\tau,\si)}$ is a section of
$\bK_{\bZ}$ when $\cX$ is smooth. We define
\begin{equation}\label{eqn:flat-one}
\check{\tau}^{(\tau,\si)}(q):= \frac{1}{2\pi\sqrt{-1}}\int_{\tA^{(\tau,\si)}} \Phi.
\end{equation}
where $\tA^{(\tau,\si)}$ is a flat section of $\widetilde{\bK}$ such that $p_*\tA^{(\tau,\si)} =\hat{A}^{(\tau,\si)}$.
Then
$$
\check{\tau}^{(\tau,\si)}(q) =  c + \sum_{a=1}^{\fp'} c^{(\tau,\si)}_a \log q_a + O(|q_\mathrm{orb}|^2) + O(|q_K|)
$$
where $c$ is a constant and $c^{(\tau,\si)}_a = s_{a i_3} -s_{a i_2} \in \bZ$. (Recall that the right hand side of \eqref{eqn:flat-one} is defined up to addition of a constant depending on the choice of $\tA^{(\tau,\si)}$, and $i_2, i_3$ are determined by
the flag $(\tau,\si)$.) There exist (non-unique) rational numbers $\hat{c}^{(\tau,\si)}_b \in \bQ$ such that
$$
\sum_{(\tau,\si)} \hat{c}^{(\tau,\si)}_a c^{(\tau,\si)}_b = \delta_{a,b}
$$
where the sum is over all flags $(\tau,\si)$. For $a\in \{1,\ldots \fp'\}$, let
$$
\tA_a = \sum_{(\tau,\si)} \hat{c}^{(\tau,\si)}_a \tA^{(\tau,\si)},\quad  A_a=p_* \tA_a,\quad \check{\tau}_a := \frac{1}{2\pi\sqrt{-1}} \int_{\tA_a} \Phi.
$$
Then  $\check{\tau}_a(q)$ is a solution of the non-equivariant Picard-Fuch system, and
\begin{itemize}
\item $[\cL]\cdot \check{\tau}_a(q) =  \check{\tau}_a(q)$ for all $[\cL]\in \Picst(\cX)$,
\item $\check{\tau}_a(q) = c_a + \log q_a + O(|q_\mathrm{orb}|^2) + O(|q_K|)$, where $c_a$ is a constant which we may choose to be zero.
\end{itemize}
By Lemma \ref{lm:monodromy},
$$
\check{\tau}_a(q)= \tau_a(q).
$$

\subsubsection{$B$-cycles}
By the Lefschetz duality, there is a perfect pairing
$$
\cap: H_1(C_q;\bC)\times H_1(\Cbar_q,D_q^\infty;\bC)\to \bC,
$$
where $\dim_\bC H_1(C_q;\bC)=\dim_\bC H_1(\Cbar_q,D_q^\infty;\bC) = 2\fg+\fn-1$.
This gives an isomorphism $H_1(\Cbar_q,D_q^\infty;\bC)\cong H_1(C_q;\bC)^\vee$.
We also have an intersection pairing
$$
\cap: H_1(\Cbar_q;\bC)\times H_1(\Cbar_q;\bC)\to \bC.
$$
This gives an isomorphism $H_1(\Cbar_q;\bC)\to H_1(\Cbar_q;\bC)^\vee$.
Under the above isomorphisms, the injective map
$H_1(\Cbar_q;\bC)\to H_1(\Cbar_q,D_q^\infty)$ can be identified with the dual
of the surjective linear map $J_*: H_1(C_q;\bC)\to H_1(\Cbar_q;\bC)$.
Let $K_1(C_q;\bC)^\perp\subset H_1(\Cbar_q,D_q^\infty;\bC)$ be the subspace of
annihilators of $K_1(C_q;\bC)$. Then $K_1(C_q;\bC)^\perp$ is 2-dimensional, and
there is a perfect pairing
$$
\cap: K_1(C_q;\bC)\times H_1(\Cbar_q,D_q^\infty;\bC)/K_1(C_q;\bC)^\perp  \longrightarrow \bC.
$$
By permuting $A_1,\ldots, A_{\fp}$ if necessary, we may choose $B_1,\ldots, B_{\fp}\in H_1(\Cbar_q,D_q;\bC)/K_1(C_q;\bC)^\perp$
such that
\begin{enumerate}
\item $\{ A_1,\ldots, A_{\fg}, B_1,\ldots, B_{\fg}\}$ is a symplectic basis of
$H_1(\Cbar_q;\bC)$, and $\{B_1,\ldots, B_{\fg}, -A_1,\ldots, -A_{\fg}\}$ is the dual basis of $H_1(\Cbar_q;\bC)$,
\item $\{ A_1,\ldots, A_{\fp}, B_1,\ldots, B_{\fg}\}$ is a basis of $K_1(C_q;\bC)$, and
$\{B_1,\ldots, B_{\fp}, -A_1,\ldots, -A_{\fg}\}$ is the dual basis of $H_1(\Cbar_q, D_q^\infty;\bC)$.
\end{enumerate}

\subsection{Differentials of the second kind}\label{sec:second-kind}
Let $B(p_1,p_2)$ be the fundamental differential of the second kind normalized by $A_1,\ldots, A_{\fg}$.
Then
$$
\frac{\partial \Phi}{\partial \tau_a}(p)= \int_{p'\in B_a} B(p,p'),\quad a=1,\ldots, \fp.
$$

Following \cite{E11, EO15}, given any $\bsi \in I_\Si$, let
$$
\zeta_{\bsi} =\sqrt{\hx-\check{u}^{\bsi}}
$$
be local holomorphic coordinate near the critical point $p_{\bsi}$. For any non-negative integers $d$,  define
$$
\theta_{\bsi}^d(p):= -(2d-1)!! 2^{-d}\Res_{p'\to p_{\bsi}}
B(p,p')\zeta_\bsi^{-2d-1}.
$$
Then $\theta_{\bsi}^d$ satisfies the following properties.
\begin{enumerate}
\item $\theta_{\bsi}^d$ is a meromorphic 1-form on $\Cbar_q$ with
a single pole of order $2d+2$ at $p_{\bsi}$.

\item In local coordinate $\zeta_{\bsi} =\sqrt{ \hat{x}-\check{u}^{\bsi}}$ near $p_{\bsi}$,
$$
\theta_{\bsi}^d = \Big( \frac{-(2d+1)!!}{2^d \zeta_\bsi^{2d+2}}
+ f(\zeta_\bsi)\Big) d\zeta_\bsi,
$$
where $f(\zeta_\bsi)$ is analytic around $p_{\bsi}$.
The residue of $\theta_{\bsi}$ at $p_{\bsi}$ is zero,
so $\theta_\bsi$ is a differential of the second kind.
\item
\[
\int_{A_i} \theta_\bsi^d=0, \ i=1,\ldots,\fg.
\]
\end{enumerate}
The meromorphic 1-form $\theta_{\bsi}^d$ is uniquely characterized by the above
properties; $\theta_{\bsi}^d$ can be viewed as a section in
$H^0(\Cbar_q, \omega_{\Cbar_q}((2d+2) p_{\bsi}))$.
At $q=0$, if $\bsi=(\si,\alpha)$ then  $\theta^d_{\bsi}(0)|_{\Cbar_{\si'}}=0$ for $\si'\neq \si$, and
\begin{enumerate}
\item $\theta_\bsi^d(0)|_{\Cbar_\si}$ is a meromorphic 1-form on $\Cbar_\si$ with
a single pole of order $2d+2$ at $p_{\si,\alpha}(0)$.

\item In local coordinate $\zeta_{\bsi} =\sqrt{ \hat{x}-\check{u}^{\bsi}}$ near $p_{\bsi}(0)\in \Cbar_\si$,
$$
\theta_{\bsi}^d(0)= \Big( \frac{-(2d+1)!!}{2^d \zeta_\bsi^{2d+2}}
+ f(\zeta_\bsi)\Big) d\zeta_\bsi,
$$
where $f(\zeta_\bsi)$ is analytic around $p_{\bsi}(0)$.
The residue of $\theta_{\bsi}$ at $p_{\bsi}$ is zero,
so $\theta_\bsi$ is a differential of the second kind on $\Cbar_\si$.
\item
$\displaystyle{\int_{A_i} \theta_\bsi^d(0)|_{\Cbar_\si}=0}$ for $i\in \Up_\si$,
where $\Up_\si$ is defined by Equation \eqref{eqn:Up-si}.
\end{enumerate}
Therefore, $\theta_{\bsi=(\sigma,\alpha)}^d|_{\Cbar_\si}$ coincides with the differential of the second kind
$\theta^\alpha_d$ on $\Cbar_\si$ (the mirror curve for the affine toric Calabi-Yau 3-orbifold $\cX_\si =[\bC^3/G_\si]$) in \cite[Section 6.6]{FLZ}.
\

\section{B-model Topological Strings}
\label{sec:b-string}

\subsection{Canonical basis in the B-model: $\theta_{\bsi}^0$ and $[V_{\bsi}(\btau)]$.}

For any Laurent polynomial
$$f\in
\bar{S}_{\bT'}[X,X^{-1},Y,Y^{-1},Z,Z^{-1}],
$$
let
$[f]\in\Jac(W^{\bT'}_q)$ be the image of $f$ in the Jacobian ring.
For $\bsi\in I_\Si$, let $V_{\bsi}(\btau)$ be a
Laurent polynomial in $X,Y,Z$ such that $V_{\bsi}(\btau)(\bp_{\bsi'})=\delta_{\bsi,\bsi'}$. The collection $\{ [V_{\bsi}(\btau)] \}_{\bsi\in I_\Sigma}$ is a canonical basis
of the semisimple Frobenius algebra $\Jac(W^{\bT'}_q)$.

Let $\check H_a=[\frac{\partial W^{\bT'}}{\partial \tau_a}]$, which correspond to $H_a$ under the isomorphism in Equation \eqref{eqn:QH=Jac}. Since $\check H_1,\dots,\check H_{\fp}$ multiplicatively generate $\Jac(W^{\bT'}_q)$,
we may choose a basis of $\Jac(W^{\bT'}_q)$ of the form
$$
1, \check H_1,\ldots, \check H_{\fp}, E_1,\ldots, E_{\fg}
$$
where $E_i =\check{H}_{a_i}\check{H}_{b_i}$ for some $a_i,b_i \in \{1,\ldots,\fp\}$.
We can write $[f]$ in the following decomposition
$$
[f]=\sum_{i=1}^{\fg} A_i(q) \check H_{a_i}\cdot \check H_{b_i}+ \sum_{a=1}^{\fp}  B_a(q) \check H_a + C(q) \one.
$$
Let $D_a f=\frac{\partial W^{\bT'}_q}{\partial \tau_a} f - z \frac{\partial f}{\partial \tau_a}$. Define the \emph{standard form} of $[f]$ to be
$$
\bar f=\sum_{i=1}^{\fg} A_i(q) D_{a_i}D_{b_i}1+\sum_{a=1}^{\fp}  B_a(q) D_a1 +C(q)=\sum_{d=0}^2 z^d f_d,$$
and define the oscillating integral of $[f]$ to be
$$
\int_{\tGa} e^{-\frac{W^{\bT'}_q}{z}}\bar f\Omega.
$$
We see that $[\bar{f}] =[f]$. Direct calculation shows the following.

\begin{lemma}\label{lm:H}
We have the following identities in the Jacobian ring $\Jac(W^\bT_q)$.
\[
\one=\sum_{\bsi\in I_\Sigma} [V_{\bsi}(\btau)],\quad\
\check H_a =-\sum_{\bsi\in I_\Sigma}B'^{\bsi}_a(q) [V_{\bsi}(\btau)],\quad\
\check H_a \cdot \check H_b =\sum_{\bsi\in I_\Sigma} C'^{\bsi}_{ab}(q) [V_{\bsi}(\btau)],
\]
in which the coefficients are
\begin{align*}
B'^{\bsi}_a(q) =&\left.\frac{\frac{\partial H}{\partial \tau_a}}{\frac{\partial H}{\partial \hat x}}\right|_{(X,Y)=(X_{\bsi}(q),Y_{\bsi}(q))},\\
C'^{\bsi}_{ab}(q) =&\left.\frac{\frac{\partial H}{\partial \tau_a}\frac{\partial H}{\partial \tau_b}}{(\frac{\partial H}{\partial \hat x})^2}\right|_{(X,Y)=(X_{\bsi}(q),Y_{\bsi}(q))}.
\end{align*}
\end{lemma}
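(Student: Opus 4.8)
The plan is to exploit the semisimplicity of the Jacobian ring. For generic $\su_1,\su_2$ and $q\in U_{\ep(f)}$ the superpotential $W^{\bT'}_q$ is holomorphic Morse, so $\Jac(W^{\bT'}_q)$ is a semisimple $\bSt$-algebra and evaluation at the critical points $\bp_{\bsi}=(X_{\bsi}(q),Y_{\bsi}(q),Z_{\bsi}(q))$, $\bsi\in I_\Si$, is an algebra isomorphism $\mathrm{ev}\colon\Jac(W^{\bT'}_q)\to\prod_{\bsi\in I_\Si}\bSt$, $[f]\mapsto(f(\bp_{\bsi}))_{\bsi}$. The defining property $V_{\bsi}(\btau)(\bp_{\bsi'})=\delta_{\bsi,\bsi'}$ says precisely that $\mathrm{ev}([V_{\bsi}(\btau)])$ is the $\bsi$-th standard idempotent, so $\{[V_{\bsi}(\btau)]\}_{\bsi\in I_\Si}$ is the idempotent basis and for any Laurent polynomial $f$ one has $[f]=\sum_{\bsi\in I_\Si}f(\bp_{\bsi})\,[V_{\bsi}(\btau)]$. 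Thus the whole lemma reduces to evaluating $\one$, $\check H_a$ and $\check H_a\cdot\check H_b$ at the $\bp_{\bsi}$, and since $\mathrm{ev}$ is multiplicative it suffices to compute $\one(\bp_{\bsi})$, $\check H_a(\bp_{\bsi})$ and $\check H_b(\bp_{\bsi})$ individually.

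The first value is $\one(\bp_{\bsi})=1$, which gives the identity for $\one$. For $\check H_a$, note that since $W^{\bT'}_q=H(X,Y,q)Z-\su_1\log X-\su_2\log Y$ and $q$ depends on $\btau$ only through $H$ via the closed mirror map, we have $\partial W^{\bT'}_q/\partial\tau_a=(\partial H/\partial\tau_a)Z$, hence $\check H_a(\bp_{\bsi})=(\partial H/\partial\tau_a)(\bp_{\bsi})\,Z_{\bsi}$. The critical-point equation $\partial W^{\bT'}_q/\partial X=Z\,\partial H/\partial X-\su_1/X=0$ forces $Z_{\bsi}\,X_{\bsi}\,(\partial H/\partial X)(\bp_{\bsi})=\su_1$, and together with the coordinate identity $\partial H/\partial\hx=-(X/\su_1)\,\partial H/\partial X$ (which follows from $\hx=\su_1 x+\su_2 y$, $\hy=y/\su_1$, $x=-\log X$) this yields the key relation $Z_{\bsi}\,(\partial H/\partial\hx)(\bp_{\bsi})=-1$ --- essentially the computation already recorded around Lemma \ref{lm:product} and Lemma \ref{lm:pairing}. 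Therefore $\check H_a(\bp_{\bsi})=-(\partial H/\partial\tau_a)/(\partial H/\partial\hx)\big|_{(X,Y)=(X_{\bsi}(q),Y_{\bsi}(q))}=-{B'}^{\bsi}_a(q)$, which is the second identity. For the third, multiplicativity of $\mathrm{ev}$ gives $(\check H_a\cdot\check H_b)(\bp_{\bsi})=\check H_a(\bp_{\bsi})\check H_b(\bp_{\bsi})={B'}^{\bsi}_a(q){B'}^{\bsi}_b(q)={C'}^{\bsi}_{ab}(q)$, and the expansion of $\check H_a\cdot\check H_b$ follows.

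There is no genuine obstacle here: this is the direct calculation referred to in the statement. The only points deserving care are (i) making precise that $\{[V_{\bsi}(\btau)]\}$ is the idempotent basis of $\Jac(W^{\bT'}_q)$, which is immediate from the holomorphic Morse hypothesis together with the description of $\Jac(W^{\bT'}_q)$ and its residue pairing in Section \ref{sec:qcoh} and Section \ref{sec:LG}; and (ii) keeping track of the factors of $\su_1$ in the relation $Z_{\bsi}(\partial H/\partial\hx)(\bp_{\bsi})=-1$ and of the $\btau$-dependence of $q$ through the mirror map, so that the coefficients come out exactly as ${B'}^{\bsi}_a$ and ${C'}^{\bsi}_{ab}$ with no spurious constants. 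Both are controlled entirely by material already established in the excerpt.
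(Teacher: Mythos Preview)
Your proof is correct and is precisely the ``direct calculation'' the paper alludes to: the paper gives no further details beyond stating that the lemma follows by direct computation, and your argument---expand in the idempotent basis via evaluation at critical points, then use the critical-point equations to rewrite $Z_{\bsi}$ as $-1/(\partial H/\partial\hx)(\bp_{\bsi})$---is exactly that computation carried out in full.
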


\begin{lemma}\label{lm:d}
\[
d(\frac{d\hat y}{d\hat x})=\sum_{\bsi \in I_\Sigma}\frac{h^{\bsi}_1 \theta^0_{\bsi}}{2},\quad \
d\left(\frac{\frac{\partial\Phi}{\partial \tau_a}}{d\hat x}\right)=\sum_{\bsi\in I_\Sigma}B'^{\bsi}_a(q) \frac{h^{\bsi}_1\theta_{\bsi}^0}{2},\quad \
\frac{\partial^2\Phi}{\partial \tau_a\partial \tau_b}=\sum_{\bsi \in I_\Sigma} C'^{\bsi}_{ab}(q)\cdot \frac{h^{\bsi}_1\theta^0_{\bsi}}{2}.
\]
\end{lemma}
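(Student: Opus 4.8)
The plan is to deduce all three identities from Lemma \ref{lm:H} by a dimensional-reduction argument, using the fact that the oscillating integral of a standard form, differentiated under the integral sign, recovers the differential-of-the-second-kind 1-forms $\theta^0_{\bsi}$ on the mirror curve. The three identities are parallel; the third contains the other two as special cases (up to derivatives of $H$), so I would first prove the identity
$$
d\!\left(\frac{d\hat y}{d\hat x}\right)=\sum_{\bsi\in I_\Sigma}\frac{h^{\bsi}_1\,\theta^0_{\bsi}}{2},
$$
and then indicate the obvious modifications that yield the other two.

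\textbf{Step 1: Reduce $\theta^0_{\bsi}$ to a residue/oscillatory-integral statement.} By definition (Section \ref{sec:second-kind}), $\theta^0_{\bsi}$ is the unique differential of the second kind on $\Cbar_q$ with a double pole at $p_{\bsi}$ in the coordinate $\zeta_{\bsi}=\sqrt{\hx-\check u^{\bsi}}$ of leading term $-d\zeta_{\bsi}/\zeta_{\bsi}^2$, and normalized so that $\int_{A_i}\theta^0_{\bsi}=0$ for $i=1,\dots,\fg$. On the other side, $\hat y$ has the local expansion $\hy=\check v^{\bsi}+h^{\bsi}_1\zeta_{\bsi}+O(\zeta_{\bsi}^2)$ around $p_{\bsi}$, where $h^{\bsi}_1=\sqrt{2/(-\det\Hess(W^{\bT'}_q)(\bp_{\bsi}))}$ by \eqref{eqn:h-Hess}. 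The plan is to show that $d(d\hy/d\hx)$, a priori a meromorphic 1-form on $\Cbar_q$, has double poles exactly at the critical points $p_{\bsi}$ with the correct principal parts, no residues, and vanishing $A$-periods, and then invoke uniqueness. The principal part computation is local: near $p_{\bsi}$, $\hx=\check u^{\bsi}+\zeta_{\bsi}^2$ so $d\hx=2\zeta_{\bsi}\,d\zeta_{\bsi}$, $d\hy=(h^{\bsi}_1+O(\zeta_{\bsi}))d\zeta_{\bsi}$, hence $d\hy/d\hx=h^{\bsi}_1/(2\zeta_{\bsi})+(\text{analytic})$, and $d(d\hy/d\hx)=-\tfrac{h^{\bsi}_1}{2}\,d\zeta_{\bsi}/\zeta_{\bsi}^2+\cdots$, matching $\tfrac{h^{\bsi}_1}{2}\theta^0_{\bsi}$ to leading order; the absence of a residue is the statement that the $\zeta_{\bsi}^{-1}$ coefficient in $d(d\hy/d\hx)$ vanishes, which follows since $d\hy/d\hx$ is single-valued near $p_{\bsi}$ (it extends meromorphically, being a ratio of holomorphic differentials). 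For the $A$-period normalization I would use that $d\hy/d\hx$ is a globally single-valued meromorphic function on $\Cbar_q$ — wait, it is not, because $\hx$ and $\hy$ involve $\log X,\log Y$; instead I would argue the normalization via the oscillating-integral characterization in Step 2, which builds in the $A$-normalization automatically through the standard form.

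\textbf{Step 2: Match the two sides via oscillating integrals.} The cleaner route, following \cite[Section 6.6]{FLZ} and Lemma \ref{lm:oscillatory}, is to write everything as an oscillating integral. By Lemma \ref{lm:H}, $\one=\sum_{\bsi}[V_{\bsi}(\btau)]$ in $\Jac(W^{\bT'}_q)$, and the standard form of $\one$ is just $\one$ itself, whereas the standard form of $[V_{\bsi}(\btau)]$ pairs, under the residue/oscillatory pairing, with the Lefschetz thimble $\Gamma_{\bsi}$ to produce (after dimensional reduction via Lemma \ref{lm:oscillatory}) the 1-form $\theta^0_{\bsi}$ up to the normalization constant $h^{\bsi}_1/2$; this is precisely the content of the local expansion $\hy=\check v^{\bsi}+h^{\bsi}_1\zeta_{\bsi}+\cdots$ together with the standard Laplace-type asymptotics identifying the $z$-expansion of $\int_{\Gamma_{\bsi}}e^{-\hx/z}\cdot(\text{1-form})$ with the derivatives of $\hy$ in $\zeta_{\bsi}$. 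Concretely, one checks $d(d\hy/d\hx)$ and $\sum_{\bsi}\tfrac{h^{\bsi}_1}{2}\theta^0_{\bsi}$ have equal periods against the basis $\{A_i,B_i,\delta_j\}$ of $H_1(C_q;\bC)$: the $A_i$-periods of both sides are $0$ (for the left side because $d\hy/d\hx$, restricted to a small disk around each $A_i$-avoiding region, is single-valued so its $d$ has vanishing period; more honestly, because the left side equals the right side near $q=0$ where one can compute directly on each $\Cbar_\si$ using the affine computation of \cite[Section 6.6]{FLZ}, and both sides are holomorphic in $q$); the $\delta_j$-periods vanish since $\theta^0_{\bsi}$ and $d(\text{meromorphic function})$ have no residues; and the $B_i$-periods agree by the reciprocity law relating periods of differentials of the second kind to the local data at their poles. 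I would run the $q\to 0$ degeneration (Section \ref{sec:toric-degeneration}, Proposition \ref{prop:untwisted-limit}, and the last paragraph of Section \ref{sec:second-kind} identifying $\theta^d_{(\si,\alpha)}|_{\Cbar_\si}$ with $\theta^\alpha_d$) to reduce the period comparison to the already-established affine case in \cite{FLZ}, then invoke analyticity in $q$ on $U_\delta$ to conclude the identity holds for all small $q$.

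\textbf{Step 3: The other two identities.} For $d(\partial_{\tau_a}\Phi/d\hx)$, I replace the role of $d\hy$ by $d(\partial_{\tau_a}\Phi/d\hx)$; the relevant local data is now the expansion of $\partial H/\partial\tau_a$ over $\partial H/\partial\hx$ at the critical point, which by Lemma \ref{lm:H} equals $-B'^{\bsi}_a(q)$ — this produces the factor $B'^{\bsi}_a(q)$ in front of $h^{\bsi}_1\theta^0_{\bsi}/2$. For $\partial^2\Phi/\partial\tau_a\partial\tau_b$, note $\partial\Phi/\partial\tau_a=\int_{B_a}B(\cdot,\cdot)$ from Section \ref{sec:second-kind}, so $\partial^2\Phi/\partial\tau_a\partial\tau_b$ is already a differential of the second kind with $A$-periods zero; its poles and principal parts at the $p_{\bsi}$ are governed by the second-order Taylor data of $W^{\bT'}_q$, which by Lemma \ref{lm:H} yields the coefficients $C'^{\bsi}_{ab}(q)=\big(\partial_{\tau_a}H\,\partial_{\tau_b}H/(\partial_{\hx}H)^2\big)\big|_{\bp_{\bsi}}$. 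In each case the argument is: compute local principal part at each $p_{\bsi}$, observe vanishing residues and vanishing $A$-periods, invoke uniqueness of $\theta^0_{\bsi}$.

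\textbf{Main obstacle.} The hard part will be the $A$-period normalization — verifying that the 1-forms on the left-hand sides really have vanishing $A_i$-periods for $i=1,\dots,\fg$. Unlike the residue and principal-part checks, which are local, this is a global statement. My plan to circumvent it is the degeneration-plus-analyticity argument of Step 2: at $q=0$ the compactified mirror curve splits into the $\Cbar_\si$, on each of which the identity is the affine statement proved in \cite[Section 6.6]{FLZ} (where the normalization was established), and since both sides of each identity are holomorphic in $q$ on $U_\delta$ and the normalizing cycles $A_i$ extend flatly (Lemma \ref{lm:flat}, Section \ref{sec:vanishing}), the equality propagates to all small $q$. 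Making the bookkeeping of which $A_i$ lives on which component precise — using $\Up_\si$ for $i\in\Up_3$ and the vanishing-cycle/loop descriptions for $i\in\Up_2$ and for the K\"ahler directions — is the one genuinely delicate piece, but it is entirely parallel to the flat-coordinate construction already carried out in Section \ref{sec:B-flat}.
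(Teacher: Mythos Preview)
Your overall strategy---compute the principal part of the left-hand side at each $p_{\bsi}$, match it with $\tfrac{h^{\bsi}_1}{2}\theta^0_{\bsi}$, verify vanishing $A_i$-periods, and invoke the uniqueness characterization of $\theta^0_{\bsi}$---is correct and is precisely the argument of \cite[Propositions 6.3 and 6.4]{FLZ} that the paper cites.  Your local principal-part computation in Step~1 is fine.

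However, your ``main obstacle'' is illusory, and your Step~2 detour through degeneration and analyticity in $q$ is unnecessary.  Although $\hx$ and $\hy$ are multivalued, the 1-forms
\[
d\hx=-\su_1\frac{dX}{X}-\su_2\frac{dY}{Y},\qquad d\hy=-\frac{1}{\su_1}\frac{dY}{Y}
\]
are globally single-valued on $C_q$ and extend to meromorphic 1-forms on $\Cbar_q$.  Hence their ratio $d\hy/d\hx$ is a global meromorphic \emph{function} on $\Cbar_q$, and $d(d\hy/d\hx)$ is exact; its period over every closed cycle, in particular each $A_i$, vanishes automatically.  The same remark applies to the second identity, since $\frac{\partial\Phi}{\partial\tau_a}$ is the single-valued 1-form described in Section~\ref{sec:first-third}, so $\frac{\partial\Phi/\partial\tau_a}{d\hx}$ is again a meromorphic function on $\Cbar_q$.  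For the third identity the $A_i$-period of $\frac{\partial^2\Phi}{\partial\tau_a\partial\tau_b}$ vanishes because, by the definition of the B-model flat coordinates in Section~\ref{sec:B-flat}, $\frac{1}{2\pi\sqrt{-1}}\int_{A_i}\Phi=\tau_i+\text{const}$, so its second $\tau$-derivative is zero.  You retracted the single-valuedness observation too quickly; once you keep it, there is no global obstruction and no need to reduce to the affine case.

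The one check you omit is that the left-hand sides have no poles on $\Cbar_q$ away from the $p_{\bsi}$ (so that after subtracting $\sum_{\bsi}\tfrac{h^{\bsi}_1}{2}\theta^0_{\bsi}$ one is left with a holomorphic 1-form on $\Cbar_q$).  This is a routine order-of-vanishing count for $d\hx$, $d\hy$, and $\frac{\partial\Phi}{\partial\tau_a}$ at the punctures $\bar p_j$, carried out exactly as in \cite{FLZ}.
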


\begin{proof}
See the proofs of Proposition 6.4 and Proposition 6.5 of \cite{FLZ}.
\end{proof}

By Lemma \ref{lm:H} and Lemma \ref{lm:d}, we conclude this subsection with the following proposition.
\begin{proposition}
\label{prop:b-canonical}
If
$$
[V_{\bsi}(\btau)]=\sum_{i=1}^{\fg} A^i_{\bsi}(q) \check H_{a_i}\cdot \check H_{b_i} - \sum_{a=1}^{\fp} B^a_{\bsi}(q) \check H_a + C_\bsi(q) \one,
$$
then
$$
\frac{h^{\bsi}_1}{2} \theta_{\bsi}^0=\sum_{i=1}^{\fg} A^i_{\bsi}(q) \frac{\partial^2 \Phi}{\partial \tau_{a_i}\partial \tau_{b_i}}+
\sum_{a=1}^{\fp} B^a_{\bsi}(q)d(\frac{\frac{\partial \Phi}{\partial \tau_a}}{d\hat x}) + C_\bsi(q) d(\frac{d\hat y}{d\hat x}).
$$
\end{proposition}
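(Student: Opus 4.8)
The plan is to deduce this proposition formally from Lemma \ref{lm:H} and Lemma \ref{lm:d}, which together say that the change of basis on the algebra side is governed by exactly the same coefficient functions $B'^{\bsi}_a(q)$ and $C'^{\bsi}_{ab}(q)$ as the change of basis on the one‑form side. Concretely, first I would introduce the $\bar{S}_{\bT'}$‑linear map $L$ from $\Jac(W^{\bT'}_q)$ to the $\bar{S}_{\bT'}$‑module of differentials of the second kind on $\Cbar_q$, determined on the canonical basis by $L([V_{\bsi}(\btau)]):=\tfrac12 h_1^{\bsi}\theta^0_{\bsi}$. This is well defined precisely because $\{[V_{\bsi}(\btau)]\}_{\bsi\in I_\Sigma}$ is a basis of the free module $\Jac(W^{\bT'}_q)$; in fact, since the $\theta^0_{\bsi}$ have double poles at the distinct points $p_{\bsi}$ and the $h_1^{\bsi}$ are units, $L$ is an isomorphism onto the span of $\{\theta^0_{\bsi}\}_{\bsi\in I_\Sigma}$, though only the linearity of $L$ is needed below.

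Next I would evaluate $L$ on the distinguished basis $\{\one,\check{H}_1,\dots,\check{H}_{\fp},\check{H}_{a_1}\check{H}_{b_1},\dots,\check{H}_{a_{\fg}}\check{H}_{b_{\fg}}\}$ of $\Jac(W^{\bT'}_q)$. Applying $L$ to the three identities of Lemma \ref{lm:H} and then reading off the right‑hand sides via the three identities of Lemma \ref{lm:d} yields $L(\one)=d(d\hy/d\hx)$, $L(\check{H}_a)=-\,d\big((\partial_{\tau_a}\Phi)/d\hx\big)$, and $L(\check{H}_a\cdot\check{H}_b)=\partial^2\Phi/\partial\tau_a\partial\tau_b$. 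The sign in the middle identity is forced: in Lemma \ref{lm:H} the expansion $\check{H}_a=-\sum_{\bsi}B'^{\bsi}_a(q)[V_{\bsi}(\btau)]$ carries a minus sign, whereas in Lemma \ref{lm:d} the expansion $d\big((\partial_{\tau_a}\Phi)/d\hx\big)=\sum_{\bsi}B'^{\bsi}_a(q)\tfrac12 h_1^{\bsi}\theta^0_{\bsi}$ does not, so $L(\check{H}_a)$ picks up the minus.

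Finally I would apply $L$ to the assumed expansion $[V_{\bsi}(\btau)]=\sum_i A^i_{\bsi}(q)\check{H}_{a_i}\check{H}_{b_i}-\sum_a B^a_{\bsi}(q)\check{H}_a+C_{\bsi}(q)\one$ and use $\bar{S}_{\bT'}$‑linearity together with the three evaluations above; the explicit minus sign in front of the $B^a_{\bsi}$ term cancels the minus sign in $L(\check{H}_a)$, producing precisely $\tfrac12 h_1^{\bsi}\theta^0_{\bsi}=\sum_i A^i_{\bsi}(q)\,\partial^2\Phi/\partial\tau_{a_i}\partial\tau_{b_i}+\sum_a B^a_{\bsi}(q)\,d\big((\partial_{\tau_a}\Phi)/d\hx\big)+C_{\bsi}(q)\,d(d\hy/d\hx)$. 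I do not anticipate a real obstacle: all of the analytic content is already packaged in Lemmas \ref{lm:H} and \ref{lm:d} (which themselves rest on the residue computations imported from \cite{FLZ}), and the only steps that require attention are checking that $L$ is well defined and keeping the single sign discrepancy between the two lemmas straight. If any point is delicate, it is merely confirming that $A^i_{\bsi}$, $B^a_{\bsi}$, $C_{\bsi}$ are to be read as the entries of the inverse of the transition matrix of Lemma \ref{lm:H} restricted to the sub‑basis $\{\one,\check{H}_a,\check{H}_{a_i}\check{H}_{b_i}\}$, which is exactly how the hypothesis of the proposition is phrased.
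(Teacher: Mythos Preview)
Your proposal is correct and is essentially the same approach as the paper's: the paper simply states that the proposition follows from Lemma~\ref{lm:H} and Lemma~\ref{lm:d}, and your introduction of the linear map $L:[V_{\bsi}(\btau)]\mapsto \tfrac{1}{2}h_1^{\bsi}\theta^0_{\bsi}$ is just a clean way to make that deduction explicit. Your sign bookkeeping is accurate, and nothing further is needed.
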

If the genus $\fg$ of the compactified mirror curve $\overline{C}_q$ is zero then $\theta_{\bsi}^0$, or more generally any
differential of the second kind on $\overline{C}_q$, is exact.

\subsection{Oscillating integrals and the B-model $R$-matrix}
Let $V_{\bsi}=V_{\bsi}(0)$ be the flat basis in $\Jac(W^{\bT'}_q)$ such that when $q=0$, $V_{\bsi}(\bp_{\bsi'})=\delta_{\bsi, \bsi'}$. Then define
\begin{align}
\label{eqn:B-S}
\check{S}_{\bsi'}^{\spa {\tbsi}}(z) = (2\pi z)^{-\frac{3}{2}}\int_{\tGa_{\bsi}} e^{-\frac{W_q^{\bT'}}{z}} \overline V_{\bsi'}\Omega,\qquad
\check{S}_{\hat \bsi'}^{\spa {\tbsi}}(z) = (2\pi z)^{-\frac{3}{2}}\sqrt{\Delta^{\bsi'}}\int_{\tGa_{\bsi}} e^{-\frac{W^{\bT'}_q}{z}} \overline V_{\bsi'}\Omega.
\end{align}
\begin{remark}
We use the index $\tbsi$ with tilde to indicate that the integral $\check S_{\bsi'}^{\spa \tbsi}$ is not equal to $S_{\bsi'}^{\spa \bsi}$ in which one inserts the classical canonical basis $\phi_\bsi$ (Equation \eqref{eqn:S}) -- the insertion needs modification by a characteristic class involving the Gamma function in \cite{Iri} to make it equal to $\check S_{\bsi'}^{\spa \tbsi}$. We omit the proof of this fact since it is not directly related to the proof of the Remodeling Conjecture.
\end{remark}
The matrix $\check{S}$ plays the role of the fundamental solution of the B-model
quantum differential equation. The matrix $\check S_{\hat\bsi'}^{\spa
  \tbsi}(z)$ has the following asymptotic expansion.
\begin{proposition}
  \label{prop:check-S}
$$
\check S_{\hat \bsi'}^{\spa \tbsi}(z)\sim\sum_{\bsi''\in I_\Si} \Psi_{\bsi'}^{\spa \bsi''}\check R_{\bsi''}^{\spa \bsi}(z) e^{-\frac{\check u^\bsi}{z}},
$$
where $\Psi_{\bsi'}^{\ \bsi''}$ is the matrix such that $\hat{\phi}_{\bsi'}=\sum_{\bsi''}\Psi_{\bsi'}^{\ \bsi''} \hat{\phi}_{\bsi''}(\btau)$ as in
Equation \eqref{eqn:Psi-matrix}, and $\check u^\bsi$ is the critical
value of $W^{\bT'}_q$ at $\bp_\bsi$. Furthermore, the matrix $\check R_{\bsi''}^{\spa \bsi}=\delta_{\bsi''}^{\spa \bsi}+O(z)$.
\end{proposition}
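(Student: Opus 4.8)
The plan is to evaluate the oscillating integrals \eqref{eqn:B-S} by the classical steepest--descent method along the Lefschetz thimbles $\tGa_\bsi$, after first reducing them to one--dimensional oscillating integrals on the mirror curve $C_q$. This follows the strategy of \cite{E11,EO15} in the smooth toric case and of \cite[Sections 6--7]{FLZ} in the affine orbifold case, the present statement being the global orbifold analogue.

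\textbf{Step 1: dimensional reduction.} We are in the locus where $W^{\bT'}_q$ is holomorphic Morse and $q\in U_\delta$, so Lemma \ref{lm:oscillatory} applies and gives $\int_{\tGa_\bsi} e^{-W^{\bT'}_q/z}\Omega = 2\pi\sqrt{-1}\int_{\Gamma_\bsi} e^{-\hx/z}\,\hy\,d\hx$. Since the insertion $\overline V_{\bsi'}$ is the standard form of a Jacobian-ring element --- a polynomial of degree at most $2$ in $z$ built by applying the operators $D_a$ to $1$ --- and since $\tfrac{\partial}{\partial\tau_a}\bigl(e^{-W^{\bT'}_q/z}f\bigr) = -\tfrac1z e^{-W^{\bT'}_q/z}D_a f$, the same reduction applied termwise, together with Lemma \ref{lm:H}, Lemma \ref{lm:d} and Proposition \ref{prop:b-canonical}, rewrites $\int_{\tGa_\bsi} e^{-W^{\bT'}_q/z}\overline V_{\bsi'}\,\Omega$ as $2\pi\sqrt{-1}$ times a one--dimensional oscillating integral $\int_{\Gamma_\bsi} e^{-\hx/z}\,\eta_{\bsi'}$, where $\eta_{\bsi'}$ represents, modulo exact differentials and up to the $z$--powers carried by the standard form, the meromorphic one--form $\tfrac{h_1^{\bsi'}}{2}\theta^0_{\bsi'}$ of Proposition \ref{prop:b-canonical}. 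The key algebraic input is the relation, following from \eqref{eqn:Delta-Hess} and \eqref{eqn:h-Hess} under the closed mirror map,
\begin{equation*}
\bigl(\sqrt{\Delta^{\bsi'}(\btau)}\; h_1^{\bsi'}\bigr)^2 = -2 ,
\end{equation*}
so that $\sqrt{\Delta^{\bsi'}(\btau)}\,h_1^{\bsi'}$ is a constant independent of $\bsi'$. Moreover, expressing the flat class $V_{\bsi'}(0)$ occurring in \eqref{eqn:B-S} in terms of the $q$--dependent canonical idempotents $V_{\bsi''}(\btau)$ of $\Jac(W^{\bT'}_q)$ introduces exactly the transition matrix $\Psi$ of \eqref{eqn:Psi-phi}--\eqref{eqn:Psi-matrix}: under the Frobenius--algebra isomorphism \eqref{eqn:QH=Jac}, both $\{V_{\bsi''}(\btau)\}$ and the A--model normalized canonical basis $\{\hat\phi_{\bsi''}(\btau)\}$ are the canonical idempotents of the same semisimple algebra, while $\{V_{\bsi'}(0)\}$ and $\{\hat\phi_{\bsi'}\}$ are their respective large--radius limits.

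\textbf{Step 2: stationary phase.} Next I would run steepest descent on each $\int_{\Gamma_\bsi} e^{-\hx/z}\theta^0_{\bsi''}$. Using the Morse coordinate $\zeta_\bsi=\sqrt{\hx-\check u^\bsi}$ one gets $e^{-\check u^\bsi/z}$ times a Gaussian integral of the Laurent expansion of $\theta^0_{\bsi''}$ in $\zeta_\bsi$, whose moments $\int \zeta_\bsi^{2k} e^{-\zeta_\bsi^2/z}\,d\zeta_\bsi$ produce the asymptotic series in $z$. For $\bsi''\neq\bsi$ the form $\theta^0_{\bsi''}$ is holomorphic at $p_\bsi$, so its contribution is suppressed by a positive power of $z$ relative to the diagonal; for $\bsi''=\bsi$ the order--two pole $-\zeta_\bsi^{-2}d\zeta_\bsi$ of $\theta^0_\bsi$ is handled by integration by parts along the thimble, reducing it to $\tfrac{2}{z}\int_{\Gamma_\bsi} e^{-\zeta_\bsi^2/z}\,d\zeta_\bsi$. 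Assembling the normalization $(2\pi z)^{-3/2}$ from \eqref{eqn:B-S}, the factor $2\pi\sqrt{-1}$ from Lemma \ref{lm:oscillatory}, the $z$--powers from the standard form, and the constant $\sqrt{\Delta^{\bsi'}(\btau)}\,h_1^{\bsi'}$ from Step 1, the diagonal leading coefficient collapses to $1$, and one obtains $\check S_{\hat\bsi'}^{\spa\tbsi}(z)\sim \sum_{\bsi''\in I_\Si}\Psi_{\bsi'}^{\spa\bsi''}\check R_{\bsi''}^{\spa\bsi}(z)\,e^{-\check u^\bsi/z}$ with $\check R_{\bsi''}^{\spa\bsi}(z)=\delta_{\bsi''}^{\spa\bsi}+O(z)$; the entries of $\check R$ are genuine power series in $z$ because, as in \cite{E11,EO15}, the steepest--descent expansion of an oscillating integral along a descending cycle has this form.

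\textbf{Main obstacle.} The hard part is analytic bookkeeping rather than a new idea: one must justify carefully that the integral admits the steepest--descent expansion along the $\tGa_\bsi$ (legitimate since $W^{\bT'}_q$ is holomorphic Morse with the $\Gamma_\bsi$ a basis of the relative homology of descending cycles for $q\in U_\delta$), and --- the delicate point --- track every normalization constant through the dimensional reduction, the $z$--powers of the standard form, and the regularization of the pole of $\theta^0_\bsi$, so that the constant term of $\check R$ comes out to be exactly the identity matrix and not merely a diagonal one. A secondary subtlety is confirming that the change of basis from $\{V_{\bsi'}(0)\}$ to $\{V_{\bsi''}(\btau)\}$ is precisely the A--model matrix $\Psi$, which amounts to matching the $q\to 0$ limit on the B--model side with the A--model large--radius limit $\tQ,\tau''\to 0$ under the closed mirror map; once this is settled, the resulting comparison with the A--model $R$--matrix of Theorem \ref{R-matrix} is what will be used in Section \ref{sec:mirror-symmetry}.
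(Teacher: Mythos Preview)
Your approach is correct and ultimately equivalent, but it takes a considerably longer detour than the paper. The paper's proof stays entirely in three dimensions and never invokes Lemma \ref{lm:oscillatory}, Lemma \ref{lm:d}, or Proposition \ref{prop:b-canonical} at this stage. It simply writes the change of basis $\sqrt{\Delta^{\bsi'}}\,\overline{V_{\bsi'}}=\sum_{\bsi''}\Psi_{\bsi'}^{\spa\bsi''}\sqrt{\Delta^{\bsi''}(\btau)}\,\overline{V_{\bsi''}(\btau)}$ coming from \eqref{eqn:QH=Jac} and \eqref{eqn:Psi-phi}, and then applies the ordinary three--variable stationary phase expansion to each $\int_{\tGa_\bsi}e^{-W^{\bT'}_q/z}\overline{V_{\bsi''}(\btau)}\,\Omega$. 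The leading Gaussian factor $(2\pi z)^{3/2}/\sqrt{\det\Hess(W^{\bT'}_q)(\bp_\bsi)}$ cancels against $(2\pi z)^{-3/2}\sqrt{\Delta^{\bsi''}(\btau)}$ by \eqref{eqn:Delta-Hess}, and $V_{\bsi''}(\btau)(\bp_\bsi)=\delta_{\bsi'',\bsi}$ gives the identity constant term of $\check R$. All of the bookkeeping you flag as the ``main obstacle'' evaporates in this direct 3D argument.

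What you outline --- the dimensional reduction to $\int_{\Gamma_\bsi}e^{-\hx/z}\theta^0_{\bsi''}$ via Proposition \ref{prop:b-canonical} and the 1D steepest descent --- is precisely what the paper carries out \emph{after} Proposition \ref{prop:check-S}, in order to identify $\check R_{\bsi'}^{\spa\bsi}(z)$ with the Laplace transform $f_{\bsi'}^{\spa\bsi}(1/z)$ on the mirror curve. So your route effectively merges the proof of the proposition with the subsequent curve--theoretic identification; this is legitimate, and it buys you a concrete formula for $\check R$ at the cost of the normalization tracking. The paper's separation of the two steps keeps the proof of the proposition to a few lines.
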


\begin{proof}
Under the isomorphism \eqref{eqn:QH=Jac},
$\sqrt{\Delta^{\bsi'}}\cdot \overline{V_{\bsi'}}=\sum_{\bsi''\in I_\Si}\Psi_{\bsi'}^{\spa \bsi''} \sqrt{\Delta^{\bsi''}(\btau)}\cdot \overline{V_{\bsi''}(\btau)}$. By stationary phase expansion,

\begin{align}
  \label{eqn:stationary}
\int_{\tGa_\bsi} e^{-\frac{W^{\bT'}_q}{z}}\overline{V_{\bsi''}(\btau)}\sim\frac{(2\pi z)^{ \frac{3}{2} } }{\sqrt{\det\Hess(W^{\bT'}_q)(\bp_{\bsi''})}}(\delta_{\bsi''}^{\spa \bsi}+O(z)).
\end{align}

Then
\begin{align}
\nonumber
\check S_{\hat \bsi'}^{\spa \tbsi}(z)&=(2\pi z)^{-\frac{3}{2}}\sum_{\bsi''\in I_\Si} \Psi_{\bsi'}^{\spa \bsi''}\sqrt{\Delta^{\bsi''}}\cdot \int_{\tGa_{\bsi}} e^{-\frac{W^{\bT'}_q}{z}} \overline {V_{\bsi''}(\btau)}\Omega\\
&\sim  \sum_{\bsi''\in I_\Si} \Psi_{\bsi'}^{\spa \bsi''}\cdot
e^{-\frac{{\check u}^\bsi}{z}} (\delta_{\bsi''}^{\spa \bsi}+O(z)). \label{eqn:R-expansion-2}
\end{align}
Here we use the fact that $\Delta^{\bsi''}(\btau)=\det\Hess(W^{\bT'}_q)(\bp_{\bsi''})$ (Equation \eqref{eqn:Delta-Hess}). So the matrix $\check S$ can be asymptotically expanded in the desired form, and the matrix $\check R$ is
$$
\check R_{\bsi'}^{\spa \bsi}(z)=\sum_{k=0}^\infty (\check R_k)_{\bsi'}^{\spa \bsi} z^k=\delta_{\bsi'}^{\spa \bsi} + O(z).
$$
\end{proof}

Given any $\bsi\in I_\Si$, define $\htheta^0_\bsi=\theta^0_\bsi$. For any positive integer $k$, define
$$
\hat \xi^k_{\bsi}: =(-1)^k(\frac{d}{d\hat x})^{k-1} \frac{\theta_{\bsi}^0}{d\hat x},\quad
\htheta^k_\bsi: =d\hxi^k_\bsi.
$$
Notice that $\hxi^k_\bsi$ is a meromorphic function on $\overline C_q$.   Let
\begin{equation}\label{eqn:theta-z}
\theta_\bsi(z):=\sum_{k=0}^\infty \theta_\bsi^k z^k,\quad \htheta_\bsi(z):=\sum_{k=0}^\infty \htheta^k_\bsi z^k.
\end{equation}
Following Eynard-Orantin \cite{EO15}, let
$$
f_{\bsi'}^{\spa \bsi}(u) =
\frac{e^{u\check{u}^{\bsi}}}{2\sqrt{\pi u}}
\int_{\Gamma_{\bsi}} e^{-u\hat{x}}\theta^0_{\bsi'}.
$$
Assume
$$
[V_{\bsi'}(\btau)]=\sum_{i=1}^{\fg} A^i_{\bsi'}(\btau)\check H_{a_i}\cdot \check H_{b_i}-
\sum_{a=1}^{\fp}  B^a_{\bsi'}(\btau) \check H_b+C_{\bsi'}(\btau) \one,
$$
then by Proposition \ref{prop:b-canonical}, we have
\begin{align*}
\int_{\tGa_\bsi} e^{-\frac{W^{\bT'}_q}{z}} \overline{V_{\bsi'}(\btau)}\Omega
=& (z^2  \sum_{i=1}^{\fg}A^i_{\bsi'} (\btau) \frac{\partial^2}{\partial \tau_{a_i}\partial \tau_{b_i}}
+ z\sum_{a=1}^{\fp}  B^a_{\bsi'}(\btau) \frac{\partial}{\partial \tau_a}+C_{\bsi'}(\btau))\int_{\tGa_\bsi} e^{-\frac{W^{\bT'}_q}{z}} \Omega \\
=& (z^2  \sum_{i=1}^{\fg} A^i_{\bsi'} (\btau) \frac{\partial^2}{\partial \tau_{a_i}\partial \tau_{b_i}}
+ z\sum_{a=1}^{\fp} B^a_{\bsi'}(\btau) \frac{\partial}{\partial \tau_a}+C_{\bsi'}(\btau)) 2\pi \sqrt{-1}\int_{\Gamma_\bsi}e^{-\frac{\hat x}{z}}\Phi\\
=& 2\pi \sqrt{-1}z^2\int_{\Gamma_\bsi}e^{-\frac{\hat x}{z}} (\sum_{i=1}^{\fg}A^i_{\bsi'} (\btau) \frac{\partial^2\Phi}{\partial \tau_{a_i}\partial \tau_{b_i}}
+ \sum_{a=1}^{\fp}B^a_{\bsi'}(\btau) d(\frac{\partial \hat y}{\partial \tau_a})+C_{\bsi'}d (\frac{d \hat y}{d\hat x})) \\
=& 2\pi\sqrt{-1} z^2 \int_{\Gamma_\bsi} e^{-\frac{\hat x}{z}}\frac{h_1^{\bsi'} \theta_{\bsi'}^0}{2}=2\sqrt{-1}(\pi z)^{\frac{3}{2}}e^{-\frac{\check{u}^\bsi}{z}} h_1^{\bsi'}f_{\bsi'}^{\spa \bsi}(\frac{1}{z}).
\end{align*}
From Equation \eqref{eqn:stationary}
$$
\int_{\tGa_\bsi} e^{-\frac{W^{\bT'}_q}{z}} \overline{V_{\bsi'}(\btau)}\Omega\sim \frac{(2\pi z)^\frac{3}{2}e^{-\frac{\check u^{\bsi}}{z}}}{\sqrt{\det\Hess({W^{\bT'}_q})(\bp_{\bsi'})}} \check R^{\spa \bsi}_{\bsi'}(z),
$$
and by
$h_1^{\bsi'}=\sqrt{\frac{2}{-\det\Hess(W^{\bT'}_q)(\bp_{\bsi'})}}$ (Equation \eqref{eqn:h-Hess}), it is
easy to see
$$
\check{R}_{\bsi'}^{\spa \bsi}(z) = f_{\bsi'}^{\spa \bsi}(\frac{1}{z}).
$$

Following Eynard \cite{E11}, define Laplace transform of the Bergman kernel
\begin{equation}
\check{B}^{\bsi,\bsi'}(u,v,q) :=\frac{uv}{u+v} \delta_{\bsi,\bsi'}
+ \frac{\sqrt{uv}}{2\pi} e^{u\check u^\bsi + v \check u^{\bsi'}}\int_{p_1\in \Gamma_\bsi}\int_{p_2\in \Gamma_{\bsi'}}
B(p_1,p_2) e^{-ux(p_1) -v x(p_2)},
\end{equation}
where $\bsi,\bsi'\in I_\Si$. By \cite[Equation (B.9)]{E11},
\begin{equation}\label{eqn:B-ff}
(u+v)\check{B}^{\bsi,\bsi'}(u,v,q) = uv(\delta_{\bsi,\bsi'}
-\sum_{\bsi''\in I_\Si} \check R^{\spa \bsi}_{\bsi''}(\frac{1}{u})\check R^{\spa \bsi'}_{\bsi''}(\frac{1}{v})).
\end{equation}
Setting $u=-v$, we conclude that $(\check R^*(\frac{1}{u}) \check R(-\frac{1}{u}))^{\bsi\bsi'}=\{\sum_{\bsi''\in I_\Si} \check R^{\spa \bsi}_{\bsi''}(\frac{1}{u})\check R^{\spa \bsi'}_{\bsi''}(-\frac{1}{u})\}=\delta^{\bsi\bsi'}$. This shows $\check R$ is unitary.

The following proposition is a consequence of Lemma 6.5 in \cite{FLZ} and Equation (\ref{eqn:B-ff}).
\begin{proposition}
\label{prop:b-open-leaf}
$$
\theta_\bsi(z)=\sum_{\bsi'\in I_\Si}\check R_{\bsi'}^{\spa \bsi} (z) \hat \theta_{\bsi'}(z).
$$
\end{proposition}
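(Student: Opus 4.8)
The plan is to prove the identity term by term in $z$, as an equality of meromorphic $1$-forms on $\overline C_q$, by comparing Laplace transforms along the Lefschetz thimbles $\Gamma_{\bsi''}$. First I would check that both sides are formal power series in $z$ valued in the space $\cV$ of differentials of the second kind on $\overline C_q$ with vanishing $A_1,\dots,A_{\fg}$-periods and with poles only at the critical points $\{p_{\bsi''}:\bsi''\in I_\Si\}$ of $\hat x$: on the left this is built into the defining properties of $\theta^k_\bsi$ listed in Section \ref{sec:second-kind}; on the right, each $\htheta^k_{\bsi'}=d\hxi^k_{\bsi'}$ with $k\ge 1$ is exact, hence has vanishing periods over every cycle, and $\hxi^k_{\bsi'}=(-1)^k(d/d\hat x)^{k-1}(\theta^0_{\bsi'}/d\hat x)$ is a meromorphic function whose poles, created by the repeated differentiation $d/d\hat x$, sit over the zeros of $d\hat x$, i.e. at the critical points, while it stays holomorphic at the punctures; the $z^0$-terms agree since $\check R_0=\one$ and $\htheta^0_\bsi=\theta^0_\bsi$ by definition. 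An element of $\cV$ is determined by its principal parts at the critical points, because a second-kind differential with vanishing $A$-periods and no poles is zero; and since $\{\Gamma_{\bsi''}\}_{\bsi''\in I_\Si}$ is a basis of $H_1(C_q,\{\hat x\gg 0\})$ and each $\Gamma_{\bsi''}$ emanates from $p_{\bsi''}$, the family of Laplace pairings $\omega\mapsto \frac{\sqrt u}{2\sqrt\pi}e^{u\check u^{\bsi''}}\int_{\Gamma_{\bsi''}}e^{-u\hat x}\omega$ recovers all of these principal parts and is therefore jointly injective on $\cV$. So it suffices to match the Laplace transform of the two sides along every $\Gamma_{\bsi''}$.

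For the left-hand side I would use the residue presentation $\theta^d_\bsi(p)=(2d-1)!!\,2^{-d}\Res_{p'\to p_\bsi}B(p,p')\,\zeta_\bsi(p')^{-2d-1}$: summing the resulting series in $z$ writes $\theta_\bsi(z)(p)$ as the residue at $p_\bsi$ of $B(p,\,\cdot\,)$ against an explicit kernel depending only on $\hat x(\cdot)-\check u^\bsi$. Applying the Laplace transform in $p$ along $\Gamma_{\bsi''}$ turns the inner integral $\int_{p\in\Gamma_{\bsi''}}e^{-u\hat x(p)}B(p,p')$ into the building block of the Laplace-transformed Bergman kernel $\check B^{\bsi'',\bsi}(u,v,q)$ of Section \ref{sec:b-string}, and since the residue at $p_\bsi$ against powers of $\zeta_\bsi$ is exactly the data encoded in the stationary-phase expansion of $\int_{\Gamma_\bsi}e^{-v\hat x}(\,\cdot\,)$, Equation \eqref{eqn:B-ff} converts this, modulo the universal $\frac{uv}{u+v}\delta_{\bsi'',\bsi}$ term, into $\sum_{\bsi'}\check R^{\spa\bsi''}_{\bsi'}(1/u)\,\check R^{\spa\bsi}_{\bsi'}(1/v)$. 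Reading off the coefficients in $v$ then leaves $\sum_{\bsi'}\check R^{\spa\bsi}_{\bsi'}(z)$ multiplied by a kernel $\mathcal K_{\bsi',\bsi''}(u)$ that depends only on the local geometry near $p_{\bsi'}$ and on the thimble $\Gamma_{\bsi''}$.

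For the right-hand side I would compute $\int_{\Gamma_{\bsi''}}e^{-u\hat x}\htheta_{\bsi'}(z)$ directly from the defining formula $\hxi^k_{\bsi'}=(-1)^k(d/d\hat x)^{k-1}(\theta^0_{\bsi'}/d\hat x)$, using integration by parts against $e^{-u\hat x}$; this is precisely the local computation carried out in Lemma 6.5 of \cite{FLZ}, whose proof uses only the structure of $\overline C_q$ near the critical points together with the Laplace transforms $\int_{\Gamma_{\bsi''}}e^{-u\hat x}\theta^0_{\bsi'}$, and so goes through verbatim in the present, more general, setting. That lemma identifies this Laplace transform with exactly the kernel $\mathcal K_{\bsi',\bsi''}(u)$ produced above, so that $\sum_{\bsi'}\check R^{\spa\bsi}_{\bsi'}(z)\htheta_{\bsi'}(z)$ has the same Laplace transform along $\Gamma_{\bsi''}$ as $\theta_\bsi(z)$. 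The universal $\frac{uv}{u+v}$ term and the unitarity of $\check R$ recorded just after \eqref{eqn:B-ff} are what make the diagonal case $\bsi''=\bsi$ work out; by the injectivity of the first paragraph this proves the proposition.

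I expect the main obstacle to be the residue–Laplace bookkeeping tying the two computations together: one must verify carefully that the residue at $p_\bsi$ of the (half-)Laplace-transformed Bergman kernel reconstitutes the power-series coefficients of $\check R^{\spa\bsi}_{\bsi'}(z)$ through its stationary-phase expansion, and that the Laplace transforms of the exact forms $\htheta^k_{\bsi'}$ along the thimbles contribute exactly the matching kernel with no leftover terms. This is the content imported from \cite[Lemma 6.5]{FLZ}; the only genuinely new inputs are Equation \eqref{eqn:B-ff}, already established in this generality, and the observation that the poles on both sides lie at the critical points so that both sides live in the same space $\cV$ on which the Laplace pairings are injective. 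The verification that $\{\Gamma_{\bsi''}\}$ detects all principal parts, and hence the injectivity itself, is more routine and follows from the basis property of the thimbles in $H_1(C_q,\{\hat x\gg 0\})$.
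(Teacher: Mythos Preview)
Your approach is correct and is essentially the same as the paper's: the paper simply records that the proposition is a consequence of \cite[Lemma~6.5]{FLZ} together with Equation~\eqref{eqn:B-ff}, and your proposal is a careful unpacking of exactly how those two inputs combine, via matching Laplace transforms along the thimbles and using the injectivity of this pairing on second-kind differentials with vanishing $A$-periods.
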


The following proposition is due to Dubrovin \cite{D93} and Givental
\cite{G97, G98, G01}, but we only consider the small phase space $H_{\CR,\bT'}^2(\cX;\bC)$.
\begin{proposition}
\label{prop:fundamental}
Let $\hat \phi^B_{\bsi}$ be the Jacobian ring element corresponding to $\hat\phi_{\bsi}$ under the isomorphism \eqref{eqn:qcoh-Tprime}.
Assume $\check \rA^\tbsi(z)(\btau)=\sum_{\bsi'\in I_\Si} \check \rA_{\hat \bsi'}^{\spa \tilde \bsi}(z)(\btau)\hat \phi_{\bsi'}^B$
has the following asymptotic expansions where $\btau\in H^2_{\CR,\bT'}(\cX,\bC)$
$$
\check \rA^{\spa \tbsi}_{\hat \bsi'}(z)\sim\sum_{\bsi''\in I_\Si} \Psi_{\bsi'}^{\spa  \bsi''} \check \rB_{\bsi''}^{\spa \bsi}(z) e^{-\frac{\check s^\bsi}{z}},
$$
such that
\begin{itemize}
\item $\Psi$ is the transition matrix defined in Equation \eqref{eqn:Psi-phi};
\item The matrix function $\check \rB_{\bsi''}^{\spa \bsi}(z)=\delta_{\bsi''}^{\spa \bsi} + O(z)$ is unitary
   $$\sum_{\bsi''\in I_\Si} \check \rB_{\bsi''}^{\spa \bsi}(z) \check \rB_{\bsi''}^{\spa \bsi'}(-z)
=\delta^{\bsi,\bsi'};$$
\item The functions $\check s^\bsi$ and the canonical coordinates
 $u^\bsi$ differ by constants, i.e. $\frac{\partial \check s^\bsi}{\partial \tau_i}=\frac{\partial u^\bsi}{\partial \tau_i}$ for $i=1,\dots,p$.
\end{itemize}
If each function $\rA^\tbsi$ satisfies the quantum differential equations for $1\leq i \leq p$
$$
-z \frac{\partial}{\partial \tau_i}\check  \rA^{\tbsi}=\check
H_i\cdot \check \rA^\tbsi,
$$
then $\check \rB$ is unique up to a right multiplication of $\exp(\sum_{i=1}^\infty a_{i} z^{2i-1})$, where $a_{i}$ is a constant diagonal matrix.
\end{proposition}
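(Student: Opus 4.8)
The plan is to follow the classical construction of the fundamental solution of a semisimple Frobenius structure, due to Dubrovin \cite{D93} and Givental \cite{G97,G01}, carried out in the canonical coordinates $u^\bsi$ of the semisimple Frobenius structure and using only the $\fp$ small-phase-space directions $\tau_1,\dots,\tau_\fp$. First I would write a solution in the frame $\{\hat\phi^B_\bsi\}$ as $\Psi\,\check\rB(z)\,e^{-\check s/z}$ with $\check s=\mathrm{diag}(\check s^\bsi)$, substitute it into the quantum differential equations $-z\,\partial_{\tau_i}\check\rA^\tbsi=\check H_i\cdot\check\rA^\tbsi$, and use that under the isomorphism \eqref{eqn:qcoh-Tprime} multiplication by $\check H_i$ is conjugate by $\Psi$ to $\mathrm{diag}(\partial_{\tau_i}u^\bsi)$. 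After cancelling $e^{-\check s/z}$ and $\Psi^{-1}$ and invoking the hypothesis $\partial_{\tau_i}\check s^\bsi=\partial_{\tau_i}u^\bsi$, this yields the matrix identity
\[
\big[\mathrm{diag}(\partial_{\tau_i}u^\bsi),\ \check\rB(z)\big]
=-z\big(\partial_{\tau_i}\check\rB(z)+V_i\,\check\rB(z)\big),\qquad V_i:=\Psi^{-1}\partial_{\tau_i}\Psi,
\]
for $i=1,\dots,\fp$, whose $z^0$-part together with $\check\rB=\one+O(z)$ re-confirms $\check\rB_0=\one$.

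Next I would prove the uniqueness. Given two solutions admitting expansions of the stated form, say $\Psi\,\check\rB\,e^{-\check s/z}$ and $\Psi\,\check\rB'\,e^{-\check s'/z}$ (the same $\Psi$, and $\check s,\check s'$ each differing from $u$ by constants), the ratio of the two corresponding bases of solutions of the first-order system is a matrix depending only on $z$; hence $\check\rB'=\check\rB\,D(z)$ with $D(z)=\one+O(z)$ a power series in $z$ whose coefficients are a priori $\btau$-dependent, and subtracting the two copies of the identity above (the connection term $V_i$ cancels) gives the homogeneous equation $[\mathrm{diag}(\partial_{\tau_i}u^\bsi),D(z)]=-z\,\partial_{\tau_i}D(z)$. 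Expanding $D=\sum_{m\ge0}D_mz^m$, the order-$z^m$ part reads $[\mathrm{diag}(\partial_{\tau_i}u^\bsi),D_m]=-\partial_{\tau_i}D_{m-1}$; its diagonal part forces every diagonal entry of $D$ to be a constant, while its $(\bsi,\bsi')$-entry for $\bsi\neq\bsi'$ reads $(\partial_{\tau_i}u^\bsi-\partial_{\tau_i}u^{\bsi'})(D_m)_{\bsi\bsi'}=-\partial_{\tau_i}(D_{m-1})_{\bsi\bsi'}$, so by induction, once $D_{m-1}$ is diagonal so is $D_m$, \emph{provided} that for $\bsi\neq\bsi'$ some index $i$ has $\partial_{\tau_i}u^\bsi\neq\partial_{\tau_i}u^{\bsi'}$. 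This is the one step needing genuine input: $\partial_{\tau_i}u^\bsi$ is the value of $\check H_i$ at the critical point $\bp_\bsi$, and since $\check H_1,\dots,\check H_\fp$ multiplicatively generate $\Jac(W^{\bT'}_q)$ the tuple $(\check H_1|_{\bp_\bsi},\dots,\check H_\fp|_{\bp_\bsi})$ separates the critical points, which is exactly this non-degeneracy. Thus $D(z)$ is a diagonal power series with constant coefficients and $D(0)=\one$.

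Finally I would impose unitarity. Writing $D(z)=\exp\!\big(\sum_{k\ge1}c_k z^k\big)$ with $c_k$ constant diagonal matrices, the fact that $\check\rB$ and $\check\rB'=\check\rB D$ are both unitary and that $D$ is diagonal (hence symmetric) gives
\[
\one=\check\rB'(-z)^{\mathrm T}\check\rB'(z)=D(-z)\big(\check\rB(-z)^{\mathrm T}\check\rB(z)\big)D(z)=D(-z)D(z)=\exp\!\Big(\sum_{k\ge1}c_k\big((-z)^k+z^k\big)\Big),
\]
so $c_k=0$ for every even $k$. Hence $D(z)=\exp\!\big(\sum_{i\ge1}a_i z^{2i-1}\big)$ with $a_i:=c_{2i-1}$ a constant diagonal matrix, which is the asserted ambiguity. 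I expect the main (indeed the only non-formal) obstacle to be the separation of the critical points of $W^{\bT'}_q$ by $\check H_1,\dots,\check H_\fp$ used in the second paragraph; once that is in hand, everything else is the standard order-by-order bookkeeping with the recursion and the unitarity relation.
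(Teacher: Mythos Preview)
Your proposal is correct and follows essentially the same classical Dubrovin--Givental argument as the paper: substitute $\Psi\check\rB e^{-\check s/z}$ into the QDE, pass to the canonical frame to obtain the commutator identity $[\mathrm{diag}(\partial_{\tau_i}u^\bsi),\check\rB]=-z(\partial_{\tau_i}\check\rB+V_i\check\rB)$, and use unitarity to isolate the odd-degree diagonal ambiguity. The only organizational difference is that the paper analyzes the recursion $(\partial_{\tau_i}+V_i)\check\rB_{k-1}=-[\partial_{\tau_i}\check s,\check\rB_k]$ directly (off-diagonal of $\check\rB_k$ algebraically determined, diagonal fixed by integration up to constants, then unitarity $\rP_k=0$ kills the even-level constants), whereas you compare two solutions through the gauge $D=\check\rB^{-1}\check\rB'$ and impose unitarity on $D$ at the end; these are equivalent packagings of the same computation. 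Your explicit justification of the separation of critical points by $\check H_1,\dots,\check H_\fp$ is a point the paper leaves implicit.
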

\begin{proof}
This proof is essentially the same as the Proposition in \cite[Section 1.3 (p1269)]{G01}. The only minor difference is that we only considers
the small phase space. Let $\check s$ be the diagonal matrix with the
diagonal elements $\{\check s^\bsi\}_{\bsi\in \bSi}$. Notice that substituting the series into the quantum differential equations gives
$$
(\frac{\partial}{\partial \tau_i}+\Psi\frac{ \partial\Psi}{\partial \tau_i})\check \rB_{k-1}=-[\frac{\partial \check s}{\partial \tau_i},\check \rB_k].
$$
This gives a recursion which determines $\check \rB$. The off-diagonal terms in $\check \rB_k$ are directly expressed in $\check \rB_{k-1}$ algebraically, and the diagonal terms could be solved by integration, noting that $[\frac{\partial \check s}{\partial \tau_i},\check \rB_k]$ has vanishing diagonal. 

Let $\rP^{\bsi,\bsi'}(z)=\sum_{k\geq 0} (\rP_k)^{\bsi,\bsi'} z^k
=\sum_{\bsi''\in I_\Si} \check \rB^{\spa \bsi}_{\bsi''}(z) \check \rB^{\spa \bsi'}_{\bsi''}(-z)$, then the quantum differential equations produce
$$
-[\frac{\partial \check s}{\partial \tau_i}, \rP_k]=d  \rP_{k-1} + [\Psi\frac{\partial \Psi}{\partial \tau_i}, \rP_{k-1}].
$$
Note that $\check \rB$ is unitary, i.e. $\rP_k=0$ for $k\geq 1$ and $\rP_0=I$. For $k$ odd, the equation above ensures that $\rP_k=0$ from $\rP_{k-1}=0$ (or $I$ when $k=1$) since $\rP_k$ is anti-symmetric. For even $k$, the ambiguity of the integrating constants in determining the diagonal terms of $\rB_k$ in the process above is fixed by
$$
0=(\rB_k)_{\bsi}^{\spa \bsi'}+(\rB_k)_{\bsi'}^{\spa \bsi}+ \text{terms involving $\rB_i$, $i=1,\dots, k-1$}.
$$
We see that this is equivalent to a right multiplication of $\exp(\sum_{i=1}^\infty a_iz^{2i-1})$.
\end{proof}

Since $\check u^\bsi$ is a critical value,
$$
\frac{\partial \check u^\bsi}{\partial \tau_i}=\frac{dW^{\bT'}_q(\bp_\bsi, \btau)}{d \tau_i}
=\frac{\partial W^{\bT'}_q}{\partial \tau_i}(\bp_\bsi).
$$
The Jacobian ring element $\check H_i=[\frac{\partial W^{\bT'}_q}{\partial \tau_i}]$ corresponds
to $H_i$ in the quantum cohomology. Then by the following identity
$$
[\frac{\partial W^{\bT'}_q}{\partial \tau_i}]=\sum_{\bsi\in I_\Si} \frac{\partial W^{\bT'}_q}{\partial \tau_i}(\bp_\bsi) [V_\bsi(\btau)],
$$
we have
$$
\frac{\partial u^\bsi}{\partial \tau_i}=\frac{\partial \check u^\bsi}{\partial \tau_i},
$$
which implies the critical values are canonical coordinates.
The function $\check S^{\tbsi} =\sum_{\bsi' \in I_\Si} \check S_{\hat \bsi'}^{\spa \tbsi} \hat \phi^{\bsi'}$ is a solution to the quantum differential equation
$$
-z \frac{\partial}{\partial \tau_i}\check S^{\tbsi}=(\frac{\partial W^{\bT'}_q}{\partial \tau_i }) \check S^\tbsi,
$$
For all $\bsi \in I_\Si$, $\check S^{\tbsi}$ satisfy the condition
of Proposition \ref{prop:fundamental}.

\subsection{The Eynard-Orantin topological recursion and the B-model graph sum}
\label{sec:eynard-orantin}

Let $\omega_{g,n}$ be defined recursively by the Eynard-Orantin topological recursion \cite{EO07}:
$$
\omega_{0,1}=0,\quad  \omega_{0,2}=B(p_1,p_2).
$$
When $2g-2+n>0$,
\begin{eqnarray*}
\omega_{g,n}(p_1,\ldots, p_n) &=& \sum_{\bsi\in I_\Si}\Res_{p \to p_\bsi}
\frac{\int_{\xi = p}^{\bar{p}} B(p_n,\xi)}{2(\Phi(p)-\Phi(\bar{p}))}
\Big( \omega_{g-1,n+1}(p,\bar{p},p_1,\ldots, p_{n-1}) \\
&&\quad\quad  + \sum_{g_1+g_2=g}
\sum_{ \substack{ I\cup J=\{1,..., n-1\} \\ I\cap J =\emptyset } } \omega_{g_1,|I|+1} (p,p_I)\omega_{g_2,|J|+1}(\bar{p},p_J)\Big). 
\end{eqnarray*}

Following \cite{DOSS}, the B-model invariants $\omega_{g,n}$ are expressed in terms of graph sums. We first introduce some notation.
\begin{itemize}
\item  For any $\bsi\in I_{\Sigma}$, we define
\begin{equation}
\check{h}^{\bsi}_{k} :=\frac{(2k-1)!!}{2^{k-1}}h^\bsi_{2k-1}.
\end{equation}
Then
$$
\check{h}^{\bsi}_k = [u^{1-k}]\frac{u^{3/2}}{\sqrt{\pi}} e^{u\check{u}^{\bsi}}
\int_{p\in \Gamma_{\bsi}}e^{-u \hat{x}(p)}\Phi(p).
$$

\item For any $\bsi,\bsi'\in I_\Sigma$, we expand
$$
B(p_1,p_2) =\Big( \frac{\delta_{\bsi,\bsi'}}{ (\zeta_\bsi-\zeta_{\bsi'})^2}
+ \sum_{k,l\in \bZ_{\geq 0}} B^{\bsi,\bsi'}_{k,l} \zeta_\bsi^k \zeta_{\bsi'}^l \Big) d\zeta_\bsi d\zeta_{\bsi'},
$$
near $p_1=p_{\bsi}$ and $p_2=p_{\bsi'}$, and define
\begin{equation}\label{eqn:BcheckB}
\check{B}^{\bsi,\bsi'}_{k,l} := \frac{(2k-1)!! (2l-1)!!}{2^{k+l+1}} B^{\bsi,\bsi'}_{2k,2l}.
\end{equation}
Then
\[
\check{B}^{i,j}_{k,l}=[u^{-k}v^{-l}]\left(\frac{uv}{u+v}(\delta_{\bsi,\bsi'}
-\sum_{\bgamma\in I_\Si} f^{\ \bsi}_{\bgamma}(u)f^{\ \bsi'}_{\bgamma}(v))\right)
=[z^{k}w^{l}]\left(\frac{1}{z+w}(\delta_{\bsi,\bsi'}
-\sum_{\bgamma\in I_\Si} f^{\ \bsi}_{\bgamma}(\frac{1}{z})f^{\ \bsi'}_{\bgamma}(\frac{1}{w}))\right).
\]
\end{itemize}

Given a labeled graph $\vGa \in \bGa_{g,n}(\cX)$ with
$L^o(\Ga)=\{l_1,\ldots,l_n\}$, and $\bullet=\bu$ or $O$, 
we define its weight to be
\begin{eqnarray*}
w_B^{\bullet}(\vGa) &=& (-1)^{g(\vGa)-1}\prod_{v\in V(\Gamma)} \Big(\frac{h^{\bsi(v)}_1}{\sqrt{-2}}\Big)^{2-2g-\val(v)} \langle \prod_{h\in H(v)} \tau_{k(h)}\rangle_{g(v)}
\prod_{e\in E(\Gamma)} \check{B}^{\bsi(v_1(e)),\bsi(v_2(e))}_{k(e),l(e)}  \\
&& \cdot \prod_{l\in \cL^1(\Gamma)}(\check{\cL}^1)^{\bsi(l)}_{k(l)}
\prod_{j=1}^n (\check{\cL}^\bullet)^{\bsi(l_j)}_{k(l_j)}(l_j)
\end{eqnarray*}
where
\begin{itemize}
\item (dilaton leaf)
$$
(\check{\cL}^1)^{\bsi}_k = \frac{-1}{\sqrt{-2}}\check{h}^{\bsi}_k.
$$
\item (descendant leaf)
$$
(\check{\cL}^\bu)^{\bsi}_k(l_j) =  \frac{1}{\sqrt{-2}} \theta_{\bsi}^k(p_j).
$$
\item (open leaf)
Let
$$
\psi_\ell:= \frac{1}{\fm} \sum_{k=0}^{\fm-1}\omega_\fm^{-k\ell} \one'_{\frac{k}{\fm}}\in H^*_\CR(\cB\bmu_\fm;\bC), \quad \ell=0,1, \ldots, \fm-1,
$$
where $\omega_\fm= e^{2\pi\sqrt{-1}/\fm}$. We may regard $\ell\in \bmu_\fm^*$ such that $\ell(\one'_{\frac{k}{\fm}})=\omega_\fm^{-k\ell}.$
$$
(\check{\cL}^O)^{\bsi}_k (l_j) = \frac{1}{\sqrt{-2}} \sum_{\ell\in \bmu_\fm^*}\int_0^{X'_j} \rho_\ell^*(\theta_{\bsi}^k)\psi_\ell.
$$

\end{itemize}

In our notation \cite[Theorem 3.7]{DOSS} is equivalent to:
\begin{theorem}[Dunin-Barkowski--Orantin--Shadrin--Spitz \cite{DOSS}] \label{thm:DOSS}
For $2g-2+n>0$,
$$
\omega_{g,n} = \sum_{\Gamma \in \bGa_{g,n}(\cX)}\frac{w_B^{\bu}(\vGa)}{|\Aut(\vGa)|}.
$$
\end{theorem}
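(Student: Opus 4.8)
The plan is to obtain Theorem~\ref{thm:DOSS} as a direct specialization of the general graph-sum formula for Eynard--Orantin invariants proved in \cite{DOSS}. Recall that \cite[Theorem 3.7]{DOSS} applies to any spectral curve with simple ramification and expresses each $\omega_{g,n}$ with $2g-2+n>0$ as a sum over stable graphs whose building blocks are extracted from the \emph{local} data of the recursion near the ramification points: a matrix of power series (the ``$R$-matrix'' of the curve) encoding the local behaviour of $y\,dx$, the Laplace transform of the Bergman kernel (the edge weights), and the differentials of the second kind together with their primitives (the leaf weights). The first step is therefore to identify our mirror-curve data with this template. We take the compact Riemann surface $\Cbar_q$ as the spectral curve, with $\hx$ in the role of $x$, the meromorphic one-form $\Phi=\hy\,d\hx$ in the role of $y\,dx$, and the Bergman kernel $B(p_1,p_2)$ normalized by the $A$-cycles $A_1,\dots,A_{\fg}$ fixed in Section~\ref{sec:B-flat}. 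Since $W^{\bT'}_q$ is holomorphic Morse for $q\in U_{\ep(f)}$, the critical points $p_\bsi$ of $\hx$ ($\bsi\in I_\Si$) are the $2\fg-2+\fn$ simple ramification points, with local coordinate $\zeta_\bsi=\sqrt{\hx-\check u^\bsi}$, and $d\hy\neq 0$ there because $h_1^\bsi\neq 0$ by \eqref{eqn:h-Hess}; this is precisely the spectral-curve setup of \cite{DOSS}.

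Next I would match the three families of weights. By the displayed Laplace-transform formula, the coefficients $\check h^\bsi_k=\tfrac{(2k-1)!!}{2^{k-1}}h^\bsi_{2k-1}$ are the expansion data of $\Phi$ at $p_\bsi$ entering the DOSS vertex and dilaton-leaf weights; only the odd coefficients $h^\bsi_{2k-1}$ contribute because $\hx-\check u^\bsi=\zeta_\bsi^2$ is even in $\zeta_\bsi$, a standard parity observation. Likewise $\check B^{\bsi,\bsi'}_{k,l}=\tfrac{(2k-1)!!(2l-1)!!}{2^{k+l+1}}B^{\bsi,\bsi'}_{2k,2l}$ is the Laplace-transformed Bergman kernel, and the identity in the excerpt rewriting it in terms of the functions $f_\gamma^{\spa\bsi}$ --- equivalently, via $\check R_{\bsi'}^{\spa\bsi}(z)=f_{\bsi'}^{\spa\bsi}(1/z)$, in terms of the B-model $R$-matrix $\check R$ --- is exactly the DOSS edge weight $\cE^{\bsi,\bsi'}_{k,l}$ written in the variables $z=1/u$, $w=1/v$. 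Finally, the differentials of the second kind $\theta^k_\bsi$ of Section~\ref{sec:second-kind}, characterized by a single pole of order $2k+2$ at $p_\bsi$ with prescribed principal part and vanishing $A$-periods, are DOSS's descendant differentials attached to ordinary leaves, and these properties determine them uniquely, so there is nothing to choose. With these identifications the DOSS graph sum becomes $\sum_{\vGa\in\bGa_{g,n}(\cX)} w_B^{\bu}(\vGa)/|\Aut(\vGa)|$, provided the combinatorial vertex factor $\langle\prod_{h\in H(v)}\tau_{k(h)}\rangle_{g(v)}$ carries the correct normalization $\bigl(h^{\bsi(v)}_1/\sqrt{-2}\bigr)^{2-2g(v)-\val(v)}$ and the global sign is $(-1)^{g(\vGa)-1}$.

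The only genuinely delicate point, and the step I expect to consume most of the effort, is this last normalization bookkeeping: tracking the recurring factors of $\sqrt{-2}$, the relation $h_1^\bsi=\sqrt{2/(-\det\Hess(W^{\bT'}_q)(\bp_\bsi))}$ from \eqref{eqn:h-Hess}, the double-factorial constants relating $h^\bsi_{2k-1}$ and $B^{\bsi,\bsi'}_{2k,2l}$ to their checked versions, and the conventions for the Euler-characteristic exponent $2-2g(v)-\val(v)$, so that the stationary-phase prefactor $(2\pi z)^{3/2}$ of the oscillating integrals threads through consistently on both sides. None of this is conceptually hard; it is a matter of aligning the conventions of \cite{DOSS} with ours, and the computation in \cite[Section 6]{FLZ} in the affine case provides the template to follow. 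Once the dictionary is pinned down, the equality of the two graph sums is termwise and immediate.
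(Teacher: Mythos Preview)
Your approach is correct and matches the paper's treatment: the paper does not give an independent proof of Theorem~\ref{thm:DOSS} but simply states it as a translation of \cite[Theorem~3.7]{DOSS} into the present notation, with the mirror-curve data $(\Cbar_q,\hx,\Phi,B)$ playing the role of the spectral curve and the quantities $\check h^\bsi_k$, $\check B^{\bsi,\bsi'}_{k,l}$, $\theta^k_\bsi$ identified beforehand as the DOSS weights. Your proposal spells out this dictionary in more detail than the paper does, but the content is the same specialization argument.
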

We now consider the unstable case $(g,n)=(0,2)$.
Recall that $dx =-\frac{dX}{X}$ is a meromorphic 1-form on $\bar{C}_q$, and
$\frac{d}{dx}=-X\frac{d}{dX}$ is a meromorphic vector field on $\bar{C}_q$.
Define
\begin{equation}\label{eqn:C}
C(p_1,p_2):=
(-\frac{\partial}{\partial x(p_1)}-\frac{\partial}{\partial x(p_2)})\Big(\frac{\omega_{0,2}}{dx(p_1) dx(p_2)}\Big)(p_1,p_2)
d(x(p_1))(dx(p_2)).
\end{equation}
Then $C(p_1,p_2)$ is meromorphic on $(\bar{C}_q)^2$ and is holomorphic
on $(\overline{C}_q \setminus \{p_\bsi:\bsi \in I_\Si\})^2$.
\begin{lemma}\label{lemm:C}
$$
C(p_1,p_2)= \frac{1}{2} \sum_{\bsi \in I_\Si} \theta_\bsi^0(p_1) \theta_\bsi^0(p_2).
$$
\end{lemma}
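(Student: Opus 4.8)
\textbf{Proof proposal for Lemma \ref{lemm:C}.}

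The plan is to compute both sides of the claimed identity using their characterizations near the critical points $p_{\bsi}$ and their global meromorphicity, then invoke uniqueness of a meromorphic differential with prescribed principal parts. First I would observe that, by definition \eqref{eqn:C}, $C(p_1,p_2)$ is built from $\omega_{0,2}=B(p_1,p_2)$ by applying the first-order differential operator $-\partial/\partial x(p_1)-\partial/\partial x(p_2)$ after dividing by $dx(p_1)dx(p_2)$, and then multiplying back. The key point is to understand the behavior of the resulting object near $p_1=p_{\bsi}$ (resp. $p_2=p_{\bsi'}$). Recall that $\zeta_{\bsi}=\sqrt{\hx-\check u^{\bsi}}$ is a local coordinate and $\hx$ is a local Morse coordinate, so $x$ itself is a holomorphic function of $\zeta_{\bsi}$ with $x=x(p_{\bsi})+O(\zeta_{\bsi})$ near $p_{\bsi}$; in particular $dx$ has a simple zero at $p_{\bsi}$ (since $dx=(\text{const}+O(\zeta_\bsi))\,\zeta_\bsi\,d\zeta_\bsi$ up to the chain rule from $\hx$ to $x$ — more precisely $dx$ vanishes to the same order as $d\hx$, namely order one in $\zeta_\bsi$). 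Using the expansion $B(p_1,p_2)=\bigl(\frac{\delta_{\bsi,\bsi'}}{(\zeta_\bsi-\zeta_{\bsi'})^2}+\sum_{k,l}B^{\bsi,\bsi'}_{k,l}\zeta_\bsi^k\zeta_{\bsi'}^l\bigr)d\zeta_\bsi d\zeta_{\bsi'}$ from Section \ref{sec:eynard-orantin}, I would compute the principal part of $C(p_1,p_2)$ at $p_1=p_{\bsi}$: the derivative operator $\partial/\partial x$ raises pole order, and the diagonal double pole together with the change of variables produces, after the $\partial_{x(p_1)}+\partial_{x(p_2)}$ annihilates the singular diagonal contribution (this is the standard cancellation in Eynard-Orantin theory), a 1-form in $p_1$ with a pole of order exactly $2$ at $p_{\bsi}$ and no pole elsewhere except possibly at other $p_{\bsi''}$.

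Next I would compare with the right-hand side. By the defining properties listed in Section \ref{sec:second-kind}, $\theta_{\bsi}^0$ is the unique meromorphic 1-form on $\Cbar_q$ with a single double pole at $p_{\bsi}$, with principal part $\bigl(\frac{-1}{\zeta_\bsi^2}+\text{analytic}\bigr)d\zeta_\bsi$ in the coordinate $\zeta_\bsi$, vanishing residue, and normalized by $\int_{A_i}\theta_{\bsi}^0=0$ for $i=1,\dots,\fg$. Hence for fixed $p_2$, the RHS $\frac12\sum_{\bsi}\theta^0_\bsi(p_1)\theta^0_\bsi(p_2)$ is a meromorphic 1-form in $p_1$ whose only poles are double poles at the points $p_{\bsi''}$, with $A$-periods all zero. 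So the strategy reduces to: (i) show $C(p_1,p_2)$, as a 1-form in $p_1$ for fixed generic $p_2$, has vanishing $A_i$-periods ($i=1,\dots,\fg$); (ii) match the principal part of $C(p_1,p_2)$ at each $p_1=p_{\bsi''}$ with that of $\frac12 \theta^0_{\bsi''}(p_1)\theta^0_{\bsi''}(p_2)$, checking in particular that the coefficient is $\frac12\theta^0_{\bsi''}(p_2)$ and that the pole is exactly of order two with no residue; then uniqueness of a differential of the second kind with given principal parts and $A$-periods forces the equality. For (i), the vanishing of $A_i$-periods of $C$ follows because $\omega_{0,2}=B$ has vanishing $A_i$-periods by its normalization, and the operation defining $C$ — differentiating along a globally defined vector field $\partial/\partial x$ and multiplying by the exact form $dx$ — preserves the vanishing of $A$-periods (the period of a total $x$-derivative times $dx$ around a closed cycle avoiding the poles of $dx$ is computed by Stokes and vanishes; one must be slightly careful about the poles of $dx$ at the $p_{\bsi}$, which do not lie on the $A$-cycles, so the argument goes through). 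For (ii), the local computation near $p_{\bsi''}$ in the coordinate $\zeta_{\bsi''}$ is the heart: one expands $\frac{1}{dx}$ near $p_{\bsi''}$ using $\hx-\check u^{\bsi''}=\zeta_{\bsi''}^2$ and $x=x(p_{\bsi''})+a_1\zeta_{\bsi''}+\cdots$ (with $a_1\ne 0$ by the Morse/transversality hypothesis), applies $-X\partial_X=\partial_x$, and reads off the leading singular term, matching it against $\frac{-(2\cdot 0+1)!!}{2^0}\zeta_{\bsi''}^{-2}=-\zeta_{\bsi''}^{-2}$ from $\theta^0_{\bsi''}$, with $p_2$-dependence coming through $\int_{\xi=p_1}^{p_2}$-type residue extraction. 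This kind of computation is carried out in \cite{EO15, E11}; I would cite those and reproduce only the essential local identity.

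The main obstacle I expect is step (ii): carefully controlling the change of variables from $\hx$ (the natural Morse coordinate, equal to $\su_1 x+\su_2 y$ restricted to $C_q$) to $x$ (in which the operator $\partial/\partial x$ and the 1-form $dx$ in \eqref{eqn:C} are expressed), and verifying that after applying $\partial_{x(p_1)}+\partial_{x(p_2)}$ the off-diagonal (holomorphic) part $\sum B^{\bsi,\bsi'}_{k,l}\zeta_\bsi^k\zeta_{\bsi'}^l$ contributes nothing to the principal part while the diagonal double pole contributes exactly $\tfrac12\sum_\bsi\theta^0_\bsi\otimes\theta^0_\bsi$. The delicate bookkeeping is the factor $(2k-1)!!/2^k$ normalizations built into $\theta^0_\bsi$ versus the Bergman-kernel expansion; getting the constant $\frac12$ right is where one must be most careful. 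Once the principal parts and $A$-periods agree on both sides, the equality is immediate since the difference would be a holomorphic 1-form in $p_1$ with zero $A$-periods, hence identically zero; symmetry in $p_1\leftrightarrow p_2$ is manifest on both sides. I would also note the degenerate case $\fg=0$: then there are no $A_i$-periods to match and every differential of the second kind is determined by its principal parts alone, so the argument simplifies, consistent with the remark after Proposition \ref{prop:b-canonical} that $\theta^0_\bsi$ is then not exact but still uniquely pinned down by its pole.
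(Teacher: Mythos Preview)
Your approach is valid but takes a genuinely different route from the paper. You propose to verify the identity by directly computing the principal parts of $C(p_1,p_2)$ at each $p_1=p_{\bsi}$ via local expansion in $\zeta_\bsi$, then invoke uniqueness of a differential of the second kind with prescribed principal parts and vanishing $A$-periods. The paper instead computes the \emph{Laplace transforms} of both sides over $\Gamma_\bsi\times\Gamma_{\bsi'}$: integration by parts turns the Laplace transform of $C$ into $-(z_1^{-1}+z_2^{-1})$ times that of $\omega_{0,2}=B$, and the latter is already known in closed form via Equation \eqref{eqn:B-ff} as a sum $\sum_{\bsi''}\check R^{\ \bsi}_{\bsi''}(z_1)\check R^{\ \bsi'}_{\bsi''}(z_2)$; this is then recognized as the Laplace transform of $\tfrac12\sum\theta^0_{\bsi''}\otimes\theta^0_{\bsi''}$ using the defining relation $\check R^{\ \bsi}_{\bsi'}(z)=f^{\ \bsi}_{\bsi'}(1/z)$. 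The paper then sets $\omega=C-\tfrac12\sum\theta^0\otimes\theta^0$, observes it has vanishing Laplace transforms and vanishing $A$-periods, and concludes $\omega=0$ by showing the residue 1-form $\Res_{p_1\to p_\bsi}\zeta_\bsi(p_1)\omega$ is holomorphic with zero $A$-periods, hence zero, so $\omega$ has no poles.

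What each approach buys: the paper's method packages all the local principal-part data into the Laplace transform and leverages identities already established in the text, so no new local computation is needed and the sign/normalization bookkeeping you (rightly) flag as the main obstacle is entirely avoided. Your direct method is more elementary and self-contained, but you would have to push through the local expansion carefully---including resolving the $x$ versus $\hat x$ issue you noticed (the operator in \eqref{eqn:C} should effectively be $\partial/\partial\hat x$, since the poles must land at the zeros of $d\hat x$, i.e.\ at the $p_\bsi$) and matching the exact leading coefficient against the stated principal part $-\zeta_\bsi^{-2}d\zeta_\bsi$ of $\theta^0_\bsi$. Your argument for vanishing $A$-periods of $C$ (splitting into the exact-in-$p_1$ piece and the $\partial_{\hat x_2}$-piece that inherits $\int_{A_i}B=0$) is correct and essentially the same as the paper's. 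So: correct strategy, different key lemma; the paper's route is shorter because \eqref{eqn:B-ff} does the heavy lifting.
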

\begin{proof}
For any $\bsi,\bsi'\in I_\Si $, we compute their Laplace transforms
\begin{align*}
&\int_{p_1\in \Gamma_\bsi}\int_{p_2\in \Gamma_{\bsi'}}
e^{-\frac{x(p_1)-\check u^\bsi}{z_1}-\frac{x(p_2)-\check u^{\bsi'}}{z_2}} C(p_1, p_2)\\
=& (-\frac{z_1+z_2}{z_1z_2})\int_{p_1\in \Gamma_\bsi}\int_{p_2\in \Gamma_{\bsi'}}
e^{-\frac{x(p_1)-\check u^\bsi}{z_1}-\frac{x(p_2)-\check u^{\bsi'}}{z_2}}\omega_{0,2}\\
=& \frac{2\pi}{\sqrt{z_1z_2}} \sum_{\bsi'' \in I_\Si}\check
   R^{\spa \bsi}_{\bsi''}(z_1)\check R^{\spa \bsi'}_{\bsi''}(z_2)\\
=&\frac{1}{2}\sum_{\bsi''\in I_\Si}\int_{p_1\in \Gamma_\bsi}\int_{p_2\in \Gamma_{\bsi'}}
e^{-\frac{x(p_1)-\check u^\bsi}{z_1}-\frac{x(p_2)-\check u^{\bsi'}}{z_2}} \theta_{\bsi''}^0(p_1) \theta_{\bsi''}^0(p_2).
\end{align*}
Define
\begin{align*}
\omega&=C(p_1,p_2)-\frac{1}{2} \sum_{\bsi \in I_\Si}
        \theta_\bsi^0(p_1) \theta_\bsi^0(p_2).
\end{align*}
Since for $i=1,\dots,\fg$, $\int_{p_2\in A_i} \omega_{0,2}(p_1,p_2)=0,
\ \int_{A_i}
\theta_\bsi^0=0$, we have $\int_{p_2\in A_i}\omega =0$, and the
following residue $1$-form has
$$
\int_{p_2\in A_i} \Res_{p_1\to p_\bsi} \zeta_{\bsi}(p_1) \omega(p_1,p_2)=0,
$$
for all $i=1,\dots, \fg$. Notice that the $1$-form $\Res_{p_1\to
  p_\bsi} \zeta_{\bsi}(p_1)\omega(p_1,p_2)$ has no poles, otherwise
a possible double pole at $p_{{\bsi'}}$ implies non-zero Laplace
transform of $\omega$ at $\Gamma_\bsi\times \Gamma_{\bsi'}$. It
follows from the vanishing A-cycles integrals that
$$
\Res_{p_1\to p_\bsi} \zeta_{\bsi}(p_1) \omega(p_1,p_2)=0,
$$ and then $\omega$ does
not have any poles. Therefore by the vanishing A-periods of $\omega$
we know $\omega=0$.
\end{proof}

\subsection{B-model open potentials}\label{sec:B-potential}
In this section, we fix $\su_1=1$ and $\su_2=f$. Choose $\delta>0$, $\epsilon>0$ sufficiently small, such that
for $|q|<\epsilon$, the meromorphic function
$\hX: \bar{C}_q \to \bC\cup \{\infty\}$
restricts to an isomorphism
$$
\hX_q^\ell: D_q^\ell\to D_\delta=\{ \hX\in \bC: |X|<\delta\},
$$
where $D_q^\ell$ is an open neighborhood of $\bar{p}_\ell:=\bar{p}_\ell^{(\tau_0,\si_0)}\in \hX^{-1}(0)$, $\ell=0,\ldots,m-1$.
Define
$$
\rho_q^{\ell_1,\ldots,\ell_n}:= (\hX_q^{\ell_1})^{-1}\times \cdots \times (\hX_q^{\ell_n})^{-1}:
(D_\delta)^n\to D_q^{\ell_1}\times \cdots \times D_q^{\ell_n} \subset (\bar{C}_q)^n.
$$

\begin{enumerate}
\item (disk invariants)
At $q=0$, $\hY(\bar{p}_\ell)^{\fm}=-1$ for $\ell=0,\ldots,\fm-1$. When
$\epsilon$ and $\delta$ are sufficiently small,
$\hY(\rho_q^\ell(\hX))\in \bC\setminus[0,\infty)$. Choose a branch of logarithm
$\log:\bC\setminus [0,\infty)\to (0,2\pi)$, and define
$$
\hy_q^\ell (\hX) = -\log \hY (\rho_q^\ell(\hX)).
$$
The function $\hy_q^\ell(X)$ depends on the choice of logarithm, but
$\hy_q^\ell(X)-\hy_q^\ell(0)$ does not. $d\hx = -d\hX/\hX$ is a meromorphic
1-form on $\bC$ with a simple pole at $\hX=0$, and
$$
(\hy_q^\ell(X)-\hy_q^\ell(0))d\hx
$$
is a holomorphic 1-form on $D_\delta$.

Define the {\em B-model disk potential} by
$$
\check{F}_{0,1}(q;\hX):= \sum_{\ell\in I_\Si}\int_0^{\hX} (\hy_q^\ell(X')-\hy_q^\ell(0))(-\frac{dX'}{X'}) \cdot \psi_\ell,
$$
which takes values in $H^*(\cB\bmu_\fm;\bC)$.

\item (annulus invariants)
$$
(\rho_q^{\ell_1,\ell_2})^*\omega_{0,2}-\frac{d\hX_1d\hX_2}{(\hX_1-\hX_2)^2}
$$
is holomorphic on $D_\delta\times D_\delta$.  Define the {\em B-model annulus potential} by
$$
\check{F}_{0,2}(q;\hX_1,\hX_2):=
\sum_{\ell_1,\ell_2\in \bmu_\fm^*}
\int_0^{\hX_1}\int_0^{\hX_2} \Big((\rho_q^{\ell_1,\ell_2})^*\omega_{0,2}-\frac{dX_1' dX_2'}{(X_1'-X_2')^2}\Big) \cdot
\psi_{\ell_1}\otimes\psi_{\ell_2},
$$
which takes values in $H^*(\cB\bmu_\fm;\bC)^{\otimes 2}$.
\item For $2g-2+n>0$, $(\rho_q^{\ell_1,\ldots,\ell_n})^*\omega_{g,n}$ is
holomorphic on $(D_\delta)^n$. Define
$$
\check{F}_{g,n}(q;\hX_1,\ldots,\hX_n):=
\sum_{\ell_1,\ldots,\ell_n\in \bmu_\fm^*}
\int_0^{\hX_1}\cdots\int_0^{\hX_n} (\rho_q^{\ell_1,\ldots,\ell_n})^*\omega_{g,n} \cdot
\psi_{\ell_1}\otimes\cdots \otimes\psi_{\ell_n},
$$
which takes values in $H^*(\cB\bmu_\fm;\bC)^{\otimes n}$.
\end{enumerate}
For $g\in\bZ_{\geq 0}$ and $n\in \bZ_{>0}$,
$\check{F}_{g,n}(q;\hX_1,\ldots,\hX_n)$ is holomorphic on $B_\epsilon \times (D_\delta)^n$ when
$\epsilon, \delta>0$ are sufficiently small. By construction,
the power series expansion of  $\check{F}_{g,n}(q;\hX_1,\ldots,\hX_n)$ only involves
positive powers of $\hX_i$.

For $k\in \bZ_{\geq 0}$, define
$$
\xi_\bsi^k(\hX):=\sum_{\ell \in \bmu_\fm^*} \int_0^{\hX}(\rho_q^\ell)^*\theta_\bsi^k\psi_\ell,\quad
\xi_\bsi(z, \hX):=\sum_{\ell \in\bmu_\fm^*}
\int_0^{\hX} (\rho_q^\ell)^*\hat \theta_\bsi(z)\psi_\ell,
$$
where $\hat{\theta}_\bsi(z)$ is defined as in Equation \eqref{eqn:theta-z}.

\subsection{B-model free energies}

In this section, $g>1$ is an integer.

\begin{definition}[{cf. \cite[Definition 4.3]{EO07}}] \label{df:stable-Fg-B}
The {\em $B$-model genus $g$ free energy} is  defined to be
$$
\check{F}_g:= \frac{1}{2-2g} \sum_{\bsi\in I_\Si}
\Res_{p\to p_{\bsi}} \omega_{g,1}(p)\tPhi_{\bsi}(p).
$$
where $\tPhi_{\bsi}$ is a function
defined on an open neighborhood of $p_{\bsi}$ in $\Si_q$ such that
$d\tPhi_{\bsi}=\Phi$.
\end{definition}
Notice that the definition does not depend on the choice of
$\tPhi_{\bsi}$.

\begin{proposition} \label{prop:stable-Fg-B}
$$
\check{F}_g = \frac{1}{2-2g}\sum_{\vGa\in \Gamma_{g,1}(\cX)}
\frac{w^\bu_B(\vGa)\Big|_{ (\check{\cL}^{\bu})^{\bsi}_k(l_1) = (\check{\cL}^1)^{\bsi}_k } }{|\Aut(\vGa)|}.
$$
\end{proposition}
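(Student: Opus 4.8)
The plan is to start from the definition of $\check F_g$ in Definition \ref{df:stable-Fg-B} and insert the graph sum formula for $\omega_{g,1}$ from Theorem \ref{thm:DOSS}. Explicitly, $\check F_g = \frac{1}{2g-2}\sum_{\bsi\in I_\Si}\Res_{p\to p_\bsi}\omega_{g,1}(p)\tPhi_\bsi(p)$ and $\omega_{g,1} = \sum_{\vGa\in\bGa_{g,1}(\cX)}\frac{w_B^\bu(\vGa)}{|\Aut(\vGa)|}$, so the only thing to understand is how the residue operation $\Res_{p\to p_\bsi}(\cdot)\tPhi_\bsi(p)$ acts on the single open/descendant leaf weight $(\check\cL^\bu)^\bsi_k(l_1) = \frac{-1}{\sqrt{-2}}\theta_\bsi^k(p_1)$ attached at the vertex marked $\bsi$. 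So the first step is to compute $\Res_{p\to p_\bsi}\theta_\bsi^k(p)\tPhi_\bsi(p)$ and show it equals $(\check\cL^1)^\bsi_k = \frac{-1}{\sqrt{-2}}\check h^\bsi_k$ (up to the shared constant prefactor), which will convert the descendant leaf weight into exactly the dilaton leaf weight appearing in $w_B^\bu(\vGa)|_{(\check\cL^\bu)^\bsi_k(l_1)=(\check\cL^1)^\bsi_k}$.

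The key computation is the residue pairing: by definition $\theta_\bsi^k(p) = (2k-1)!!\,2^{-k}\Res_{p'\to p_\bsi}B(p,p')\zeta_\bsi(p')^{-2k-1}$, and $\tPhi_\bsi$ is a local primitive of $\Phi = \hat y\,d\hat x$ near $p_\bsi$, with $\hat x - \check u^\bsi = \zeta_\bsi^2$ and $\hat y = \check v^\bsi + \sum_{d\ge 1}h_d^\bsi\zeta_\bsi^d$. One expands $\tPhi_\bsi$ as a power series in $\zeta_\bsi$ (integrating $\hat y\,d\hat x = \hat y\cdot 2\zeta_\bsi\,d\zeta_\bsi$), extracts the relevant coefficient, and uses the fact that $\theta_\bsi^k$ has a single pole of order $2k+2$ at $p_\bsi$ with leading behavior $\frac{-(2k+1)!!}{2^k\zeta_\bsi^{2k+2}}d\zeta_\bsi$. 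The residue $\Res_{p\to p_\bsi}\theta_\bsi^k(p)\tPhi_\bsi(p)$ then picks out the coefficient of $\zeta_\bsi^{2k}$ in the power series of $\tPhi_\bsi$, which by a short calculation (using $h_1^\bsi$ and the definitions $\check h^\bsi_k = \frac{(2k-1)!!}{2^{k-1}}h^\bsi_{2k-1}$) gives precisely $\check h^\bsi_k$ times the needed constant. This is the same manipulation that underlies the relation between descendant and dilaton leaves in \cite{EO15, DOSS}; the identities in Section \ref{sec:second-kind} and Section \ref{sec:eynard-orantin} supply everything needed. I expect the cleanest route is to interchange the two residues: $\Res_{p\to p_\bsi}\left(\Res_{p'\to p_\bsi}B(p,p')\zeta_\bsi(p')^{-2k-1}\right)\tPhi_\bsi(p)$, write $\tPhi_\bsi(p) = \int^p\Phi$, integrate by parts against $B(p,p')$ (whose $p$-primitive is related to the standard normalized $1$-form), and recognize the Laplace-transform identity expressing $\check h^\bsi_k$ in terms of $h^\bsi_{2k-1}$.

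Having established the leaf-swap identity, the rest is bookkeeping: the residue is local at $p_\bsi$, so it commutes with the sum over graphs and with all the other weight factors (edges, other vertices, dilaton leaves) which are unaffected; the vertex prefactor $\left(\frac{h_1^{\bsi(v)}}{\sqrt{-2}}\right)^{2-2g-\val(v)}$ and the Hodge integral $\langle\prod\tau_{k(h)}\rangle_{g(v)}$ stay put. Summing over $\bsi$ and over graphs, one gets $\check F_g = \frac{1}{2g-2}\sum_{\vGa\in\bGa_{g,1}(\cX)}\frac{w_B^\bu(\vGa)|_{(\check\cL^\bu)^\bsi_k(l_1)=(\check\cL^1)^\bsi_k}}{|\Aut(\vGa)|}$, which is the claim after writing $\frac{1}{2g-2} = \frac{-1}{2-2g}$ and absorbing the sign; the stated formula in Proposition \ref{prop:stable-Fg-B} has $\frac{1}{2-2g}$, so I must track the sign carefully through the residue computation (the $(-1)^{g(\vGa)-1}$ already sits inside $w_B^\bu$). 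The main obstacle is the first step — getting the residue pairing $\Res_{p\to p_\bsi}\theta_\bsi^k(p)\tPhi_\bsi(p) = \check h^\bsi_k$ exactly right, including the precise double-factorial constants and the sign — since everything else is a formal consequence of the locality of residues and the already-established graph sum. This is entirely parallel to the argument for Proposition \ref{prop:stable-Fg-A} on the A-model side (using Lemma \ref{lm:dilaton} and the dilaton equation), and in fact matching the two is exactly what the comparison in Section \ref{sec:mirror-symmetry} will exploit.
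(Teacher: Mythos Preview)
Your proposal is correct and follows essentially the same approach as the paper: plug the graph sum for $\omega_{g,1}$ into Definition~\ref{df:stable-Fg-B}, reduce to the single residue identity $\Res_{p\to p_{\bsi}}\theta_{\bsi}^k(p)\tPhi_{\bsi}(p)=-\check h^{\bsi}_k$, and verify that identity by the direct local expansion in $\zeta_{\bsi}$ (your ``interchange residues / integrate by parts'' alternative is unnecessary --- the direct power-series computation is already the simplest route). One small slip: the pole $\frac{-(2k+1)!!}{2^k\zeta_{\bsi}^{2k+2}}d\zeta_{\bsi}$ picks out the coefficient of $\zeta_{\bsi}^{2k+1}$ (not $\zeta_{\bsi}^{2k}$) in $\tPhi_{\bsi}$, namely $\frac{2h^{\bsi}_{2k-1}}{2k+1}$, which gives the required $-\check h^{\bsi}_k$ and hence the sign $\frac{1}{2-2g}$ in the statement.
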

\begin{proof}
Recall that
$$
(\check{\cL}^\bu)^{\bsi}_k(l_1)  =  \frac{1}{\sqrt{-2}}  \theta^k_{\bsi}(p_1),\quad
(\check{\cL}^1)^{\bsi}_k = \frac{-1}{\sqrt{-2}} \check{h}^{\bsi}_k.
$$
By the graph sum formula of $\omega_{g,1}$
(Theorem \ref{thm:DOSS}) and the definition of $\check{F}_g$ (Definition
\ref{df:stable-Fg-B}),  it suffices to show that
$$
\Res_{p\to p_{\bsi}} \theta^k_{\bsi}(p)\tPhi_{\bsi}(p)
= -\check{h}_k^{\bsi}.
$$
Near $p_{\bsi}$, we have
$$
\theta^k_{\bsi} = \Big(\frac{-(2k+1)!!}{2^k \zeta_{\bsi}^{2k+2} } + f(\zeta_{\bsi})\big) d\zeta_{\bsi}
$$
where $f(\zeta_{\bsi})$ is analytic around $p_{\bsi}$, and
$$
d\tPhi_{\bsi} = \hat{y}d\hat{x} = (\check{v}^{\bsi} +\sum_{d=1}^\infty h^{\bsi}_d \zeta_{\bsi}^d)(2\zeta_{\bsi} d\zeta_{\bsi}),
$$
so up to a constant,
$$
\tPhi_{\bsi} = \check{v}^{\bsi} +\sum_{d=1}^\infty \frac{2h^{\bsi}_d}{d+2}\zeta_{\bsi}^{d+2}.
$$
Therefore,
$$
\Res_{p\to p_{\bsi}} \theta^k_{\bsi}(p)\tPhi_{\bsi}(p)
= \frac{-(2k-1)!!}{2^{k-1}} h_{2k-1}^{\bsi}= -\check{h}_k^{\bsi}.
$$
\end{proof}

\section{All Genus Mirror Symmetry}
\label{sec:mirror-symmetry}

\subsection{Identification of A-model and B-model $R$-matrices}\label{Id fundamental}
Recall that there is an isomorphism of Frobenius algebras (cf. Equation  \eqref{eqn:qcoh-Tprime} in Section \ref{sec:qcoh}):
$$
QH^*_{CR,\bT'}(\cX)\Bigr|_{\btau=\btau(q), Q=1} \cong \Jac(W_q^{\bT'}).
$$
Equation \eqref{eqn:Delta-Hess} and Lemma \ref{lm:pairing} imply
$$
h_1^{\bsi}(q)=\sqrt{ \frac{2}{\frac{d^2\hx}{d\hy^2}(\check{v}^{\bsi}) } } = \left. \sqrt{\frac{-2}{\Delta^{\bsi}(\btau) }} \, \right|_{\btau=\btau(q), Q=1} .
$$
We are working with {\em non-conformal} Frobenius manifolds, and the solution of the quantum differential equation is not unique.
The ambiguity is fixed by the following theorem.
\begin{theorem}
For any $\bsi=(\si,\alpha)$ and $\bsi'=(\si',\alpha')$,
$$
R_{\bsi'}^{\spa \bsi}(z)\big|_{t=\btau,Q=1}=\check R_{\bsi'}^{\spa \bsi}(-z).
$$
\end{theorem}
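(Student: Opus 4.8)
The plan is to show that both $R(z)\big|_{t=\btau,Q=1}$ and $\check R(-z)$ are matrices $\delta + O(z)$ that (i) solve the same quantum differential equation on the small phase space $H^2_{\CR,\bT'}(\cX;\bC)$, (ii) are unitary, and (iii) conjugate the canonical coordinates $u^\bsi$ to a diagonal matrix that differs from $U$ only by constants; then invoke the uniqueness statement of Proposition \ref{prop:fundamental}, which fixes the remaining ambiguity to a right multiplication by $\exp(\sum_{i\ge 1} a_i z^{2i-1})$ with $a_i$ constant diagonal; and finally kill that ambiguity by matching the two $R$-matrices in a single degenerate limit. First I would recall that the $\cS$-operator $\tS = \Psi R(z) e^{U/z}$ (Theorem \ref{R-matrix}) is a fundamental solution to the $\bT$-equivariant big QDE, restrict it to the small phase space $t=\btau$, $Q=1$, and pass to the Calabi-Yau torus $\bT'$ via the homomorphism \eqref{eqn:iota}; by the mirror theorem (Theorem \ref{thm:IJ}) and the identification of the QDE with the Landau-Ginzburg quantum differential equation $-z\partial_{\tau_i}\check\rA = \check H_i\cdot \check\rA$ under the isomorphism \eqref{eqn:QH=Jac}, the $\cS$-matrix becomes a solution $\check\rA = \check S$ of exactly the equation appearing in Proposition \ref{prop:fundamental}.

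\textbf{Key steps.} The second step is to verify that $\check S$ has the asymptotic form required by Proposition \ref{prop:fundamental}: this is precisely the content of Proposition \ref{prop:check-S}, giving $\check S_{\hat\bsi'}^{\spa\tbsi}(z)\sim\sum_{\bsi''}\Psi_{\bsi'}^{\spa\bsi''}\check R_{\bsi''}^{\spa\bsi}(z)e^{-\check u^\bsi/z}$ with $\check R = \delta + O(z)$. The third step is unitarity of $\check R$: this was established via Eynard's Laplace-transformed Bergman kernel identity \eqref{eqn:B-ff} by setting $u=-v$. The fourth step is that the functions $\check u^\bsi$ (critical values of $W^{\bT'}_q$) coincide with the canonical coordinates $u^\bsi$ up to constants --- this is the computation $\partial_{\tau_i}\check u^\bsi = \partial_{\tau_i} u^\bsi$ carried out just before Section \ref{sec:eynard-orantin} using $[\partial_{\tau_i}W^{\bT'}_q]=\sum_\bsi \partial_{\tau_i}W^{\bT'}_q(\bp_\bsi)[V_\bsi(\btau)]$. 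With these in place, Proposition \ref{prop:fundamental} applies to $\check S$ and gives: any two such solutions (hence $\check R(z)$ obtained from $\check S$, and $R(-z)\big|_{t=\btau,Q=1}$ obtained from $\tS$ after the change $z\mapsto -z$, using the unitarity $R^T(-z)R(z)=\one$ from Theorem \ref{R-matrix}) differ by a right factor $\exp(\sum_{i\ge1} a_i z^{2i-1})$ with $a_i$ constant diagonal.

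\textbf{Removing the residual ambiguity.} The final and main step is to pin down $a_i = 0$ by comparing the two $R$-matrices at a degeneration point. Both $R(z)$ and $\check R(z)$ admit explicit large-radius / large-complex-structure limits: the A-model side is given by the formula \eqref{eqn:R-at-zero} in Theorem \ref{R-matrix}, namely a product of $\exp$ of Bernoulli-polynomial series in $z/\w_i(\si)$; the B-model side degenerates, under the toric degeneration $q\to 0$ of Section \ref{sec:toric-degeneration}, to the $R$-matrix of the affine piece $\cX_\si = [\bC^3/G_\si]$ attached to each $3$-cone, for which the matching $R = \check R(-z)$ is already known (the affine case, \cite{FLZ}, which the paper's Section \ref{sec:second-kind} explicitly reduces to). Since $\check R_{(\si,\alpha)}^{\spa(\si',\alpha')}\big|_{q\to 0}$ vanishes unless $\si=\si'$ and reduces on the diagonal blocks to the affine $R$-matrices of \cite[Section 6.6]{FLZ}, and the same is true of $R\big|_{\tQ,\tau''\to 0}$ by \eqref{eqn:R-at-zero}, the two sides agree in the limit; hence the diagonal constant matrices $a_i$ all vanish and $R_{\bsi'}^{\spa\bsi}(z)\big|_{t=\btau,Q=1}=\check R_{\bsi'}^{\spa\bsi}(-z)$ identically. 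I expect the main obstacle to be the bookkeeping in this last step: carefully tracking the $z\mapsto -z$ sign convention across $\tS=\Psi R(z)e^{U/z}$ versus $\check S\sim \Psi\check R(z)e^{-\check u/z}$, ensuring that the unitarity relations are used consistently so that the ``difference of two solutions'' really is of the form $\exp(\sum a_i z^{2i-1})$, and then justifying that the $q\to 0$ limit of $\check R$ genuinely computes the affine $R$-matrix block-diagonally (which relies on the degeneration of the differentials $\theta^d_\bsi$ recorded at the end of Section \ref{sec:second-kind} and on the compatibility of the stacky-group actions with the central fiber).
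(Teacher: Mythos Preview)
Your proposal is correct and follows essentially the same approach as the paper: invoke Proposition \ref{prop:fundamental} (using the QDE, unitarity from \eqref{eqn:B-ff}, and $\partial_{\tau_i}\check u^\bsi=\partial_{\tau_i}u^\bsi$) to reduce to a constant diagonal ambiguity, then kill that ambiguity by matching at $q=0$ via the toric degeneration of Section \ref{sec:toric-degeneration}, the block-diagonal degeneration of $\theta^0_\bsi$ from Section \ref{sec:second-kind}, and the explicit affine computation from \cite[Theorem 7.5]{FLZ} against \eqref{eqn:R-at-zero}. The paper's proof is terser but structurally identical; your anticipated bookkeeping concerns about the $z\mapsto -z$ sign are exactly the only places where care is needed.
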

\begin{proof}
We know that $S_{\hat \bsi'}^{\spa \bsi}(-z)$ and $\check S_{\hat \bsi'}^{\spa \tbsi}(z)$ satisfy the conditions of Proposition
\ref{prop:fundamental} by setting $\check A_{\hat \bsi'}^{\spa\tbsi}(z)=S_{\hat\bsi'}^{\spa \bsi}(-z)$ or $\check S_{\hat \bsi'}^{\ \tbsi}(z)$. So we only need to show $R$ and $\check R$ match when $q=0$.
Recall from  Section \ref{sec:toric-degeneration} that
when $q=0$, the compactified mirror curve $\Cbar_q$ degenerates into a nodal curve
$\fC_0= \bigcup_{\si\in \Si(3)} \Cbar_\si$, where the irreducible component $\Cbar_\sigma$ can be identified with the compactified mirror curve of
the affine toric Calabi-Yau 3-orbifold $\cX_\si$ defined by the 3-cone $\si$.  Recall from Section \ref{sec:second-kind} that the $1$-form $\theta^0_{\sigma,\alpha}(0)|_{\Cbar_{\sigma'}}$ vanishes when  $\sigma'\neq \sigma$, and $\theta^0_{\sigma,\alpha}(0)|_{\Cbar_\sigma}$  coincides with $\theta_0^\alpha$ in \cite[Section 6.6]{FLZ}.
As computed in \cite[Theorem 7.5]{FLZ}
\[
\check R_{\bsi'}^{\spa \bsi}(-z)|_{q=0}
=\delta_{\si,\si'}\sum_{h\in G_\sigma}\frac{\chi_\alpha(h) \chi_{\alpha'}(h^{-1})}{|G_\sigma|}\exp\left (\sum_{m\geq 1} \frac{(-1)^m}{m(m+1)}\sum_{i=1}^3 B_{m+1} (c_i(h))(\frac{z}{\w_i(\sigma)})^m\right)
\]
which is precisely $R_{\bsi'}^{\spa \bsi}(z)|_{q=0}$ given in Equation (\ref{eqn:R-at-zero}). Here $\bsi=(\sigma,\alpha),\bsi'=(\sigma',\alpha')$.
\end{proof}

\subsection{Identification of graph sums}
In this subsection, we identify the graph sums on A-model and B-model.

For $l=1,\cdots,n$ and $\bsi\in I_\Si$, let
$$
\tilde{\bu}_l^\bsi(z)=\sum_{a\geq 0}(\tilde{u}_l)^\bsi_az^a:=\sum_{\bsi'\in I_\Si}
\left(\frac{\bu_l^{\bsi'}(z)}{\sqrt{\Delta^{\bsi'}(\btau)} }
S^{\widehat{\underline{\bsi}} }_{\spa \widehat{\underline{\bsi'}}}(z)\right)_+
$$
The identification $R(z)|_{t=\btau,Q=1}=\check{R}(-z)$ implies the following theorem:

\begin{theorem}\label{thm:graph-match}
For any $\vGa\in \bGa_{g,n}(\cX)$,
$$
w^\bu_B(\vGa)|_{ \frac{1}{\sqrt{-2}}\htheta_{\bsi}^a(p_l)=-(\tilde{u}_l)^\bsi_a}
=(-1)^{g(\vGa)-1+n} w^\bu_A(\vGa)\big|_{t=\btau,Q=1},
$$
under the closed mirror map.
\end{theorem}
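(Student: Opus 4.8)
The plan is to compare the two graph-sum weights $w_A^{\bu}(\vGa)$ and $w_B^{\bu}(\vGa)$ term by term, showing that the building blocks (vertices, edges, dilaton leaves, ordinary leaves) of the A-model and B-model weights agree up to explicitly tracked signs and normalization factors, and that the product of these local contributions over $\vGa$ produces the global sign $(-1)^{g(\vGa)-1+n}$. The key input, already established, is the identification of the $R$-matrices: $R(z)|_{t=\btau,Q=1}=\check R(-z)$ from the theorem in Section \ref{Id fundamental}, together with the hydrodynamic identities $h_1^{\bsi}(q)=\sqrt{-2/\Delta^{\bsi}(\btau)}\,|_{\btau=\btau(q),Q=1}$ from Lemma \ref{lm:pairing} and Equation \eqref{eqn:Delta-Hess}, and the fact that the canonical coordinates $u^{\bsi}$ coincide with the critical values $\check u^{\bsi}$ (established at the end of Section \ref{sec:b-string} using Proposition \ref{prop:fundamental}).

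First I would match the vertex factors. The A-model vertex weight is $(\sqrt{\Delta^{\bsi(v)}(t)})^{2g(v)-2+\val(v)}\langle\prod_h\tau_{k(h)}\rangle_{g(v)}$; the B-model vertex weight is $(-1)^{g(\vGa)-1}$ times $\prod_v(h_1^{\bsi(v)}/\sqrt{-2})^{2-2g(v)-\val(v)}\langle\prod_h\tau_{k(h)}\rangle_{g(v)}$. Using $h_1^{\bsi}/\sqrt{-2}=\sqrt{1/\Delta^{\bsi}(\btau)}=1/\sqrt{\Delta^{\bsi}(\btau)}$, the product over $v$ of $(h_1^{\bsi(v)}/\sqrt{-2})^{2-2g(v)-\val(v)}$ equals $\prod_v(\sqrt{\Delta^{\bsi(v)}(\btau)})^{2g(v)-2+\val(v)}$, so the $\Delta$-powers match exactly (under the closed mirror map). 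The overall $(-1)^{g(\vGa)-1}$ in $w_B^{\bu}$ is pulled out in front. Next, the edge factors: $\cE^{\bsi,\bsi'}_{k,l}=[z^kw^l]\frac{1}{z+w}(\delta_{\bsi\bsi'}-\sum_\brho R^{\spa\bsi}_\brho(-z)R^{\spa\bsi'}_\brho(-w))$, while $\check B^{\bsi,\bsi'}_{k,l}=[z^kw^l]\frac{1}{z+w}(\delta_{\bsi\bsi'}-\sum_\gamma f^{\spa\bsi}_\gamma(\tfrac1z)f^{\spa\bsi'}_\gamma(\tfrac1w))$, and since $\check R^{\spa\bsi}_{\bsi'}(z)=f^{\spa\bsi}_{\bsi'}(1/z)$ and $R(z)|_{t=\btau,Q=1}=\check R(-z)$, the edge factors agree identically. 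The dilaton leaves: $(\cL^1)^{\bsi}_k=[z^{k-1}](-\sum_{\bsi'}\frac{1}{\sqrt{\Delta^{\bsi'}(t)}}R^{\spa\bsi}_{\bsi'}(-z))$ versus $(\check\cL^1)^{\bsi}_k=\frac{-1}{\sqrt{-2}}\check h^{\bsi}_k$; I would verify $\check h^{\bsi}_k=[z^{k-1}]\sum_{\bsi'}\frac{\sqrt{-2}}{\sqrt{\Delta^{\bsi'}(t)}}\check R^{\spa\bsi}_{\bsi'}(z)$ using the definitions of $\check h^{\bsi}_k$ and $f^{\spa\bsi}_{\bsi'}$ together with Proposition \ref{prop:b-canonical}, so again these match. (This computation is essentially Lemma 6.5 of \cite{FLZ}; one could alternatively cite that lemma together with the $R$-matrix match.)

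The step I expect to be the main obstacle is matching the \emph{ordinary leaf} weights and carefully accounting for the sign $(-1)^n$. On the A-model side the ordinary leaf is $(\cL^{\bu})^{\bsi}_k(l_j)=[z^k](\sum_{\bsi',\brho}(\tilde\bu_j^{\brho}(z)/\sqrt{\Delta^{\brho}(t)}\cdot S^{\widehat\brho}_{\spa\widehat{\bsi'}}(z))_+R(-z)^{\spa\bsi}_{\brho})$, which after the substitution in the theorem statement becomes $[z^k]\sum_\brho(\tilde u_j)^\brho(z)R(-z)^{\spa\bsi}_\brho$ restricted to the relation $(\tilde u_j)^\brho_a=\frac{1}{\sqrt{-2}}\htheta^a_\brho(p_j)$; on the B-model side the open/descendant leaf is $(\check\cL^{\bu})^{\bsi}_k(l_j)=\frac{-1}{\sqrt{-2}}\theta^k_\bsi(p_j)$. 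Using Proposition \ref{prop:b-open-leaf}, $\theta_\bsi(z)=\sum_{\bsi'}\check R^{\spa\bsi}_{\bsi'}(z)\htheta_{\bsi'}(z)$, hence $[z^k]\sum_\brho\frac{1}{\sqrt{-2}}\htheta_\brho(z)\check R(z)^{\spa\bsi}_\brho=\frac{1}{\sqrt{-2}}[z^k]\theta_\bsi(z)=\frac{1}{\sqrt{-2}}\theta^k_\bsi(p_j)$, which is $-1$ times the B-model leaf weight. So each ordinary leaf contributes a factor of $(-1)$ when passing from the A-model to the B-model convention (after the matching substitution), giving $(-1)^n$; combined with the vertex $(-1)^{g(\vGa)-1}$ this yields the total factor $(-1)^{g(\vGa)-1+n}$, establishing $w_B^{\bu}(\vGa)|_{\frac{1}{\sqrt{-2}}\htheta^a_\bsi(p_l)=(\tilde u_l)^\bsi_a}=(-1)^{g(\vGa)-1+n}w_A^{\bu}(\vGa)|_{t=\btau,Q=1}$. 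The delicate point is that all identifications must be made \emph{simultaneously under the closed mirror map} $\btau=\btau(q)$, $Q=1$, so one must check that restriction to $Q=1$ is legitimate at every stage (guaranteed by Remark \ref{descendent Q}) and that the asymptotic-expansion ambiguity in $\check R$ is pinned down exactly by the $q=0$ comparison performed in the preceding theorem; I would be careful to confirm that the constant diagonal matrix ambiguity of Proposition \ref{prop:fundamental} is eliminated by the degree-zero matching and does not reappear in the leaf or dilaton contributions.
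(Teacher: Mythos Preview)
Your proposal is correct and follows essentially the same approach as the paper: both proofs match the vertex, edge, dilaton leaf, and ordinary leaf contributions term by term using the $R$-matrix identification $R(z)|_{t=\btau,Q=1}=\check R(-z)$ together with $h_1^{\bsi}=\sqrt{-2/\Delta^{\bsi}(\btau)}$, and both obtain the sign $(-1)^n$ from the ordinary leaves and the $(-1)^{g(\vGa)-1}$ from the overall B-model normalization. The paper's treatment of the dilaton leaf simply cites the relation $\check h^{\bsi}_a=[u^{1-a}]\sum_{\brho}h_1^{\brho}f^{\spa\bsi}_{\brho}(u)$ from \cite{FLZ} directly rather than invoking Proposition \ref{prop:b-canonical}, but this is the same computation you outline.
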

\begin{proof}
\begin{enumerate}
\item {\em Vertex}.
By the discussion in Section \ref{sec:qcoh} and Section \ref{sec:LG}, $h_1^\bsi=\sqrt{\frac{-2}{\Delta^{\bsi}(\btau)}}$ for any $\bsi\in I_\Si$. So in the B-model vertex term, $\frac{h^\bsi_1}{\sqrt{-2}}=\sqrt{\frac{1}{\Delta^\bsi(\btau)}}$. Therefore the B-model vertex matches the A-model vertex.

\item  {\em Edge}. By the property for $\check{B}^{\bsi,\brho}_{a,b}$,
$$
\check{B}^{\bsi,\brho}_{a,b}=[u^{-a}v^{-b}]\left(\frac{uv}{u+v}(\delta_{\bsi,\brho}
-\sum_{\bgamma\in I_\Si} f^{\spa \bsi}_\bgamma(u)f^{\spa \brho}_\bgamma(v))\right)\\
=[z^{a}w^{b}]\left(\frac{1}{z+w}(\delta_{\bsi,\brho}
-\sum_{\bgamma\in I_\Si} f^{\spa \bsi}_\bgamma(\frac{1}{z})f^{\spa \brho}_\bgamma(\frac{1}{w}))\right).
$$
Therefore, the identification $R(z)_{\brho}^{\spa\bsi}\big|_{t=\btau, Q=1}=\check{R}_\brho^{\spa\bsi}(-z)=f^{\spa \bsi}_{\brho}(-\frac{1}{z})$ gives us $$
\check{B}^{\bsi,\brho}_{a,b} = \cE^{\bsi,\brho}_{a,b}\big|_{t=\btau,Q=1}.
$$

\item {\em Ordinary leaf}. By Proposition \ref{prop:b-open-leaf}, we have the following expression for $\theta_\bsi^a $:
$$
\theta_\bsi^a = \sum_{c=0}^{a}\sum_{\brho\in I_\Si}([z^{a-c}](\check{R}_\brho^{\spa\bsi}(z))\htheta_{\brho}^c.
$$
Notice that $\check{R}(z)=R(-z)\big|_{t=\btau, Q=1}$. So
$$
(\check{\cL^\bu})^{\bsi}_k \Big|_{ \frac{1}{\sqrt{-2}}\hat{\theta}^a_\bsi(p_l)=- (\tu_l)^{\bsi}_a  } = -(\cL^\bu)^{\bsi}_k(l_j)\vert_{t=\btau, Q=1}.
$$

\item {\em Dilaton leaf}. We have the following relation between $\check{h}^\bsi_a$ and $f^{\spa \bsi}_{\brho}(u)$ (see \cite{FLZ})
$$
\check{h}^\bsi_a=[u^{1-a}]\sum_{\brho\in I_\Si}h_1^\brho f^{\spa \bsi}_\brho(u).
$$
By the relation
$$
R_\brho^{\,\ \bsi}(z)\big|_{t=\btau,Q=1} =  f^{\spa\bsi}_\brho(-\frac{1}{z})
$$
and the fact $h^\bsi_1=\sqrt{\frac{-2}{\Delta^\bsi(\btau)}}$, it is easy to see that
the B-model dilaton leaf matches the A-model dilaton leaf.
\end{enumerate}
\end{proof}

\subsection{BKMP Remodeling Conjecture: the open string sector}

In this subsection, we fix $\su_1=1$ and $\su_2=f$. We compare A and
B-model open leafs. The disk potential with respect to the
Aganagic-Vafa brane $\cL$ is given by localization, as in Proposition
\ref{prop:open-descendant} (computed in \cite{FLT}).
\begin{equation}
\label{eqn:disk-localization}
(\hat X\frac{d}{d\hat X})^2F_{0,1}^{\cX,(\cL,f)}(\btau,\tX)= [z^0] \sum_{\bsi'\in I_\Sigma} \txi^{\bsi'} (z,\tX)S(1,\phi_{\bsi'})\Big|_{t=\btau,Q=1}.
\end{equation}
The following theorem is proved by Tseng and the first two authors \cite{FLT}.
\begin{theorem}[Genus zero open-closed mirror symmetry]
\label{thm:AKV-disk}
Under the closed mirror map given by Equation \eqref{eqn:closed-mirror-map} and the open map given by
\begin{equation}
\label{eqn:open-mirror-map}
\log \tX= \log \hat X+\sum_{m=1}^{3} w_i A_i(q),
\end{equation}
we have
$$
F_{0,1}^{\cX,(\cL,f)}(\btau,\tX) =\check{F}_{0,1}(q;\hX)=\sum_{\ell\in \bmu_\fm^*}(
\int_{0}^{\hat X}    \rho_\ell^*( \hat{y}(X')-\hat{y}(0) )(-\frac{d  \hX'}{ \hX'})  \psi_\ell.
$$
\end{theorem}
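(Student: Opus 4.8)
\textbf{Proof proposal for Theorem \ref{thm:AKV-disk}.}

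The plan is to establish the identity by evaluating both sides through explicit expansions and matching them term-by-term in powers of the open string variable, rather than by any abstract argument. On the B-model side, the starting point is the definition of $\check F_{0,1}(q;\hX)$ from Section \ref{sec:B-potential}: it is obtained by integrating $(\hy_q^\ell(\hX')-\hy_q^\ell(0))(-d\hX'/\hX')$ against $\psi_\ell$ over the punctures $\bar p_\ell$ of the mirror curve. The key input is that the defining equation $H(X,Y,q)=0$ of the mirror curve, after the reparametrization \eqref{eqn:Xab}--\eqref{eqn:HH} adapted to the preferred flag $(\tau_0,\si_0)$, reduces at $q=0$ to $X^\fr Y^{-\fs}+Y^\fm+1=0$. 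Around each puncture $\bar p_\ell$ one solves for $\hy=-\log \hY$ as a power series in $\hX=e^{-\hx}$ by Lagrange inversion, and the coefficients are ratios of Gamma-function expressions in the weights $w_1,w_2,w_3$; this is exactly the computation that produces the disk factors $D'(d_0,k)$ and the functions $\Phi^h_0(\tX)$ in Section \ref{sec:open-closed-GW}. So the first step is to verify that after applying $\hX\tfrac{d}{d\hX}$ to $\check F_{0,1}$, the resulting generating function coincides with $\tfrac{1}{2} \sum_{\bsi\in I_\Si}\big(\txi^\bsi(z,\hX)\big)$-type expressions — more precisely, I would show that $\sum_{\ell}\int_0^{\hX}(\rho_q^\ell)^*\theta_\bsi^0\,\psi_\ell = \xi_\bsi^0(\hX)$ specializes at $q=0$ to the $\txi^\alpha_{-2}$ and $\txi^\alpha_{-1}$ appearing in Proposition \ref{prop:open-descendant}(1), using the identification of $\theta^0_{\si_0,\alpha}(0)$ with $\theta_0^\alpha$ on $\Cbar_{\si_0}$ recorded in Section \ref{sec:second-kind} and the explicit formula for $\theta_0^\alpha$ from \cite[Section 6.6]{FLZ}.

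The second step is to upgrade this $q=0$ match to an identity of power series in $q$ near $q=0$. Here I would use Lemma \ref{lemm:C} together with Proposition \ref{prop:b-canonical}: these express $\theta_\bsi^0$ (equivalently $C(p_1,p_2)$) in terms of the second derivatives $\partial^2\Phi/\partial\tau_a\partial\tau_b$ of the Liouville form, and the latter are the objects governed directly by the Picard-Fuchs system and the closed mirror map $\btau=\btau(q)$. Concretely, the A-model disk function $F_{0,1}^{\cX,(\cL,f)}$ satisfies, by \eqref{eqn:disk-localization}, the relation that $(\hX\tfrac{d}{d\hX})^2 F_{0,1}^{\cX,(\cL,f)}$ equals $[z^0]\sum_{\bsi'}\txi^{\bsi'}(z,\tX)S(1,\phi_{\bsi'})|_{t=\btau,Q=1}$. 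I would show the B-model side satisfies the same ODE in the $\hX$-variable with respect to the same linear differential operators in $\tau_a$ (these are precisely the operators whose solution space is pinned down by Proposition \ref{prop:PF} and the mirror theorem \ref{thm:IJ}). The open mirror map \eqref{eqn:open-mirror-map} $\log\tX=\log\hX+\sum_i w_iA_i(q)$ is exactly the change of variables needed to make the two disk factors agree: it is the $\hX$-analogue of the closed mirror map component $\tau_0(t_0,q)=t_0+\sum_i\w_iA_i(q)$ from \eqref{eqn:closed-mirror-map}, reflecting that $\tX$ is the equivariant open coordinate attached to the flag $(\tau_0,\si_0)$.

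The third step handles the constant of integration and the factor $|G_0|$. Since both $F_{0,1}^{\cX,(\cL,f)}$ and $\check F_{0,1}$ are power series in $\tX$ (resp.\ $\hX$) with no constant term — the A-model one by construction in Definition \ref{def:Fgn} and Proposition \ref{prop:open-descendant}, the B-model one because $\check F_{0,1}(q;\hX)$ involves only positive powers of $\hX$ — matching $(\hX\tfrac{d}{d\hX})^2$ of both sides, which kills no information beyond the linear term in $\tX$, determines them up to a term linear in the exponent; and the linear term is fixed by the disk factor $\Phi^1_{-2}(\tX)$ versus its B-model counterpart, both computed from the same Gamma-function expression. The overall normalization $|G_0|$ enters because the B-model open leaf $(\check\cL^O)^\bsi_k$ and the B-model disk potential are built from $\xi_\bsi$, while the A-model correlators carry the $\tfrac{1}{|G_0|}$ in $\Phi^h_a(\tX)$ and the $|G_0|$ in $\txi^\alpha_a(\tX)$; tracking these through Proposition \ref{prop:open-descendant}(1) gives the stated $|G_0|$ (not $|G_0|^n$ since $n=1$).

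The main obstacle I expect is the bookkeeping of the reparametrization \eqref{eqn:Xab}--\eqref{eqn:Ycd} and the branch choices for $\log$ near each puncture $\bar p_\ell^{(\tau_0,\si_0)}$: one must be careful that the sum over $\ell\in\bmu_\fm^*$ on the B-model side correctly reproduces the sum over twisted sectors $\one'_{-k/\fm}$ and the phase factors $(-(-1)^{-k/\fm})$ on the A-model side, and that the Lefschetz thimble $\Gamma_\bsi$ used to define $\theta_\bsi$ and the relative cycle along which the disk is counted are compatible. This is essentially the genus-zero, one-hole specialization of the oscillatory-integral dimensional reduction in Lemma \ref{lm:oscillatory}, so the analytic content is already available; the work is organizing it so that the equivariant disk potential of \cite{FLT} and the period-type integral defining $\check F_{0,1}$ are literally the same series after the two mirror maps. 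Since the precise genus-zero statement is proved in \cite{FLT}, I would in fact present the proof as: cite \cite{FLT} for the equality of the two sides after applying $(\hX\tfrac{d}{d\hX})^2$, then integrate twice and match the two undetermined integration constants using the explicit leading disk factors and the fact that both potentials vanish at $\tX=\hX=0$.
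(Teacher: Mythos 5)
Your proposal is fine and ends up in the same place as the paper: Theorem \ref{thm:AKV-disk} is not proved in this paper at all but is quoted directly from \cite{FLT}, and your final plan — cite \cite{FLT} for the localization/hypergeometric computation of the disk factors and their match with the B-model expansion of $\hat y\,d\hat x$ at the punctures, then note that $(\hX\frac{d}{d\hX})^2$ is injective on power series with no constant term so no integration constants survive — is consistent with that. Your preliminary sketch (Lagrange inversion at the punctures, Gamma-function disk factors, the open mirror map as the $\hX$-analogue of the $\tau_0$-component of the closed mirror map, and the $|G_0|$ bookkeeping through $\Phi^h_a$ versus $\txi^\alpha_a$) is an accurate outline of what \cite{FLT} actually does, so nothing is missing.
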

This theorem, together with Equation \eqref{eqn:disk-localization},
implies that under the open-closed mirror map, as power series in
$\hat X$,
\begin{equation}
\label{eqn:z-disk}
U(z)(\btau,\tX):= \sum_{\bsi'\in I_\Sigma} \txi^{\bsi'} (z,\tX)S(1,\phi_{\bsi'})\big|_{t=\btau, Q=1}=-\sum_{n\geq 0} z^n (-\frac{d}{d\hat x})^n  \frac{d}{d\hat x}\sum_{\ell\in \bmu_\fm^*} \rho_\ell^*(\hat y) \psi_\ell.
\end{equation}
Notice that from Proposition \ref{prop:b-canonical}, if
$$
\hat \phi_{\bsi}(\btau(q))=\sum_{i=1}^{\fg} \hat A^i_{\bsi}(q) H_{a_i}\star_{\btau} H_{b_i}
+ \sum_{a=1}^{\fp} \hat B^a_{\bsi}(q)  H_a + \hat C_\bsi(q) \one,
$$
then
\begin{equation}
\label{eqn:b-canonical}
\frac{\theta_{\bsi}^0}{\sqrt{-2}}=\sum_{i=1}^{\fg} \hat A^i_{\bsi}(q)
\frac{\partial^2 \Phi}{\partial \tau_{a_i}\partial \tau_{b_i}}
+ \sum_{a=1}^{\fp} \hat B^a_{\bsi}(q)d(\frac{\frac{\partial \Phi}{\partial \tau_a}}{d\hat x})
+ \hat C_\bsi(q) d(\frac{d\hat y}{d\hat x}).
\end{equation}
Therefore
\[
\sum_{\bsi'\in I_\Sigma} \txi^{\bsi'} (z,\tX) S(\hat \phi_\bsi(\btau), \phi_{\bsi'})\big|_{t=\btau, Q=1}
=\sum_{i=1}^{\fg} z^2 \hat A^i_{\bsi}(q) \frac{\partial^2
   U}{\partial\tau_{a_i}\partial \tau_{b_i}}+ \sum_{a=1}^{\fp} z \hat B^a_{\bsi}(q)
   \frac{\partial U}{\partial \tau_a} + \hat C_\bsi(q) U.
\]
By Equation \eqref{eqn:z-disk} and \eqref{eqn:b-canonical}, under the
open-closed mirror map
\begin{equation}
\label{eqn:xi-in-disk}
z^2\sum_{\bsi'\in I_\Si} \txi^{\bsi'}(z,\tX)
S( \hat\phi_{\bsi}(\btau),\phi_{\bsi'})\big|_{t=\btau, Q=1} =-\sum_{\ell\in \bmu_\fm^*}\int_0^{\hX}
\frac{ \rho^*_\ell \hat \theta_{\bsi}(z)}{\sqrt{-2}} \psi_\ell=-\frac{\xi_\bsi(z, \hat{X})}{\sqrt{-2}}.
\end{equation}

\begin{proposition}[Annulus open-closed mirror symmetry]\label{prop:BKMP-annulus}
Under the open-closed mirror map,
$$
\check{F}_{0,2}(q;\hX_1,\hX_2) =-F_{0,2}^{\cX,(\cL,f)}(\btau;\hX_1,\hX_2).
$$
\end{proposition}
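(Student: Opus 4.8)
The plan is to reduce the identity to matching the second-order $\hat X$-derivatives on both sides, then to use the canonical-basis expansion of the annulus invariants together with the disk mirror theorem. First I would apply $(\tX_1\frac{\partial}{\partial \tX_1})(\tX_2\frac{\partial}{\partial\tX_2})$ (equivalently $(\hX_1\frac{d}{d\hX_1})(\hX_2\frac{d}{d\hX_2})$, since the open mirror map $\log\tX_j = \log\hX_j + \sum_{m=1}^3 w_m A_m(q)$ only shifts by a $q$-dependent constant) to both $\check F_{0,2}$ and $F_{0,2}^{\cX,(\cL,f)}$. By the definition of $\check F_{0,2}$ in Section \ref{sec:B-potential} and Lemma \ref{lemm:C}, the double derivative of the B-model side is, up to the pullbacks $\rho_q^{\ell_1,\ell_2}$ and the subtraction of $\frac{dX_1'dX_2'}{(X_1'-X_2')^2}$, just $C(p_1,p_2) = \frac12\sum_{\bsi\in I_\Si}\theta^0_\bsi(p_1)\theta^0_\bsi(p_2)$; the subtracted term exactly accounts for the $(g,n)=(0,2)$ double-pole and the constant piece $F_{0,2}^{\cX,(\cL,f)}(0;\tX_1,\tX_2)$ on the A-side (Remark after Proposition \ref{prop:open-descendant}). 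On the A-model side, by part (2) of Proposition \ref{prop:open-descendant} the annulus generating function is $[z_1^{-1}z_2^{-1}]\sum_{\alpha_1,\alpha_2}V_{z_1,z_2}(\phi_{\si_0,\alpha_1},\phi_{\si_0,\alpha_2})\txi^{\alpha_1}(z_1,\tX_1)\txi^{\alpha_2}(z_2,\tX_2)$, and by Equation \eqref{eqn:two-in-one} $V_{z_1,z_2}(a,b) = \frac{1}{z_1+z_2}\sum_{\bsi}S_{z_1}(\hat\phi_\bsi,a)S_{z_2}(\hat\phi_\bsi,b)$.

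Next I would insert the key relation \eqref{eqn:xi-in-disk}, which came out of the disk mirror theorem \ref{thm:AKV-disk}: under the open-closed mirror map,
$$
z^2|G_0|\sum_{\bsi'\in I_\Si}\txi^{\bsi'}(z,\tX)S(\hat\phi_\bsi(\btau),\phi_{\bsi'})\big|_{t=\btau,Q=1} = -\frac{\xi_\bsi(z,\hX)}{\sqrt{-2}}.
$$
Feeding this into the $V_{z_1,z_2}$-expression for the A-model annulus and summing over $\bsi$ using $\sum_\bsi \hat\phi_\bsi\otimes\hat\phi_\bsi$ as the "diagonal," the A-model annulus potential becomes (under the mirror map) $\frac{1}{|G_0|^2}[z_1^0 z_2^0]\frac{1}{z_1+z_2}\cdot\frac{1}{(-2)}\sum_\bsi\xi_\bsi(z_1,\hX_1)\otimes\xi_\bsi(z_2,\hX_2)$ up to sign, i.e. $-\frac{1}{2|G_0|^2}[z_1^0z_2^0]\frac{1}{z_1+z_2}\sum_\bsi\xi_\bsi(z_1,\hX_1)\xi_\bsi(z_2,\hX_2)$. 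Taking $\hX_j\frac{d}{d\hX_j}$ of $\xi_\bsi(z,\hX)$ turns $\int_0^{\hX}(\rho_q^\ell)^*\hat\theta_\bsi(z)$ into $\sum_\ell(\rho_q^\ell)^*\hat\theta_\bsi(z)\psi_\ell$, whose $z^0$-coefficient is $\sum_\ell(\rho_q^\ell)^*\theta^0_\bsi\psi_\ell$. Comparing with the B-model side, whose double derivative is $\sum_{\ell_1,\ell_2}(\rho_q^{\ell_1,\ell_2})^*\big(\frac12\sum_\bsi\theta^0_\bsi(p_1)\theta^0_\bsi(p_2)\big)\psi_{\ell_1}\otimes\psi_{\ell_2}$ minus the double-pole term, I get exactly $-|G_0|^2$ times the A-model double derivative: the factor $\frac12$ matches (one side carries $\frac{1}{-2}$ twice from the two $\frac{-1}{\sqrt{-2}}$, the other carries $\frac12$ from $C$), and the $\frac{1}{z_1+z_2}$ residue extraction produces precisely the "$-$" relative sign needed. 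Here I need the Bergman-kernel identity $\check B^{\bsi,\bsi'}(u,v,q)$ relation \eqref{eqn:B-ff} (equivalently that the $\theta^0_\bsi$ furnish the diagonalization of $\omega_{0,2}$) — this is already packaged in Lemma \ref{lemm:C}.

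Finally I would match constants: both $\check F_{0,2}(q;0,0)=0$ and $F_{0,2}^{\cX,(\cL,f)}$ has its $\tX$-independent normalization fixed, and the only ambiguity in integrating the double-derivative identity back up is the terms linear in a single $\hX_j$ or constant. Those are handled because all of $\check F_{0,2}$, $F_{0,2}^{\cX,(\cL,f)}(0;\tX_1,\tX_2)$ (determined by \eqref{eqn:annulus-zero}) and the difference $F_{0,2}^{\cX,(\cL,f)}(\btau;\tX_1,\tX_2)-F_{0,2}^{\cX,(\cL,f)}(0;\tX_1,\tX_2)$ involve only strictly positive powers of each $\hX_j$ (resp. $\tX_j$), so an $\hX_1\frac{d}{d\hX_1}$ or $\hX_2\frac{d}{d\hX_2}$ is injective on the relevant space of formal series and the double-derivative identity integrates uniquely. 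The one-variable pieces $\partial_{\hX_1}\check F_{0,2}|_{\hX_2=0}$ are themselves governed by Equation \eqref{eqn:annulus-zero} and the disk statement, so they match by Theorem \ref{thm:AKV-disk}; I would spell this out by applying only one of the two Euler operators and invoking the disk case.

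\textbf{Main obstacle.} The delicate point is bookkeeping of the numerical and sign factors — reconciling the $\frac{-1}{\sqrt{-2}}$ normalizations of the B-model leaves, the $\frac12$ in $C(p_1,p_2)$, the $|G_0|$'s coming from the orbifold structure of the brane, and the $[z_1^0 z_2^0]\frac{1}{z_1+z_2}$ extraction — so that the relative sign comes out to exactly $-|G_0|^2$ and not, say, $+|G_0|^2$ or $-2|G_0|^2$. A secondary subtlety is making sure the subtraction of $\frac{dX_1'dX_2'}{(X_1'-X_2')^2}$ on the B-side corresponds precisely to the diagonal ($\bsi$-independent) pole contribution and to the constant term $F_{0,2}^{\cX,(\cL,f)}(0;\cdot)$ on the A-side; this requires carefully matching the local expansions of $\omega_{0,2}$ near the punctures $\bar p_\ell$ with the $(g,n)=(0,2)$ term in $V_{z_1,z_2}$, i.e. the $\frac{(a,b)_{\cX,\bT}}{z_1+z_2}$ piece.
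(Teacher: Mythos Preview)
Your overall strategy---differentiate both sides, use the disk identity \eqref{eqn:xi-in-disk} and the WDVV relation \eqref{eqn:two-in-one}, then integrate back---is exactly right, but the choice of differential operator is wrong and this breaks the argument at the first step. You apply the \emph{product} $(\hX_1\partial_{\hX_1})(\hX_2\partial_{\hX_2})$ and then invoke Lemma~\ref{lemm:C} to identify the result on the B-side with $C(p_1,p_2)=\tfrac12\sum_\bsi\theta^0_\bsi(p_1)\theta^0_\bsi(p_2)$. But $C$ is defined in \eqref{eqn:C} by the \emph{sum} operator $(-\partial_{x(p_1)}-\partial_{x(p_2)})$ applied to $\omega_{0,2}/(dx_1\,dx_2)$, not the product. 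The product of Euler operators applied to $\check F_{0,2}$ returns the regularized kernel $\omega_{0,2}/(d\hat x_1\,d\hat x_2)$ itself (evaluated at the endpoints), which has no decomposition of the form $\tfrac12\sum_\bsi\theta^0_\bsi\otimes\theta^0_\bsi$; Lemma~\ref{lemm:C} simply does not apply.

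The sum operator is also what makes the A-side close. The paper applies $(\hX_1\partial_{\hX_1}+\hX_2\partial_{\hX_2})$; on the B-side this reproduces $\int\!\!\int C$ because (a) it matches the definition of $C$ and (b) the diagonal function $\hX_1\hX_2/(\hX_1-\hX_2)^2$ is homogeneous of degree~$0$, so the sum Euler operator kills it outright and no separate bookkeeping of the subtracted pole is needed. On the A-side, $\tX_j\partial_{\tX_j}$ acting on $\txi(z_j,\tX_j)$ produces a factor $z_j^{-1}$, so the sum operator generates $(z_1^{-1}+z_2^{-1})=\tfrac{z_1+z_2}{z_1z_2}$, which exactly cancels the $\tfrac{1}{z_1+z_2}$ in $V_{z_1,z_2}$ and leaves a clean $[z_1^{-2}z_2^{-2}]$ extraction of $\sum_\bsi S(\hat\phi_\bsi,\cdot)S(\hat\phi_\bsi,\cdot)$. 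With the product operator you would instead face $[z_1^0z_2^0]\tfrac{1}{z_1+z_2}(\cdots)$, which is not well defined as a power-series coefficient. In short: change the product to the sum and your outline becomes the paper's proof; the sign and $|G_0|^2$ bookkeeping you were worried about then falls out from \eqref{eqn:xi-in-disk} and \eqref{eqn:annulus-zero} exactly as in the paper.
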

\begin{proof} The symmetric meromorphic $2$-form $C(p_1,p_2)$ is defined by \eqref{eqn:C}. Then
\begin{align*}
& (\hX_1\frac{\partial}{\partial \hX_1} + \hX_2\frac{\partial}{\partial \hX_2})\check{F}_{0,2}(q;\hX_1,\hX_2)\\
=& \sum_{\ell_1,\ell_2\in \bmu_\fm^*} \int_0^{\hX_1}\int_0^{\hX_2}(\rho_q^{\ell_1,\ell_2})^*C
\psi_{\ell_1}\otimes\psi_{\ell_2}\\
=&\frac{1}{2} \sum_{\ell_1,\ell_2\in \bmu_\fm^*} \sum_{\bsi \in I_\Si} \int_0^{\hX_1}(\rho_q^{\ell_1})^* \theta_\bsi^0
\int_0^{\hX_2}(\rho_q^{\ell_2})^* \theta_\bsi^0 \psi_{\ell_1}\otimes \psi_{\ell_2}\\
=&\frac{1}{2}\sum_{\bsi\in I_\Si} \xi_\bsi^0(\hX_1)\xi_\bsi^0(\hX_2)\\
=&-[z_1^{-2}z_2^{-2}] \sum_{\bsi,\bsi',\bsi''\in I_\Si} \txi^{\bsi''}(z_1,\tX_1)
   \txi^{\bsi'}(z_2,\tX_2) S(\hat\phi_{\bsi}(\btau),\phi_{\bsi'})\big|_{t=\btau, Q=1}  S(\hat\phi_{\bsi}(\btau),\phi_{\bsi''})\big|_{t=\btau, Q=1}\\
=&-[z_1^{-2}z_2^{-2}](z_1+z_2) \sum_{\bsi',\bsi''}V( \phi_{\bsi'},\phi_{\bsi''})\big|_{t=\btau, Q=1}\txi^{\bsi'}(z_1,\tX_1) \txi^{\bsi''}(z_2,\tX_2)\\
=&-[z_1^{-1}z_2^{-1}] (\hX_1\frac{\partial }{\partial
   \hX_1}+\hX_2\frac{\partial}{\partial \hX_2}) \sum_{\bsi',\bsi''} V( \phi_{\bsi'},\phi_{\bsi''})\big|_{t=\btau, Q=1}\txi^{\bsi'}(z_1,\tX_1) \txi^{\bsi''}(z_2,\tX_2)\\
=& -(\hX_1\frac{\partial}{\partial \hX_1} + \hX_2\frac{\partial}{\partial \hX_2})F_{0,2}^{\cX,(\cL,f)}(\btau;\tX_1,\tX_2).
\end{align*}
where the second equality follows from Lemma \ref{lemm:C}, the fourth
equality follows from Equation \eqref{eqn:xi-in-disk}, the fifth equality
is WDVV (Equation \eqref{eqn:two-in-one}), and the last equality follows from \eqref{eqn:annulus-zero}.
Both $\check{F}_{0,2}(q;\hX_1,\hX_2)$ and $F^{\cX,(\cL,f)}_{0,2}(\btau;\hX_1,\hX_2)$ are $H^*_{\CR}(\cB\bmu_\fm;\bC)^{\otimes 2}$-valued
power series in $\hX_1, \hX_2$ which vanish at $(\hX_1,\hX_2)=(0,0)$, so
$$
\check{F}_{0,2}(q;\hX_1,\hX_2)= - F_{0,2}^{\cX,(\cL,f)}(\btau;\hX_1,\hX_2).
$$
\end{proof}

\begin{theorem}[All genus open-closed mirror symmetry, a.k.a. BKMP Remodeling Conjecture]\label{main}
Under the open and closed mirror maps,
$$
\check{F}_{g,n}(q,\hX_1,\ldots, \hX_n) =(-1)^{g-1+n}  F_{g,n}^{X, (\cL,f)}(\btau;\tX_1,\ldots, \tX_n).
$$
\begin{proof}
For the unstable cases $(g,n)=(0,1)$ and $(0,2)$, this theorem is
Theorem \ref{thm:AKV-disk} and Proposition \ref{prop:BKMP-annulus} respectively.

For stable cases $2g-2+n>0$, the graph sums are matched in
Theorem \ref{thm:graph-match} except for open leafs. We match them here.

The A-model open leaf is
$$
(\cL^O)^\bsi_k =- [z^k]\frac{1}{\sqrt{-2}}\sum_{\ell \in \bmu_\fm^*} \int_0^{\hX}  \sum_{\bsi'\in I_\Sigma} \left(R_{\bsi'}^{\spa \bsi}(-z)\big|_{t=\btau,Q=1} \right)\rho_\ell^* \hat \theta_{\bsi'}(z) \psi_\ell.
$$
By $\theta_\bsi(z)=\sum_{\bsi'\in I_\Sigma} \left(R_{\bsi'}^{\spa \bsi}(-z)\big|_{t=\btau, Q=1}\right) \hat \theta_{\bsi'}(z)$ (Proposition \ref{prop:b-open-leaf}), the B-model open leaf is
$$
(\check{\cL}^O)^\bsi_k=[z^k]\frac{1}{\sqrt{-2}}\sum_{\ell \in \bmu_\fm^*} \int_0^{\hX}  \sum_{\bsi'\in I_\Sigma} \left(R_{\bsi'}^{\spa \bsi}(-z)\big|_{t=\btau, Q=1}\right) \rho_\ell^* \hat \theta_{\bsi'}(z) \psi_\ell.
$$
Then $(\cL^O)^\bsi_k=-(\check{\cL}^O)^\bsi_k$, and this proves the BKMP Remodeling Conjecture.

\end{proof}
\end{theorem}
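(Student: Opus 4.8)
The plan is to split the statement into the unstable range $(g,n)=(0,1),(0,2)$ and the stable range $2g-2+n>0$, handling the former by previously established genus-zero mirror theorems and the latter by a term-by-term comparison of the A-model and B-model graph sums. For $(g,n)=(0,1)$ the identity is exactly Theorem \ref{thm:AKV-disk} (the disk mirror theorem of \cite{FLT}), holding under the closed mirror map \eqref{eqn:closed-mirror-map} and the open mirror map \eqref{eqn:open-mirror-map}. For $(g,n)=(0,2)$ it is Proposition \ref{prop:BKMP-annulus}, which I would prove by differentiating both sides with respect to the open variables, using Lemma \ref{lemm:C} to write the kernel $C(p_1,p_2)$ in the canonical basis $\theta_\bsi^0$, then Equation \eqref{eqn:xi-in-disk} to pass to the A-model $\txi^\bsi$, the WDVV relation \eqref{eqn:two-in-one}, and finally \eqref{eqn:annulus-zero}; since both potentials are power series in $\hX_1,\hX_2$ vanishing at the origin, matching derivatives suffices.

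For the stable range, by the Corollary to Theorem \ref{tF-graph-sum} one has $F_{g,n}^{\cX,(\cL,f)}=\sum_{\vGa\in\bGa_{g,n}(\cX)}w_A^O(\vGa)/|\Aut(\vGa)|$, while Theorem \ref{thm:DOSS} together with the definitions in Section \ref{sec:B-potential} gives $\check F_{g,n}=\sum_{\vGa\in\bGa_{g,n}(\cX)}w_B^O(\vGa)/|\Aut(\vGa)|$ over the \emph{same} set of decorated graphs. Since $|\Aut(\vGa)|$ depends only on the combinatorial type, it is enough to prove, graph by graph, $w_B^O(\vGa)=(-1)^{g-1+n}|G_0|^n\,w_A^O(\vGa)$ under the open-closed mirror map. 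Each weight factors over vertices, edges, dilaton leaves and open leaves, so I would match factors separately. The vertex factors agree because $h_1^\bsi/\sqrt{-2}=1/\sqrt{\Delta^\bsi(\btau)}$ by \eqref{eqn:Delta-Hess} and Lemma \ref{lm:pairing}; the edge factors agree, $\check B^{\bsi,\brho}_{a,b}=\cE^{\bsi,\brho}_{a,b}$, by \eqref{eqn:B-ff}; the dilaton-leaf factors agree; this is the content of Theorem \ref{thm:graph-match}. For the open leaves, Proposition \ref{prop:b-open-leaf}, $\theta_\bsi(z)=\sum_{\bsi'}\check R_{\bsi'}^{\spa\bsi}(z)\hat\theta_{\bsi'}(z)$, together with the defining integral representations of $(\cL^O)^\bsi_k$ and $(\check\cL^O)^\bsi_k$ over $\int_0^{\hX}\rho_\ell^*\hat\theta_{\bsi'}(z)\psi_\ell$, yields $|G_0|(\cL^O)^\bsi_k=-(\check\cL^O)^\bsi_k$, contributing exactly one factor $-|G_0|$ per open leaf; combined with the sign from Theorem \ref{thm:graph-match} this gives the constant $(-1)^{g-1+n}|G_0|^n$.

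The decisive input, which I also expect to be the main obstacle, is the identification of $R$-matrices, $R(z)|_{t=\btau(q),Q=1}=\check R(-z)$. I would argue that under the Frobenius-algebra isomorphism \eqref{eqn:qcoh-Tprime} both $\check S_{\hat\bsi'}^{\spa\tbsi}(z)$ (the oscillatory integrals of the standard forms $\overline V_{\bsi'}$, Proposition \ref{prop:check-S}) and $S_{\hat\bsi'}^{\spa\bsi}(-z)$ solve the quantum differential equation $-z\,\partial_{\tau_i}(\cdot)=\check H_i\cdot(\cdot)$, each admits an asymptotic expansion $\sum_{\bsi''}\Psi_{\bsi'}^{\spa\bsi''}(\cdot)_{\bsi''}^{\spa\bsi}e^{-\check u^\bsi/z}$ with unitary leading term $\one+O(z)$, and $\check u^\bsi$ differs from the canonical coordinate $u^\bsi$ by a constant because $\check H_i=[\partial_{\tau_i}W_q^{\bT'}]$ forces $\partial_{\tau_i}\check u^\bsi=\partial_{\tau_i}u^\bsi$. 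Proposition \ref{prop:fundamental} then pins down such a solution up to right multiplication by $\exp(\sum_{i\ge1}a_iz^{2i-1})$ with $a_i$ constant diagonal, so the equality reduces to matching $R$ and $\check R$ at the single point $q=0$. There the compactified mirror curve degenerates (Section \ref{sec:toric-degeneration}) into $\fC_0=\bigcup_\si\Cbar_\si$ with $\Cbar_\si$ the mirror curve of the affine orbifold $\cX_\si=[\bC^3/G_\si]$, the second-kind differentials satisfy $\theta^0_{(\si,\alpha)}(0)|_{\Cbar_{\si'}}=0$ for $\si'\neq\si$ and restrict on $\Cbar_\si$ to the affine $\theta^\alpha_0$ of \cite[Section 6.6]{FLZ}, so by \cite[Theorem 7.5]{FLZ} one gets $\check R(-z)|_{q=0}$ equal to the explicit Bernoulli-polynomial expression, which is precisely $R(z)|_{q=0}$ of \eqref{eqn:R-at-zero}. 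Keeping the flat cycles, the $A$-cycle normalization of $\theta^0_\bsi$, and the mirror map mutually compatible in the $q\to0$ limit is the delicate bookkeeping; once that is in place the graph-sum comparison above finishes the proof.
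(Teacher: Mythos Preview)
Your proposal is correct and follows essentially the same approach as the paper: split into unstable and stable cases, invoke Theorem \ref{thm:AKV-disk} and Proposition \ref{prop:BKMP-annulus} for $(0,1)$ and $(0,2)$, and for $2g-2+n>0$ combine the graph-sum matching of Theorem \ref{thm:graph-match} with the open-leaf identity $|G_0|(\cL^O)^\bsi_k=-(\check\cL^O)^\bsi_k$ obtained from Proposition \ref{prop:b-open-leaf}. Your additional discussion of the $R$-matrix identification and of the proof of Proposition \ref{prop:BKMP-annulus} accurately reproduces the paper's arguments for those prerequisite results as well.
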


\subsection{BKMP Remodeling Conjecture: the free energies}
Recall that in Definition \ref{def:Fgn},
$$
F^{\cX}_g(\btau):= \llangle \ \rrangle^{\cX}_{g,0}\big|_{t=\btau, Q=1}.
$$

\subsubsection{The case $g>1$}

\begin{theorem}\label{thm:stable-Fg}
When $g>1$, we have
$$
F^{\cX}_g(\btau)= (-1)^{g-1} \check{F}_g(q).
$$
\end{theorem}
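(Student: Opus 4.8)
The plan is to combine the two graph-sum representations established earlier: Proposition \ref{prop:stable-Fg-A} writes $F^\cX_g(\btau) = \llangle\ \rrangle_{g,0}^{\cX,\bT}\big|_{t=\btau,Q=1}$ as $\frac{1}{2-2g}$ times a sum over $\vGa\in\bGa_{g,1}(\cX)$ of $w_A^\bu(\vGa)$ with the substitution $(\cL^\bu)^\bsi_k(l_1) = (\cL^1)^\bsi_k$, while Proposition \ref{prop:stable-Fg-B} writes $\check F_g(q)$ as $\frac{1}{2-2g}$ times the sum over the same index set $\bGa_{g,1}(\cX)$ of $w_B^\bu(\vGa)$ with the substitution $(\check\cL^\bu)^\bsi_k(l_1) = (\check\cL^1)^\bsi_k$. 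So the theorem reduces to a term-by-term comparison of the two weighted sums under the substituted leaf weights.

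First I would invoke Theorem \ref{thm:graph-match}, which already gives $w^\bu_B(\vGa)\big|_{\frac{1}{\sqrt{-2}}\htheta_\bsi^a(p_l) = (\tilde u_l)^\bsi_a} = (-1)^{g(\vGa)-1+n}\, w^\bu_A(\vGa)\big|_{t=\btau,Q=1}$ under the closed mirror map; here $n=1$ and $g(\vGa)=g$, so the sign is $(-1)^g$. The only thing that is not immediately covered is that in Proposition \ref{prop:stable-Fg-A} and Proposition \ref{prop:stable-Fg-B} the ordinary (open/descendant) leaf $l_1$ has been replaced by a \emph{dilaton-type} weight rather than the generic $\bu$-leaf or $\htheta$-leaf. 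So the key step is to check that the identification of Theorem \ref{thm:graph-match} specializes correctly: that the A-model substitution $(\cL^\bu)^\bsi_k(l_1) = (\cL^1)^\bsi_k$ corresponds, under $R(z)|_{t=\btau,Q=1} = \check R(-z)$ and the expansions defining $\theta_\bsi(z)$ and $\check h^\bsi_k$, to the B-model substitution $(\check\cL^\bu)^\bsi_k(l_1) = (\check\cL^1)^\bsi_k$. Concretely one uses $(\check\cL^1)^\bsi_k = \frac{-1}{\sqrt{-2}}\check h^\bsi_k$ together with $(\cL^1)^\bsi_k = [z^{k-1}]\big(-\sum_{\bsi'}\frac{1}{\sqrt{\Delta^{\bsi'}(t)}}R^{\spa\bsi}_{\bsi'}(-z)\big)$ and the relation $\check h^\bsi_a = [u^{1-a}]\sum_{\brho}h_1^\brho f^{\spa\bsi}_\brho(u)$ recalled in the proof of Theorem \ref{thm:graph-match}; this shows the dilaton leaves already match (up to the factor $|G_0|$-free normalization since $n=0$ here carries no $|G_0|$), so the substituted open leaves match as well.

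Granting that, I would conclude as follows: for each $\vGa\in\bGa_{g,1}(\cX)$,
$$
w^\bu_B(\vGa)\Big|_{(\check\cL^\bu)^\bsi_k(l_1) = (\check\cL^1)^\bsi_k} = (-1)^{g}\, w^\bu_A(\vGa)\Big|_{(\cL^\bu)^\bsi_k(l_1) = (\cL^1)^\bsi_k,\ t=\btau,\ Q=1},
$$
and summing over $\vGa$ with the common factor $\frac{1}{2-2g}$ and dividing by $|\Aut(\vGa)|$ gives $\check F_g(q) = (-1)^g\, F^\cX_g(\btau)$. Since $(-1)^g = -(-1)^{g-1}$, rearranging yields $F^\cX_g(\btau) = (-1)^{g-1}\check F_g(q)$, which is \eqref{eqn:stable-Fg}.

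The main obstacle I anticipate is purely bookkeeping rather than conceptual: making sure the sign $(-1)^{g(\vGa)-1+n}$ from Theorem \ref{thm:graph-match} is applied with the correct value of $n$ (it is $n=1$ for the auxiliary one-leaf graphs used in Propositions \ref{prop:stable-Fg-A} and \ref{prop:stable-Fg-B}, not $n=0$), and that the leaf-substitution commutes with the matching — i.e., that replacing the generic ordinary leaf by the dilaton weight on both sides is compatible with the dictionary $\frac{1}{\sqrt{-2}}\htheta^a_\bsi(p_l) = (\tilde u_l)^\bsi_a$. Once the dilaton-leaf identity from the proof of Theorem \ref{thm:graph-match} is quoted, everything else is the same graph-by-graph argument already carried out there, so no genuinely new estimate or construction is needed.
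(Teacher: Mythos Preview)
Your strategy is exactly the paper's: combine Proposition~\ref{prop:stable-Fg-A}, Proposition~\ref{prop:stable-Fg-B}, and the graph-by-graph matching from (the proof of) Theorem~\ref{thm:graph-match}. However, your sign bookkeeping is off, and the error is then hidden by an incorrect ``rearranging'' at the end.

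The factor $(-1)^{g(\vGa)-1+n}$ in Theorem~\ref{thm:graph-match} is not monolithic: it is $(-1)^{g-1}$ from the explicit prefactor in the definition of $w_B^\bu(\vGa)$ times $(-1)^n$ coming from the $n$ ordinary leaves, since item~(3) of the proof shows $(\check\cL^\bu)^\bsi_k\big|_{\text{subst}} = -(\cL^\bu)^\bsi_k$ per leaf, while item~(4) shows the dilaton leaves satisfy $(\check\cL^1)^\bsi_k = (\cL^1)^\bsi_k\big|_{t=\btau,Q=1}$ with \emph{no} sign. So when you replace the single ordinary leaf $l_1$ on each side by the corresponding dilaton leaf, the $(-1)^{n}$ contribution disappears and the correct identity is
\[
w^\bu_B(\vGa)\Big|_{(\check\cL^\bu)^\bsi_k(l_1) = (\check\cL^1)^\bsi_k} \;=\; (-1)^{g-1}\, w^\bu_A(\vGa)\Big|_{(\cL^\bu)^\bsi_k(l_1) = (\cL^1)^\bsi_k,\, t=\btau,\, Q=1},
\]
which is the paper's Equation~\eqref{eqn:stable-Fg-AB}, not your $(-1)^g$. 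Equivalently: the substitution in Theorem~\ref{thm:graph-match} that makes the A-side ordinary leaf equal $(\cL^1)^\bsi_k$ forces the B-side ordinary leaf to equal $-(\check\cL^1)^\bsi_k$, so flipping it to $+(\check\cL^1)^\bsi_k$ costs one more sign. Your final step is then an algebra slip: from $\check F_g = (-1)^g F^\cX_g$ one gets $F^\cX_g = (-1)^g \check F_g$, not $(-1)^{g-1}\check F_g$, since $(-1)^{2g-1}=-1\neq 1$. With the corrected sign $(-1)^{g-1}$ in the graph identity, summing over $\vGa$ and dividing by $(2-2g)|\Aut(\vGa)|$ gives $\check F_g = (-1)^{g-1}F^\cX_g$, hence $F^\cX_g = (-1)^{g-1}\check F_g$ directly.
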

\begin{proof}
From the proof of Theorem \ref{thm:graph-match},
\begin{equation}\label{eqn:stable-Fg-AB}
w^{\bu}_B(\vGa)\Big|_{(\check{\cL}^{\bu})^{\bsi}_k(l_1)=(\check{\cL}^1)^{\bsi}_k}
=  (-1)^{g-1} w_A^{\bu}(\vGa) \Big|_{(\cL^{\bu})^{\bsi}_k(l_1)=(\cL^1)^{\bsi}_k,t=\btau,Q=1}.
\end{equation}
for any labelled graph $\vGa\in \Gamma_{g,1}(\cX)$.
Theorem \ref{thm:stable-Fg} follows from Proposition
\ref{prop:stable-Fg-A}, Proposition \ref{prop:stable-Fg-B}, and Equation \eqref{eqn:stable-Fg-AB}.
\end{proof}

Theorem \ref{thm:stable-Fg} was proved in the special case $\cX=\bC^3$ in \cite{BCMS}.

\subsubsection{The case $g=1$}

The genus-one free energy has a different formula on both A-model and B-model. On A-model side, since the graph sum formula is for $2g-2+n>0$, we need to find a different formula for $F^{\cX}_1$. In \cite{Zo}, the third author proved a formula for the genus-one Gromov-Witten potential of any GKM orbifolds. It expresses $F^{\cX}_1$ in terms of the Frobenius structures. In our case, we have the following theorem:

\begin{theorem}[Givental {\cite{G98}}, Zong {\cite{Zo}}]
The following formula holds for the genus one Gromov-Witten potential
$F^{\cX}_1(\btau)$:
$$dF^{\cX}_1(\btau)=\sum_{\bsi\in I_\Si}\frac{1}{48}d\log\Delta^\bsi(\btau)+
\sum_{\bsi\in I_\Si}\frac{1}{2}(R_1)_\bsi^{\spa \bsi}du^\bsi.$$
\end{theorem}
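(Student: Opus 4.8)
The plan is to obtain $dF^{\cX}_1(\btau)$ from the genus-one instance of Zong's graph-sum formula (Theorem \ref{thm:Zong}), which is available because $2(1)-2+1=1>0$. Writing $t=\sum_{\bsi}t^{\bsi}\hat{\phi}_{\bsi}$ in the classical normalized canonical frame and differentiating the generating series term by term, one gets $\partial F^{\cX}_1/\partial t^{\bsi}=\llangle \hat{\phi}_{\bsi}\rrangle^{\cX,\bT}_{1,1}$, so that
\[
dF^{\cX}_1=\sum_{\bsi\in I_\Si}\llangle \hat{\phi}_{\bsi}\rrangle^{\cX,\bT}_{1,1}\,dt^{\bsi}.
\]
Hence it suffices to evaluate each $\llangle \hat{\phi}_{\bsi}\rrangle^{\cX,\bT}_{1,1}$ by running Theorem \ref{thm:Zong} with $g=1$, $n=1$, $\bu_1(z)\equiv\hat{\phi}_{\bsi}$, and then to restrict to the small phase space $t=\btau$, $Q=1$.

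First I would enumerate the decorated graphs $\vGa\in\bGa_{1,1}(\cX)$ with nonzero weight. The vertex factor at a genus-$h$, valence-$v$ vertex is a $\psi$-monomial integrated over $\Mbar_{h,v}$, and $\dim\Mbar_{1,1}=1$, $\dim\Mbar_{1,2}=2$, $\dim\Mbar_{0,3}=0$; since dilaton leaves have height $\geq 2$, stability together with these dimension bounds rules out every configuration with two or more vertices --- a genus-zero vertex attached by an edge would need enough dilaton leaves that the heights overshoot the dimension --- and leaves exactly three graph types: (a) one genus-one vertex carrying the ordinary leaf at $\psi$-height $1$; (b) one genus-one vertex carrying the ordinary leaf at height $0$ and one dilaton leaf at height $2$; (c) one genus-zero vertex carrying a single self-loop, with the ordinary leaf and both loop half-edges at height $0$, and with $\Aut(\vGa)=\bZ/2$ interchanging the two loop half-edges.

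Next I would substitute the weights from Section \ref{sec:Agraph} and collect. For type (c) one uses $\int_{\Mbar_{0,3}}1=1$, the self-loop edge weight
\[
\cE^{\bsi,\bsi}_{0,0}=[z^0w^0]\tfrac{1}{z+w}\Bigl(1-\sum_{\brho\in I_\Si}R_{\brho}^{\spa\bsi}(-z)R_{\brho}^{\spa\bsi}(-w)\Bigr)=(R_1)_{\bsi}^{\spa\bsi},
\]
and the factor $\tfrac12$ from $|\Aut(\vGa)|$; after converting the flat coframe $\{dt^{\bsi}\}$ into the canonical coframe $\{du^{\bsi}\}$ by \eqref{eqn:Psi-matrix}, this is exactly the term $\tfrac12\sum_{\bsi\in I_\Si}(R_1)_{\bsi}^{\spa\bsi}\,du^{\bsi}$. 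Types (a) and (b) both contribute the factor $\int_{\Mbar_{1,1}}\psi=\int_{\Mbar_{1,2}}\psi_2^2=\tfrac1{24}$; combining them, using the unitarity $R^T(-z)R(z)=\one$ (so $R_1$ is self-adjoint for $(\ ,\ )_{\cX,\bT}$) to pair off-diagonal entries, and then rewriting the outcome via the structural relations among $\Psi$, $\Delta^{\bsi}(t)$ and $R_1$ that follow from $\tS=\Psi R(z)e^{U/z}$ being a solution of the big QDE \eqref{eqn:qde} --- matching the $z^0$- and $z^1$-coefficients of $\nabla^z\tS=0$ expresses $d\log\Delta^{\bsi}(t)$ and the off-diagonal entries of $R_1$ through $\Psi^{-1}d\Psi$ --- collapses types (a)$+$(b) to $\tfrac1{48}\sum_{\bsi\in I_\Si}d\log\Delta^{\bsi}(t)$, the constant $\tfrac1{48}=\tfrac1{24}\cdot\tfrac12$ recording that the canonical frame is normalized by $\sqrt{\Delta^{\bsi}(t)}$. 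Adding the three contributions and restricting to $t=\btau$, $Q=1$ yields the stated formula.

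The hard part will be this bookkeeping: tracking the $\Psi$-matrix, the $\sqrt{\Delta^{\bsi}(t)}$ normalizations and the signs when passing from $\sum_{\bsi}\llangle\hat{\phi}_{\bsi}\rrangle_{1,1}\,dt^{\bsi}$ to canonical coordinates, and verifying that the off-diagonal $R_1$-pieces of types (a), (b), (c) recombine precisely into $\tfrac1{48}d\log\Delta^{\bsi}$ through the QDE identities --- this is where the rational constants get pinned down. The graph enumeration and the evaluation of the individual weights are routine once the dimension constraints on the $\psi$-integrals are written out.
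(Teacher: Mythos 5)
The paper does not actually prove this statement: it is imported from the third author's paper \cite{Zo} on GKM orbifolds, so there is no internal proof to compare against. Your derivation is the natural one from the ingredients the paper does provide, and as far as I can check it is correct. The reduction to $\sum_{\bsi}\llangle\hat{\phi}_{\bsi}\rrangle_{1,1}^{\cX,\bT}dt^{\bsi}$ is legitimate (and is forced on you, since Theorem \ref{thm:Zong} excludes $(g,n)=(1,0)$); the enumeration of the three graph types is right --- the bounds $\dim\Mbar_{1,1}=1$, $\dim\Mbar_{1,2}=2$, $\dim\Mbar_{0,3}=0$ together with the height-$\geq 2$ constraint on dilaton leaves do kill every multi-vertex configuration; the self-loop graph yields exactly $\tfrac12\sum_{\bsi}(R_1)_{\bsi}^{\spa\bsi}du^{\bsi}$ after using $\sum_{\bsi}dt^{\bsi}\Psi_{\bsi}^{\spa\brho}=du^{\brho}/\sqrt{\Delta^{\brho}(t)}$; and the two tail graphs combine to $\tfrac1{24}\sum_{\brho,\balpha}\sqrt{\Delta^{\brho}(t)/\Delta^{\balpha}(t)}\,(R_1)_{\balpha}^{\spa\brho}(du^{\brho}-du^{\balpha})$, which does equal $\tfrac1{48}\sum_{\brho}d\log\Delta^{\brho}(t)$. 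One correction to your plan for that last step: the needed identity is not the $z^1$-coefficient of $\nabla^z\tS=0$ (that coefficient involves $R_2$). The two ingredients are (i) the $z^0$-coefficient, $\Psi^{-1}d\Psi+[R_1,dU]=0$, which converts $(R_1)_{\balpha}^{\spa\brho}(du^{\brho}-du^{\balpha})$ into $-(\Psi^{T}d\Psi)_{\balpha}^{\spa\brho}$, and (ii) the differential of $1/\sqrt{\Delta^{\brho}(t)}=\sum_{\bgamma}\Psi_{\bgamma}^{\spa\brho}/\sqrt{\Delta^{\bgamma}}$ --- i.e.\ the expression of the unit $\one=\sum_{\bgamma}\phi_{\bgamma}=\sum_{\brho}\phi_{\brho}(t)$ in the two orthonormal frames, with the classical $\Delta^{\bgamma}$ constant --- which together with the antisymmetry of $\Psi^{T}d\Psi$ gives $\sum_{\balpha}\sqrt{\Delta^{\brho}(t)/\Delta^{\balpha}(t)}\,(\Psi^{T}d\Psi)_{\brho}^{\spa\balpha}=\tfrac12 d\log\Delta^{\brho}(t)$. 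With those two relations the collapse to $\tfrac1{48}\sum_{\brho}d\log\Delta^{\brho}(t)$ is a two-line computation, and the rest of your argument, including the final restriction to $t=\btau$, $Q=1$, goes through.
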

\noindent
In \cite{G98}, Givental conjectured  that the above formula holds for genus-one GW potential of a compact symplectic manifold with generically
semisimple quantum cohomology and proved this formula for any GKM manifolds.

\medskip

On B-model side, the genus-one free energy is defined in the following way (see \cite{EO07}):

\begin{definition}[genus-one B-model free energy]
The genus-one B-model free energy $\check{F}_1$ is defined as
$$
\check{F}_1=-\frac{1}{2}\log\tau_B-\frac{1}{24}\sum_{\bsi\in I_\Si}\log h_1^\bsi
$$
where $\tau_B$ is the Bergmann $\tau$-function determined by
$$
d (\log\tau_B)=\sum_{\bsi} \Res_{p\to p_\bsi}\frac{B(p,\bar{p})}{d\hat{x}(p)} d\check{u}^{\bsi}.
$$
\end{definition}

The Bergmann $\tau-$function is defined up to a constant and so is $\check{F}_1$. The mirror symmetry for the genus-one free energy is the following theorem:

\begin{theorem}[mirror symmetry for genus-one free energy]\label{F1}
Under the closed mirror map,
$$
dF^{\cX}_1(\btau)=d\check{F}_1(q)
$$
\end{theorem}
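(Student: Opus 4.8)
The plan is to compare the two explicit formulas for $dF^\cX_1$ and $d\check F_1$ term by term, using the identification $R(z)\big|_{t=\btau,Q=1}=\check R(-z)$ established in Section \ref{Id fundamental}, together with the dictionary $h_1^\bsi=\sqrt{-2/\Delta^\bsi(\btau)}$ and the fact that the critical values $\check u^\bsi$ agree with the canonical coordinates $u^\bsi$ up to constants (so $d\check u^\bsi=du^\bsi$). Concretely, the A-model formula reads
$$
dF^\cX_1(\btau)=\sum_{\bsi\in I_\Si}\frac{1}{48}d\log\Delta^\bsi(\btau)+\sum_{\bsi\in I_\Si}\frac12 (R_1)_\bsi^{\spa\bsi}\,du^\bsi,
$$
and the B-model side is $\check F_1=-\frac12\log\tau_B-\frac{1}{24}\sum_\bsi \log h_1^\bsi$. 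First I would rewrite the B-model $\tau$-function term: differentiating, $d\log\tau_B=\sum_\bsi \Res_{p\to p_\bsi}\frac{B(p,\bar p)}{d\hat x(p)}\,d\check u^\bsi$, and I would compute the residue in the local coordinate $\zeta_\bsi=\sqrt{\hat x-\check u^\bsi}$ near $p_\bsi$, expanding $B(p_1,p_2)$ in the form used in Section \ref{sec:eynard-orantin} with coefficients $B^{\bsi,\bsi}_{k,l}$. Since $\bar p$ is the deck-transform point $\zeta_\bsi\mapsto -\zeta_\bsi$, the leading behaviour of $B(p,\bar p)/d\hat x(p)$ picks out exactly $\check B^{\bsi,\bsi}_{0,0}$ (up to explicit constants), which by the relation $(u+v)\check B^{\bsi,\bsi'}(u,v)=uv(\delta_{\bsi\bsi'}-\sum_{\bsi''}\check R^{\spa\bsi}_{\bsi''}(1/u)\check R^{\spa\bsi'}_{\bsi''}(1/v))$ is determined by the first-order coefficient $\check R_1$; this is how $(\check R_1)_\bsi^{\spa\bsi}$ enters.

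Next I would handle the $\log h_1^\bsi$ term: using $h_1^\bsi=\sqrt{-2/\Delta^\bsi(\btau)}$ we get $-\frac{1}{24}\sum_\bsi d\log h_1^\bsi=\frac{1}{48}\sum_\bsi d\log\Delta^\bsi(\btau)$, which matches the first sum on the A-model side exactly. It then remains to match $-\frac12 d\log\tau_B$ against $\sum_\bsi\frac12(R_1)_\bsi^{\spa\bsi}du^\bsi$; after the residue computation this becomes a statement relating the diagonal of $\check R_1$ (equivalently, via $R(z)\big|_{t=\btau,Q=1}=\check R(-z)$, the diagonal of $R_1$ with the appropriate sign) to the Bergman $\tau$-function differential, which is precisely the classical Eynard--Orantin identity expressing $d\log\tau_B$ in terms of the diagonal of the $R$-matrix of the spectral curve (cf. \cite{EO07}); this identity is the local-model input and holds for our spectral curve because the graph-sum/$R$-matrix formalism of Section \ref{sec:eynard-orantin} applies. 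One must be a little careful with the constant ambiguity: both $\check F_1$ and $\tau_B$ are defined only up to a constant, but we are only claiming the equality of the differentials $dF^\cX_1=d\check F_1$, so this is harmless.

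An alternative, cleaner route — which I would actually prefer to carry out — is to reduce to $q=0$ as in the proof of Theorem \ref{thm:graph-match}: both $dF^\cX_1$ and $d\check F_1$ are built from the same Frobenius-structure data (the $\Psi$-matrix, the $R$-matrix, and the canonical coordinates), and once $R(z)\big|_{t=\btau,Q=1}=\check R(-z)$ and $d\check u^\bsi=du^\bsi$ and $h_1^\bsi=\sqrt{-2/\Delta^\bsi}$ are in hand, the two one-forms are literally the same universal expression in this data. The only genuinely curve-theoretic ingredient is the identification of $-\frac12 d\log\tau_B$ with $\sum_\bsi \frac12(\check R_1)_\bsi^{\spa\bsi}d\check u^\bsi$ (modulo the $\Delta$-term), i.e. showing that the Bergman $\tau$-function of the compactified mirror curve, differentiated along the family, produces the diagonal of the B-model $R$-matrix.

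\textbf{Main obstacle.} The hard part will be this last point: establishing the residue identity $d\log\tau_B=\sum_\bsi\Res_{p\to p_\bsi}\frac{B(p,\bar p)}{d\hat x(p)}d\check u^\bsi = 2\sum_\bsi(\check R_1)_\bsi^{\spa\bsi}d\check u^\bsi$ (up to the explicit normalization), since it requires extracting the subleading term of $B(p,\bar p)$ near the ramification point and relating it, via the Laplace-transform formula \eqref{eqn:B-ff}, to $\check R_1$. This is a known computation in the Eynard--Orantin literature, so I expect it to go through, but it is the step where the $(0,2)$-unstable sector and the precise normalization conventions have to be tracked with care; everything else is bookkeeping with the dictionary already assembled in Sections \ref{sec:qcoh}, \ref{sec:LG}, and \ref{Id fundamental}.
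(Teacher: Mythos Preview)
Your proposal is correct and follows essentially the same route as the paper: match the $h_1^\bsi$/$\Delta^\bsi$ terms via $h_1^\bsi=\sqrt{-2/\Delta^\bsi(\btau)}$, reduce to identifying $-\tfrac12 d\log\tau_B$ with $\sum_\bsi\tfrac12(R_1)_\bsi^{\spa\bsi}du^\bsi$, compute the residue $\Res_{p\to p_\bsi}\frac{B(p,\bar p)}{d\hat x(p)}$ in the local coordinate $\zeta_\bsi$ to extract $\check B^{\bsi,\bsi}_{0,0}$, and then use \eqref{eqn:B-ff} together with $R(z)\big|_{t=\btau,Q=1}=\check R(-z)$. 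The ``main obstacle'' you flag is in fact a three-line computation already set up by \eqref{eqn:BcheckB} and \eqref{eqn:B-ff}: one gets directly $\Res_{p\to p_\bsi}\frac{B(p,\bar p)}{d\hat x(p)}=-\check B^{\bsi,\bsi}_{0,0}=-(R_1)_\bsi^{\spa\bsi}$ (so the constant is $-1$, not $2$), and no separate appeal to the Eynard--Orantin literature is needed.
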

\begin{proof}
First by the identification $h_1^\bsi=\sqrt{\frac{-2}{\Delta^{\bsi}(\btau)}}$, we have
$$
-\frac{1}{24}\sum_{\bsi\in I_\Si}d\log h_1^\bsi=-\frac{1}{24}\sum_{\bsi\in I_\Si}d\log\sqrt{\frac{-2}{\Delta^{\bsi}(\btau)}}
=\frac{1}{48}\sum_{\bsi\in I_\Si}d\log \Delta^\bsi(\btau).
$$
So in order to prove the theorem, we only need to show that
$$
-\frac{1}{2}d\log\tau_B=\sum_{\bsi\in I_\Si}\frac{1}{2}(R_1)_\bsi^{\spa \bsi}du^\bsi
\big|_{t=\btau, Q=1}.
$$
Note that since $\{\check{u}^\bsi\}_{\bsi\in I_\Si}$ is the set of B-model canonical coordinates and so $du^\bsi\big|_{t=\btau, Q=1}=d\check{u}^\bsi$ for any $\bsi\in I_\Si$. Therefore
$$
-\frac{1}{2}d\log\tau_B
=-\frac{1}{2}\sum_{\bsi\in I_\Si}\Res_{p\to p_\bsi}\frac{B(p,\bar{p})}{d\hat{x}(p)}
du^\bsi\big|_{t=\btau, Q=1}.
$$
By the local expansions of $\hat{x}$ and $B(p,\bar{p})$ near $p_\bsi$, we have
\begin{eqnarray*}
\hat{x}&=&\check{u}^\bsi+\zeta_\bsi^2\\
B(p,\bar{p})&=&\left(\frac{1}{(2\zeta_\bsi)^2}+\sum_{k,k'\geq 0}B^{\bsi,\bsi}_{k,k'}\zeta_\bsi^k(-\zeta_\bsi)^{k'}\right)d\zeta_\bsi d(-\zeta_\bsi).
\end{eqnarray*}
Recall that
$$
\check{B}^{\bsi,\bsi}_{k,l}=
\frac{(2k-1)!!(2l-1)!!}{2^{k+l+1}}B^{\bsi,\bsi}_{2k,2l}.
$$
Substituting the local expansions of $\hat{x}$ and $B(p,\bar{p})$ into $\Res_{p\to p_\bsi}\frac{B(p,\bar{p})}{d\hat{x}(p)}$, we have
\begin{eqnarray*}
\Res_{p\to p_\bsi}\frac{B(p,\bar{p})}{d\hat{x}(p)}&=&-\check{B}^{\bsi,\bsi}_{0,0}\\
&=& -[z^0 w^0]\left(\frac{1}{z+w}(\delta_{\bsi,\bsi}
-\sum_{\bgamma\in I_\Si} f^{\spa \bsi}_\bgamma(\frac{1}{z})f^{\spa \bsi}_\bgamma(\frac{1}{w}))\right)\\
&=& -[z^0 w^0]\left(\frac{1}{z+w}(\delta_{\bsi,\bsi}
-\sum_{\bgamma\in I_\Si}  R^{\spa \bsi}_\bgamma(-z)R^{\spa \bsi}_\bgamma(-w))\right)\\
&=& -(R_1)_\bsi^{\spa \bsi}\big|_{t=\btau, Q=1}.
\end{eqnarray*}
Therefore
$$
-\frac{1}{2}d\log\tau_B=-\frac{1}{2}\sum_{\bsi\in I_\Si}\Res_{p\to p_\bsi}\frac{B(p,\bar{p})}{d\hat{x}(p)}du^\bsi\big|_{t=\btau, Q=1}=\sum_{\bsi\in I_\Si}\frac{1}{2}(R_1)_\bsi^{\spa \bsi}du^\bsi\big|_{t=\btau, Q=1}
$$
which finishes the proof.
\end{proof}

\subsubsection{The case $g=0$}
Another special case is the genus-zero free energy. In this case, instead of giving the definition of $\check F_0$ directly, we will use the special geometry property to build the mirror symmetry. Recall that we have the following \textbf{special geometry} property (see \cite{EO07}): for $i=1,\ldots, \fp$,
\begin{equation}\label{sg1}
\frac{\partial \omega_{g,n}}{\partial \tau_i}(p_1,\ldots,p_n)=\int_{p_{n+1}\in B_i}\omega_{g,n+1}(p_1,\cdots,p_{n+1}),\quad  (g,n)\neq (0,0), (0,1).
\end{equation}
\begin{equation}\label{sg2}
\frac{\partial \Phi}{\partial \tau_i}(p_1)=\int_{p_{2}\in B_i}\omega_{0,2}(p_1,p_2),
\end{equation}
\begin{equation}\label{sg3}
\frac{\partial \check{F}_0}{\partial \tau_i}=\int_{p\in B_i}\Phi(p)
\end{equation}
Here when $n=0$, the invariant $\omega_{g,0}$ is just the free energy $\check{F}_g$. We will use the special geometry property to show the following theorem:
\begin{theorem}[mirror symmetry for genus-zero free energy]\label{F0}
For any $i,j,k\in\{1,\cdots,\fp\}$, we have
$$
\frac{\partial^3 F^{\cX}_0}{\partial\tau_i\partial\tau_j\partial\tau_k}(\btau)=-\frac{\partial^3 \check{F}_0}{\partial\tau_i\partial\tau_j\partial\tau_k}(q)
$$
under the closed mirror map.

\end{theorem}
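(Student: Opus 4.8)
The plan is to compute both sides as third derivatives of genus-zero data and match them via the WDVV-type identity \eqref{eqn:two-in-one} together with the $B$-model canonical basis description in Proposition \ref{prop:b-canonical}. First I would recall that on the A-model side, $\frac{\partial^3 F^\cX_0}{\partial\tau_i\partial\tau_j\partial\tau_k}(\btau)$ is the genus-zero three-point function $\llangle H_i, H_j, H_k\rrangle^{\cX}_{0,3}|_{t=\btau,Q=1}$, which is precisely the structure constant of the small quantum product $H_i\star_{\btau} H_k$ paired with $H_j$ under the non-equivariant Poincar\'e pairing (using the restriction of the $\bT'$-equivariant theory to the non-equivariant limit, which exists by the argument following Proposition \ref{prop:PF}). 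Equivalently, under the isomorphism \eqref{eqn:qcoh-Tprime} of Frobenius algebras $QH^*_{\CR,\bT'}(\cX)|_{\btau=\btau(q),Q=1}\cong \Jac(W^{\bT'}_q)$, this becomes the residue-pairing structure constant $(\check H_i\cdot \check H_j, \check H_k)$ on the Jacobian ring, expressed in the canonical basis $\{[V_\bsi(\btau)]\}$ as $\sum_{\bsi} \frac{C'^{\bsi}_{ij}(q)\, C'^{\bsi}_{k,\bullet}\cdots}{\Delta^\bsi(\btau)}$; more cleanly, using Lemma \ref{lm:H}, it is $\sum_{\bsi\in I_\Si} \frac{1}{\Delta^\bsi(\btau)} B'^\bsi_i(q) B'^\bsi_j(q) B'^\bsi_k(q)$ up to signs, where $B'^\bsi_a(q) = \frac{\partial H/\partial\tau_a}{\partial H/\partial\hat x}|_{(X_\bsi,Y_\bsi)}$.

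Next I would compute the $B$-model side. By special geometry \eqref{sg3} and \eqref{sg2}, $\frac{\partial^3\check F_0}{\partial\tau_i\partial\tau_j\partial\tau_k}$ is obtained by differentiating $\frac{\partial\check F_0}{\partial\tau_i}=\int_{B_i}\Phi$ twice more, giving $\frac{\partial^3\check F_0}{\partial\tau_i\partial\tau_j\partial\tau_k} = \frac{\partial}{\partial\tau_k}\int_{p_1\in B_i}\frac{\partial\Phi}{\partial\tau_j}(p_1)$, and using $\frac{\partial\Phi}{\partial\tau_j}(p) = \int_{p'\in B_j}B(p,p')$ from Section \ref{sec:second-kind}, this is a double-$B$-cycle period of $\frac{\partial^2}{\partial\tau_k}B$, i.e. of $C(p_1,p_2)$-type objects. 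The key computational input is Lemma \ref{lm:d} (equivalently Proposition \ref{prop:b-canonical}), which expresses $\frac{\partial^2\Phi}{\partial\tau_a\partial\tau_b}$ and hence the relevant third-order object in terms of $\sum_\bsi C'^\bsi_{ab}(q)\frac{h_1^\bsi\theta^0_\bsi}{2}$; combined with $\int_{B_i}\theta^0_\bsi$ being essentially $\frac{2}{h_1^\bsi}B'^\bsi_i(q)$ (this follows by Laplace-transform/residue computation as in the annulus case, cf. the manipulations around Lemma \ref{lemm:C} and Proposition \ref{prop:b-open-leaf}), one gets $\frac{\partial^3\check F_0}{\partial\tau_i\partial\tau_j\partial\tau_k} = \sum_{\bsi\in I_\Si} \frac{h_1^\bsi}{2}\cdot C'^\bsi_{\cdot}(q)\cdot \frac{2}{h_1^\bsi}B'^\bsi(q)$, which via $(h_1^\bsi)^2 = \frac{-2}{\Delta^\bsi(\btau)}$ (from Equation \eqref{eqn:Delta-Hess} and Lemma \ref{lm:pairing}) collapses to $\pm\sum_\bsi \frac{1}{\Delta^\bsi(\btau)}B'^\bsi_i B'^\bsi_j B'^\bsi_k$, matching the A-model expression up to the claimed sign.

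An alternative, cleaner route — which I would actually write up — is to avoid recomputing everything and instead use the WDVV identity \eqref{eqn:two-in-one} directly, following the exact template of the proof of Proposition \ref{prop:BKMP-annulus}: differentiate the annulus identity, or rather mimic it, by observing that the third $\tau$-derivative of $\check F_0$ is a period of $\sum_\bsi \theta^0_\bsi(p_1)\theta^0_\bsi(p_2)$-type terms (Lemma \ref{lemm:C}), and that the corresponding A-model quantity $\llangle H_i,H_j,H_k\rrangle^\cX_{0,3}$ equals $\sum_\bsi \frac{1}{\Delta^\bsi(\btau)}$ times a product of three canonical-coordinate-change factors by the standard formula for three-point functions in the canonical basis. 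The sign $(-1)$ appears exactly as in Proposition \ref{prop:BKMP-annulus} and Theorem \ref{main}, coming from the single factor of $|G_0|^0$ being irrelevant here (no open leaves, $n=0$) but the relation $h_1^\bsi/\sqrt{-2} = 1/\sqrt{\Delta^\bsi(\btau)}$ carrying an implicit sign when squared.

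The main obstacle I anticipate is bookkeeping of the sign and of the normalization constants: matching $\int_{B_i}\theta^0_\bsi$ to $B'^\bsi_i(q)$ with the correct factor of $h_1^\bsi$, and tracking the $(-1)$ through the identification $(h_1^\bsi)^2 = -2/\Delta^\bsi(\btau)$ — the square root has a sign ambiguity, but the combination appearing in the triple product is $(h_1^\bsi)^2$-linear for the $C'$-terms against $1/(h_1^\bsi)$ for the period, so one net power of $h_1^\bsi$, hence no ambiguity, but the overall $-1$ must be extracted carefully. A secondary subtlety is that $\check F_0$ itself is only defined via special geometry \eqref{sg3} (not an independent definition), so one must check that the three relations \eqref{sg1}, \eqref{sg2}, \eqref{sg3} are mutually consistent and that the third derivative is symmetric in $i,j,k$ — this symmetry is automatic from the symmetry of the three-point A-model correlator and provides a useful consistency check. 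Finally, one should note that the equality is of third derivatives only (not of $\check F_0$ and $F^\cX_0$ themselves), reflecting the quadratic-in-$\btau$ ambiguity in the closed mirror symmetry statement mentioned in the introduction, so no integration constants need to be fixed.
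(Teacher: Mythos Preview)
Your outline is in the right direction but takes a different route from the paper, and the key computational step is not adequately justified.

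\textbf{How the paper does it.} The paper applies special geometry three times to write
\[
\frac{\partial^3\check F_0}{\partial\tau_i\partial\tau_j\partial\tau_k}
=\int_{p_1\in B_k}\int_{p_2\in B_j}\int_{p_3\in B_i}\omega_{0,3}(p_1,p_2,p_3),
\]
and then invokes the graph sum formula (Theorem~\ref{thm:DOSS}) for $\omega_{0,3}$. This reduces everything to the already-proved Theorem~\ref{thm:graph-match}, with ordinary leaves replaced by the $B$-period $\frac{1}{\sqrt{-2}}\int_{B_l}\theta^0_\bsi$. That period is then computed, not by a residue argument, but via the oscillatory-integral identity: the paper uses $\theta^0_\bsi(p)=[z^0]\bigl(\frac{-e^{\check u^\bsi/z}}{\sqrt{\pi z}}\int_{\Gamma_\bsi}B(p,p')e^{-\hat x(p')/z}\bigr)$, swaps $\int_{B_l}B(p,p')=\frac{\partial\Phi}{\partial\tau_l}$, and recognizes the result as $[z^0](-e^{\check u^\bsi/z}\check S_l^{\ \tbsi}(z)) = -\Psi_l^{\ \bsi}$, which matches the A-model leaf up to the sign $(-1)$. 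Three such leaves give the overall $(-1)$.

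\textbf{Comparison and the gap in your plan.} Your first route --- compute the A-side as Jacobian ring structure constants via \eqref{eqn:qcoh-Tprime} and Lemma~\ref{lm:H}, and the B-side via $\int_{B_i}\frac{\partial^2\Phi}{\partial\tau_j\partial\tau_k}$ using Lemma~\ref{lm:d} --- is a legitimate alternative that bypasses $\omega_{0,3}$ and graph sums entirely. However, both approaches converge on the same bottleneck: evaluating $\int_{B_l}\theta^0_\bsi$. Your suggestion that this ``follows by Laplace-transform/residue computation as in the annulus case'' is not a proof; Lemma~\ref{lemm:C} and Proposition~\ref{prop:b-open-leaf} concern $\Gamma_\bsi$-integrals (Lefschetz thimbles), not $B$-cycle periods, and there is no residue argument in the paper that yields this period. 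Moreover your stated value $\frac{2}{h_1^\bsi}B'^\bsi_i$ is wrong: tracking through $H_l=\sum_\bsi\Psi_l^{\ \bsi}\hat\phi_\bsi(\btau)$, Lemma~\ref{lm:H}, and $h_1^\bsi=\sqrt{-2/\Delta^\bsi(\btau)}$ gives $\int_{B_l}\theta^0_\bsi = h_1^\bsi B'^\bsi_l$, not $\frac{2}{h_1^\bsi}B'^\bsi_l$. This is not just cosmetic --- the two differ by a factor of $-\Delta^\bsi$ and would break your sign tracking.

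Your ``alternative cleaner route'' via WDVV is too vague to assess; as written it does not supply a mechanism for producing $B$-cycle periods, and the annulus template in Proposition~\ref{prop:BKMP-annulus} concerns open variables rather than closed $\tau$-derivatives. If you want to salvage your first route, the cleanest way to get the missing period is exactly the paper's oscillatory-integral computation of $\frac{1}{\sqrt{-2}}\int_{B_l}\theta^0_\bsi=-\Psi_l^{\ \bsi}$; once you have that, your Frobenius-algebra comparison goes through.
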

\begin{proof}
Recall that $\{H_1,\cdots,H_{\fp}\}$ is the basis of $H^2_{\CR,\bT'}(\cX)$ corresponding to the coordinates $\{\tau_1,\cdots,\tau_{\fp}\}$. Then
\begin{eqnarray*}
\frac{\partial^3 F^{\cX}_0}{\partial\tau_i\partial\tau_j\partial\tau_k}=\llangle H_i,H_j,H_k\rrangle^{\cX,\bT'}_{0,3}\big|_{t=\btau,Q=1}.
\end{eqnarray*}
By the graph sum formula described in Section \ref{sec:Agraph}, we know that $\llangle H_i,H_j,H_k\rrangle^{\cX,\bT'}_{0,3}$ has the same graph sum formula with that of $\llangle \bu_1,\bu_2,\bu_3\rrangle^{\cX,\bT'}_{0,3}$ except that the ordinary leaves are replaced by
\begin{equation}\label{oA}
[z^0](\sum_{\brho\in I_\Si}\Psi_l^{\,\ \brho}R(-z)_{\brho}^{\,\ \bsi})
\end{equation}
with $l=i,j,k$ respectively. Here $\Psi_l^{\,\ \brho}$ is defined as $H_l=\sum_{\brho\in I_\Si}\Psi_l^{\,\ \brho}
\hat\phi_\brho(\btau)$.

Now let us consider $\frac{\partial^3 \check F_0}{\partial\tau_i\partial\tau_j\partial\tau_k}$. By the special geometry property
\eqref{sg1} \eqref{sg2} \eqref{sg3}, we have
\begin{eqnarray*}
 \frac{\partial^3 \check F_0}{\partial\tau_i\partial\tau_j\partial\tau_k}
&=&\frac{\partial^2}{\partial \tau_i \partial \tau_j} \int_{p_1\in B_k}\Phi(p_1)
=\frac{\partial}{\partial \tau_i} \int_{p_1\in B_k}\frac{\partial \Phi}{\partial \tau_j}(p_1)\\
&=&\frac{\partial}{\partial \tau_i} \int_{p_1\in B_k}\int_{p_2\in B_j}\omega_{0,2}(p_1,p_2)
=\int_{p_1\in B_k}\int_{p_2\in B_j} \frac{\partial \omega_{0,2}}{\partial \tau_i}(p_1,p_2)\\
&=&\int_{p_1\in B_k}\int_{p_2\in B_j}\int_{p_3\in B_i}\omega_{0,3}(p_1,p_2,p_3).
\end{eqnarray*}
By the graph sum formula for $\omega_{0,3}$, we know that $\int_{p_1\in B_k}\int_{p_2\in B_j}\int_{p_3\in B_i}\omega_{0,3}(p_1,p_2,p_3)$
has the same graph sum formula with that of $\omega_{0,3}$ except that the ordinary leaves are replaced by
\begin{equation}\label{oB}
\frac{1}{\sqrt{-2}}\int_{p\in B_l}\theta^0_\bsi(p)
\end{equation}
with $l=k,j,i$ respectively. It is easy to see that
$$
\theta^0_\bsi(p)=[z^0](\frac{-e^{\frac{\check{u}^\bsi}{z}}}{\sqrt{\pi z}}\int_{p'\in\Gamma_\bsi} B(p,p')e^{-\frac{\hat{x}(p')}{z}}).
$$
Define
\[
\check S_l^{\spa \tbsi}(z)=(2\pi z)^{-\frac{3}{2}}\int_{\tGamma_\bsi}e^{-\frac{W^{\bT'}_q}{z}}\frac{\partial W^{\bT'}_q}{\partial \tau_l}
\Omega=(2\pi)^{-\frac{1}{2}}z^{-\frac{3}{2}}\sqrt{-1}\int_{\Gamma_\bsi}e^{-\frac{\hx}{z}}\frac{\partial \Phi}{\partial \tau_l}.
\]
Noting that $[\frac{\partial W^{\bT'}_q}{\partial \tau_l}]\in \Jac(W_q^{\bT'})$ corresponds to $H_l$ under Equation \eqref{eqn:qcoh-Tprime}, by argument similar to Proposition \ref{prop:check-S}, we have
\[
\check S_l^{\spa \tbsi}(z)=\sum_{\brho\in I_\Si}\Psi_l^{\spa \brho} \check R(z)_\brho^{\spa \bsi}e^{-\frac{\check u^\bsi}{z}}.
\]
Therefore
\begin{eqnarray*}
\frac{1}{\sqrt{-2}}\int_{p\in B_l}\theta^0_\bsi(p)&=&\frac{1}{\sqrt{-2}}\int_{p\in B_l}[z^0](\frac{-e^{\frac{\check{u}^\bsi}{z}}}{\sqrt{\pi z}}\int_{p'\in\Gamma_\bsi}
B(p,p')e^{-\frac{\hat{x}(p')}{z}})\\
&=&[z^0](\frac{-1}{\sqrt{-2}}\frac{e^{\frac{\check{u}^\bsi}{z}}}{\sqrt{\pi z}}\int_{p'\in\Gamma_\bsi}\frac{\partial \Phi}{\partial \tau_l}e^{-\frac{\hat{x}(p')}{z}})\\
&=&[z^0](-e^{\frac{\check{u}^\bsi}{z}}\check{S}_l^{\,\ \tbsi}(z))\\
&=&[z^0](-\sum_{\brho\in I_\Si}\Psi_l^{\,\ \brho}\check{R}(z)_{\brho}^{\,\ \bsi})\\
&=&[z^0](-\sum_{\brho\in I_\Si}\Psi_l^{\,\ \brho}R(-z)_{\brho}^{\,\ \bsi})\big|_{t=\btau,Q=1}.
\end{eqnarray*}
Comparing with (\ref{oA}), we see that the three new ordinary leaves on A-model differ those on B-model by a minus sign. So by Theorem \ref{thm:graph-match} for $(g,n)=(0,3)$, we conclude that
$$
\frac{\partial^3 F_0^{\cX}}{\partial\tau_i\partial\tau_j\partial\tau_k}=-\frac{\partial^3 \check F_0}{\partial\tau_i\partial\tau_j\partial\tau_k}.
$$

\end{proof}

\begin{remark}
The proof of Theorem \ref{F0} can be directly generalized to show that the first derivatives of
$F_g^{\cX}$ match the first derivatives of $\check F_g$ for any $g\geq 1$
by replacing $[z^0]$ by $[z^k]$ for any $k\in \bZ_{\geq 0}$ in the computation of new ordinary leaves. In particular, this gives another proof of Theorem \ref{F1}.

\end{remark}



\end{document}